\documentclass[12pt]{amsart} 
\usepackage[left=3cm,top=2.5cm,bottom=2.5cm,right=3cm]{geometry}
\usepackage{times}
\usepackage{amssymb, amsmath, amsthm}
\usepackage{mathtools}
\usepackage{graphicx,xspace}
\usepackage{epsfig}
\usepackage{enumitem}
\usepackage[usenames,dvipsnames]{xcolor}
\usepackage{tikz}
\usepackage{gensymb}
\usepackage[T1]{fontenc}
\usepackage[utf8]{inputenc}
\usepackage{bm}
\usepackage{subcaption}

\usepackage{mathrsfs}
\definecolor{green}{RGB}{0,127,0}
\definecolor{red}{RGB}{191,0,0}
\usepackage[colorlinks,cite color=red,link color=green,pagebackref=true]{hyperref}
\usepackage{todonotes}

\usetikzlibrary{matrix,arrows,calc}
\usetikzlibrary{decorations.markings}
\usetikzlibrary{patterns}
\usetikzlibrary{snakes}
\usetikzlibrary{shapes}

\usepackage[capitalize]{cleveref}
\usepackage{seqsplit}
\usepackage{booktabs}

\theoremstyle{plain}

\newtheorem{lemma}{Lemma}[section]
\newtheorem{theorem}[lemma]{Theorem}
\newtheorem{corollary}[lemma]{Corollary}
\newtheorem{proposition}[lemma]{Proposition}

\newtheorem{definition}[lemma]{Definition}
\newtheorem{definition-lemma}[lemma]{Definition-Lemma}

\newtheorem{assumption}[lemma]{Assumption}
\newtheorem{claim}[lemma]{Claim}

\theoremstyle{remark}
\newtheorem{remark}{Remark}
\newtheorem{example}{Example}

\definecolor{green}{RGB}{0,127,0}
\definecolor{red}{RGB}{191,0,0}
\newcommand{\xdownarrow}[1]{{\left\downarrow\vbox to #1{}\right.\kern-\nulldelimiterspace}}

\newcommand{\R}{\mathbb{R}}
\newcommand{\C}{\mathbb{C}}

\newcommand{\Sym}[1]{\mathfrak{S}_{#1}}

\newcommand{\Z}{\mathbb{Z}}

\newcommand{\QQ}{\mathbb{Q}}
\newcommand{\PP}{\mathbb{P}}

\newcommand{\SSS}{\mathbf{S}}
\newcommand{\SP}{\mathcal{S}\mathcal{P}}

\newcommand{\Poly}{\mathscr{P}}

\newcommand{\pp}{\mathbf{p}}
\newcommand{\qq}{\mathbf{q}}

\newcommand{\E}{\mathbb{E}}

\newcommand{\Y}{\mathbb{Y}}

\def\la{\lambda}

\def\ka{\kappa}
\def\a{{(\alpha)}}

\DeclareMathOperator{\SSym}{Sym}

\DeclareMathOperator{\conn}{conn}

\DeclareMathOperator{\supp}{supp}

\DeclareMathOperator{\Span}{Span}
\DeclareMathOperator{\Ch}{Ch}

\DeclareMathOperator{\Cov}{Cov}

\newcommand{\m}{\mathfrak{h}}

\DeclareMathOperator{\Symm}{Sym}

\def\vv{\mathbf{v}}

\newcommand{\ribbon}{\vec{\bm{\Gamma}}}

\newcommand{\bigone}{\prod_{i\ge 1}\bigg( i\cdot\frac{\alpha-1}{u}\bigg)^{\!|\SSS^i_{\tiny\rightarrow}(\ribbon)|}}

\newcommand{\bigtwo}{\bigg(\frac{i\alpha}{u^2}\bigg)^{\!|\mathbf{P}_i(\ribbon)|}\,v_i^{\,|\SSS_i(\ribbon)|}}

\newcommand{\al}{\alpha}
\newcommand{\ga}{\gamma}
\newcommand{\eps}{\epsilon}

\DeclareMathOperator{\AFP}{AFP}
\DeclareMathOperator{\eAFP}{eAFP}

\usepackage[none]{hyphenat}

\title[Jack-deformed random Young diagrams]{Universality of global asymptotics of Jack-deformed random Young diagrams at varying temperatures}

\author[C.~Cuenca]{Cesar Cuenca}
\address{
Department of Mathematics,
The Ohio State University,
231 West 18th Avenue,
Columbus, OH 43210, USA.
}
\email{cesar.a.cuenk@gmail.com}

\author[M.~Dołęga]{Maciej Dołęga}
\address{
Institute of Mathematics, 
Polish Academy of Sciences, 
ul. Śniadeckich 8, 
00-956 Warszawa, Poland.
}
\email{mdolega@impan.pl}

\author[A.~Moll]{Alexander Moll}
\address{
Department of Mathematics and Statistics,
Reed College,
3203 Southeast Woodstock Boulevard,
Portland, OR 97202, USA.
}
\email{amoll@reed.edu}

\thanks{This research was funded in whole or in part by {\it Narodowe Centrum Nauki}, grant 2021/42/E/ST1/00162. For the purpose of Open Access, the author has applied a CC-BY public copyright licence to any Author Accepted Manuscript (AAM) version arising from this submission.}

\begin{document}
\emergencystretch 3em
\begin{abstract}
This paper establishes universal formulas describing the global asymptotics of two distinct discrete versions of $\beta$-ensembles in the high, low and fixed temperature regimes. Our results affirmatively answer a question posed by the second author and Śniady.

We first introduce a special class of Jack measures on Young diagrams of arbitrary size, called the ``Jack--Thoma measures'', and prove the LLN and CLT in the three aforementioned limit regimes. In each case, we provide explicit formulas for polynomial observables of the limit shape and Gaussian fluctuations around the limit shape. These formulas have surprising positivity properties and are expressed as sums of weighted lattice paths.
Second, we show that the previous formulas are universal: they also describe the limit shape and Gaussian fluctuations for the model of random Young diagrams of a fixed size derived from Jack characters with the approximate factorization property. 
Finally, in stark contrast with continuous $\beta$-ensembles, we show that the limit shapes at high and low temperatures of our random Young diagrams are one-sided infinite staircase shapes.
For the Jack--Plancherel measure, we describe this shape explicitly by
relating its local minima with the zeroes of Bessel functions.
\end{abstract}

\maketitle

\section{Introduction and main results}

\subsection{Jack-deformed random Young diagrams as discrete $\beta$-ensembles}\label{sec:IntroOfIntro}

The \emph{Gaussian Unitary Ensemble} (GUE) is one of the most well-studied models of
random matrices. The joint density of eigenvalues $x_1 > \dots > x_N$ of the GUE is given by a formula of the form
\[ \frac{1}{Z_{\beta, N, V}}\prod_{1\leq i<j \leq N} |x_i-x_j|^\beta
  \prod_{i=1}^N\exp(-V(x_i)),\]
where $\beta = 2,\, V(x) = \frac{x^2}{2}$, and $Z_{\beta, N, V}$ is the normalization constant. When we choose an arbitrary $\beta>0$ and $V(x) = \frac{x^2}{2}$, we obtain the joint density of eigenvalues of the model
of random matrices known as the
\emph{Gaussian $\beta$-Ensemble} (G$\beta$E). For generic $V(x)$ and arbitrary $\beta>0$, this model is known in probability as a \textit{$\beta$-ensemble} and in statistical mechanics as a 1d \emph{log-gas system} of $N$ particles in a potential $V$ at inverse temperature $\beta>0$ \cite{Forrester2010}.

\indent While the asymptotic behavior as $N \to \infty$ of $\beta$-ensembles is well
understood for a fixed $\beta>0$ (see for instance~\cite{BenGuionnet1997,Johansson1998,DumitriuEdelman2002,ValkoVirag2009,RamirezRiderVirag2011}), the asymptotic analysis of $\beta$-ensembles in the \emph{high-temperature regime} in which $N \rightarrow \infty$ and $\beta \rightarrow 0$ simultaneously with $N \beta \sim c$ for some constant $c$ has recently gained a lot of attention in the literature \cite{AllezBouchaudGuionnet2012,Benaych-GeorgesPeche2015,DuyShirai2015,NakanoTrink2018,AkemannByun2019,Pakzad2020,HardyLambert2021,Forrester2022,Benaych-GeorgesCuencaGorin2022}.

It is well known by now that the remarkable
asymptotic properties of GUE as $N \to \infty$ are reflected in
the asymptotics of its discrete counterpart, 
given by the \emph{Plancherel measure} on Young
diagrams~\cite{VershikKerov1977,LoganShepp1977,BaikDeiftJohansson1999,BorodinOkounkovOlshanski2000,Okounkov2000,IvanovOlshanski2002},
see e.g.~\cite{BaikDeiftSuidan2016} and references therein. This
measure has a natural one-parameter deformation, introduced by Kerov~\cite{Kerov2000}, and called the \emph{Jack--Plancherel
  measure}. By definition, the Jack--Plancherel measure is the probability measure on the
set $\Y_d$ of Young diagrams of a fixed size $d\in\Z_{\ge 0}$ that depends on the
Jack deformation parameter $\al>0$ and is given by the formula
\begin{equation}\label{eq:Jack-Planch}
\PP^\a_{d}(\lambda) := \frac{d!\,\alpha^d}
{\prod_{(i, j)\in\lambda}\big(\alpha(\lambda_i-j)+(\lambda_j'-i)+1\big)
\big(\alpha(\lambda_i-j)+(\lambda_j'-i)+\alpha\big)}.
\end{equation}
It was expected (see e.g.~\cite{Okounkov2003}) that the
Jack--Plancherel measure would be the most 
natural discrete analogue of the Gaussian $\beta$-Ensemble;
the parameter $\alpha$ would need to be related to the inverse temperature
$\beta>0$ by the relation $\alpha = 2/\beta$. 
This prediction was confirmed in several ways, for instance, the second author and
F\'eray proved in~\cite{DolegaFeray2016} the LLN and CLT for random Jack--Plancherel
Young diagrams and conjectured that the edge scaling limit is given by the $\beta$-Tracy--Widom law; this conjecture was later proved by Guionnet and Huang~\cite{GuionnetHuang2019}.

\vspace{3pt}

In the last two decades, different models for discrete
$\beta$-ensembles which are analogs of the continuous
$\beta$-ensembles with a generic potential $V$, and simultaneously are generalizations of the 
Jack--Plancherel measure, have been proposed. 
It turns out that all these models are significantly different, and the methods developed for their analyses are different as well.

The first models are the \textit{Jack measures}; they are natural
$\alpha$-deformations of Schur measures~\cite{Okounkov2001} and, at
the same time, distinguished limits of the Macdonald
measures~\cite{BorodinCorwin2014}.
The study of Jack measures was first suggested by Borodin and Olshanski in~\cite{BorodinOlshanski2005} 
and were first studied in general by the third author in~\cite{Moll2023}.
Jack measures are probability measures on the infinite set of all Young diagrams $\Y$ defined by two homomorphisms $\rho_1,\rho_2\colon\SSym\to\C$ on the real algebra of symmetric functions $\SSym$. 
Denote the power-sum symmetric functions by $p_1,p_2,\dots$, and the Jack symmetric functions by
$J^\a_\lambda$. If
\begin{equation}\label{eq:NonNeg}
  \sum_{k \geq 1}\frac{\rho_1(p_{k}) \overline{\rho_2(p_{k})}}{k} < \infty,\qquad
  \rho_1\left(J_{\la}^\a\right)\overline{\rho_2\left(J_\la^\a\right)}\ge 0,\quad \forall\, \la\in\Y,
\end{equation}
then the Jack measure $M_{\rho_1, \rho_2}^\a$ is defined by the formula
\begin{equation}\label{eqn:Jack_measure_intro}
M_{\rho_1, \rho_2}^\a(\la) :=
\frac{\rho_1\left(J_\la^\a\right)
  \overline{\rho_2\left(J_\la^\a\right)}}
{j^\a_\la}\exp\left(-\sum_{k=1}^{\infty}{\frac{\rho_1(p_k) \overline{\rho_2(p_k)}}{k\alpha}} \right),
\quad \la\in\Y.
\end{equation}

A second model, introduced by the second author and Śniady~\cite{DolegaSniady2019},
defines probability measures on the finite set of Young diagrams
$\Y_d$ of a fixed size $d$. These measures
$\mathbb{P}^\a_{\chi_d}$ are defined by the decomposition of
a general choice of \emph{reducible Jack character} $\chi_d:\Y_d\to\C$ into a convex combination of \emph{irreducible Jack-characters} $\chi^\a_\la$:
\begin{equation}
\label{eq:proba-linearcomb-AIntro}
\chi_d = \sum_{\lambda\in\Y_d} \mathbb{P}^\a_{\chi_d}(\lambda) \ \chi^\a_\lambda.
\end{equation}
The irreducible Jack characters are defined
as the normalized coefficients of the Jack polynomial
$J^\a_\la$ expressed in the power-sum basis $p_\mu$, and they coincide
with the normalized irreducible characters of the symmetric groups in
the special case $\alpha=1$ (see~\cref{sec:Jack-deformed-1} for more details).
In particular, the case $\alpha=1$ recovers measures considered by
Biane~\cite{Biane2001} and Śniady~\cite{Sniady2006c}, in
the context of asymptotic representation theory of the symmetric groups.

Some other variants have been proposed, of which lastly we will mention 
the \textit{discrete $\beta$-ensembles} of Borodin--Gorin--Guionnet~\cite{BorodinGorinGuionnet2017}, 
defined as certain probability measures on the set $\Y^{(N)}$ of Young diagrams with at most $N$ rows.
These measures have attracted much interest recently, e.g.~\cite{DimitrovKnizel2019, GuionnetHuang2019}, because of their appearance, when $\beta=2$, as marginals of 2d statistical mechanical models, such as random tilings, last passage percolation, etc.

\vspace{3pt}

These discrete variants of $\beta$-ensembles all have different origins and are 
even supported on different sets, namely $\Y$, $\Y_d$ and $\Y^{(N)}$. 
Remarkably, the asymptotic behaviors of large Young diagrams sampled from each of them 
admit nice analyses and descriptions.
The third author proved in~\cite{Moll2023} a weak version of LLN and
CLT for certain specialization of Jack measures in high, low,
and fixed temperature regimes\footnote{He used a different
parametrization for $M_{\rho_1,\rho_2}$; in particular his regimes are not the same as the
ones in this paper and his assumptions are not satisfied in
case of Jack--Thoma measures from this paper; see \cite[Sec.~7.1.3]{Moll2023}.} under
certain regularity assumptions, provided that the specializations $\rho_1,\rho_2$ satisfy
the nonnegativity assumption~\eqref{eq:NonNeg}.
He considers especially the Jack measures $M_{\rho,\rho}$ with the same specializations $\rho_1 = \rho_2 =: \rho$, so that~\eqref{eq:NonNeg} is trivially satisfied.
Next, in the work of the second author and Śniady~\cite{DolegaSniady2019}, 
the authors prove a LLN and CLT for large random Young diagrams sampled by the
measures $\mathbb{P}^\a_{\chi_d}$, when $(\chi_d)_{d\ge 1}$ is a 
sequence of Jack characters satisfying the \textbf{Approximate Factorization
Property} or \textbf{AFP} (see~\cref{sec:Jack-deformed-1} for precise definitions). In the special case $\al=1$, it corresponds to the LLN
for random representations of large symmetric groups studied by
Biane~\cite{Biane2001} and the CLT for those representations proved by
Śniady~\cite{Sniady2006c}.
Finally, for the discrete $\beta$-ensembles~\cite{BorodinGorinGuionnet2017}, the authors prove a LLN and CLT when $\beta>0$ is fixed, by using the discrete loop equations.

Our understanding of the
relations between all these models was limited to the simple fact
that their intersection includes (not directly, though) the Jack--Plancherel
measure~\eqref{eq:Jack-Planch}. The problem of finding an intrinsic relation between these
models and their methods of analysis has been suggested to us on
several occasions by experts in the field, and the second author
proposed it already in his joint work with Śniady~\cite[Section 1.3]{DolegaSniady2019}. 
In this paper, we take a step forward in this direction, by establishing a deep connection between the first two models mentioned previously (Jack measures and Jack character measures with the AFP). Specifically, we find an infinite-parameter family of measures, called the \emph{Jack--Thoma measures}, and their depoissonized versions which fit into both settings.
We prove that the global asymptotic behavior of these measures is governed by explicit formulas expressed in terms of simple and elegant combinatorial objects known as \emph{lattice paths}.
Furthermore, we show that this special family of measures is dense enough so that, actually, all Jack character measures with the AFP display the same universal global-scale behavior.
The path to this surprising discovery was intricate, and we describe each step of our journey in detail, presenting a series of novel results along the way.

\subsection{Main result for Jack measures}

Our first main results describe the asymptotic behavior of certain
Jack measures via explicit combinatorial formulas. Let $\alpha> 0$, $\vv = (v_1, v_2,
  \cdots)\in\R^{\infty}$, and $u>0$. We say that the measure
\begin{equation}\label{CDM_def_intro}
M^\a_{u;\vv}(\la) := \exp\left(-\frac{u^2\cdot v_1}{\alpha}\right)\frac{J_\la^\a(u\cdot\vv)\cdot u^{|\la|}}
{j^\a_\la},\quad\forall\,\la\in\Y,
\end{equation}
is a \textbf{Jack--Thoma measure}; it is a special case of 
Jack measure $M_{\rho_1,\rho_2}$ from \eqref{eqn:Jack_measure_intro} with
the following choice of specializations: $\rho_1(p_k) = u\cdot v_k$ and $\rho_2(p_k) = u\cdot\delta_{1,k}$, for all $k\ge 1$.
The nonnegativity assumption in~\eqref{eq:NonNeg} is no longer automatically satisfied, as 
it was for the special case of $M_{\rho,\rho}$ studied in~\cite{Moll2023}.
For our Jack--Thoma measures, and for a fixed $\al>0$, the nonnegativity assumption 
reduces to the classification of \emph{$\al$-Jack-positive specializations of $\SSym$}.
This classification was completed by Kerov, Okounkov and Olshanski in~\cite{KerovOkounkovOlshanski1998},
where they extended the celebrated Thoma's classification of extremal characters
of the infinite symmetric group (see~\cite{Thoma1964,KerovOkounkovOlshanski1998,Matveev2019}).
Based on their result, $M^\a_{u;\vv}$ defines a probability measure on the set $\Y$ of partitions of
arbitrary size whenever $u,\vv$ are points parametrized by the points from the
Thoma cone 
\begin{multline*}
\Omega = \{ (a, b, c)\in \R^{\infty}\times \R^{\infty}\times \R \mid
a = (a_1, a_2, \cdots),\quad b = (b_1, b_2, \cdots),\\
a_1 \ge a_2 \ge \cdots \ge 0,\quad b_1 \ge b_2 \ge \cdots \ge 0,\quad
\sum_{i=1}^{\infty}{(a_i + b_i)} \le c \}
\end{multline*}
by setting $v_1 = u^{-1}\cdot c$ and $v_k = u^{-1}\sum_{i=1}^\infty a_i^k +
u^{-1} (-\alpha)^{1-k}\sum_{i=1}^\infty b_i^k$, 
for $k \geq 2$, and $u>0$ is arbitrary.
This shows that the class of Jack--Thoma measures is a rich infinite-parameter family.
Moreover, there exists a unique measure belonging to both classes
$M_{\rho,\rho}$ and $M^\a_{u; \vv}$, and it is the Jack--Plancherel measure.
Hence, next to the previously studied $M_{\rho,\rho}$, the Jack--Thoma measures 
form the most natural class of Jack measures that exist.

We are interested in the behavior of $M^\a_{u;\vv}$-distributed Young diagrams 
for a generic choice of the infinite family of parameters $\vv = (v_1, v_2,
  \cdots)\in\R^{\infty}$ as the ratio $\frac{\alpha}{u^2}$ tends
  to zero. This asymptotic regime describes the typical behavior of a large
$M^\a_{u; \vv}$-distributed random Young diagram. 
We study the asymptotic behavior of $M^\a_{u;\vv}$ in three different regimes: the 
  \textbf{fixed temperature regime} when $\frac{\alpha-1}{u} \to
  0$ (for instance when $\alpha>0$ is fixed and $u\to\infty$), the
  \textbf{high temperature regime} when $\frac{\alpha-1}{u}\to g>0$, 
and the \textbf{low temperature regime} when $\frac{\alpha-1}{u}\to g<0$.
  Therefore, in this subsection assume that $\alpha=\alpha(d)$, $u = u(d)$ are
  sequences of positive real numbers such that
\begin{equation}\label{eq:assumptions}
\lim_{d\to\infty} \frac{\alpha}{u^2} = 0,\qquad \lim_{d\to\infty} \frac{\alpha-1}{u}
 = g \in \R,\qquad M^\a_{u;\vv}(\lambda) \geq 0,\quad \forall\, \la\in\Y,\
 \forall\, d\in\Z_{\ge 0}.
\end{equation}

Note that in the case when $g\ne 0$ (the high and low temperature regimes), it is quite nontrivial to find sequences $\al, u$ that satisfy the conditions \eqref{eq:assumptions}, with the nonnegativity of $M^\a_{u;\vv}$ being especially troublesome.
When $g=0$ and $\al>0$ is fixed, we noted that \cite{KerovOkounkovOlshanski1998} furnishes examples where $v_k$ depends on $\al$; however, this becomes an issue if $\al$ varies, because the parameters $v_k$ must remain constant.
Nevertheless, we manage to find a large family of pairs $(g, \vv)\in(\R\setminus\{0\})\times\R^\infty$ for which these conditions are satisfied: see~\cref{def:admissible_pair} and \cref{prop:CDMPar2}.
Our construction is based on a new classification theorem for \emph{totally Jack-positive specializations}, see~\cref{thm_jack_spec}, i.e.~specializations which take nonnegative values on all Jack symmetric functions $J^\a_\la$, for all $\al>0$.

In our first main limit theorem, we prove that the profile $\omega_{\Lambda_{(\alpha;u\sqrt{v_1})}}$ of a rescaled random Young diagram $\Lambda_{(\alpha;u\sqrt{v_1})}$, obtained from a Young diagram $\lambda$ by replacing each box of size $1\times 1$ by a box of size $\frac{\alpha}{u\sqrt{v_1}}\times\frac{1}{u\sqrt{v_1}}$, concentrates around a deterministic
shape, and this holds true in all three limit regimes.
Let us remark that each box in the Young diagram was rescaled to have size $\frac{\alpha}{u\sqrt{v_1}}\times\frac{1}{u\sqrt{v_1}}$, because of the following two principles that we followed:

\vspace{2pt}

$\bullet$ Each box should be $\alpha$-anisotropic, meaning that the ratio between the width and height is $\alpha$.

$\bullet$ The expected area of a $M^\a_{u;\vv}$--distributed random Young diagram, after the rescaling, should be equal to $1$.\footnote{See \cref{subsec:LLNCLTCDM} and specifically \cref{exam:size_jack_thoma}.}

\vspace{2pt}

As a side comment, we mention that we deduce our first main theorem from a general statement about the convergence of random functions (see~\cref{theo:GeneralLLN}) that might be helpful in a wide variety of similar contexts, and is of independent interest.
Here is the abbreviated version of our first main result (and see~\cref{theo:LLNCDM} for the detailed version).

\begin{theorem}\label{theo:LLNCDM_intro}
Fix $g\in\R$ and $\vv = (v_1, v_2,\cdots)\in\R^\infty$. Suppose that $\alpha, u\in\R_{>0}$ satisfy~\eqref{eq:assumptions}.
Then there exists some deterministic function $\omega_{\Lambda_{g;\vv}}\colon\R\to\R_{\ge 0}$ with the property that 
\begin{equation}\label{eq:limitCDM'}
\lim_{d\rightarrow\infty}\omega_{\Lambda_{(\alpha;u\sqrt{v_1})}} = \omega_{\Lambda_{g;\vv}}.
\end{equation}
The limit in \eqref{eq:limitCDM'} means convergence with respect to the supremum norm, in probability.
Moreover, the limit shape $\omega_{\Lambda_{g;\vv}}$ is described by explicit formulas for various
observables that uniquely determine it (see~\eqref{eq:MomentLukasiewicz} below for the moments of the
associated transition measure, and~\eqref{s_ell} for the moments of the measure with the density $\frac{\omega_{\Lambda_{g;\vv}}(x)-|x|}{2}$).
These formulas are expressed in terms of a weighted enumeration of Łukasiewicz paths.
\end{theorem}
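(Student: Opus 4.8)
The plan is to reduce the uniform (sup-norm) statement to the convergence of a generating family of polynomial observables of the diagram, to compute their expectations exactly, to extract the leading behaviour in the regime \eqref{eq:assumptions}, and finally to feed the result into the abstract convergence statement \cref{theo:GeneralLLN}. Concretely, I would work in the algebra of $\alpha$-polynomial functions on $\Y$, freely generated by the normalized Jack characters $\Ch_k$ (equivalently by the anisotropic free cumulants). Because the moments of the transition measure $\mu_\la$, as well as the moments of the signed measure with density $\tfrac12\big(\omega_\la(x)-|x|\big)$, are polynomials in the $\Ch_k$ through the anisotropic Kerov polynomials, it suffices to prove that each rescaled $\Ch_k$ converges in probability to an explicit constant and that all rescaled covariances vanish. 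This replaces the geometric assertion by a purely algebraic computation of expectations.

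The heart of the argument is the exact evaluation of $\E_{M^\a_{u;\vv}}[\Ch_\mu]$ for arbitrary $\mu$. The Jack--Thoma weight is $J_\la^\a(u\vv)\,u^{|\la|}/j_\la^\a$, so the partition function is read off the Cauchy identity $\sum_\la J_\la^\a(x)J_\la^\a(y)/j_\la^\a=\prod_{i,j}(1-x_iy_j)^{-1/\alpha}$. Inserting the commuting family of Sekiguchi--Debiard / Nazarov--Sklyanin operators $\D_k$, which diagonalize the $J_\la^\a$ with eigenvalues the power sums $\sum_{\square\in\la}c_\alpha(\square)^k$ of the $\alpha$-contents, turns the insertion of any content-power-sum observable into a differential operator acting on the kernel; since the algebra generated by these content power sums coincides with the one generated by the $\Ch_k$, reading off the action produces a closed product-type formula for $\E_{M^\a_{u;\vv}}[\Ch_\mu]$, polynomial in $u$, $\alpha$ and the $v_j$, with joint cumulants of the $\Ch_k$ of strictly lower order than the product of the individual expectations. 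Substituting $\alpha/u^2\to0$ and $(\alpha-1)/u\to g$ and retaining the top-degree part, the content sums reorganize into a weighted enumeration of Łukasiewicz paths with $g$ appearing as a per-step weight, which is exactly the right-hand side of \eqref{eq:MomentLukasiewicz} and \eqref{s_ell}.

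The lower-order bound on the joint cumulants shows that every rescaled covariance tends to $0$, giving concentration and hence convergence in probability of each moment to its deterministic limit. It remains to pass from moments to shapes. Since in the high- and low-temperature regimes the limit shape $\omega_{\Lambda_{g;\vv}}$ is a one-sided infinite staircase, its transition measure need not be compactly supported, so the classical Kerov correspondence between moment convergence and uniform profile convergence does not apply directly; this is precisely why I would route the conclusion through \cref{theo:GeneralLLN}, after verifying its hypotheses, namely the convergence in probability just established together with a uniform tail/growth estimate guaranteeing tightness and that the limiting moment sequence determines $\omega_{\Lambda_{g;\vv}}$ uniquely.

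The main obstacle is the second step when $g\neq0$: extracting the exact top-degree asymptotics of $\E_{M^\a_{u;\vv}}[\Ch_\mu]$ while both $\alpha/u^2\to0$ and $(\alpha-1)/u\to g$. The two specializations $\rho_1=u\vv$ and $\rho_2=u\,\delta_{1,\bullet}$ are asymmetric, and the anisotropy $\alpha$ must be tracked through every matrix element of the operators $\D_k$; contributions that are a priori of different orders in $1/u$ must be shown to conspire, through the combinatorics of the $\alpha$-contents, into the clean lattice-path weights, and turning this identification from a heuristic into a rigorous cancellation is where the real difficulty lies. A secondary difficulty is the non-compact support in the high/low-temperature regimes, which is what forces the tightness estimate needed to invoke \cref{theo:GeneralLLN}.
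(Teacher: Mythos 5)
Your overall architecture---compute a generating family of polynomial observables exactly, show that joint cumulants are of strictly lower order so that concentration holds, and then convert moment convergence into sup-norm convergence via \cref{theo:GeneralLLN} together with a tail estimate and moment determinacy---is exactly the skeleton of the paper's proof of \cref{theo:LLNCDM}, and your remark that the non-compact support of $\mu_{g;\vv}$ for $g\neq 0$ is what forces this route (a Poisson tail bound for condition (a), Carleman's condition for determinacy) matches the paper's Steps 1--3 in \cref{subsubsec:Cumulants}. The part of your plan that computes $\E[\Ch^\a_\mu]$ is also feasible and in fact easy for Jack--Thoma measures, by Poissonization and multiplicativity (cf.\ \cref{prop:Depoissonization} and \eqref{eq:IdentityCh}).

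However, the heart of your argument has a genuine gap, and it is the one you flag yourself. You propose to express the moments of the transition measure through Kerov-polynomial-type expansions in the $\Ch_k$ and $\gamma$, and then argue that in the regime \eqref{eq:assumptions} the top-degree part ``reorganizes'' into the Łukasiewicz path weights of \eqref{eq:MomentLukasiewicz}. But the expansion of $M^\a_\ell$ (or $S^\a_\ell$) in the basis $\{\gamma^k\Ch^\a_\mu\}$ with explicit Łukasiewicz-path coefficients---\cref{cor:MSTop} in the paper---is not available as an input: the known structure theory (\cref{theo:DF16}, from \cite{DolegaFeray2016}) controls only the top part in $\deg_1$, i.e.\ the free-cumulant term, which suffices when $g=0$ but says nothing about the $\gamma$-dependent coefficients that carry all the $g$-weights; producing those coefficients by the dual/character approach is tied to Lassalle's positivity conjectures, which the paper explicitly describes as having been out of reach, and the exact character expansion was only obtained afterwards in \cite{BenDaliDolega2023} \emph{using} the results of this paper. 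The paper's actual mechanism is different and is what fills this hole: the Nazarov--Sklyanin operators $PL^\ell P^\dagger$ have eigenvalues the Boolean cumulants $B_{\ell+2}(\Lambda_{(\alpha;1)})$ (not content power sums, as you state---that conflation with Sekiguchi--Debiard matters), and the Łukasiewicz paths emerge concretely from the band structure of the Lax matrix $L$: the entries $p_{j-i}$ give up/down steps, the diagonal entries $i(\alpha-1)$ give the horizontal-step weights $(ig)$, normal ordering of the noncommuting entries produces the pairings, and applying the resulting operator to the Plancherel-specialized kernel $e^{up_1/\al}$ annihilates every unpaired down step of degree $\geq 2$ (\cref{theo:Expectations}). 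Without this computation, or an independent proof of \cref{cor:MSTop}, your reorganization into lattice paths remains a heuristic, so the proof is incomplete precisely at the point where the theorem's content---the explicit formulas \eqref{eq:MomentLukasiewicz} and \eqref{s_ell}---is supposed to be produced.
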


Informally, a \emph{Łukasiewicz path} is a directed lattice path
with steps $(1,k),\, k\in\Z_{\geq -1}$, starting at $(0,0)$, finishing at $(\ell,0)$, and staying in the first quadrant.
These objects are in bijection with non-crossing
set partitions, a combinatorial model intrinsically related to free
probability (see~\cref{rem:LukasiewiczNC} and the discussion
in~\cref{sub:IntroAFP}). 
Consider the following weighted generating
series for the set $\mathbf{L}_0(\ell)$ of Łukasiewicz paths of length $\ell$:

\begin{equation}\label{eq:MomentLukasiewicz}
M_\ell := v_1^{-\ell/2}\cdot\sum_{\Gamma\in\mathbf{L}_0(\ell)}
{\prod_{i\ge 0} (ig)^{\text{number of horizontal steps at height $i$}}\cdot
\prod_{j\ge 1} v_j^{\text{number of steps $(1,j)$}}}.
\end{equation}

We prove that the $\ell$-th moment of the transition measure
$\mu_{g;\vv}$ associated with the limit shape
$\omega_{\Lambda_{g;\vv}}$ via the
Markov--Krein correspondence is given by this series:

\[ \int_\R{x^\ell \mu_{g;\vv}(dx)} = M_\ell,\quad\text{for all $\ell\ge 1$}. \]

Note that in the special case when $g=0$ (e.g.~when $\al>0$ is fixed), $\vv=(1,0,0\dots)$ 
and $\vv=(1,c,c^2,\dots)$, our formula~\eqref{eq:MomentLukasiewicz} 
recovers the combinatorial formula for the moments of the \emph{semicircle
distribution} and the \emph{free Poisson distribution}, respectively.
These two special cases correspond to the LLN for the
Jack--Plancherel measure (\cite{LoganShepp1977,VershikKerov1977} for
$\alpha=1$, \cite{DolegaFeray2016} for general $\alpha>0$) and for the
Schur-Weyl measure (\cite{Biane2001} for
$\alpha=1$); the
high and low temperature regimes give one-parameter
deformations of these laws.
In particular, note that when specialized to $\al=1,\, g=0$, we provide a new proof of Biane's result, see~\cref{rem:LukasiewiczNC}.
Moreover, Jack--Thoma measures are distinguished mixtures of the measures on partitions of a fixed size already
studied by Kerov, Okounkov and Olshanski~\cite{KerovOkounkovOlshanski1998} in the
context of harmonic functions on the Young graph with Jack
multiplicities and a fixed $\alpha$. In light of our depoissonization analysis (see~\cref{sub:IntroAFP}), our~\cref{theo:LLNCDM_intro} describes the limit
shape for all these measures. We want to point out here that this limit
shape is described not only for an arbitrary and fixed parameter
$\alpha$ (which turns out to be the same as in the $\alpha=1$ case
studied by Okounkov in~\cite{Okounkov2000}), but it also covers the high and low temperature limits of the Kerov--Okounkov--Olshanski measures.

We conclude our analysis of Jack--Thoma measures by proving
the central limit theorem. We determine the
(non-centered) Gaussian process describing fluctuations around the limit shape
by giving explicit formulas for its mean shift and covariance matrix. 
For technical reasons, we require that the assumptions on the
growth of $\alpha$ and $u$ in~\eqref{eq:assumptions} are slightly
enhanced and replaced by the following conditions (see~\cref{rem:Assumpt}):
\begin{equation}\label{eq:assumptions_refined}
\frac{\al}{u^2} = \frac{1}{d} + o\left(\frac{1}{d}\right),\quad
\frac{\al-1}{u} = g+\frac{g'}{\sqrt{d}}+o\left(\frac{1}{\sqrt{d}}\right),\quad\text{ as }d\to\infty.
\end{equation}
This is the abbreviated version of our CLT (see~\cref{theo:CLTCDM} for the precise statement):

\begin{theorem}\label{theo:CLTCDM_intro}
Fix $g,g'\in\R$ and $\vv = (v_1, v_2,\cdots)\in\R^\infty$. 
Suppose that the sequences of positive reals $\alpha, u$ satisfy~\eqref{eq:assumptions_refined}.
Then there exists a Gaussian process $\Delta_{g,g';\vv}$ such that the random functions 
$\Delta^\a_{u;\vv} := 
\frac{u}{\sqrt{\al}}\bigg(\frac{\omega_{\Lambda_{(\al; u\sqrt{v_1})}} - \omega_{\Lambda_{g;\vv}}}{2}\bigg)$
converge to $\Delta_{g,g';\vv}$ in distribution for multidimensional
polynomial test functions. This process is uniquely determined
by equation~\eqref{MeanGaussian_2} that describes its mean vector,
and equation~\eqref{CovarianceGaussian_2} that describes its covariance matrix.
\end{theorem}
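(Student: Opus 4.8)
The plan is to follow the same moment-based strategy that underlies the LLN in \cref{theo:LLNCDM_intro}, but pushed to second order, so that the rescaled fluctuation field $\Delta^\a_{u;\vv}$ is controlled through the joint cumulants of a generating family of polynomial observables of the random diagram. Via the Markov--Krein correspondence the random profile $\omega_{\Lambda_{(\al;u\sqrt{v_1})}}$ is encoded by its transition measure, and testing $\Delta^\a_{u;\vv}$ against polynomials amounts to examining the $\tfrac{u}{\sqrt\al}$-rescaled moments of the measure with density $\tfrac{\omega(x)-|x|}{2}$ (cf.~\eqref{s_ell}), equivalently the free cumulants $R_k(\la)$. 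Thus it suffices to prove a multivariate CLT for such a family of observables and then transfer it to the level of random functions using the general convergence framework of \cref{theo:GeneralLLN}, now applied at the fluctuation scale.

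The heart of the argument is an exact expansion of these observables under the Jack--Thoma measure. Because $M^\a_{u;\vv}$ is the Jack measure with the explicit specializations $\rho_1(p_k)=u\,v_k$ and $\rho_2(p_k)=u\,\delta_{1,k}$, the expectation of any product of normalized Jack characters $\Ch_{\mu_1}\cdots\Ch_{\mu_r}$ admits a closed combinatorial form. I would organize this expansion so that the joint cumulant $\kumu_r(\Ch_{\mu_1},\dots,\Ch_{\mu_r})$ is exactly the connected part, indexed by Łukasiewicz-type lattice paths carrying the weight $(ig)$ at each horizontal step of height $i$ and $v_j$ at each ascending step $(1,j)$, in the spirit of \eqref{eq:MomentLukasiewicz}. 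Equipping the algebra of $\al$-shifted symmetric functions with its natural gradation then lets me read off the order in $\tfrac{\al}{u^2}$ of every cumulant.

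The three quantitative inputs I expect to need are: (i) the first cumulant of each observable reproduces, at leading order, the limit-shape moment from \cref{theo:LLNCDM_intro}, while its subleading correction---dictated by the refined growth \eqref{eq:assumptions_refined} through the parameter $g'$ and the $1/\sqrt d$ term of $\tfrac{\al}{u^2}$, together with the intrinsic non-centering contribution of the Jack deformation---produces the mean vector \eqref{MeanGaussian_2}; (ii) the second cumulant, rescaled by $\tfrac{u}{\sqrt\al}$, converges to the covariance \eqref{CovarianceGaussian_2}, which I would identify with a weighted enumeration of pairs of Łukasiewicz paths joined along a single marked step; and (iii) every cumulant of order $r\ge 3$ vanishes at this scale. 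Input (iii), obtained from the gradation/order count, yields Gaussianity by the method of cumulants, and the existence and uniqueness of the limiting process $\Delta_{g,g';\vv}$ follow once its mean and covariance are pinned down, giving the precise \cref{theo:CLTCDM}.

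The main obstacle will be the uniform order control of the higher cumulants in the genuinely temperature-dependent regime $g\ne 0$. When the horizontal-step weights $(ig)$ are active, the connected contributions no longer organize as cleanly as in the fixed-temperature Plancherel case, and one must show that all connected Łukasiewicz configurations of order $r\ge 3$ are suppressed by the correct power of $\tfrac{\al}{u^2}$, uniformly over the infinite parameter family $\vv$. Disentangling the top-degree gradation from the proliferation of horizontal steps---so that the second cumulant survives while every higher one dies---is the crux of the matter; by comparison, promoting the finite-dimensional CLT to the process level against polynomial test functions should be routine once \cref{theo:GeneralLLN} is invoked at the fluctuation scale.
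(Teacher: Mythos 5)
Your high-level skeleton (method of cumulants: identify the mean, identify the covariance, kill all cumulants of order $\ge 3$) matches the paper's, but the engine you propose to power it does not work as stated, and the place where it fails is exactly the crux of the theorem. You propose to compute joint cumulants of products of normalized Jack characters $\Ch_\mu$ under $M^\a_{u;\vv}$ and assert that they ``admit a closed combinatorial form \ldots indexed by Łukasiewicz-type lattice paths.'' Under the Jack--Thoma measure the $\bullet$-cumulants (disjoint-product cumulants) of the $\Ch_\mu$'s are indeed trivial to compute, because the depoissonized character is multiplicative (cf.~\cref{prop:Depoissonization}); the hard objects are the $\times$-cumulants of \emph{profile observables} such as $S_\ell$ or $M_\ell$, and passing from one to the other requires the structure constants of the algebra $\Poly$ of $\al$-polynomial functions, i.e.\ the expansion of pointwise products $\Ch_\mu\times\Ch_\nu$ in the basis $\{\gamma^k\Ch_\rho\}$. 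The gradation bounds you invoke (\cref{theo:DF16}, \cref{theo:AFPCharacterization}) control only the \emph{degrees} of these expansions: they suffice to prove that higher cumulants vanish and that limits exist --- which is essentially the already-known CLT of Dołęga--Śniady, \cref{theo:CLTDS} --- but they do not produce the explicit formulas \eqref{MeanGaussian_2} and \eqref{CovarianceGaussian_2} that the theorem asserts uniquely determine $\Delta_{g,g';\vv}$. Obtaining those structure constants explicitly is precisely the problem the paper describes as previously out of reach (it is tied to Kerov polynomials and Lassalle's conjectures), so your claimed ``closed combinatorial form'' is not a step one can take; it is the theorem in disguise.

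The paper's actual proof of \cref{theo:CLTCDM} avoids the character algebra entirely: it uses the Nazarov--Sklyanin spectral identity (\cref{theo:Nazarov-Sklyanin}), $PL^\ell P^\dagger J_\la = B_{\ell+2}(\Lambda_{(\al;1)})\,J_\la$, applied to the Cauchy kernel, and the \emph{pairings} in the resulting Łukasiewicz ribbon paths --- which carry the weights $i\al/u^2$ and generate the entire covariance \eqref{CovarianceGaussian_2} --- arise from the commutation relations $[p_{-i},p_i]=i\al$ when the operator products are put in normal order (\cref{theo:Expectations}, \cref{prop:kumuLukasiewicz}, \cref{cor_formulas_cums}). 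Your proposal has no mechanism that produces these pairings, hence no way to obtain the covariance as a cancellation-free pairing sum; and the paper's own character-based argument in \cref{sec:Duality} (the proof of \cref{theo:CovDS}) pins down the unknown polynomial by matching against the Jack--Thoma formulas \emph{already proven} via Nazarov--Sklyanin, so using that route to prove \cref{theo:CLTCDM} itself would be circular. A further, more minor, misconception: \cref{theo:GeneralLLN} is a law-of-large-numbers device for uniform convergence of random functions and plays no role at the fluctuation scale; the limit object $\Delta_{g,g';\vv}$ is a distribution-valued Gaussian field, and the convergence asserted is simply weak convergence of the joint laws of the polynomial observables $\langle\Delta^\a_{u;\vv},x^\ell\rangle$, which follows directly from the cumulant computations once one has them.
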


To prove these results, we develop a combinatorial approach to
Nazarov--Sklyanin operators, building on the work initiated
in~\cite{Moll2023}. For random partitions distributed according to the Jack--Thoma measures, 
and their transition measures, we can describe various important observables 
by elegant combinatorial models of Łukasiewicz ribbon paths that we introduce here 
(e.g.~see~\cref{theo:Expectations}). 
These combinatorial formulas allowed us to take limits in our regimes of interest and were the basis of our LLN and CLT. Finally, let us reiterate that we are able to construct, in~\cref{sec:JackPos}, a very rich family 
of specializations with certain positivity properties that lead to valid sequences of measures for which \cref{theo:LLNCDM_intro} and \cref{theo:CLTCDM_intro} are applicable in the high and low temperature regimes. These sequences are parametrized by $g\in\R\setminus\{0\}$ and $c\ge a_1 \ge a_2 \ge \cdots \ge 0$ with $\sum_{i=1}^{\infty}{a_i} \le c$ (see \cref{thm_jack_spec}
and \cref{prop:CDMPar2}). These ideas turn out to be a key link between Jack--Thoma measures and Jack character measures with the AFP, thus allowing us to establish that the asymptotic behavior of both models is governed by universal formulas expressed in terms of Łukasiewicz ribbon paths.

\subsection{Main result for measures with the approximate factorization property}\label{sub:IntroAFP}

\begin{figure}[tbp]	
\begin{center}
\includegraphics[width = \linewidth]{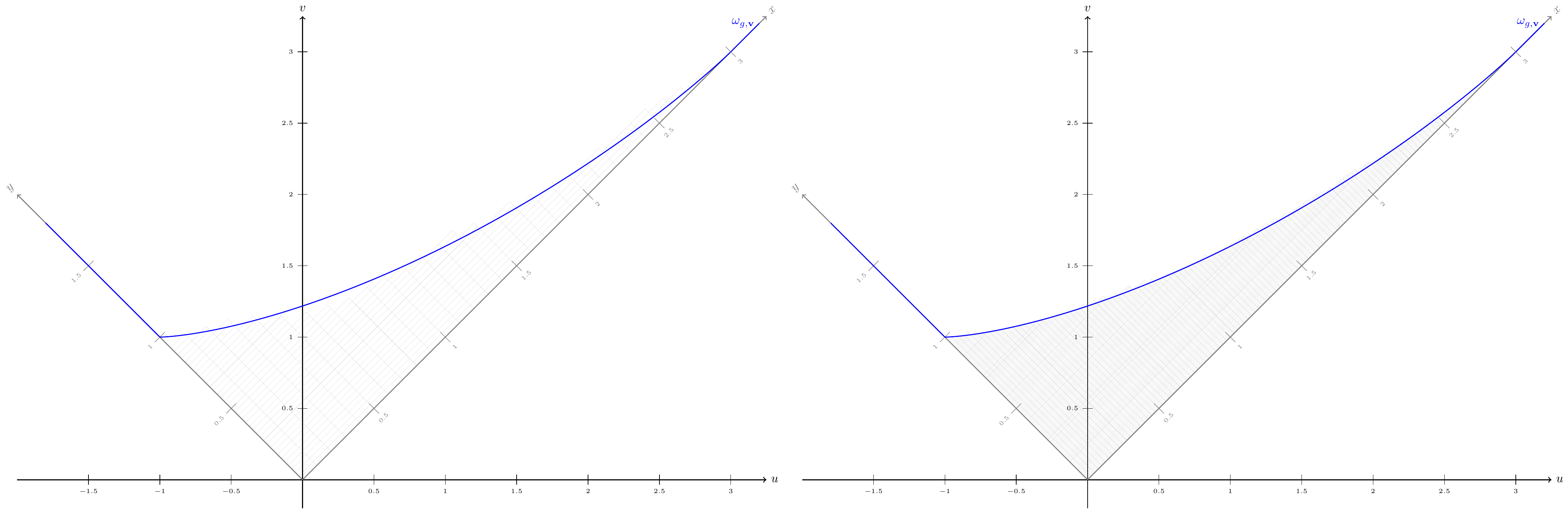}
\end{center}
\caption{The profiles of anisotropically-scaled Young diagrams of size $d=400$ and $d=6400$, sampled
by the Jack--Schur--Weyl measure from \cref{ex:Schur-Weyl} with
parameters $\alpha = 4$, $N = \sqrt{\alpha d}$, where each box is not of
size $1\times 1$, but of size $\frac{2}{\sqrt{d}}\times\frac{1}{2\sqrt{d}}$.
The blue curve is the limit shape in the fixed temperature regime (fixed $\al>0$), 
discovered by Biane~\cite{Biane2001}.}
\label{fig:SchurWeylFixed}
\end{figure}

Our second main result relates the previously studied Jack measures
with the model introduced by the second author and
Śniady in~\cite{DolegaSniady2019}. We recall that given a general reducible Jack character
$\chi_d\colon\Y_d\to\R$, we can associate a probability measure on the set of Young
diagrams $\Y_d$ of size $d$ by the formula
\eqref{eq:proba-linearcomb-AIntro}. The Jack characters
$\chi_d$ which are interesting from the probabilistic point of view are those which have the \emph{approximate factorization
property} (\textbf{AFP}). Roughly speaking, in the simplest case this property says
that the character $\chi_d(\pi_1 \cdot \pi_2)$, evaluated on the disjoint product of
partitions $\pi_1,\pi_2$, behaves asymptotically the same as the product of characters
$\chi_d(\pi_1)\cdot \chi_d(\pi_2)$, and 
similar formulas hold asymptotically for all higher order cumulants; 
see~\cref{sec:Jack-deformed-1} for the precise definitions. The second author and Śniady
proved in~\cite{DolegaSniady2019} the LLN and CLT for the Jack character measures 
with the AFP in all three previously discussed regimes
(see~\cref{theo:LLNDS} and \cref{theo:CLTDS}).

The primary tool used in~\cite{DolegaSniady2019} is the rich algebraic
structure of Jack characters. This approach, also known as the
\emph{dual approach}, stems from the successful viewpoint of Kerov and
Olshanski~\cite{KerovOlshanski1994}, who observed that the set of
characters of the symmetric groups, treated as functions on the set
of Young diagrams representing the associated irreducible
representation, is strictly related to the algebra of shifted
symmetric functions. The generalization of this approach to the
setting of Jack characters was proposed by Lassalle
in~\cite{Lassalle2008b,Lassalle2009}, who formulated challenging
conjectures on various positivity phenomena of Jack
characters. Despite significant
efforts~\cite{DolegaFeraySniady2014,DolegaFeray2016,Sniady2019,BenDali2023},
these conjectures remain wide open prior to our work.\footnote{The
  second author in a joint work with Ben Dali~\cite{BenDaliDolega2023}
  have found a proof of the Lassalle's positivity
  conjecture from~\cite{Lassalle2008b} that relies on the
  results obtained in this paper.} Consequently, finding formulas
for the moments of the limiting transition measure
$\mu_{\Lambda_\infty}$ from~\cref{theo:LLNDS} and for the
covariance of the Gaussian process $\Delta_\infty$
from~\cref{theo:CLTDS} turned out to be far beyond our reach. Our second main result solves this problem and we show that that the formulas for the moments and for the
covariance described in the previous section are in fact universal for both models.
\begin{theorem}[Abbreviated version of~\cref{theo:ShapeDS,theo:CovDS}]
  \label{theo:MainDSIntro}
	Assume that $\chi_d\colon\Y_d\to\R$ are reducible Jack characters
        such that the sequence $(\chi_d)_{d\ge 1}$ fulfills the
        AFP (enhanced AFP, respectively) given by \cref{def:approx-factorization-charactersA}. Let
        $\mu_{\Lambda_\infty}$ and $\Delta_\infty$ be the limiting
        transition measure and the limiting Gaussian process described
        by~\cref{theo:LLNDS} and~\cref{theo:CLTDS}, respectively. Then
        the moments $M_\ell := \int_\R{x^\ell \mu_{\Lambda_\infty}(dx)}$ are given by the
        polynomial formula~\eqref{eq:MomentLukasiewicz}, and the mean shift $\E[\langle \Delta_\infty,\, x^{\ell-2} \rangle]$ and the covariance $\Cov\left(\langle \Delta_{\infty}, x^{k-2}
          \rangle, \langle \Delta_{\infty}, x^{l-2} \rangle\right)$ are given by the explicit
        polynomial formulas~\eqref{eq:ExpDS} and~\eqref{eq:CovDS}, respectively, 
	that are obtained from the universal formulas~\eqref{MeanGaussian_2}
        and~\eqref{CovarianceGaussian_2} by incorporating the terms
        $v_k', v_{(k|l)}\,$ from the second order asymptotics of the character $\chi_d$ (given by~\eqref{eq:SecondCumu}).
      \end{theorem}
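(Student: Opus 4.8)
The plan is to establish universality by funneling both models through a common set of asymptotic observables --- the normalized Jack characters --- and then transporting the explicit Łukasiewicz formulas from the Jack--Thoma side to the general AFP side by a density argument. The starting point is that \cref{theo:LLNDS} and \cref{theo:CLTDS} express the limiting transition measure $\mu_{\Lambda_\infty}$ and the Gaussian process $\Delta_\infty$ as functions of the first- and second-order asymptotic data of the characters $\chi_d$ alone, through the model-independent machinery of the dual approach (Kerov polynomials together with the Markov--Krein / moment-cumulant correspondence). These functions are well-defined polynomials in the data, but \cite{DolegaSniady2019} leaves them unevaluated; the content of the present theorem is precisely that they coincide with the explicit formulas \eqref{eq:MomentLukasiewicz}, \eqref{MeanGaussian_2} and \eqref{CovarianceGaussian_2}.

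First I would set up the dictionary between the two parametrizations. For a Jack--Thoma measure $M^\a_{u;\vv}$ and its depoissonized version on $\Y_d$, I would compute the large-$d$ asymptotics of the normalized Jack characters and verify directly that the resulting sequence is a reducible Jack character satisfying the AFP, and the enhanced AFP once the refined assumptions \eqref{eq:assumptions_refined} are imposed; allowing a $d$-dependent correction $v_k(d)=v_k+v_k'/\sqrt{d}+o(d^{-1/2})$ produces the mean-shift parameters $v_k'$, while the genuine second-order cumulants \eqref{eq:SecondCumu} produce the $v_{(k|l)}$. The upshot is that the abstract AFP data attached to a depoissonized Jack--Thoma measure is exactly the tuple $(g,\vv,g',v_k',v_{(k|l)})$ governing our Jack--Thoma limit theorems. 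Because such a measure is simultaneously an AFP measure and a Jack--Thoma measure, \cref{theo:LLNDS}/\cref{theo:CLTDS} and \cref{theo:LLNCDM_intro}/\cref{theo:CLTCDM_intro} describe the same limiting objects; hence the model-independent dual-approach polynomials, evaluated on every Jack--Thoma data point, return the Łukasiewicz formulas \eqref{eq:MomentLukasiewicz}, \eqref{MeanGaussian_2} and \eqref{CovarianceGaussian_2}.

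Next comes the transfer to an arbitrary AFP sequence. For each fixed $\ell$ (respectively each fixed pair $k,l$) the relevant dual-approach polynomial has bounded degree and depends on only finitely many of the asymptotic parameters, so it suffices to argue in each finite-dimensional truncation of the data space. Using the rich family of totally Jack-positive specializations constructed in \cref{thm_jack_spec} and \cref{prop:CDMPar2}, I would show that the tuples of asymptotic data realized by Jack--Thoma measures are dense in each such truncation. Since two polynomials of bounded degree agreeing on a dense set coincide, the identity between the dual-approach polynomials and the Łukasiewicz formulas, already verified on the Jack--Thoma locus, extends to every admissible tuple; applied to the data of the given sequence $(\chi_d)_{d\ge1}$ it yields the claimed formulas \eqref{eq:MomentLukasiewicz}, \eqref{eq:ExpDS} and \eqref{eq:CovDS}.

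I expect the obstacles to be twofold. On the analytic side, verifying that the depoissonized Jack--Thoma measures genuinely satisfy the AFP and enhanced AFP --- that is, controlling the passage from the grand-canonical measure on $\Y$ to the fixed-size measure on $\Y_d$ at the level of all higher character cumulants --- is delicate, and it is here that the sharpened assumptions \eqref{eq:assumptions_refined} are needed. On the algebraic side, and this is the crux, one must ensure that the Jack--Thoma family sweeps out a dense set of second-order data, in particular that the off-diagonal covariance parameters $v_{(k|l)}$ are not confined to a proper subvariety; should they be, one would instead exploit that \eqref{CovarianceGaussian_2} depends affinely on the second-order cumulants with coefficients polynomial in the first-order data $(g,\vv)$, so that density in $(g,\vv)$ alone pins down those coefficients. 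Making this last point rigorous, so that no covariance direction is missed, is precisely where the full flexibility of the specializations from \cref{sec:JackPos} becomes indispensable and where the universality principle is ultimately decided.
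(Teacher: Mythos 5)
Your first two steps --- identifying the depoissonized Jack--Thoma measures as multiplicative Jack characters with the (enhanced) AFP, and then transporting the explicit first-order formulas to arbitrary AFP sequences by polynomiality of the asymptotic observables plus a polynomial-identity/density argument --- are exactly the paper's route to \cref{theo:ShapeDS} (via \cref{prop:Depoissonization}, \cref{theo:Depoissonization}, \cref{lem:pomocnicz} and \cref{equal_polys}). For the moments of $\mu_{\Lambda_\infty}$, which depend only on $(g,\vv)$, this is a complete argument.

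The gap is in the CLT part, and it sits precisely at the point you flag but do not resolve. Every conditional Jack--Thoma character is multiplicative, so all of its joint cumulants of order $\geq 2$ vanish identically: the locus your dictionary sweeps out has $v_{(k|l)}\equiv 0$ (and, in the paper's normalization, $g'=v_k'=0$ as well). So your claim in the dictionary step that the ``genuine second-order cumulants produce the $v_{(k|l)}$'' is false, your worry about a proper subvariety is realized, and your fallback cannot repair it: knowing the covariance functional on the slice $\{v_{(k|l)}=0\}$, for however dense a set of $(g,\vv)$, determines only its restriction to that slice; if the functional is affine in the $v_{(k|l)}$ with coefficients polynomial in $(g,\vv)$, the slice data fix the constant term but leave the linear coefficients completely undetermined. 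The paper's resolution is structural rather than interpolative: using the disjoint product $\bullet$ on the algebra of $\alpha$-polynomial functions, it decomposes $\kappa^{\chi_d,\times}_2(S^\a_k,S^\a_l) = \E^\a_{\chi_d}\kappa^{\times}_\bullet(S^\a_k,S^\a_l) + \kappa^{\chi_d,\bullet}_2(S^\a_k,S^\a_l)$. The $\bullet$-cumulant term is computed directly for an \emph{arbitrary} AFP sequence, by expanding $S^\a_k$ in normalized Jack characters (\cref{cor:MSTop}) and invoking the limit \eqref{eq:KumuOfCh}; this is the piece that carries all the $v_{(k|l)}$'s. Only the remaining piece $\E^\a_{\chi_d}\kappa^{\times}_\bullet(S^\a_k,S^\a_l)$, which by \cref{lem:pomocnicz} is a polynomial in $(g,v_2,v_3,\dots)$ alone, is pinned down by the Jack--Thoma examples together with \cref{equal_polys}. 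Likewise the mean shift \eqref{eq:ExpDS} is not obtained by density over $(g',v_k')$: it follows from the refined expansion in \cref{lem:pomocnicz}, which shows that for any enhanced-AFP sequence the subleading term of $\E^\a_{\chi_d}(x)$ is the operator $g'\tfrac{\partial}{\partial g}+\sum_{i\ge 2} v_i'\tfrac{\partial}{\partial v_i}$ applied to the leading polynomial $P_x$. Without this decomposition and the direct algebraic computation of the $\bullet$-cumulant, your argument cannot produce the $v_{(k|l)}$-terms in \eqref{eq:CovDS}.
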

      
      Our method for proving~\cref{theo:MainDSIntro} synthesizes tools described in the previous section with algebraic tools developed in~\cite{DolegaFeray2016,DolegaSniady2019}. We show that the Jack--Thoma measures described earlier are, in fact, a Poissonized version of a specific class of examples of measures with the AFP (see~\cref{ex:CDM}). This allows us to deduce~\cref{theo:MainDSIntro} for this specific class. However, to understand the asymptotic behavior of arbitrary measures with the AFP, we need new ideas.
We use the theory developed in~\cite{DolegaFeray2016,DolegaSniady2019} to prove that the class of examples from \cref{ex:CDM} is large enough to ensure that the polynomial formulas described in \cref{theo:MainDSIntro} are universal for the entire class of measures with the AFP.

\subsection{Connections to Free Probability Theory and Algebraic Combinatorics}

Before delving into direct applications of our formulas, it is worth
mentioning some interesting ramifications of our work in related subjects.

Biane's fundamental work~\cite{Biane1998} established a connection between asymptotic representation theory of symmetric groups and free probability; we recover this fact because \eqref{eq:MomentLukasiewicz} implies that the $v_k$'s are the free cumulants of the limiting transition measure $\mu_{g;\vv}$ when $g=0$, see~\cref{rem:LukasiewiczNC}.
For a general $g$, the formula \eqref{eq:MomentLukasiewicz} defines a novel notion of 
\emph{$g$-deformed free cumulants} that we hope to investigate in future works. 
Other such deformations have appeared previously in
the literature, e.g.~see \cite{Benaych-GeorgesCuencaGorin2022,
  MergnyPotters2022, Nica1995}. Ours is more similar to the one in
\cite{Benaych-GeorgesCuencaGorin2022, MergnyPotters2022} because both
have, conjecturally, the property that the natural notion of $g$-deformed free
convolution is positivity preserving:\footnote{The $q$-deformed free
  convolution of Nica~\cite{Nica1995} is not positivity preserving,
  see \cite{Oravecz2005}.} if $\mu, \nu$ are probability measures, it
is conjectured that there exists a probability measure
$\mu\boxplus_g\nu$ whose $g$-cumulants are the sums of the
$g$-cumulants of $\mu$ and $\nu$. This is related to the conjecture of
Stanley~\cite{Stanley1989} on the positivity of
the structure constants for Jack polynomials. Furthermore, a different
positivity of the $g$-deformed free cumulant $v_k$ was proved by
Śniady~\cite{Sniady2019}: there exists a polynomial $K_k(-g,R_2,R_3,\dots)$ in $g$ and the 
free cumulants $R_2,R_3,\dots$ with positive integer coefficients,
such that $v_k = K_k(-g,R_2,R_3,\dots)$. This polynomial is the top-degree part of
the so-called \emph{Kerov polynomials}, whose positivity was conjectured by
Lassalle~\cite{Lassalle2009} and is still wide open (see
also~\cite{Feray2009,DolegaFeraySniady2010,DolegaFeray2016}). Śniady found a
combinatorial interpretation of polynomials $K_k$ in terms of graphs
embedded into non-oriented surfaces such that the exponent of $g$ is
computed by a recursive decomposition of these graphs. In practice,
controlling this exponent is almost impossible, except for some special
cases, see~\cite{DolegaFeraySniady2014,Dolega2017a,ChapuyDolega2022,BenDali2022a}. Comparing to Śniady's formula, our combinatorial
interpretation involves drastically simpler
combinatorial objects and, most importantly, the exponent of $g$ is an
explicit statistic of the underlying
path. Therefore we provide new combinatorial tools to attack
Lassalle's positivity conjectures~\cite{Lassalle2008b,Lassalle2009}
and they already turned out to be very successful --- the second
author in a joint work with Ben Dali~\cite{BenDaliDolega2023} used the results presented here
to prove integrality of the coefficients in Kerov's polynomials and to
fully prove Lassalle's conjecture from~\cite{Lassalle2008b}. We believe
that the tools developed in this paper will lead to ultimate
understanding of the Kerov's polynomials.

\subsection{Further applications}

\begin{figure}[tbp]	
\begin{center}
\includegraphics[width = \linewidth]{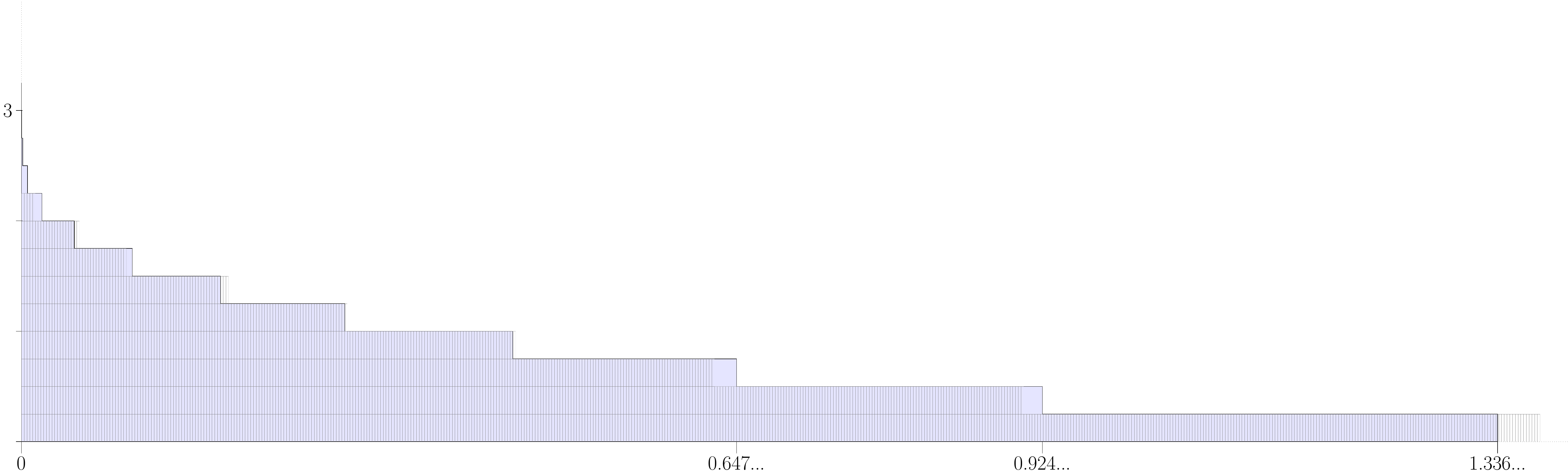}
\end{center}
\caption{The limit shape $\Lambda_{g;(1,0,0,\cdots)}$ with $g\!=\!-\frac{1}{4}$ (low temperature regime),
$\vv\!=\!(1,0,0,\cdots)$, drawn in the French notation and obtained from \cref{thm_asymptotic_positions_abbr}, is shaded in light blue.
Juxtaposed is an anisotropically-scaled Young diagram $\la^{(d)}$ of size $d = 1600$ sampled by the Jack--Plancherel measure $\PP^\a_d$ with $\al= \frac{4^2}{d}$, so each box (with gray-colored sides) is not of size $1\!\times\!1$, but of size $\frac{4}{d}\!\times\!\frac{1}{4}$.
The three smallest zeroes of the Bessel function $J_{-4z}(8)$ are equal to
$l^{(1/4)}_1 = -1.086...$, $l^{(1/4)}_2 = -0.424...$, and $l^{(1/4)}_3 = 0.102...$, therefore
\cref{thm_asymptotic_positions_abbr} implies 
$\lim_{d\to\infty}\frac{\la^{(d)}_1}{-g\cdot d} = 1.336...$, 
$\lim_{d\to\infty}\frac{\la^{(d)}_2}{-g\cdot d} = 0.924...$, 
and $\lim_{d\to\infty}\frac{\la^{(d)}_3}{-g\cdot d} = 0.647...$ in probability.
}
\label{fig:LowTempLim}
\end{figure}

\cref{theo:LLNCDM_intro} implies that the limit shape $\omega_{\Lambda_{g;\vv}}$ in the fixed temperature regime (i.e., when $g=0$) does not depend on the choice of $\alpha>0$, as long as it is fixed, and is the same as in the classical setting when $\alpha=1$. In this case, the limit shape is \emph{balanced}, meaning that $\omega_{\Lambda_{g;\vv}}(x) = |x|$, for $|x|$ large enough. However, in the high and low temperature regimes, i.e., when $g>0$ and $g<0$, respectively, the situation is different.

Drawing an analogy with recent studies on the high and low temperature
regimes of log-gases (see~\cref{sec:IntroOfIntro}), we apply our
combinatorial formulas to study the limit shape
$\omega_{\Lambda_{g;\vv}}$ (or equivalently, the associated transition
measure $\mu_{g;\vv}$) in the high and low temperature regimes. We show
that, unlike in the continuous counterpart of log-gases, there is a
phase transition in our models from the fixed
temperature regime to the high and low temperature
regimes. Specifically, we prove that for $g\neq 0$, the measure
$\mu_{g;\vv}$ is discrete with an infinite support bounded
only on one side and with a unique accumulation point at $+\infty$ or
$-\infty$, depending on the sign of $g$ (see
\cref{measures_discrete,main_thm_2}, and compare \cref{fig:LowTempLim} to \cref{fig:SchurWeylFixed}).

Finally, we relate $\mu_{g;\vv}$ to a specific unbounded Toeplitz-like
matrix, and we use it to describe the edge asymptotics of the random
partitions. By analogy with the celebrated
works of Baik, Borodin, Deift, Johanson, Okounkov, Olshanski~\cite{BaikDeiftJohansson1999,BorodinOkounkovOlshanski2000,Okounkov2000} on the
relation between the asymptotics of the largest rows of
Plancherel-sampled Young diagrams and the asymptotics of the largest
eigenvalues of GUE,
we relate the asymptotics of the largest rows of
Jack--Plancherel-sampled Young diagrams with the asymptotics of the largest eigenvalues
of finite deterministic Jacobi matrices. It allows us to prove the
following theorem that gives an explicit description of the limit
shape of Jack--Plancherel-sampled Young diagrams in the regimes of high and low temperature,
and expresses the asymptotics of the largest rows in terms of zeroes of the
Bessel function of the first kind, as a function of the label parameter (here we only state the abbreviated version for the case of low temperature, i.e.~for $g<0$; see~\cref{thm_asymptotic_positions} for the more general statement and \cref{fig:LowTempLim} to see the limit shape drawn with the help of this theorem).

\begin{theorem}\label{thm_asymptotic_positions_abbr}
Let  $(\la^{(d)}\in\Y_d)_{d\ge 1}$ be a sequence of random Young
diagrams distributed by the Jack--Plancherel measures.
Next, assume that $g<0$ and let
\[
\big\{l_1^{(-g)}<l_2^{(-g)}<\cdots\big\}  = \{ z\in\R\colon J_{z\cdot g^{-1}}(-2 g^{-1}) = 0 \}
\]
be the set of zeroes of the Bessel function of the first kind $J_{z\cdot g^{-1}}(-2 g^{-1})$.
In the regime $d,\alpha^{-1}\to\infty,\, \sqrt{\al d}\to -g^{-1}$, then
\[
\frac{\la^{(d)}_i}{-g\cdot d}\longrightarrow - l_i^{(-g)}-ig,\quad\text{in probability},
\]
for all $i=1,2,\cdots$, and moreover the limit shape is
\[
\omega_{\Lambda_{g;\vv}}(x) = 
\begin{cases}
x, &\text{ if } x > -l_1^{(-g)}-g,\\
-2\big( l_i^{(-g)}+ig \big) - x, &\text{ if } x \in \big[ -l_i^{(-g)},\ -l_i^{(-g)}-g \big], \text{ for some } i \geq 1,\\
x - 2ig, &\text{ if } x\in\big[ -l_{i+1}^{(-g)}-g,\ -l_i^{(-g)} \big], \text{ for some } i\ge 1.
\end{cases}
\]
\end{theorem}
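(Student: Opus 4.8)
The plan is to derive the edge behavior of Jack--Plancherel-sampled Young diagrams from the limit shape $\omega_{\Lambda_{g;\vv}}$ with $\vv=(1,0,0,\dots)$ established in~\cref{theo:LLNCDM_intro}, and then extract the locations of the local minima (equivalently, the largest rows) by identifying the relevant spectral problem with a Bessel-type Jacobi operator. First I would specialize the combinatorial moment formula~\eqref{eq:MomentLukasiewicz} to $\vv=(1,0,0,\dots)$: here the only admissible steps are the diagonal down-step and the unit up-step, so the Łukasiewicz paths reduce to Dyck-type paths, and the weight simplifies to a pure product of factors $(ig)$ over horizontal steps at height $i$. The resulting moments $M_\ell$ then determine the transition measure $\mu_{g;\vv}$ of the limit shape. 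Because $g<0$, the discreteness result (\cref{measures_discrete,main_thm_2}) tells us $\mu_{g;\vv}$ is atomic with support unbounded on one side and a single accumulation point, which is precisely the structure that produces the infinite-staircase shape.

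Next I would realize $\mu_{g;\vv}$ as the spectral measure (at a distinguished cyclic vector) of an explicit tridiagonal Jacobi operator $\J$ on $\ell^2(\Z_{\ge 0})$, whose entries are read off from the step weights $(ig)$ and the up-step weight; the horizontal-step weight $ig$ at height $i$ becomes the $i$-th diagonal entry and the up/down steps supply the off-diagonal entries. The atoms of $\mu_{g;\vv}$ are then the eigenvalues of $\J$. The heart of the argument is to recognize that this three-term recurrence is exactly the recurrence satisfied by Bessel functions $J_\nu$ in the order parameter $\nu$. Concretely, I would match the recurrence $J_{\nu-1}(z)+J_{\nu+1}(z)=\tfrac{2\nu}{z}J_\nu(z)$, after rescaling $z\mapsto -2g^{-1}$ and $\nu\mapsto z\cdot g^{-1}$, against the eigenvalue equation $\J\psi=\lambda\psi$; this forces the eigenvalues to coincide with the zeroes $\{l_i^{(-g)}\}$ of $J_{z\cdot g^{-1}}(-2g^{-1})$ viewed as a function of $z$. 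Once the spectrum is identified with the Bessel zeroes, the Markov--Krein correspondence converts the atoms and their locations into the local minima of $\omega_{\Lambda_{g;\vv}}$, yielding the piecewise-linear staircase formula stated in the theorem.

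Finally I would translate the abstract limit-shape data into the concrete row asymptotics. By general properties of the transition measure and profile (local minima of $\omega$ correspond to the rescaled contents of the corners of the diagram, i.e.~to the rescaled row lengths $\la^{(d)}_i$), each atom $-l_i^{(-g)}-ig$ of the support corresponds to a corner, and after undoing the anisotropic scaling by $\tfrac{\alpha}{u\sqrt{v_1}}\times\tfrac{1}{u\sqrt{v_1}}$ under the regime $d,\alpha^{-1}\to\infty,\ \sqrt{\alpha d}\to -g^{-1}$, one obtains $\la^{(d)}_i/(-g\cdot d)\to -l_i^{(-g)}-ig$ in probability. The concentration in probability is inherited from~\cref{theo:LLNCDM_intro}, since uniform convergence of profiles controls the positions of the leading corners.

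\medskip
\noindent\textbf{Main obstacle.} The hard part will be the exact identification of the Jacobi operator's spectrum with the Bessel zeroes, rather than the soft steps. Two difficulties arise. First, because $\mu_{g;\vv}$ has unbounded support on one side, $\J$ is an \emph{unbounded} Jacobi operator, so I must verify self-adjointness (essential self-adjointness, via a limit-point/limit-circle or a Carleman-type criterion) and check that the spectrum is genuinely discrete with no continuous part before speaking of a clean list of eigenvalues $\{l_i^{(-g)}\}$. Second, the recurrence-matching step is delicate: one must pin down the precise normalization so that the \emph{decaying} (square-summable) Bessel solution selects exactly the zeroes of $J_{z\cdot g^{-1}}(-2g^{-1})$ in the variable $z$ and not some spuriously related transcendental equation. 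Getting the index-versus-argument roles of the Bessel function correct---that the \emph{order} carries the spectral variable while the \emph{argument} is fixed at $-2g^{-1}$---is the subtle point on which the whole identification rests.
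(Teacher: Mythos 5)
Your overall architecture tracks the paper's: specializing the moment formula to $\vv=(1,0,0,\dots)$ reduces the sum to Motzkin paths (not Dyck paths --- the horizontal steps survive, with weight $i\cdot g$ at height $i$); the realization of $\mu_{g;\vv}$ as the spectral measure of a tridiagonal matrix is exactly \cref{prop:matrix_moms}; and the Bessel identification is \cref{theo:BesselZeroes}. But note that the paper does \emph{not} prove that spectral identification: it quotes it as a known fact about the Jacobi operator $J^{\text{Planc}}_g$, with references to the literature. So what you single out as the main obstacle --- essential self-adjointness of the unbounded Jacobi operator, limit-point/limit-circle analysis, selecting the square-summable Bessel solution --- is work the paper avoids entirely by citation; doing it from scratch is a legitimate but optional detour, and it is not where the real difficulty of this theorem lies.

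The genuine gap is in the step you treat as soft: ``each atom $-l_i^{(-g)}-ig$ of the support corresponds to a corner.'' Knowing the support of $\mu_{g;\vv}$, i.e.~the set of local minima of the limit shape, does not by itself tie the $i$-th row to the $i$-th atom. By \cref{prop:LongestRows}, the limit shape is assembled from the row limits $\tilde\la_i = -g^{-1}\lim \la_i^{(d)}/d$; if $\tilde\la_i = \tilde\la_{i+1}$ for some $i$, then two rows merge into a single step of the staircase, the shape has strictly fewer local minima than rows, and the matching of the $i$-th row with the $i$-th Bessel zero fails for every index past the merge. The paper flags exactly this danger (see the remark following \cref{prop:LongestRow}, which explains why the soft argument only handles the first row or column) and removes it with the functional equation
\[ z\, G_{\mu_g^{\text{Planch}}}(z) - 1 \;=\; G_{\mu_g^{\text{Planch}}}(z)\, G_{\mu_g^{\text{Planch}}}(z-g), \]
proved by a combinatorial decomposition of Motzkin paths. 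This identity forces every pole $z_0$ of the Cauchy transform to be accompanied by a zero at $z_0+g$, i.e.~each local minimum is followed by a local maximum at distance exactly $|g|$, which rules out $\tilde\la_i=\tilde\la_{i+1}$ and makes the atoms biject with the rows in the stated way. Nothing in your proposal plays this role, and ``uniform convergence of profiles controls the positions of the leading corners'' only yields the case $i=1$ (that is \cref{prop:LongestRow}); for $i\ge 2$ you must establish the non-degeneracy first. This is the missing idea you would have to supply.
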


\subsection{Organization of the paper}

In~\cref{sec:Jack-deformed-1}, we review the definition of Jack characters with the AFP, their associated probability measures, and we 
restate the LLN and CLT obtained in~\cite{DolegaSniady2019}.
We also give a large family of examples relating this model to the Jack--Thoma measures.
The latter model is introduced in~\cref{sec:Jack-deformed-2}, where we also prove our first main theorems: 
the LLN and CLT for the Jack--Thoma measures.
\Cref{sec:JackPos} is devoted to study Jack-positive specializations in different
regimes and relating our new family of measures with the Thoma cone.
In~\cref{sec:Duality}, we prove our second main theorem: first we show that
the conditional Jack--Thoma measure fits into the model of Jack-characters
with the AFP, and second we study the algebraic structure of Jack characters
in order to deduce formulas for the moments of the limiting measure and for the mean shift and covariance of the limiting Gaussian process.
The last~\cref{sec:LimitShape} is devoted to the study of the
limit shape, as well as its transition measure, arising from the high and low temperature limits of random partitions.
In particular, for the example of the Jack--Plancherel measure, we find the limits in probability of the largest rows/columns of the random partitions in terms of the zeroes of the Bessel function.

\section{First model of random Young diagrams: Jack characters and the asymptotic factorization property}\label{sec:Jack-deformed-1}

In this section and the next, we review two different constructions of 
probability measures on the set of Young diagrams that depend on the \emph{Jack parameter}, 
and that were introduced in \cite{DolegaSniady2019,Moll2023}.
It turns out that these two a priori different models are intimately related 
(see e.g.~\cref{prop:Depoissonization}), and this relation will be exploited 
later in \cref{sec:Duality}.
In this section, we review the construction from \cite{DolegaSniady2019} 
that yields probability measures on the finite set $\Y_d$ of Young diagrams of a fixed size $d$,
and is inspired by the relation between symmetric functions and the 
representation theory of symmetric groups; see \cite{Biane2001,Sniady2006c}. 
We also review the LLN and CLT as the size $d$ of the partitions tends to infinity.

\subsection{Asymptotics of the Jack parameter}

In this paper, we employ the Jack parameter, classically denoted by $\al>0$.
Later in this section, when we state our limit theorems, we will have instead a sequence $(\alpha(d))_{d\ge 1}$ of Jack parameters, i.e.~of positive real numbers. To simplify our notation, a general term of this sequence will be denoted by $\alpha = \alpha(d)$. We will assume that
\begin{equation}\label{eq:double-scaling-refined}
\frac{\sqrt{\al}-\frac{1}{\sqrt{\al}}}{\sqrt{d}} = g + \frac{g'}{\sqrt{d}} + o\left(d^{-\frac{1}{2}}\right),\ \text{ as }d\to\infty,
\end{equation}
for some constants $g,g'\in\R$.

The regime when $g>0$ and $\al,\, d$ are of the same order, i.e.~$\al = \Theta(d)$ (resp.~$g<0$ and $\al = \Theta(d^{-1})$) is called the \emph{high temperature regime} (resp.~\emph{low temperature regime}). Likewise, the regime when $g=0$, $\,\al=\Theta(1)$ and $g'=\sqrt{\al}-\frac{1}{\sqrt{\al}}$ is called the \emph{fixed temperature regime}.\footnote{The parameter $\al$ should be regarded as $2/\beta$, where $\beta>0$ is the \emph{inverse temperature} parameter from log-gas systems or beta ensembles, e.g.~see \cite{Forrester2022}.}

\subsection{Conditional cumulants}\label{sec:afp}

Let $\Y_d$ denote the set of partitions (equivalently, Young diagrams) 
of $d\in\Z_{\ge 0}$, and let $\Y := \bigsqcup_{d\geq 0}{\Y_d}$ 
be the set of all partitions. For any $\mu,\mu'\in\Y$, we define their \emph{product} 
$\mu\cdot\mu'$ as their union,
for example $(4,2,1) \cdot (5,3,1)=(5,4,3,2,1,1)$. In this way,
the set of all partitions $\Y$
becomes a unital semigroup with the unit being the empty partition $\emptyset$. The
corresponding semigroup algebra of formal linear combinations
of partitions is denoted by $\R[\Y]$.
Any function $\chi\colon\Y\to\R$ can be canonically extended to a linear map $\chi\colon\R[\Y]\to\R$, such that  $\chi(1)=1$, which will be denoted by the same symbol.

For any partitions $\mu^1,\dots,\mu^n\in\Y$, we define their \emph{conditional cumulant} 
(or simply \emph{cumulant}, for simplicity) with respect to the 
function $\chi\colon\Y\to\R$ as a coefficient in the expansion of the following
formal power series in $t_1,\dots,t_n$:
\begin{equation}
\label{eq:log-laplace}
\kappa^{\chi}_n(\mu^1,\dots,\mu^n):= 
\left. \frac{\partial^n}{\partial t_1 \cdots \partial t_n}
\log \chi\left( e^{t_1 \mu^1+\cdots+t_n \mu^n} \right) 
\right|_{t_1=\cdots=t_n=0}.
\end{equation}

An important case for us will be when each partition 
consist of just one part $\mu^i=(k_i)$; then we will use a simplified notation:
\[ \kappa^{\chi}_n(k_1,\dots,k_n) := \kappa^{\chi}_n\big( (k_1),\dots,(k_n) \big).
\]
For example,
\begin{align*}
\kappa^{\chi}_1(k) &= \chi(k), \\   
\kappa^{\chi}_2(k,l) &= \chi(k,l)- \chi(k) \chi(l).
\end{align*}

\subsection{Irreducible Jack characters}

Let $\alpha>0$ be a positive real.
Let $J^\a_\la$, $\la\in\Y$, denote the integral versions of Jack
symmetric functions; see~\cite[Ch.~VI.10]{Macdonald1995}. 
For any $d\in\Z_{\ge 0}$ and $\lambda \in \Y_d$, expand 
$J^\a_\la$ into power-sum symmetric functions:
\begin{equation} 
\label{eq:definition-theta-A}
J^\a_\la = \sum_{\mu\in\Y_d} \theta^\a_\mu(\lambda)\ p_\mu.
\end{equation}
For a partition $\mu$ with $m_i(\mu)$ parts equal to $i$, define the
standard numerical factor
\begin{equation}
\label{eq:zlambda}
z_\mu:= \prod_{i\geq 1} m_i(\mu)!\ i^{m_i(\mu)}.
\end{equation}
We renormalize the coefficients $\theta^\a_\mu(\lambda)$ by
setting
\begin{equation}
\label{eq:character-Jack-unnormalized-zmiana}
\chi^\a_\lambda(\mu) :=  \alpha^{-\frac{\|\mu\|}{2}}\ \frac{z_\mu}{d!}\ \theta^\a_\mu(\lambda),
\end{equation}
where
\[ \|\mu\|:=|\mu|-\ell(\mu),\quad |\mu| := \sum_{i\geq 0}\mu_i, \quad
  \ell(\mu) := \sum_{i \geq 1}m_i(\mu), \]
and call them \emph{irreducible Jack characters}. This terminology
is justified by the fact that in the special case $\alpha=1$, the
irreducible Jack character $\chi^{(\alpha=1)}_\lambda(\mu)$ coincides with
the normalized character of the irreducible representation of the
symmetric group $\Sym{d}$ that is associated to $\la$.

We regard each irreducible Jack character $\chi^\a_\la$, $\la\in\Y_d$, 
as a function $\Y_d\to\R$. It is known that $\theta^\a_{(1^d)}(\la) = 1$ 
(see \cite[(10.29)]{Macdonald1995}), therefore
\begin{equation}\label{norm_jack}
\chi^\a_\lambda(1^d) = 1, \text{ for any }\la\in\Y_d.
\end{equation}

Moreover we have the following duality between irreducible Jack
characters:

\begin{equation}
  \label{eq:duality}
\chi_\lambda^\a(\mu) = (-1)^{\|\mu\|}\chi_{\lambda'}^{(1/\alpha)}(\mu),
\end{equation}
where $\lambda'$ is the conjugate of the partition $\lambda$.
For this, recall the automorphism $\omega_{1/\alpha}$ of the algebra
of symmetric functions $\Symm$ defined by $\omega_{1/\alpha}(p_r) = (-1)^{r-1}\alpha^{-1} p_r$, for all $r\in\Z_{\ge 1}$,
see \cite[Ch.~VI, (10.6)]{Macdonald1995}. It satisfies, see
\cite[Ch.~VI, (10.24)]{Macdonald1995},
\begin{equation}
\label{eq:OmegaDuality}
\omega_{1/\alpha}(J^{(1/\alpha)}_{\lambda}) = \alpha^{-|\lambda|} J^\a_{\lambda'}
\quad \forall\, \lambda\in\Y.
\end{equation}
This directly implies that
\[ \theta^\a_\mu(\lambda) =(-\alpha)^{\|\mu\|}\cdot\theta^{(1/\alpha)}_\mu(\lambda'),\]
  and plugging it into definition of the irreducible Jack character
  \eqref{eq:character-Jack-unnormalized-zmiana} yields the result.

  \begin{remark}
    In the special case $\alpha=1$, this identity expressed in the
    language of representation theory states that the tensor product
    of the irreducible
    representation of $\Sym{d}$ associated with partition $\lambda$ with the sign representation corresponds to the
    irreducible representation associated with the conjugate partition $\lambda'$.
    \end{remark}

\subsection{Characters with the approximate factorization property and
  associated probability measures}
\label{subsec:CharactersAFP}

A general function $\chi_d\colon\Y_d\to\R$ will be called a
\emph{reducible Jack character} (or simply \emph{Jack character}, for short)
if it is a convex combination of the irreducible Jack characters. 
In particular, this definition and \eqref{norm_jack} imply that $\chi_d(1^d)=1$.

\begin{definition}[First model of random Young diagrams]\label{def:first}
Given a Jack character $\chi_d:\Y_d\to\R$, its associated 
probability measure $\mathbb{P}^\a_{\chi_d}$ on the set
of Young diagrams $\Y_d$ is defined by the expansion:
\begin{equation}
\label{eq:proba-linearcomb-A}
\chi_d = \sum_{\lambda\in\Y_d} \mathbb{P}^\a_{\chi_d}(\lambda) \ \chi^\a_\la.
\end{equation}
\end{definition}

\begin{remark}
It was proved in~\cite{DolegaFeray2016} that 
$\{\chi^\a_\lambda(\cdot)\}_{\lambda \in \Y_d}$ is a linear basis
of the space of functions $\Y_d\to\R$, so
the decomposition \eqref{eq:proba-linearcomb-A} uniquely determines the values $\mathbb{P}^\a_{\chi_d}(\lambda)$.
\end{remark}

\begin{remark}
If $\chi_d:\Y_d\to\C$ is any function with $\chi_d(1^d)=1$, then \eqref{eq:proba-linearcomb-A} defines a complex measure $\mathbb{P}^\a_{\chi_d}$ on $\Y_d$ with total mass $1$.
If $\chi_d$ takes real values, then $\mathbb{P}^\a_{\chi_d}$ is a signed measure.
\end{remark}

\begin{example}
The representation theory of the infinite symmetric group $S(\infty)$ furnishes a large class of probability measures as in \cref{def:first}, in the case $\al=1$.
In fact, let $\chi\colon S(\infty)\to\C$ be a character of $S(\infty)$, i.e.~a central, positive definite function that takes value $1$ at the identity.
Denote the restriction of $\chi$ to $S(d)\subset S(\infty)$ by $\chi\big|_{S(d)}\colon S(d)\to\C$, and let $\chi_d\colon\Y_d\to\C$ be defined by
\[
\chi_d(\mu) := \chi\big|_{S(d)}\big(\textrm{any permutation of cycle-type }\mu\big),\quad\textrm{ for all }\mu\in\Y_d.
\]
Then $\chi_d$ takes real values and, moreover, $\chi_d$ is a Jack character for $\al=1$.
In particular, $\chi_d$ leads to a valid probability measure $\mathbb{P}^{(\al=1)}_{\chi_d}$ by the prescription of \cref{def:first};
for a proof, see \cite[Prop.~1.6  and Prop.~3.5]{BorodinOlshanski2017}.
As $d\to\infty$, these measures converge, in certain sense, to the \emph{spectral measure} of $\chi$ on the Thoma simplex; see~\cite[Sec.~7]{BorodinOlshanski2017} for the precise statements and for further ideas in this direction.
\end{example}

We need to extend the definition of cumulants (recall~\cref{eq:log-laplace}) to make sense of $\kappa^{\chi_d}_n (k_1,\dots,k_n)$ for Jack characters $\chi_d\colon\Y_d\to\R$. 
First, extend the domain of $\chi_d$ to the set 
\[ \Y_{\leq d}:=\bigsqcup_{0\leq k\leq d} \Y_{k} \] 
of partitions of smaller numbers by adding an appropriate number of parts, each equal to $1$:
\begin{equation}
\label{eq:extended-character}
\chi_d(\pi):= \chi_d( \pi,1^{d-|\pi|}) \qquad \text{for } |\pi|\leq d.   
\end{equation}
In this way, the cumulant $\kappa^{\chi_d}_n (k_1,\dots,k_n)$ 
is well-defined if $d\geq k_1+\cdots+k_n$ is large enough.
Note that $\chi_d(\emptyset)=1$.

\begin{definition}\label{def:approx-factorization-charactersA}
Assume that for each integer $d\geq 1$, we are given a function $\chi_d\colon\Y_d\to\R$.

We say that the sequence $(\chi_d)_{d\ge 1}$ has the \textbf{approximate factorization property} (the \textbf{AFP} for short) \cite{Sniady2006c,DolegaSniady2019} if the following two conditions are satisfied:

\smallskip

	(i) For each integer $n\geq 1$ and all integers $k_1,\dots,k_n\geq 2$, the limit
	\begin{equation}
	\label{eq:aprox-fact-property}
	\lim_{d\to\infty} \kappa^{\chi_d}_n (k_1,\dots,k_n)\ d^{\frac{k_1+\cdots+k_n+n-2}{2}} 
	\end{equation}
	exists and is finite.
	
\smallskip

(ii) 	The following quantity is finite
\begin{equation}\label{eq:what-is-m}
	\sup_{k\geq 2} \frac{\sqrt[k]{|v_k|}}{k} < \infty,
      \end{equation}
      where $v_k$ are the limits appearing in
      \eqref{eq:aprox-fact-property} for $n=1$:
      \begin{align}
\lim_{d\to\infty}\kappa^{\chi_d}_1(k) \ d^{\frac{k-1}{2}} &=  \lim_{d\to\infty}\chi_d(k) \ d^{\frac{k-1}{2}}  =
v_k. \label{eq:refined-asymptotics-characters}
\end{align}

\smallskip

Further, we say that  $(\chi_d)_{d\ge 1}$ has the \textbf{enhanced AFP} if, in addition to (i) and (ii), 
the condition \eqref{eq:refined-asymptotics-characters} is strengthened as follows: for each $k\ge 2$, 
there exist some real constants $v_k'$ such that 
\begin{equation}\label{eq:refined-asymptotics-characters-2}
\kappa^{\chi_d}_1(k) \ d^{\frac{k-1}{2}} =  \chi_d(k) \ d^{\frac{k-1}{2}} =
v_k + v_k'\,d^{-\frac{1}{2}} + o\left( d^{-\frac{1}{2}}\right),\qquad \text{for }d\to\infty.
\end{equation}
\end{definition}

In the following, for each $k,l\geq 2$, we define the real constants
$v_{(k|l)}=v_{(l|k)}$ as the limits appearing in \eqref{eq:aprox-fact-property} for $n=2$:
\begin{align}
\lim_{d\to\infty}\kappa^{\chi_d}_2(k,l) \ d^{\frac{k+l}{2}} &=  v_{(k|l)}. \label{eq:SecondCumu}
\end{align}

\begin{remark}
Our definitions of the AFP and enhanced AFP differ from those in \cite{DolegaSniady2019}.
We also point out that condition \eqref{eq:what-is-m} might be relaxed in such a way that \cref{theo:LLNDS,theo:CLTDS} remain satisfied; see \cite[(1.13)]{DolegaSniady2019} for an example of such a weaker condition in some cases.
\end{remark}

\begin{proposition}
  \label{prop:DualityAFP}
Let $\vv = (v_1, v_2, \cdots)\in\R^{\infty}$ and $\pm\vv := (v_1, -v_2,
v_3, -v_4, \cdots)$. For $g,g' \in \R$ define
$\mathcal{F}^{\eAFP}_{g,g',\vv} \subset \mathcal{F}^{\AFP}_{g,g',\vv}$
as the sets of the sequences of Jack characters with the enhanced AFP, and the AFP,
respectively, with the parameters $g,g',\vv$ described
in~\eqref{eq:double-scaling-refined} and
\cref{def:approx-factorization-charactersA}. Then, we have the
bijective map
\[\mathcal{F}^{\AFP}_{g,g',\vv} \longrightarrow
\mathcal{F}^{\AFP}_{-g,-g',\pm\vv}, \quad \left(\chi^{(\alpha)}_d\right)_{d\ge 1} \mapsto
\left(\tilde{\chi}^{(\tilde{\alpha})}_d\right)_{d\ge 1}, \]
which restricts to the bijective map $\mathcal{F}^{\eAFP}_{g,g',\vv} \longrightarrow
\mathcal{F}^{\eAFP}_{-g,-g',\pm\vv}$ and is given by
\begin{equation}\label{eq:mapDuality}
\tilde{\alpha} := \alpha^{-1},\quad \tilde{\chi}^{(\tilde{\alpha})}_d(\mu) := (-1)^{\|\mu\|}\chi^{(\alpha)}_d(\mu).
\end{equation}
In particular,
\begin{equation}\label{MeasuresDuality}
\mathbb{P}^{(\tilde{\alpha})}_{\tilde{\chi}_d}(\lambda) =
\mathbb{P}^{(\alpha)}_{\chi_d}(\lambda').
\end{equation}
\end{proposition}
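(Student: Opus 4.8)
The plan is to establish the explicit measure-level identity \eqref{MeasuresDuality} first, and then read off from it that the character-level transformation \eqref{eq:mapDuality} preserves all the defining data of the AFP up to the advertised sign flips. The whole argument rests on the character duality \eqref{eq:duality} together with one elementary multiplicative property of the statistic $\|\cdot\|$.

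First I would prove \eqref{MeasuresDuality} directly. Expanding $\chi^{(\alpha)}_d$ via \eqref{eq:proba-linearcomb-A} and twisting by $(-1)^{\|\mu\|}$ gives $\tilde\chi^{(\tilde\alpha)}_d(\mu) = \sum_{\lambda} \mathbb{P}^{(\alpha)}_{\chi_d}(\lambda)\,(-1)^{\|\mu\|}\chi^{(\alpha)}_\lambda(\mu)$; since $(-1)^{\|\mu\|}\chi^{(\alpha)}_\lambda(\mu) = \chi^{(1/\alpha)}_{\lambda'}(\mu) = \chi^{(\tilde\alpha)}_{\lambda'}(\mu)$ by \eqref{eq:duality}, reindexing $\lambda' = \nu$ yields $\tilde\chi^{(\tilde\alpha)}_d = \sum_\nu \mathbb{P}^{(\alpha)}_{\chi_d}(\nu')\,\chi^{(\tilde\alpha)}_\nu$. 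This is precisely the decomposition \eqref{eq:proba-linearcomb-A} for $\tilde\chi_d$, whence $\mathbb{P}^{(\tilde\alpha)}_{\tilde\chi_d}(\nu) = \mathbb{P}^{(\alpha)}_{\chi_d}(\nu')$, which is \eqref{MeasuresDuality}. Because conjugation $\nu\mapsto\nu'$ is a bijection of $\Y_d$, the coefficients stay nonnegative and sum to $1$, so $\tilde\chi_d$ is again a reducible Jack character. The same computation shows the map is an involution: applying \eqref{eq:mapDuality} twice restores $\alpha$ and $\chi_d$ since $(-1)^{2\|\mu\|}=1$, and the index $(-(-g),-(-g'),\pm(\pm\vv))=(g,g',\vv)$ returns, so the map is bijective once we confirm it lands in the claimed class.

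Next I would track the AFP data. The crucial observation is $\|\mu\cdot\mu'\| = \|\mu\| + \|\mu'\|$, since the semigroup product concatenates parts and adds sizes; hence $(-1)^{\|\cdot\|}\colon\Y\to\{\pm1\}$ is a semigroup homomorphism (in particular the twist is compatible with the extension \eqref{eq:extended-character}, as $\|1^{d-|\pi|}\|=0$). Consequently, in \eqref{eq:log-laplace} the twist factors through a rescaling of the formal variables, $\tilde\chi_d(e^{\sum_i t_i\mu^i}) = \chi_d(e^{\sum_i (-1)^{\|\mu^i\|}t_i\,\mu^i})$, and differentiating gives $\kappa^{\tilde\chi_d}_n(\mu^1,\dots,\mu^n) = (-1)^{\|\mu^1\|+\cdots+\|\mu^n\|}\,\kappa^{\chi_d}_n(\mu^1,\dots,\mu^n)$. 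Specializing to $\mu^i=(k_i)$, where $\|(k_i)\|=k_i-1$, I obtain $\kappa^{\tilde\chi_d}_n(k_1,\dots,k_n) = (-1)^{(k_1-1)+\cdots+(k_n-1)}\kappa^{\chi_d}_n(k_1,\dots,k_n)$. Since $d$ and the power of $d$ in \eqref{eq:aprox-fact-property} are unchanged, the limits exist for $\tilde\chi_d$ exactly when they do for $\chi_d$; in particular $\tilde v_k = (-1)^{k-1}v_k$ and $\tilde v_{(k|l)} = (-1)^{k+l}v_{(k|l)}$ (cf.~\eqref{eq:SecondCumu}), so the first-cumulant data becomes $\pm\vv$, while \eqref{eq:what-is-m} is unaffected because $|\tilde v_k|=|v_k|$. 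For the enhanced class, the second-order expansion \eqref{eq:refined-asymptotics-characters-2} picks up the same global sign, so $\tilde v_k' = (-1)^{k-1}v_k'$ and the refined expansion persists. Finally, $\tilde\alpha=\alpha^{-1}$ gives $\sqrt{\tilde\alpha}-1/\sqrt{\tilde\alpha} = -(\sqrt{\alpha}-1/\sqrt{\alpha})$, so \eqref{eq:double-scaling-refined} holds with $(g,g')$ replaced by $(-g,-g')$. Collecting these, the map sends $\mathcal{F}^{\AFP}_{g,g',\vv}$ into $\mathcal{F}^{\AFP}_{-g,-g',\pm\vv}$ and restricts to the enhanced classes, and by involutivity it is a bijection.

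I expect the only genuinely delicate step to be the cumulant identity above: one must check carefully that the sign twist, which a priori attaches to the entire semigroup product appearing in the expansion of $e^{\sum_i t_i\mu^i}$, can be absorbed variable-by-variable into the formal parameters $t_i$, and this is exactly what the additivity of $\|\cdot\|$ guarantees. Everything else is bookkeeping of signs and powers of $d$ together with the fact, recalled in \cref{def:approx-factorization-charactersA}, that membership in $\mathcal{F}^{\AFP}_{g,g',\vv}$ is determined solely by the asymptotic data $(g,g',\vv)$.
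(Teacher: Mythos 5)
Your proposal is correct and follows essentially the same route as the paper: the heart of both arguments is the computation establishing \eqref{MeasuresDuality} from the decomposition \eqref{eq:proba-linearcomb-A} and the duality \eqref{eq:duality}, which in turn is what guarantees that the $\tilde{\chi}^{(\tilde{\alpha})}_d$ are genuine Jack characters. The remaining sign bookkeeping that you spell out carefully (additivity of $\|\cdot\|$ under the semigroup product, the resulting cumulant identity, and the flip $(g,g',\vv)\mapsto(-g,-g',\pm\vv)$) is exactly what the paper compresses into the phrase ``most claims follow directly from our definition.''
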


\begin{proof}
Most claims follow directly from our definition for $\tilde{\chi}^{(\tilde{\alpha})}_d$.
In fact, the only non-trivial statement is that
$\tilde{\chi}^{(\tilde{\alpha})}_d$ are Jack characters, but this is assured by \eqref{MeasuresDuality}.
Thus, it remains only to prove~\eqref{MeasuresDuality}:
\[ \tilde{\chi}^{(\tilde{\alpha})}_d(\mu) =
  (-1)^{\|\mu\|}\chi^{(\alpha)}_d(\mu) =
  (-1)^{\|\mu\|}\sum_{\la\in\Y_d} \mathbb{P}^{(\alpha)}_{\chi_d}(\la)
  \ \chi^{(\alpha)}_\la(\mu) = \sum_{\la\in\Y_d}
  \mathbb{P}^{(\alpha)}_{\chi_d}(\la') \ \chi^{(\tilde{\alpha})}_{\la}(\mu),\]
where the last equality follows from \eqref{eq:duality}.
  \end{proof}

\subsection{The LLN and CLT for characters with the approximate factorization property}\label{sec:thms_DS}

The main results of~\cite{DolegaSniady2019} are the Law of Large Numbers
(LLN) and Central Limit Theorem (CLT) for large random Young
diagrams. In order to state them, we need to work with rescaled Young diagrams. For two
positive numbers $w, h \in \R_{>0}$ and a given Young diagram $\lambda$,
we denote by $T_{w,h}\lambda$ the \emph{anisotropic}
diagram obtained from $\lambda$ by replacing each box of size
$1\times 1$ by a box of size $w\times h$, see~\cref{fig:french-fries}.
We will be mostly interested in the following scaling:
\begin{equation*}
\Lambda^\a := T_{\sqrt{\frac{\al}{|\la|}},\sqrt{\frac{1}{\al|\la|}}}\la,
\end{equation*}
so that the rescaled partition $\Lambda^\a$ has $\alpha$-anisotropic boxes (the ratio between the width and height of a box should be $\alpha$) and the area of $\Lambda^\a$ is equal to $1$.
In fact, since we will use this notation only when it is clear that the size of the random partition $\la$ is a fixed number $d$, we shall prefer the following more detailed notation:
\begin{equation}\label{eq:Lambda}
\Lambda^\a_d := T_{\sqrt{\frac{\al}{d}},\sqrt{\frac{1}{\al d}}}\la,\quad\textrm{if }|\la|=d.
\end{equation}

In order to state asymptotic results, it is convenient to draw Young diagrams using the \emph{Russian convention}, which is given by changing the usual Cartesian coordinates $(x,y)$ into a new coordinate system $(u,v)$ given by
\[ u = x-y, \qquad v = x+y. \]
The boundary of an anisotropic Young diagram $T_{w,h}\lambda$ 
drawn in the Russian convention
is the graph of a function $\omega_{T_{w,h}\lambda}\colon\R\to\R_{\ge 0}$, 
which will be called the \emph{profile} of $T_{w,h}\lambda$, see~\cref{fig:french-fries}.

\begin{figure}[tbp]	
  \begin{center}
    \includegraphics{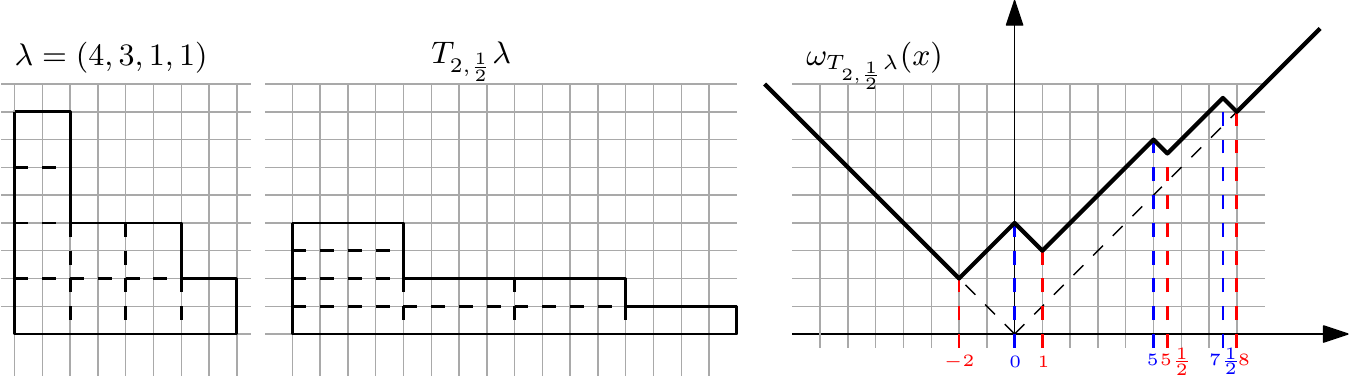}
	\end{center}
\caption{
		Young diagram $\lambda = (4,3,1,1)$ (left) and
                the anisotropic Young diagram
                $T_{2,\frac{1}{2}}\lambda$ (center)
		are in the French convention. 
		The solid line in the rightmost picture is the \emph{profile} $\omega_{T_{2,\frac{1}{2}}\lambda}$ of 
                $T_{2,\frac{1}{2}}\lambda$, obtained by
                switching from the French to the Russian convention. 
		The local minima $x_1,x_2,x_3,x_4$ and maxima $y_1,y_2,y_3$ of the profile
                are indicated in red and blue, respectively.}
\label{fig:french-fries}
\end{figure}

\begin{theorem}[Law of Large Numbers \cite{DolegaSniady2019}]	\label{theo:LLNDS}
	Assume that $\chi_d\colon\Y_d\to\R$ are Jack characters
	such that $(\chi_d)_{d\ge 1}$ fulfills the AFP. 
	Let $\lambda_d$ be a random Young diagram with $d$ boxes
	distributed according to $\mathbb{P}^\a_{\chi_d}$, and let 
	$\Lambda^\a_d$ be the associated rescaled diagram.
	
	Then there exists some deterministic function $\omega_{\Lambda_\infty}\colon\R\to\R_{\ge 0}$ 
	with the property that 
	\begin{equation}\label{eq:limit}
	\lim_{d\to\infty} \omega_{\Lambda^\a_d} = \omega_{\Lambda_\infty},
	\end{equation}
	where the convergence holds true with respect to the supremum norm, in probability.
\end{theorem}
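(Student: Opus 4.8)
The plan is to run the classical method of cumulants for polynomial observables, in the $\alpha$-deformed incarnation developed in \cite{DolegaFeray2016,Sniady2006c}. The starting point is the defining relation \eqref{eq:proba-linearcomb-A}: evaluating both sides at a single partition $\mu$ gives $\chi_d(\mu) = \sum_{\lambda\in\Y_d} \PP^\a_{\chi_d}(\lambda)\,\chi^\a_\lambda(\mu) = \E\big[\chi^\a_{\lambda_d}(\mu)\big]$, so each reducible character value is literally an expectation of an irreducible-character observable of the random diagram $\lambda_d$. I would work inside the algebra $\mathcal{P}$ of $\alpha$-polynomial functions on Young diagrams, which is filtered by weight and is generated both by the suitably $\alpha$-normalized characters $\Sigma_k(\lambda)$, proportional to $d(d-1)\cdots(d-k+1)\cdot\chi^\a_\lambda(k,1^{d-k})$, and by the anisotropic free cumulants $R_2,R_3,\dots$; the passage between the two families is given by the ($\alpha$-deformed) Kerov polynomials. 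Since the profile of $\Lambda^\a_d$ is determined by the free cumulants of its transition measure via the Markov--Krein correspondence, proving \eqref{eq:limit} reduces to showing that each rescaled free cumulant concentrates at a deterministic value.

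First I would establish concentration of the characters themselves. By the relation above and the extension convention \eqref{eq:extended-character}, the expectation $\E[\Sigma_k(\lambda_d)]$ equals, up to the explicit factor $d(d-1)\cdots(d-k+1)$, the value $\chi_d(k)$, whose asymptotics are prescribed by condition (i) for $n=1$, i.e.~by \eqref{eq:refined-asymptotics-characters}. For the second moment I would use the key algebraic fact that in $\mathcal{P}$ the pointwise product decomposes as $\Sigma_k\,\Sigma_l = \Sigma_{(k,l)} + (\text{terms of strictly smaller weight})$; taking expectations and subtracting $\E[\Sigma_k]\,\E[\Sigma_l]$ turns the leading contribution into the conditional cumulant $\kappa^{\chi_d}_2(k,l) = \chi_d(k,l) - \chi_d(k)\chi_d(l)$. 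Condition (i) for $n=2$, equation \eqref{eq:SecondCumu}, shows that after rescaling this cumulant is smaller than the product of first moments by a factor of order $d^{-1}$, so that $\Var\big(\Sigma_k(\lambda_d)\big) = o\big(\E[\Sigma_k(\lambda_d)]^2\big)$, the lower-weight terms in the product being negligible by the same bookkeeping. Hence, in the area-one normalization, each rescaled free cumulant $R_k(\Lambda^\a_d)$ has a limiting expectation and vanishing variance, and Chebyshev's inequality yields convergence in probability to a constant $r_k$.

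Finally I would upgrade convergence of the individual observables to sup-norm convergence of the whole profile. The constants $r_k$ are the free cumulants of a limiting transition measure, which in turn determines a continuous profile $\omega_{\Lambda_\infty}$ via the inverse Markov--Krein map. To conclude I would invoke the standard fact that, on the space of continuous diagrams, joint convergence in probability of all free cumulants together with a uniform control of their growth implies uniform convergence of profiles; condition (ii), the bound \eqref{eq:what-is-m}, is exactly what furnishes this growth control, guaranteeing that the limiting measure is compactly supported and that no mass escapes to infinity.

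I expect the main obstacle to be precisely this last step. The purely algebraic concentration gives convergence of each fixed observable, but turning it into sup-norm (hence uniform) control of the random profiles requires a tightness-type argument, uniform in $d$, that simultaneously handles all free cumulants and the tails of the diagrams. Controlling the lower-weight terms in the product expansion $\Sigma_k\,\Sigma_l$ with bounds that are uniform over $k,l$ — rather than for each fixed pair — is the delicate technical point, and this is exactly where the bound \eqref{eq:what-is-m} is indispensable.
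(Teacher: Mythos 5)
Your overall strategy is, in outline, the same ``dual approach'' as in \cite{DolegaSniady2019}, which is where this theorem is actually proved (the present paper only quotes it): work in the algebra $\Poly$ of $\alpha$-polynomial functions, use the AFP condition with $n=1$ for first moments and $n=2$ for variances, exploit the product expansion of normalized characters ($\Ch^\a_{(k)}\cdot\Ch^\a_{(l)} = \Ch^\a_{(k,l)} + \text{lower-order terms}$, cf.\ \cref{theo:AFPCharacterization}), transfer concentration to free cumulants through the $\alpha$-deformed Kerov polynomials, and finish with Chebyshev. Up to and including the concentration-of-observables step, your outline is correct; your order-of-magnitude bookkeeping ($\kappa_2^{\chi_d}(k,l)$ smaller than $\chi_d(k)\chi_d(l)$ by a factor $d^{-1}$) is also right.

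The genuine gap is in your last step, and it is not a technicality. You assert that condition \eqref{eq:what-is-m} ``guarantees that the limiting measure is compactly supported'' and then invoke the standard compact-support argument to upgrade convergence of observables to sup-norm convergence of profiles. This is false in the regimes the theorem covers: \eqref{eq:what-is-m} only gives $|v_k|\le (Ck)^k$, which permits superexponential growth of the limiting free cumulants, and indeed the paper proves (\cref{measures_discrete} and \cref{main_thm_2}) that for $g\neq 0$ the limiting transition measure has \emph{unbounded} discrete support accumulating at $\pm\infty$, the limit shape being a one-sided infinite staircase; even for $g=0$, compact support would require an exponential bound $\sup_k |v_k|^{1/k}<\infty$, which is strictly stronger than \eqref{eq:what-is-m}. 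The actual role of \eqref{eq:what-is-m} is to furnish Carleman's condition, i.e.\ \emph{moment-determinacy} of the limit measure — exactly how it is used in \cref{lemma_moments} and \cref{cor:UniqueP}. The correct upgrade therefore needs two ingredients in place of compactness: (a) a tail bound uniform in $d$, which here is deterministic because $|\lambda_d|=d$ makes the rescaled diagram have area $1$, so the rectangle argument of Step 1 in the proof of \cref{theo:LLNCDM} gives $\tfrac{1}{2}\big(\omega_{\Lambda^\a_d}(x)-|x|\big)\le \tfrac{1}{2}\big(\sqrt{x^2+4}-|x|\big)$ for all $x$; and (b) convergence of moments in probability to a moment-determinate limit, which together with the $1$-Lipschitz property of profiles yields uniform convergence, as packaged in \cref{theo:GeneralLLN} (via Billingsley's moment theorem, \cref{theo:Billingsley}). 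Unless you replace your compactness claim by this determinacy-plus-tail argument, the passage from concentration of free cumulants to \eqref{eq:limit} does not close.
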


\begin{theorem}[Central Limit Theorem \cite{DolegaSniady2019}]\label{theo:CLTDS}
Assume that $\chi_d\colon\Y_d\to\R$ are Jack characters such that $(\chi_d)_{d\ge 1}$ fulfills the enhanced AFP. Consider the sequence of random functions
\[{\Delta}^\a_d := \sqrt{d}\cdot\frac{\omega_{\Lambda^\a_d}-\omega_{\Lambda_\infty}}{2} \]
and the observables
\[ \langle\Delta^\a_d,\, x^\ell \rangle := \int_{\R} x^\ell\,\Delta^\a_{d}(x) dx,\quad
\ell\in\Z_{\ge 1}. \]

Then the joint distribution of the random variables $\langle\Delta^\a_d,\, x^\ell \rangle$, 
for $\ell=1,2,\cdots$, converges weakly to a (non-centered) Gaussian distribution.

In other terms, the sequence of random functions ${\Delta}^\a_{d}$ converges to some (non-centered) Gaussian random vector $\Delta_\infty$ valued in the space $(\R[x])'$ of distributions, the dual
space to polynomials, and the convergence holds in distribution as $d\to\infty$.
\end{theorem}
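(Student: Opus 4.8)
The plan is to prove the CLT by the \emph{method of cumulants} applied to the family of linear observables given by the normalized Jack characters, exploiting the ``dual'' algebraic structure developed in~\cite{DolegaFeray2016,DolegaSniady2019}. Concretely, for each $k\ge 2$ let $\Sigma_k$ denote the normalized character function $\lambda\mapsto \chi^\a_\lambda(k,1^{|\lambda|-k})$ scaled by the appropriate falling factorial, so that $\{\Sigma_k\}_{k\ge 2}$ generates the graded algebra of $\alpha$-polynomial functions on $\Y$, with $\Sigma_k$ carrying weight $k+1$. The first step is to re-express the profile observables $\langle \Delta^\a_d,x^\ell\rangle$ in terms of the $\Sigma_k$: passing through the free cumulants of the transition measure and the Kerov-type change of basis, the quantity $\langle\Delta^\a_d, x^{\ell}\rangle$ becomes, up to lower-weight corrections, a fixed polynomial in the centered and $\sqrt d$-rescaled variables $\Sigma_k(\lambda_d)$. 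It therefore suffices to prove joint asymptotic normality of these rescaled character observables.

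The second step computes their joint cumulants. The defining relation $\chi_d=\sum_{\lambda}\mathbb{P}^\a_{\chi_d}(\lambda)\,\chi^\a_\lambda$ gives $\E[\Sigma_\mu(\lambda_d)]=\chi_d(\mu)$ up to normalization, and more generally a product $\Sigma_{k_1}\cdots\Sigma_{k_n}$ expands, via the structure constants of the polynomial-function algebra, as $\Sigma_{(k_1,\dots,k_n)}$ plus strictly lower-weight terms (\emph{approximate factorization} at the level of functions). Taking expectations and forming cumulants, one shows that the classical joint cumulant of $\Sigma_{k_1}(\lambda_d),\dots,\Sigma_{k_n}(\lambda_d)$ is, to leading order in $d$, the conditional cumulant $\kappa^{\chi_d}_n(k_1,\dots,k_n)$ from~\eqref{eq:log-laplace}, the subleading contributions being suppressed by the weight gradation. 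The AFP assumption~\eqref{eq:aprox-fact-property} is precisely the statement that these conditional cumulants, normalized by $d^{(k_1+\cdots+k_n+n-2)/2}$, converge; this is exactly the power of $d$ needed so that after the $\sqrt d$-rescaling in $\Delta^\a_d$ the $n=1$ and $n=2$ cumulants stay $O(1)$ while those of order $n\ge 3$ tend to $0$.

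The third step assembles the limit. Vanishing of all joint cumulants of order $\ge 3$ forces the limiting joint law of the rescaled $\Sigma_k$ to be Gaussian; transporting this through the deterministic, weight-preserving change of basis from the first step yields a Gaussian limit $\Delta_\infty$ for the profile observables. The mean is \emph{non-centered} because the enhanced AFP~\eqref{eq:refined-asymptotics-characters-2} produces a genuine order-$d^{-1/2}$ correction $v_k'$ to $\E[\Sigma_k(\lambda_d)]$, which survives multiplication by $\sqrt d$ and contributes the mean shift $\E[\langle\Delta_\infty,x^\ell\rangle]$; the covariance $\Cov(\langle\Delta_\infty,x^{k}\rangle,\langle\Delta_\infty,x^{l}\rangle)$ is read off from the limiting second cumulants, that is, from the constants $v_{(k|l)}$ of~\eqref{eq:SecondCumu}. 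Finiteness of the limiting Gaussian vector and the moment control needed to justify the limit come from the growth bound~\eqref{eq:what-is-m}, which guarantees that the generating function of the $v_k$ has positive radius of convergence.

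The hard part will be the bookkeeping of the weight gradation in the second step: one must verify that in the expansion $\Sigma_{k_1}\cdots\Sigma_{k_n}=\Sigma_{(k_1,\dots,k_n)}+(\text{lower weight})$ every omitted term is, after the $d$-dependent normalization, too small to contribute to the surviving cumulants, and that this holds \emph{uniformly} over the growing set of indices brought in through the first step. Equivalently, one must match the top-degree behavior of products of polynomial functions with the disjoint-product combinatorics built into the conditional cumulants $\kappa^{\chi_d}_n$, and check that the LLN of~\cref{theo:LLNDS} correctly pins the centering. This gradation-degree matching, rather than any single estimate, is the technical heart on which Gaussianity rests.
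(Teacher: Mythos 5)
Your overall strategy is the same one used for this theorem at its source: the present paper only quotes \cref{theo:CLTDS} from \cite{DolegaSniady2019}, and that proof (as well as the machinery reproduced in Section 5 here: the algebra $\Poly$, the two products $\times$ and $\bullet$, and \cref{theo:AFPCharacterization}) proceeds exactly by the method of cumulants applied to character/polynomial-function observables. So the route is the right one, but your Step 2 contains a genuine error at the point you yourself identify as the technical heart.

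You assert that the classical joint cumulant of $\Ch^\a_{(k_1)}(\lambda_d),\dots,\Ch^\a_{(k_n)}(\lambda_d)$ equals the conditional cumulant $\kappa^{\chi_d}_n(k_1,\dots,k_n)$ to leading order, the corrections from the expansion $\Ch^\a_{(k)}\times\Ch^\a_{(l)}=\Ch^\a_{(k,l)}+(\text{lower weight})$ being suppressed by the gradation. This is false for $n\ge 2$: forming cumulants cancels the top-degree moments, so the relevant comparison is not ``correction versus moment'' but ``correction versus cumulant,'' and these are of the \emph{same} order. Concretely, $\Ch^\a_{(k)}\times\Ch^\a_{(l)}-\Ch^\a_{(k)}\bullet\Ch^\a_{(l)}$ has degree $k+l$ (two less than $k+l+2$), hence expectation of order $d^{(k+l)/2}$, which is exactly the order of $d_{\overline{k+l}}\,\kappa_2^{\chi_d}(k,l)\approx v_{(k|l)}\,d^{(k+l)/2}$; both survive the $\sqrt{d}$-rescaling. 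Consequently your claim that the limiting covariance is ``read off from the constants $v_{(k|l)}$'' of \eqref{eq:SecondCumu} is wrong: the true covariance \eqref{eq:CovDS} is a sum of \emph{two} pieces, one coming from the structure constants of $(\Poly,\times)$ (the paired-ribbon-path sum, i.e.\ the limit of $\E^\a_{\chi_d}\kappa^{\times}_{\bullet}$) and one from the $v_{(k|l)}$. The Jack--Plancherel measure is a counterexample to your identification: there all conditional cumulants of order $\ge 2$ vanish, so all $v_{(k|l)}=0$, yet the limiting Gaussian process has non-trivial covariance (Kerov's CLT, cf.\ \cite{IvanovOlshanski2002}); your plan would predict a degenerate limit.

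What survives: for the qualitative statement of \cref{theo:CLTDS} you only need that the rescaled $\times$-cumulants of order $n\ge 3$ vanish and that the $n=1,2$ limits exist, and this is precisely the content of the equivalence $(A)\Leftrightarrow(B)$ in \cref{theo:AFPCharacterization}. But that equivalence asserts simultaneous finiteness of the scaled limits, \emph{not} equality of the limits, and it cannot be obtained from your leading-order identification; it must be proved by induction on the degree filtration of $\Poly$, carrying along the same-order correction terms rather than discarding them. Two further points your bookkeeping must absorb: the $n=1$ case is divergent before centering (classical first cumulants are of order $d^{(\ell+1)/2}$, not $O(\sqrt d)$), so the mean shift only makes sense after subtracting the LLN term, with the enhanced AFP controlling the $O(d^{-1/2})$ remainder as you say; and $\Poly$ is generated by the $\Ch^\a_{(k)}$ \emph{together with} $\gamma=-\sqrt{\al}+1/\sqrt{\al}$, which in the high/low temperature regimes grows like $-g\sqrt d$ and carries degree $1$ in the gradation \eqref{def_Vs}, so it cannot be omitted from the weight count.
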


Informally, the LLN and CLT above describe the first and second order asymptotics of the profile $\omega_{\Lambda^\a_d}$ of a large random anisotropic partition:
\[ \omega_{\Lambda^\a_d} = \omega_{\Lambda_\infty} + \frac{2}{\sqrt{d}}\cdot\Delta_\infty + o\left( \frac{1}{\sqrt{d}} \right)\!,\text{ as $d\to\infty$}. \]

\subsection{Examples of characters with the AFP and the corresponding probability measures}\label{sec:examples}

\subsubsection{Jack--Plancherel measure}\label{ex:Plancherel}
Let
\begin{equation}\label{eq:regular}
\chi_d(\mu) := \begin{cases} 1, & \text{if $\mu=(1^d)$},\\
0, & \text{otherwise}, \end{cases}    
\end{equation}
be the normalized character of the regular representation of the symmetric group $\Sym{d}$.
For $\al=1$, the associated probability measure $\mathbb{P}^{(\al=1)}_{\chi_d}$ 
is the celebrated \emph{Plancherel measure} studied in~\cite{VershikKerov1977,LoganShepp1977,BaikDeiftJohansson1999,BorodinOkounkovOlshanski2000,Okounkov2000,IvanovOlshanski2002}.
For any $\alpha>0$, one can show (see~\cite{DolegaSniady2019}) that $\chi_d$ is a Jack character and the corresponding probability measure is the \emph{Jack--Plancherel measure} with formula 
\[  \mathbb{P}^\a_{\chi_d}(\la) = \frac{\al^d\,d!}{j_\la^\a}, \]
where
\begin{equation}\label{j_lambda}
j^\a_\la := \prod_{(i, j)\in\la}
\big(\alpha(\lambda_i-j)+(\lambda_j'-i)+1\big)
\big(\alpha(\lambda_i-j)+(\lambda_j'-i)+\alpha\big),
\end{equation}
and $\la' = (\la'_1\ge\la'_2\ge\cdots)$ is the \emph{conjugate} of the partition of $\la$, whose Young diagram (in French convention) is obtained from reflecting the Young diagram of $\la$ across the line $y=x$. The quantity $j^\a_\la$ originates from symmetric function theory, see e.g.~\cref{j_lambda_2} below.

Note that the cumulants associated to the Jack characters $\chi_d$ are given by 
\[\kappa^{\chi_d}_n(k_1,\dots,k_n) = \begin{cases}1, &\text{when } n=1 \text{ and } k_1=1,\\
0, & \text{otherwise},\end{cases}\]
so the sequence $(\chi_d)_{d\ge 1}$ has the enhanced AFP: indeed, all the constants in \eqref{eq:SecondCumu} and \eqref{eq:refined-asymptotics-characters-2} vanish, i.e.~$v_k=v_k'=v_{(k| l)}=0$, for all $k, l\ge 2$.
Hence, the theorems from~\cref{sec:thms_DS}, as well as our main theorems from~\cref{sec:Duality}, 
can be applied to this example, as long as the asymptotic condition~\eqref{eq:double-scaling-refined} 
on $\al$ is satisfied. The regimes of high temperature ($g>0,\,\al=\Theta(d)$), low temperature 
($g<0,\,\al=\Theta(d^{-1})$), and fixed temperature ($g=0,\,\al=\Theta(1)$) are all covered.

\subsubsection{Jack--Schur--Weyl measure}\label{ex:Schur-Weyl}

Let $N\in\R$ be a real parameter and set 
\begin{equation}\label{eq:Schur-Weyl}
\chi_d(\mu) := N^{-\|\mu\|}, 
\end{equation}
For $N\in\Z_{\ge 1}$, this is the normalized $\Sym{d}$-character 
considered in Schur--Weyl's construction, where $\Sym{d}$ acts naturally on 
$(\C^N)^{\otimes d} = \C^N\otimes\cdots \otimes \C^N$. Using~\cite[Ch.~VI,~(10.25)]{Macdonald1995}
for the value of the principal specialization of a Jack symmetric function, one finds:
\begin{equation}\label{eq:Schur-WeylDef}
\mathbb{P}^\a_{\chi_d}(\lambda) =
\frac{d!\,\sqrt{\alpha}^{\,d}}{N^d\cdot j^\a_\la}\cdot
\prod_{(i, j)\in\lambda}
\big(N\sqrt{\alpha}+(j-1)\alpha-(i-1)\big).
\end{equation}

For $\alpha=1$, the measure $\mathbb{P}^{(\alpha=1)}_{\chi_d}$ is the
\emph{Schur--Weyl measure} studied by Biane~\cite{Biane2001}, who
proved the LLN for the corresponding large random
Young diagrams and found an explicit formula for the limit shape.
In addition, for any $\al>0$, the measure~\eqref{eq:Schur-WeylDef} 
becomes the Jack--Plancherel measure in the limit $N\to\infty$. 
Finally, the Jack--Schur--Weyl measure can be obtained as 
a limit of the $z$-measures $M^{(d)}_{z,z',\theta}$ 
studied by Borodin and Olshanski, namely
\[ \mathbb{P}^\a_{\chi_d}(\lambda) =
\lim_{z' \to\infty}M^{(d)}_{N\sqrt{\theta},z',\theta}(\lambda),\]
holds with $\theta = \alpha^{-1}$ (see~\cite[Equation~(1.1)]{BorodinOlshanski2005}).

We remark that formula \eqref{eq:Schur-WeylDef} defines a probability measure 
whenever $N\sqrt{\alpha}\in\Z_{\ge 1}\,$ or $\,-N/\sqrt{\al}\in\Z_{\ge 1}$; 
in these cases, each $\chi_d$ is a Jack character by definition. 
If $N\sqrt{\alpha}\in\Z_{\ge 1}$ (resp.~$-N/\sqrt{\al}\in\Z_{\ge 1}$), 
then $\mathbb{P}^\a_{\chi_d}$ is supported on Young diagrams $\la\in\Y_d$ 
with $\ell(\la)\le N\sqrt{\al}$ (resp.~$\ell(\la')\le -N/\sqrt{\al}$).

The functions $\chi_d$ considered in this example are
multiplicative, i.e.~for $\mu^1,\dots,\mu^n \in\Y_{\leq d}$ 
such that $\mu^1\cdots\mu^n\in\Y_{\leq d}$, one has
$\chi_d(\mu^1\cdots\mu^n) = \chi_d(\mu^1)\cdots\chi_d(\mu^n)$.
Then all the cumulants $\kappa^{\chi_d}_n(k_1,\dots,k_n)$ of order $n \geq 2$ vanish, 
and in particular \eqref{eq:SecondCumu} is satisfied with all $v_{(k| l)}=0$; 
then $(\chi_d)_{d\ge 1}$ has the enhanced AFP if \eqref{eq:refined-asymptotics-characters-2} holds.
As a result, all theorems from~\cref{sec:thms_DS} and~\cref{sec:Duality} can be applied if each $\chi_d$ is a Jack character, and if \eqref{eq:double-scaling-refined} and 
\eqref{eq:refined-asymptotics-characters-2} are satisfied. We discuss all three possible regimes:

\smallskip

$\bullet$ \emph{High temperature:} Let $g, c>0$, and $w,g'\in\R$ be arbitrary parameters. 
Find sequences $\al=\al(d)>0,\ N=N(d)\in\frac{1}{\sqrt{\al}}\cdot\Z_{\ge 1}$ such that
\begin{equation*}
N\sqrt{\al} = \frac{\sqrt{\al d}}{c} + w\sqrt{\al} + o(\sqrt{d}),\qquad
\sqrt{\al} = g\sqrt{d} + g' + o(1),\ \text{ as }d\to\infty.
\end{equation*}
This is possible by first choosing $\al$ satisfying the second condition, 
and then choosing a positive integer $N\sqrt{\al}$ satisfying the first condition; the latter 
amounts to choosing an integer in an interval of length $o(\sqrt{d})$, which is always feasible for large $d$.
With these conditions on $\al, N$, the asymptotic relation \eqref{eq:double-scaling-refined} 
holds, and moreover 
\[ \chi_d(k) \ d^{\frac{k-1}{2}} = c^{k-1} - (k-1)wc^k\cdot d^{-\frac{1}{2}} + o(d^{-\frac{1}{2}}), \]
so \eqref{eq:refined-asymptotics-characters-2} holds too, with $v_k=c^{k-1},\, v_k'=-(k-1)wc^k$.

\smallskip

$\bullet$ \emph{Low temperature:} Let $g, c<0$, and $w,g'\in\R$ be arbitrary parameters. Find sequences $\al=\al(d)>0,\ N=N(d)\in -\sqrt{\al}\cdot\Z_{\ge 1}$ such that
\begin{equation*}
-\frac{N}{\sqrt{\al}} = -\frac{\sqrt{d}}{c\sqrt{\al}} - \frac{w}{\sqrt{\al}} + o(\sqrt{d}),\qquad \frac{1}{\sqrt{\al}} = -g\sqrt{d} - g' + o(1),\ \text{ as }d\to\infty.
\end{equation*}
With these asymptotics on $\al, N$, both \eqref{eq:double-scaling-refined} and 
\eqref{eq:refined-asymptotics-characters-2} hold with $v_k=c^{k-1},\, v_k'=-(k-1)wc^k$.

\smallskip

$\bullet$ \emph{Fixed temperature:} In this regime, we would like $\al(d)=\al>0$ to be fixed. 
Set $g=0,\, g'=\sqrt{\al}-\frac{1}{\sqrt{\al}}$, 
so that \eqref{eq:double-scaling-refined} holds. Let $c>0,\, w\in\R$ be arbitrary parameters.

The enhanced AFP property requires finding $N=N(d)\in\R$ such that either $N\sqrt{\al}\in\Z_{\ge 1},\ N\sqrt{\al} = 
\frac{\sqrt{\al d}}{c}+w\sqrt{\al}+o(1)$, or $-\frac{N}{\sqrt{\al}}\in\Z_{\ge 1},\ 
-\frac{N}{\sqrt{\al}} = -\frac{\sqrt{d}}{c\sqrt{\al}}-\frac{w}{\sqrt{\al}}+o(1)$. 
However, it is impossible to choose an integer in an interval of
length $o(1)$. Nevertheless, we can pick $N,\al$ s.t.~$N\sqrt{\al}\in\Z_{\ge 1},\ N\sqrt{\al} = \frac{\sqrt{\al d}}{c}+O(1)$, or $-\frac{N}{\sqrt{\al}}\in\Z_{\ge 1},\ -\frac{N}{\sqrt{\al}} =
-\frac{\sqrt{d}}{c\sqrt{\al}}+O(1)$, which guarantees that the AFP
condition is satisfied with $v_k=c^{k-1}$. The LLN from~\cref{theo:ShapeDS} only requires
the AFP, so we can apply it to our example. The $c$--dependent limit
shape was found by Biane~\cite{Biane2001} in the special case $\al=1$; it coincides with the celebrated Vershik--Kerov/Logan--Shepp 
limit shape in the limit $c\to 0$.

Moreover, the proof of~\cref{theo:CovDS} shows that the weaker
conditions imposed by the AFP guarantee that the 
cumulants of order $\ge 3$ vanish for the random variables $\langle\Delta^\a_d, x^m\rangle$, 
and the limits $\lim_{d\to\infty}{\Cov( \langle\Delta^\a_d, x^{k-2}\rangle,\, \langle\Delta^\a_d, x^{l-2}\rangle )}$ 
exist and have explicit combinatorial formulas. The
difference between the AFP and the enhanced AFP only affects the
existence of the limit
$\lim_{d\to\infty}{\E[\langle\Delta^\a_d, x^m\rangle]}$, which is not
guaranteed by the weaker assumption of the AFP. Note that the special
case $\alpha=1$ discussed in~\cite{Sniady2006c} omits this problem by
proving the CLT for the centered observables
(see~\cite[Corollary 3.3]{Sniady2006c}). Our case of a general fixed $\alpha>0$
covers this theorem, but we also prove a
stronger result: our choice of $N,\al$, e.g.~satisfying 
$N\sqrt{\al}\in\Z_{\ge 1},\ N\sqrt{\al} = \sqrt{\al d}/c+O(1)$,
implies that we can restrict to a subsequence $(d_k)_k$ such that 
$N(d_k)\sqrt{\al}=\sqrt{\al d_k}/c+w\sqrt{\al}+o(1)$,
for some fixed $w\in\R$. Then the convergence of the random
function $\Delta^\a_d$ to the Gaussian process $\Delta_{\infty}$
stated in~\cref{theo:CovDS} still holds true but with the sequence
$(\Delta^\a_d)_{d\in\Z_{\geq 1}}$ replaced by $(\Delta^\a_{d_k})_{k\in\Z_{\geq 1}}$.

\subsubsection{Conditional Jack--Thoma measure}\protect\footnote{In connection with this example, see also~\cref{ex:CDMPois} and~\cref{plancherel_jack} below.}\label{ex:CDM}
Let $(v_k)_{k\ge 2}$ be any sequence of reals satisfying \eqref{eq:what-is-m}, 
and for any integer $k\geq 2$, choose a sequence $(v^{(d)}_k)_{d\ge 1}$ such that
\begin{equation}\label{asymptotic_vd}
v^{(d)}_{k}\cdot d^{\frac{k-1}{2}} = v_k + o(d^{-\frac{1}{2}}), \text{ as }d\to\infty.
\end{equation}
Then consider the following function on $\Y_d$:
\begin{equation*}
\chi_d(\mu) := \prod_{{k} \geq 2}\left(v^{(d)}_{{k}}\right)^{m_{k}(\mu)}.
\end{equation*}
These functions are multiplicative; together with~\eqref{asymptotic_vd}, we conclude 
that $(\chi_d)_{d\ge 1}$ satisfies the enhanced AFP with $v_k'=v_{(k|l)}=0$.
We prove in \cref{prop:Depoissonization}  that the associated measure can be expressed as 
\[ \mathbb{P}^\a_{\chi_d}(\lambda) = \frac{\al^d\,d!}{u^d}\cdot\frac{J_\la^\a(u\cdot\vv_{u,\al})}{j^\a_\la}, \]
where $u>0$ is arbitrary, and $J_\la^\a(u\cdot \vv_{u,\al})$ is defined as 
the image of $J_\la^\a$ under the specialization (unital algebra homomorphism) 
$\rho\colon\SSym\rightarrow\C$ from the $\R$-algebra of symmetric 
functions $\SSym$ that maps the power sums $p_k\,$ to 
$\,u\cdot (u/\sqrt{\al})^{k-1}\cdot v_k^{(d)}$.

In order to apply our main results, including those from~\cref{sec:Duality}, 
we must impose that $\al$ satisfies \eqref{eq:double-scaling-refined} 
and that each $\chi_d$ is a Jack character. 
One contribution of this paper is the classification of the \emph{totally Jack positive specializations} 
in~\cref{thm_jack_spec} that allows to find large families of examples of parameters 
$\al,\, v_k^{(d)}$ that fit into this setting. 
By~\cref{prop:CDMPar2} and~\cref{prop:Depoissonization}, 
we are able to consider all three limit regimes as follows.

\smallskip

$\bullet$ \emph{High temperature:} Consider any parameters $g>0$ and $a_1\ge a_2\ge\cdots\ge 0$ 
such that $\sum_{i=1}^\infty{a_i}\leq 1$.
Set $\al = g^2d$, so that \eqref{eq:double-scaling-refined} holds with $g'=0$. Also set 
\begin{equation}\label{vkd_1}
v_k^{(d)} = \left(\frac{g\sqrt{d}}{\lceil gd\rceil}\right)^{k-1}\cdot\sum_{i=1}^\infty{a_i^k},\ 
\text{ for }k\ge 2.
\end{equation}
Then $(\chi_d)_{d\ge 1}$ are Jack characters with the enhanced AFP 
and $v_k =\!\sum_{i=1}^\infty{a_i^k}$.

\smallskip

$\bullet$ \emph{Low temperature:} Consider any parameters $g<0$ and $a_1\ge a_2\ge\cdots\ge 0$ 
such that $\sum_{i=1}^\infty{a_i}\leq 1$. Set $\alpha = \frac{1}{g^2d}$, so that \eqref{eq:double-scaling-refined} holds with $g'=0$. Also set 
\begin{equation}\label{vkd_2}
v_k^{(d)} = \left(\frac{g\sqrt{d}}{\lceil -gd\rceil}\right)^{k-1}\cdot\sum_{i=1}^\infty{a_i^k},\ 
\text{ for }k\ge 2.
\end{equation}
Then, due to \cref{prop:DualityAFP}, $(\chi_d)_{d\ge 1}$ are Jack characters with the enhanced AFP 
and $v_k = (-1)^{k-1}\sum_{i=1}^\infty{a_i^k}$.

$\bullet$ \emph{Fixed temperature:} For this regime, $\al>0$ is fixed, so 
$g=0,\, g'=\sqrt{\al}-\frac{1}{\sqrt{\al}}$. 
We prove in \cref{prop:Depoissonization} that
$\chi_d$ is a Jack character whenever 
$(\sqrt{\al}, \sqrt{\al}\,v_2^{(d)}, \sqrt{\al}\,v_3^{(d)},\dots)$
determines an $\al$-Jack-positive specialization, as in \cref{alpha_jack_positive}.
Then \cref{thm:KOO}, due to
Kerov--Okounkov--Olshanski~\cite{KerovOkounkovOlshanski1998},
characterizes the valid sequences $(v_k^{(d)})_k$. 
Take for instance any reals $a_1 \geq a_2 \geq \dots \geq 0$ and $b_1 \geq b_2 \geq
\dots \geq 0$ such that $\sum_{i=1}^\infty (a_i+b_i) \leq 1$, 
and a sequence of integers $n=n(d)$ such that $n\le\sqrt{\al d}$ 
and $n=\sqrt{\al d}+O(1)$; then set:
\[v_{k}^{(d)} := \frac{n}{\sqrt{\al}}\cdot\sum_{i=1}^\infty\left(\frac{a_i}{\sqrt{d}}\right)^{k} + 
\frac{n}{\sqrt{\al}}\cdot (-\al)^{1-k}\sum_{i=1}^{\infty}\left(\frac{b_i}{\sqrt{d}}\right)^k,\ 
\text{ for } k\ge 2.\]
Then it turns out that all results from~\cref{sec:Duality} are true, except for the existence of the limits 
$\lim_{d\to\infty}{\E[\langle\Delta^\a_d, x^m\rangle]}$; for that, one must 
restrict to subsequences $(d_k)_k$ with
$n(d_k)=\sqrt{\al d_k}+o(1)$,
cf.~\cref{ex:Schur-Weyl} and \cref{rem:CompWithSniady}.
The parameters $v_k$ are $v_k =
\sum_{i=1}^\infty{a_i^k}+(-\al)^{1-k}\sum_{i=1}^\infty{b_i^k}$.
\begin{remark}
We point out that the scaling of the random partitions in our LLN is different from that in \cite{KerovOkounkovOlshanski1998,Vershik1974}. Indeed, these other articles discuss a LLN in which the parameters $a_1,a_2,\cdots$ (resp.~$b_1,b_2,\cdots$) appear as the rates of linear growth of rows (reps.~columns) of the random partitions. In contrast, our random partitions $\la^{(d)}\in\Y_d$ are \emph{balanced} in the sense that both the number of rows and columns are asymptotically proportional to $\sqrt{d}$.
\end{remark}
\begin{remark}
  \label{rem:CompWithSniady}
In general, whenever we replace
the assumption of the enhanced AFP by the (weaker) AFP in
\cref{theo:CovDS}, we have its weaker version which says that the
family of centered random variables $x_\ell := X_\ell-\E X_\ell$, where
$X_\ell := \langle\Delta^\a_d, x^\ell\rangle$, converges jointly to a
Gaussian distribution in the weak topology of probability
measures. The special case $\alpha=1$ recovers the result of
Śniady~\cite{Sniady2006c}. Similarly, by replacing
\eqref{eq:refined-asymptotics-characters-2} in the definition of the
enhanced AFP by a weaker condition $\chi_d(k) \ d^{\frac{k-1}{2}}  =
v_k+ O(d^{-\frac{1}{2}})$
that we encountered in all our examples, then \cref{theo:CovDS} holds
true after restricting to a subsequence.
\end{remark}

\section{Second model of random Young diagrams: Jack measures}\label{sec:Jack-deformed-2}

In this section, we review the construction from \cite{Moll2023} that yields 
probability measures on the infinite set $\Y := \bigsqcup_{d \geq 0}\Y_d$ of all
Young diagrams and it relies on the Cauchy identity for Jack symmetric functions; 
the result is a natural $1$-parameter generalization of Okounkov's Schur measures~\cite{Okounkov2001}.
We also isolate a special family of Jack measures, to be called 
\emph{Jack--Thoma measures}, that are closely related to the 
model of random Young diagrams discusssed in~\cref{sec:Jack-deformed-1}, 
and to the work of Kerov--Okounkov--Olshanski \cite{KerovOkounkovOlshanski1998}.
We prove the LLN and CLT for this new family of measures.

\subsection{Definitions and preliminaries}
Recall the Cauchy identity for Jack symmetric functions~\cite[Ch.~VI.10]{Macdonald1995}:
\begin{equation}\label{eq:Cauchy}
1 + \sum_{d=1}^\infty{t^d \sum_{\la\in\Y_d}{\frac{J_\la^\a(\pp)J_\la^\a(\qq)}{\| J_\la^\a \|^2}}} = 
1 + \sum_{d=1}^\infty{t^d \sum_{\mu\in\Y_d}{\frac{p_\mu q_\mu}{\al^{\ell(\mu)}z_\mu}}} = 
\exp\bigg(\sum_{k\geq 1}\frac{t^{k} p_{k} q_{k}}{{k}\alpha}\bigg),
\end{equation}
where $\pp = (p_1,p_2,\dots)$ and $\qq = (q_1,q_2,\dots)$ are two
independent families of variables which play the role of power-sums. 
We employed the usual notations: $p_\mu := \prod_{i\ge 1}{p_{\mu_i}}$, 
$J_\lambda(\pp) := \sum_{\mu\in\Y_d}\theta^\a_\mu(\lambda)p_\mu$,
and $q_\mu, J_\lambda(\qq)$ are defined similarly.
The norm $\|\cdot\|$ in the LHS of \eqref{eq:Cauchy} is with respect to the $\alpha$-deformed Hall scalar product, defined on the $\R$-algebra of symmetric functions $\SSym$ by declaring 
the basis $\{p_{\mu}\}_{\mu\in\Y}$ to be orthogonal, and
\begin{equation}\label{eq:powersumnorm}
\| p_\mu \|^2 = \alpha^{\ell(\mu)}z_\mu,\quad\forall\,\mu\in\Y.
\end{equation}
It is known that the Jack symmetric functions $J^\a_\la$ are orthogonal and have norms 
\begin{equation}\label{j_lambda_2}
\| J^\a_\la \|^2 = j^\a_\la,
\end{equation}
where $j^\a_\la$ is given in~\eqref{j_lambda}.
Note that $j^\a_\la\in\Z_{\ge 0}[\alpha]$, in particular $j^\a_\la>0$, whenever $\al>0$.

\begin{definition}[Second model of random Young diagrams: Jack measures]\label{def:second}
Given a fixed $\alpha>0$, let $\rho_1,~\rho_2: \Symm\to\mathbb{C}$ be any two
unital $\R$-algebra homomorphisms such that
\begin{align}
\rho_1\left(J_{\la}^\a\right)
  \overline{\rho_2\left(J_\la^\a\right)} &\geq 0, \text{ for all } \lambda\in\Y, \label{eqn:positive}\\
\sum_{k=1}^{\infty}{\frac{\rho_1(p_k) \overline{\rho_2(p_k)}}{k}} &<
  \infty. \label{eqn:bounded}
\end{align}
Then define the \textbf{Jack measure} $M^\a_{\rho_1, \rho_2}$ as the
probability measure on $\Y$ given by
\begin{equation}\label{eqn:Jack_measure}
M_{\rho_1, \rho_2}^\a(\la) := \frac{\rho_1\left(J_{\la}^\a\right)
  \overline{\rho_2\left(J_\la^\a\right)}}
{j^\a_\la}\exp\left(-\sum_{k=1}^{\infty}{\frac{\rho_1(p_k) \overline{\rho_2(p_k)}}{k\alpha}} \right),
\quad \la\in\Y.
\end{equation}
\end{definition}

The normalization $\,\sum_{\la\in\Y}{M^\a_{\rho_1, \rho_2}(\la)} = 1$ 
follows from \eqref{eq:Cauchy}, while the
nonnegativity $M^\a_{\rho_1, \rho_2}(\la) \geq 0$ is a direct
consequence of the assumptions \eqref{eqn:positive}, \eqref{eqn:bounded}.

\subsection{Examples of Jack measures}\label{sec:examples_2}

\subsubsection{Principal series Jack measures}\label{ex:Moll}
The measure $M_{\rho_1, \rho_2}^\a$ was studied by the
third author in the special case when $\rho := \rho_1 = \rho_2$,
see~\cite{Moll2023}. Note that~\eqref{eqn:positive} is automatically satisfied 
when the specializations $\rho_1, \rho_2$ are equal.
The main results of~\cite{Moll2023} are the LLN and CLT for $M_{\rho,\rho}^\a$ 
under a natural assumption that guarantees \eqref{eqn:bounded}, namely 
$|\rho(p_{k})| \leq A\cdot r^{k}$, for all ${k}\ge {1}$, and some constants $A >0,\, r<1$.

\subsubsection{Poissonized Jack--Plancherel measures}\label{ex:PlancherelPois}
  Let us look at the special case of \cref{ex:Moll} when $\rho (p_{k})= u\cdot\delta_{1,k}$, for some $u\in\C$, and $\delta_{1, k}$ is the Kronecker delta.
  This is the so-called \emph{Plancherel specialization}.
From the fact that the coefficient of $p_1^{|\la|}$ in the expansion of $J^\a_\la$ is $1$ (see \cite[Ch.~VI,~(10.29)]{Macdonald1995}), one finds the explicit formula
\begin{equation*}
M_{\rho, \rho}^\a(\la) = 
\exp\left(-\frac{|u|^2}{\alpha}\right) \frac{|u|^{2|\la|}}{j^\a_\la}.
\end{equation*}
Observe that $M_{\rho, \rho}^\a$ is
  the Poissonization of the Jack--Plancherel measures from
  \cref{ex:Plancherel} with the Poisson parameter equal to
  $\frac{|u|^2}{\alpha}$. In other words,
  \[ M^\a_{\rho, \rho}(\lambda) = \exp\left(-\frac{|u|^2}{\alpha}\right)
    \frac{|u|^{2d}}{\al^d d!}\cdot \PP_{\chi_d}^\a(\lambda),\quad\forall\, \la\in\Y_d,\ d\in\Z_{\ge 0},\]
where $\PP^\a_{\chi_d}$ is the probability measure associated
with the character $\chi_d$ from \cref{ex:Plancherel}.

\subsubsection{Poissonized Jack--Schur--Weyl measures}\label{ex:PrincipPlancherelPois}
  Consider a generalization of the previous example, when
  $\rho_1$ is the \emph{principal specialization} $\rho_1 (p_k) = u\cdot c^{k-1}$, while $\rho_2$ is still the Plancherel specialization $\rho_2 (p_k)= u\cdot\delta_{1,k}$. 
  This example is not an instance of \cref{ex:Moll}, but
  it clearly satisfies~\eqref{eqn:bounded}. 
One can also check that \eqref{eqn:positive} is satisfied whenever $u\in\R, c\in\R^*$, and 
$u/c\in\Z_{\ge 1}$ or $-u/(c\al)\in\Z_{\ge 1}$.
Indeed, by \cite[Ch.~VI,~(10.25)]{Macdonald1995}, one finds
\begin{equation*}
M_{\rho_1,\rho_2}^\a(\la) = \exp\left(-\frac{u^2}{\alpha}\right)\cdot
\frac{(uc)^{|\la|}}{j_\la^\a} \cdot 
\prod_{(i, j)\in\la}{\left(\frac{u}{c}+\alpha(j-1)-(i-1)\right)}.
\end{equation*}
If $u/c\in\Z_{\ge 1}$ (resp.~$-u/(c\al)\in\Z_{\ge 1}$), then 
$M_{\rho_1,\rho_2}^\a$ is supported on partitions $\la$ 
with $\ell(\la)\le u/c$ (resp.~$\ell(\la')\le -u/(c\al)$).
Moreover, $M_{\rho_1,\rho_2}^\a$ is the 
Poissonization of the Jack--Schur--Weyl measures $\PP^\a_{\chi_d}$ from
\cref{ex:Schur-Weyl} with $N=\frac{u}{c\sqrt{\alpha}}$ and 
Poisson parameter $\frac{u^2}{\alpha}$.

\subsubsection{Jack--Thoma measures}\label{ex:CDMPois}
\footnote{See also \cref{plancherel_jack} below.}
The example from~\cref{ex:PrincipPlancherelPois} is a special case of the following general framework 
that will be the main object of interest later in this section. Let $\vv = (v_1, v_2, \cdots)\in\R^{\infty}$ 
and $u>0$. Define $M^\a_{u; \vv} $ as the Jack measure 
$M^\a_{\rho_1, \rho_2}$ with specializations 
$\rho_1(p_k) = u\cdot v_k$ and $\rho_2(p_k) = u\cdot\delta_{1,k}$.
If we denote $u\cdot\vv := (u\cdot v_1,u\cdot v_2,\cdots)$ and
$J_{\la}^\a(u\cdot\vv) := \rho(J_{\la}^\a)$, for the specialization $\rho(p_{k}) := u\cdot v_k$, then
\begin{equation*}
M^\a_{u; \vv}(\la) = 
\exp\left(-\frac{u^2\cdot v_1}{\al}\right)\frac{J_{\la}^\a(u\cdot\vv)\cdot u^{|\la|}}{j^\a_\la}.
\end{equation*}
This is a probability measure if and only if
$J_\la^\a(u\cdot\vv) \geq 0$, for all $\lambda\in\Y$.
We find in~\cref{sec:JackPos} some choices of $u,\vv$ that imply this positivity assumption, 
see e.g.~\cref{prop:CDMPar2}.
We prove in~\cref{prop:Depoissonization} that if $v_1=1$, the measure $M^\a_{u;\vv}$ is the Poissonization of the measures $\PP^\a_{\chi_d}$ from \cref{ex:CDM} with Poisson parameter $\frac{u^2}{\alpha}$.

\subsection{Transition measures}\label{sec:transition_measure}

Given any Young diagram $\la\in\Y$, we use Kerov's idea~\cite{Kerov1993} to 
associate to $\la$ a probability measure $\mu_\lambda$ on $\R$, 
called the \emph{transition measure of $\la$}; it is characterized by the following identity for its Cauchy transform:
\begin{equation}\label{eq:Cauchy-St}
G_{\mu_\la}(z) = \int_\R \frac{d\mu_\la(x)}{z-x} = \frac{1}{z+\ell(\la)}\prod_{i=1}^{\ell(\la)}\frac{z+i-\la_i}{z+i-1-\la_i}
= \frac{\prod_{i=1}^k(z-y_i)}{\prod_{i=1}^{k+1}(z-x_i)},
\end{equation}
where $x_1,\dots, x_{k+1}$ ($y_1,\dots,y_k$, respectively) are the local
minima (maxima, respectively) of the profile $\omega_{\lambda}$, see~\cref{fig:french-fries}.
More generally, for any interlacing sequence $x_1<y_1<x_2<\cdots<y_k<x_{k+1}$ of real numbers, there exists a unique probability measure whose Cauchy transform is the rational function given by the RHS of~\eqref{eq:Cauchy-St} and, in particular, it has mean $\sum_{i=1}^{k+1}{x_i} - \sum_{i=1}^k{y_i}$.
This allows us to define the transition measure $\mu_{T_{w, h}\la}$ of any anisotropic diagram $T_{w, h}\la$ by the same equation~\eqref{eq:Cauchy-St}, where the $x_i$ and $y_i$ are the local minima and maxima of $T_{w, h}\la$, respectively.
Note that the mean of any transition measure is zero, since $\sum_{i=1}^{k+1}{x_i} = \sum_{i=1}^k{y_i}$.

Going even further, for any infinite interlacing sequence of real numbers $x_1<y_1<x_2<y_2<\cdots$, or $x_1>y_1>x_2>y_2>\cdots$, where we assume that $\lim_{k\to\infty}|y_k| = \infty$, and
\begin{equation}\label{eq:KerovCondInfinite}
\sum_{i=1}^\infty\frac{x_{i+1}-y_i}{y_i} < \infty
\end{equation}
(we remove from the sum the term corresponding to $y_i = 0$, if it exists), one can show that (see~\cite[Section 1.3 and Section 2.1]{Kerov1998}):

\begin{itemize}

	\item the infinite product
\[
G(z) := \prod_{i=1}^\infty \frac{z-y_i}{z-x_i}
\]
converges for $z\ne x_i$, $i=1,2\dots$, the zeroes of $G(z)$ are located at $y_1,y_2,\dots$, and its poles are located at $x_1,x_2,\dots$;

	\item the partial fraction decomposition
\[
G(z) = \sum_{i=1}^\infty\frac{\mu_i}{z-x_i}
\]
is such that $\mu_i>0$, for all $i\ge 1$, and $\sum_{i=1}^\infty{\mu_i}=1$.
As a result,
\begin{equation}\label{eq:TransitionSupport}
\mu := \sum_{i=1}^\infty \mu_i\delta_{x_i}
\end{equation}
defines a probability measure on $\R$ whose Cauchy transform is precisely $G_{\mu}(z)=G(z)$.

\end{itemize}

\subsection{Observables of measures with all finite moments}

We will use several sequences of observables on the set of probability measures on $\R$ with finite moments of all orders.
Besides the \emph{moments}, we will use the \emph{Boolean cumulants}, the \emph{free cumulants} and the \emph{fundamental functionals of shape}.
If $\mu$ is a probability measure on $\R$ with finite moments of all orders, then let us denote its $\ell$-th moment by $M^\mu_\ell := \int_{\R}{x^\ell \mu(dx)}$. 
The sequence of moments $(M^\mu_\ell)_{\ell\geq 1}$ can also be described by expanding 
the Cauchy transform of $\mu$ in a neighborhood of infinity:
\begin{equation}\label{eq:Moment}
G_{\mu}(z) = \int_\R \frac{d\mu(x)}{z-x} = 
\frac{1}{z}+\sum_{\ell \geq 1}M^\mu_\ell z^{-\ell-1}.
\end{equation}

  The associated sequences of Boolean cumulants $(B^\mu_\ell)_{\ell
    \geq 1}$, free cumulants $(R^\mu_\ell)_{\ell
    \geq 1}$, and fundamental functionals of shape $(S^\mu_\ell)_{\ell
    \geq 1}$ can be defined by the following relations between their generating functions:
  \begin{align}
          B_{\mu}(z) &:= \sum_{\ell \ge
                   1} B^\mu_\ell \, z^{-\ell+1} = z-\frac{1}{G_\mu(z)},
                         \label{eq:Boolean}\\
      K_{\mu}(z) &:=  \sum_{\ell \ge
                   1} R^\mu_\ell \, z^{\ell-1} = G^{-1}_{\mu}(z)-\frac{1}{z},
                             \label{eq:Free}\\
    S_{\mu}(z) &:=  \sum_{\ell \ge
                 1} S^\mu_\ell \, z^{-\ell} = \log \left(z G_{\mu}(z)\right),
     \label{eq:Stransform}
\end{align}
where $G^{-1}_{\mu}(z) $ denotes the (formal) compositional inverse
of $G_\mu$. Observe that the first elements in these sequences 
of observables are the same: $B^\mu_1 = R^\mu_1 = S^\mu_1 = M^\mu_1$.

\begin{proposition}
  \label{prop:relations}
  For any integer $\ell \geq 1$,
  \begin{align}
    M^\mu_\ell &= \sum_{n \geq 1}\sum_{k_1,\dots, k_n \geq
    1\atop k_1+\cdots k_n = \ell}B^\mu_{k_1}\cdots B^\mu_{k_n},\label{eq:Moment-Boolean}\\
    S^\mu_\ell &= \sum_{n \geq 1}\frac{1}{n}\sum_{k_1,\dots, k_n \geq
    1\atop k_1+\cdots k_n = \ell}B^\mu_{k_1}\cdots B^\mu_{k_n}. \label{eq:S-Boolean}
    \end{align}
  \end{proposition}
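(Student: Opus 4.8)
The plan is to reduce both identities to elementary manipulations of formal power series in the variable $w := 1/z$. First I would introduce the generating series
\[
M(w) := \sum_{\ell\ge 1} M^\mu_\ell\, w^\ell,\qquad
B(w) := \sum_{\ell\ge 1} B^\mu_\ell\, w^\ell,\qquad
S(w) := \sum_{\ell\ge 1} S^\mu_\ell\, w^\ell,
\]
each an element of the maximal ideal $w\,\R[[w]]$ with vanishing constant term. Rewriting the definitions of $G_\mu$, $B_\mu$ and $S_\mu$ under $z\mapsto 1/w$ gives $z\,G_\mu(z) = 1 + M(w)$, $B_\mu(z) = z\,B(w)$ and $S_\mu(z) = S(w)$, so that the defining relations \eqref{eq:Boolean} and \eqref{eq:Stransform} become
\[
B(w) = 1 - \frac{1}{1+M(w)} = \frac{M(w)}{1+M(w)},
\qquad
S(w) = \log\bigl(1+M(w)\bigr).
\]

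The second step is to invert the first of these relations. Solving $B(1+M) = M$ for $M$ yields
\[
M(w) = \frac{B(w)}{1-B(w)},\qquad\text{equivalently}\qquad 1+M(w) = \frac{1}{1-B(w)}.
\]
Both expressions are legitimate in $\R[[w]]$ because $B(w)$ has no constant term, so $1 - B(w)$ is invertible; this is the only point requiring any care, and it is immediate. Substituting the second expression into the formula for $S(w)$ gives $S(w) = -\log\bigl(1-B(w)\bigr)$.

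The final step is coefficient extraction. From $M = B/(1-B) = \sum_{n\ge 1} B^n$ together with the multinomial expansion
\[
B(w)^n = \sum_{k_1,\dots,k_n\ge 1} B^\mu_{k_1}\cdots B^\mu_{k_n}\, w^{k_1+\cdots+k_n},
\]
the coefficient of $w^\ell$ on the right-hand side is exactly the sum appearing in \eqref{eq:Moment-Boolean}, which proves the first identity. Likewise $S = -\log(1-B) = \sum_{n\ge 1} B^n/n$, and reading off the coefficient of $w^\ell$ yields \eqref{eq:S-Boolean}, the extra factor $1/n$ coming directly from the logarithm.

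I do not expect a genuine obstacle here: everything takes place in the formal power series ring $\R[[w]]$, the series $B(w)$ and $M(w)$ lie in $w\,\R[[w]]$, and the geometric and logarithmic expansions converge formally term by term. The only bookkeeping to watch is the dictionary between the $z$-expansions in \eqref{eq:Moment}, \eqref{eq:Boolean} and \eqref{eq:Stransform} and the $w$-series above, together with the observation that summing over compositions $(k_1,\dots,k_n)$ of $\ell$ is precisely what the $n$-fold product $B(w)^n$ produces.
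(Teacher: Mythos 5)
Your proof is correct and follows essentially the same route as the paper: both reduce \eqref{eq:Moment-Boolean} and \eqref{eq:S-Boolean} to the formal identities $1+M = 1/(1-B)$ and $S = -\log(1-B)$ (the paper writes these directly in the variable $z^{-1}$ as $zG_\mu(z) = 1/(1-z^{-1}B_\mu(z))$ and its logarithm) and then extracts coefficients via the geometric and logarithmic series. The substitution $w = 1/z$ is purely cosmetic, so there is nothing substantive to distinguish the two arguments.
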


\begin{proof}
In order to prove \eqref{eq:Moment-Boolean}, we use~\eqref{eq:Boolean}:
\[ M^\mu_\ell = [z^{-\ell-1}]G_\mu(z) = [z^{-\ell-1}]\frac{1}{z-B_{\mu}(z)}
= [z^{-\ell}]\frac{1}{1-z^{-1}B_{\mu}(z)} = [z^{-\ell}] \sum_{n\ge 1}(z^{-1}B_{\mu}(z))^n,\]
which equals the right hand side of~\eqref{eq:Moment-Boolean}. Similarly, to prove~\eqref{eq:S-Boolean}, we use \eqref{eq:Boolean} and \eqref{eq:Stransform} to get
\[ S^\mu_\ell = [z^{-\ell}] \log \left(z G_{\mu}(z)\right) 
= [z^{-\ell}]\log \bigg(\frac{1}{1- z^{-1}B_{\mu}(z)}\bigg)
= [z^{-\ell}]\sum_{n\ge 1}{\frac{1}{n}(z^{-1}B_\mu(z))^n},\]
which equals the right hand side of \eqref{eq:S-Boolean}.
\end{proof}

The moments, Boolean cumulants, free cumulants and fundamental functionals of shape associated with a transition measure $\mu_\lambda$ will be denoted by $M_\ell(\lambda)$, $B_\ell(\lambda)$, $R_\ell(\lambda)$, $S_\ell(\lambda)$, respectively, and we think of them as functions on the set of Young diagrams.
We will also use the analogous notations if the Young diagram $\la$ is replaced by an anisotropic diagram $T_{w, h}\la$.
We remark that, for all $X = M,B,R,S$, and all anisotropic diagrams $T_{w, h}\la$, we have
\begin{equation}\label{eq:values_X}
X_1(T_{w,h}\la) = 0,\qquad X_2(T_{w,h}\la) = wh|\la|;
\end{equation}
see~\cite[Sec.~2.6]{Kerov1993transition} for more details.
Finally, we have the following \emph{scaling property}:
\begin{equation}\label{eq:scaling0}
X_\ell(T_{cw, ch}\la) = c^\ell X_\ell(T_{w, h}\la), \textrm{ for all }c>0 \textrm{ and }\ell\in\Z_{\ge 1}.
\end{equation}

In this section, we will be interested in the following scaling of random partitions:
\begin{equation}\label{eq:Lambda2}
\Lambda_{(\alpha;u)} := T_{\frac{\alpha}{u},\frac{1}{u}}\lambda,
\end{equation}
for some $u>0$; note that this rescaled partition has boxes whose ratio of dimensions is $\alpha$.
The parameter $u$ will be chosen later, depending on the random partition ensemble, in such a way that the area of the rescaled partition $\Lambda_{(\alpha;u)}$ is equal to $1$.
With this new notation, the scaling property implies in particular that:
\begin{equation}\label{eq:scaling}
X_\ell(\Lambda_{(\alpha;u)}) = u^{-\ell}X_\ell(\Lambda_{(\alpha;1)}).
\end{equation}

\subsection{The Markov--Krein correspondence}\label{subsub:Markov-Krein}

The relation between the profile $\omega_\Lambda$ and the transition measure 
$\mu_\Lambda$ of an anisotropic diagram $\Lambda$ is given by \eqref{eq:Cauchy-St} and this can be restated in the following form, known as the \emph{Markov--Krein correspondence}:
\begin{equation}\label{eq:KreinMarkov}
\frac{1}{z}\exp\int_\R\frac{1}{x-z}\left(\frac{\omega_\Lambda(x)-|x|}{2}\right)'dx 
= \int_\R \frac{d\mu_\Lambda(x)}{z-x}.
\end{equation}

Applying~\eqref{eq:Stransform} to the above equation, we obtain the following formula for the fundamental functional of shape:
    \begin{equation}
    \label{eq:SFormula}
   S_n(\Lambda) = -\int_\R x^{n-1}\left(\frac{\omega_\Lambda(x)-|x|}{2}\right)'
    dx = (n-1)\int_\R x^{n-2}\left(\frac{\omega_\Lambda(x)-|x|}{2}\right)dx.
  \end{equation}

The Markov--Krein correspondence works in a much wider generality, and we briefly state it here in the most general framework that we are aware of.\footnote{Typically the Markov--Krein correspondence is stated for probability measures with compact support, e.g.~\cite{Kerov1993}. We follow the more general framework from~\cite{Meliot2017}, which is based on Kerov's work~\cite{Kerov1998}}

  We call \emph{generalized continuous Young diagram} a continuous function
  $\omega\colon \R \to \R$ with Lipschitz constant $1$ and such that
  the following integrals converge:
  \begin{equation}
    \label{eq:GenContYoung}
    \int_{-\infty}^{0}\frac{1+\omega'(x)}{1-x}dx < \infty; \ \ \ \
    \int_{0}^{\infty}\frac{1-\omega'(x)}{1+x}dx < \infty.
  \end{equation}

For instance, if $\omega(x)$ is a zig-zag line with slopes $\pm 1$, such that $\omega(x)=-x$ whenever $x\ll 0$, and whose local minima and maxima are achieved at points in the interlacing sequence $x_1 < y_1 < x_2 < y_2 <\cdots$, then the condition~\eqref{eq:GenContYoung} is equivalent to the condition~\eqref{eq:KerovCondInfinite} (see~\cite[Section 1.3]{Kerov1998}).

  We denote by $\Y^1$ the set of equivalence classes of generalized
  continuous Young diagrams, where $\omega_1 \sim \omega_2 \iff
  \omega_1' \equiv \omega_2'$. The space of generalized continuous Young diagrams $\Y^1$
  is a Polish space (separable and metrizable complete space) with the
  following topology: we say that $\omega_n \to \omega$ if the following integrals are uniformly bounded:
    \[ \sup_{n\in\Z_{\ge 0}}\left( \int_{-\infty}^{0}\frac{1+\omega_n'(x)}{1-x}dx \right) < \infty; \ \ \ \
    \sup_{n\in\Z_{\ge 0}}\left(\int_{0}^{\infty}\frac{1-\omega_n'(x)}{1+x}dx\right) < \infty,  \]
and there exist representatives $\omega_n(x)$ and
    $\omega(x)$ such that for any $N >0$ one
    has $\lim_{n \to \infty}\|\omega_n-\omega\|_{L^{\infty}([-N,N])}
      = 0$.

We define two other Polish spaces (see~\cite{Meliot2017} for the details): $\mathscr{M}^{1}$ the space of
probability measures on the real line $\R$ with the topology of weak
convergence and $\mathscr{N}^{1}$ the space of holomorphic functions $h$
on the Poinc\'are half-space
\[ \mathbb{H} := \{z \in \C\colon \Im(z) > 0\}\]
which take values with negative imaginary parts and such that $\lim_{y
\to +\infty} \mathbf{i} yh(\mathbf{i} y) = 1$ if $\mathbf{i} = \sqrt{-1}$ (the topology is given by the proper convergence
i.e.~$h_n \to h$ if $h_n$ converges uniformly to $h$ on all compact
subsets of $\mathbb{H}$ and
\[ \lim_{N \to \infty}\sup_{n \in \Z_{\ge 0}\atop y\geq N}| \mathbf{i}yh_n( \mathbf{i} y)-1|=0). \]

Then, the following Markov--Krein correspondence holds true.

\begin{theorem}{\cite[Theorem 7.25]{Meliot2017}}
  \label{theo:Meliot}
  There is a sequence of homeomorphisms
  \[ \omega \in \Y^1 \leftrightarrow G_\omega \in \mathscr{N}^{1}
    \leftrightarrow \mu_\omega \in \mathscr{M}^{1}\]
  given by:
\begin{equation}\label{MK_moments}
G_\omega(z) = \int_\R \frac{d\mu_\omega(x)}{z-x} = \frac{1}{z}\exp\int_\R\frac{1}{x-z}\left(\frac{\omega(x)-|x|}{2}\right)' dx.
\end{equation}
\end{theorem}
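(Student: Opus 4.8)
Although \cref{theo:Meliot} is quoted from \cite{Meliot2017}, here is how one would prove it. The plan is to factor the asserted chain of homeomorphisms through the middle space $\mathscr{N}^{1}$, treating the two links $\mathscr{N}^{1}\leftrightarrow\mathscr{M}^{1}$ and $\Y^1\leftrightarrow\mathscr{N}^{1}$ by separate arguments. The first link is classical complex analysis, while the second is where the exponential formula \eqref{MK_moments} does its work; the bridge between them is the observation that this formula is elementary and explicit on piecewise-linear diagrams, which form a dense family.

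\textbf{The link $\mathscr{N}^{1}\leftrightarrow\mathscr{M}^{1}$.} I would invoke the Nevanlinna--Herglotz representation theorem: a function $h$ holomorphic on $\mathbb{H}$ with $\Im h<0$ is, up to sign, a Pick function, hence admits an integral representation with a nonnegative coefficient on the linear term and a finite positive representing measure. The normalization $\lim_{y\to+\infty}\mathbf{i}yh(\mathbf{i}y)=1$ singles out, among all such functions, exactly the Cauchy transforms $G_\mu(z)=\int_\R\frac{d\mu(x)}{z-x}$ of probability measures $\mu$: it forces the linear coefficient and the constant to vanish and the representing measure to have total mass $1$. Injectivity of $\mu\mapsto G_\mu$ and the explicit inverse are provided by the Stieltjes inversion formula, and the homeomorphism property is the standard equivalence between weak convergence of $\mu_n$ and locally uniform (proper) convergence of $G_{\mu_n}$ on $\mathbb{H}$; the uniform bound $\sup_{n,\,y\ge N}|\mathbf{i}yG_{\mu_n}(\mathbf{i}y)-1|\to 0$ built into the topology of $\mathscr{N}^{1}$ is precisely what rules out escape of mass and upgrades vague to weak convergence.

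\textbf{The link $\Y^1\leftrightarrow\mathscr{N}^{1}$.} Write $\psi(x):=\frac{\omega(x)-|x|}{2}$, so that $\psi'(x)=\frac{\omega'(x)-\sgn(x)}{2}\in[-1,1]$, with additional sign constraints coming from $|\omega'|\le 1$; the convergence of $\int_\R\frac{\psi'(x)}{x-z}\,dx$ for $z\in\mathbb{H}$ is local near any finite point because $|\psi'|\le 1$, and at $\pm\infty$ is governed exactly by the two integrals in \eqref{eq:GenContYoung}. The key reduction is to piecewise-linear diagrams with finitely many corners: there $\omega''$ (in the distributional sense) equals $2\big(\sum_i\delta_{x_i}-\sum_i\delta_{y_i}\big)$ at the local minima and maxima, so integration by parts collapses $\int_\R\frac{\psi'(x)}{x-z}\,dx$ into $\log\prod_i\frac{z-y_i}{z-x_i}-\log z$, and \eqref{MK_moments} reproduces the product formula \eqref{eq:Cauchy-St}. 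Interlacing $x_1<y_1<x_2<\cdots$ makes this the Cauchy transform of a finitely supported probability measure with strictly positive weights, so the correspondence, including the assertion $G_\omega\in\mathscr{N}^{1}$, holds verbatim on this dense family. Injectivity up to the equivalence $\omega_1'\equiv\omega_2'$ follows by recovering $\psi'$ from boundary values via Sokhotski--Plemelj, since $\Im\log\big(zG_\omega(z)\big)\to\pi\psi'(x)$ as $z\to x+\mathbf{i}0$.

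\textbf{Passage to the limit and the main obstacle.} Finally I would extend from piecewise-linear diagrams to all of $\Y^1$ by density and continuity, verifying that each of the three maps is continuous in both directions. The forward continuity of $\omega\mapsto G_\omega$ reduces to dominated convergence for the defining integral, with the dominating control supplied by the uniform integrability bounds in the topology of $\Y^1$; the reverse continuity and the $\mathscr{M}^{1}$-side are handled by the Stieltjes-transform/weak-convergence dictionary of the first step. I expect the main obstacle to be exactly this limiting step: one must ensure that the uniform tail bounds defining the topologies of $\Y^1$ and $\mathscr{N}^{1}$ prevent any escape of mass, so that weak limits of transition measures remain probability measures and the recovered profiles remain genuine generalized continuous Young diagrams satisfying \eqref{eq:GenContYoung} (equivalently \eqref{eq:KerovCondInfinite}). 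Verifying continuity of the inverse maps, rather than merely of the forward maps, is the delicate point, and it is precisely here that the carefully chosen topologies on the three Polish spaces become indispensable.
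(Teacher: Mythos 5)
The paper does not prove this statement: \cref{theo:Meliot} is imported verbatim from M\'eliot's book, so there is no internal proof to compare against. Your outline follows the same classical route as the cited source (going back to Kerov): the Nevanlinna--Herglotz/Stieltjes dictionary for the link $\mathscr{N}^{1}\leftrightarrow\mathscr{M}^{1}$, an explicit verification on piecewise-linear profiles where the exponential formula collapses to the rational expression \eqref{eq:Cauchy-St}, and a density-plus-bicontinuity argument to reach all of $\Y^1$. Structurally, then, your proposal is a faithful reconstruction of the standard proof rather than a different route.

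Two points deserve attention. First, a concrete sign slip in the piecewise-linear computation: with $\psi(x)=\frac{\omega(x)-|x|}{2}$, the distributional second derivative is $\psi''=\sum_{i}\delta_{x_i}-\sum_{j}\delta_{y_j}-\delta_0$, where the term $-\delta_0$ comes from $-|x|/2$ and must not be dropped. Integration by parts then gives
\begin{equation*}
\int_\R\frac{\psi'(x)}{x-z}\,dx \;=\; \log z+\log\frac{\prod_{j=1}^{k}(z-y_j)}{\prod_{i=1}^{k+1}(z-x_i)}
\end{equation*}
(up to branch constants), with $+\log z$ rather than the $-\log z$ you wrote; only then does $G_\omega(z)=\frac{1}{z}\exp(\cdots)$ reproduce \eqref{eq:Cauchy-St}. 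With your sign, $G_\omega$ acquires an extra factor $z^{-2}$, decays like $z^{-3}$ at infinity, and violates the normalization $\mathbf{i}yG(\mathbf{i}y)\to 1$ that defines $\mathscr{N}^{1}$, so the claimed match with \eqref{eq:Cauchy-St} would fail as stated. Second, the passage from piecewise-linear diagrams to general elements of $\Y^1$ --- density in the stated topology, continuity of all maps in both directions, and the tightness argument preventing escape of mass --- is precisely where the real content of M\'eliot's Theorem 7.25 lies; you correctly identify it as the main obstacle but do not carry it out, so your text is a sound strategy outline that is complete only modulo this final step.
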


Next, we will study the typical shape of the rescaled diagram $\Lambda_{(\alpha;u)}$ (recall the rescaling~\eqref{eq:Lambda2}) when $\lambda$ is sampled from the Jack--Thoma measure, and the Poisson parameter tends to infinity.
We prove that in various regimes such a typical shape $\Lambda_{g;\vv}$ exists, the fluctuations around this shape are Gaussian and we describe the limit shape via the Markov--Krein correspondence (we give an explicit combinatorial formula for the associated Cauchy transform in terms of certain lattice paths).

\subsection{Łukasiewicz paths and Łukasiewicz ribbon paths}\label{Luk_sec}

An \emph{excursion} $\Gamma$ of length $\ell$ is a sequence of points $\Gamma = (w_0,\dots,w_{\ell})$ on $(\Z_{\geq 0})^2$ such that $w_j = (j,y_j)$, with $y_0=y_\ell =0$, $y_1,\dots,y_{\ell-1} \in \Z_{\geq 0}$, and it is uniquely
encoded by the sequence of its \emph{steps} $e_j := w_{j}-w_{j-1} = 
(1,y_{j}-y_{j-1})$. Informally speaking, an excursion is a directed lattice path
with steps of the form $(1,k), k \in\Z$, starting at
$(0,0)$, finishing at $(\ell,0)$, and that stays in the first
quadrant. A step $e=(1,0)$ is called a \emph{horizontal step}, while
steps $e=(1,k),\, k \in \Z_{>0}$ (resp.~$k \in \Z_{<0}$), are
called \emph{up steps} (resp.~\emph{down steps}) \emph{of degree $k$}, 
see~\cref{fig:Lukasiewicz} for examples.

For a given excursion $\Gamma = (w_0,\dots,w_{\ell})$, the set of
points $\SSS(\Gamma):=\{w_1,w_2,\dots,w_\ell\}$ (not counting the origin $w_0=(0, 0)$) 
naturally decomposes into pairwise disjoint subsets
consisting of those points which are preceded by a horizontal, down step or up step:
\[\SSS(\Gamma) = \SSS_{\tiny\rightarrow}(\Gamma)\cup
  \SSS_{\tiny\searrow} (\Gamma)\cup \SSS_{\tiny\nearrow}(\Gamma).\]
Denote by $\SSS_n(\Gamma)$, $n\in\Z\setminus\{0\}$, 
the set of points preceded by a step $(1, n)$, i.e.~$\SSS_n(\Gamma)$ 
is the set of points preceded by an up step 
of degree $n$, if $n>0$, or by a down step of degree $-n$, if $n<0$.
Also denote by $\SSS^i(\Gamma)$ the subset of points with second coordinate equal to $i$, i.e.
\[ \SSS^i(\Gamma) := \{w_j = (j,y_j)\in\SSS(\Gamma) \,\mid\, y_j=i\}.\]
We say that points $w_j\in\SSS^i(\Gamma)$ are at \emph{height $i$}.
The set $\SSS^0(\Gamma)$ will play an especially prominent role 
(e.g.~see \cref{theo:CLTCDM} below); 
note that the origin $w_0=(0,0)$ does not belong to $\SSS^0(\Gamma)$.

Define the sets $\SSS^i_{\tiny\rightarrow}(\Gamma),
\SSS^i_{\tiny\searrow}(\Gamma), \SSS^i_{\tiny\nearrow}(\Gamma),
\SSS^i_n(\Gamma)$, $n\in\Z\setminus\{0\}$, analogously.

\begin{figure}[tbp]
\begin{center}
    \includegraphics[width = 0.99\linewidth]{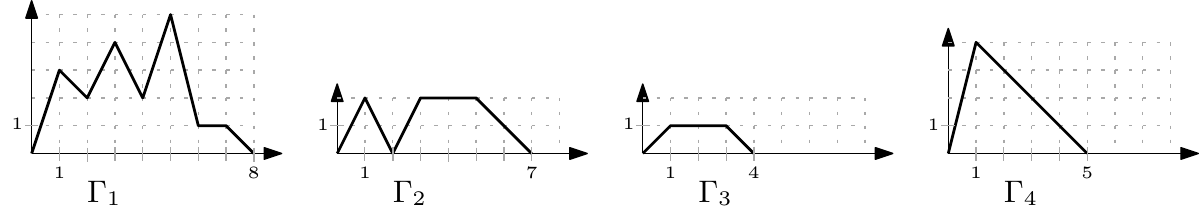}
	\end{center}
\caption{Four excursions $\Gamma_1,\Gamma_2,\Gamma_3$ and
$\Gamma_4$. Note that $\Gamma_3$ and $\Gamma_4$ are Łukasiewicz paths,
but $\Gamma_1$ and $\Gamma_2$ are not.}
\label{fig:Lukasiewicz}
\end{figure}

Summing up, we have the following natural decompositions into disjoint subsets:
\[ \SSS(\Gamma) = \bigcup_{i=0}^\infty \,\Bigl\{ \SSS^i_{\tiny\rightarrow}(\Gamma)\cup
  \SSS^i_{\tiny\searrow} (\Gamma)\cup \SSS^i_{\tiny\nearrow} (\Gamma) \Bigr\} =
\bigcup_{i=0}^\infty \,\Bigl\{\SSS^i_{\tiny\rightarrow}(\Gamma)\cup \bigcup_{n=1}^\infty
\bigl( \SSS^i_{-n} (\Gamma)\cup \SSS^i_n(\Gamma) \bigr) \Bigr\}.\]
For an ordered tuple $\vec{\Gamma} = (\Gamma_1,\dots,\Gamma_k)$ of $k$ excursions 
$\Gamma_i$, we will treat $\vec{\Gamma}$ itself as the excursion
obtained by concatenating $\Gamma_1,\dots,\Gamma_k$.
We define $\SSS(\vec{\Gamma}),
\SSS_{\tiny\rightarrow}^i(\vec{\Gamma}),
\SSS_n^i(\vec{\Gamma})$, etc.~in the same way as before, by treating 
$\vec{\Gamma}$ as the excursion obtained from the concatenation of $\Gamma_1,\dots,\Gamma_k$.
For example, if $\vec{\Gamma} = (w_0, w_1, \dots, w_\ell)$, then 
$\SSS(\vec{\Gamma}) = \{w_1, \dots, w_\ell\}$.
We say that \emph{$p = (w_i,w_j)$ is a pairing of degree $n > 0$} if $w_i \in \SSS_{-n}(\vec{\Gamma}), w_j\in\SSS_n(\vec{\Gamma})$, and $w_i$ appears
before $w_j$ in $\vec{\Gamma}$, i.e.~$i<j$. 
See~\cref{fig:Lukasiewicz} and~\cref{fig:LukasiewiczEx} for an example.

\begin{figure}[tbp]
\begin{center}
    \includegraphics[width = 0.99\linewidth]{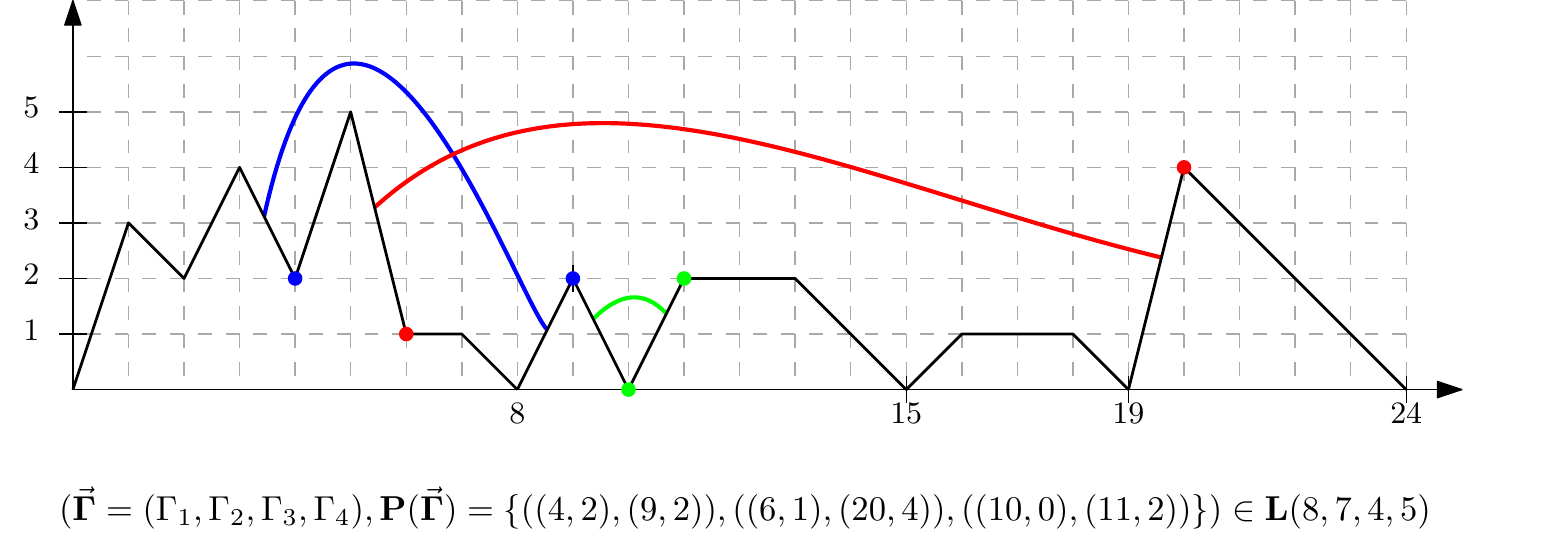}
\end{center}
\caption{A ribbon path $\ribbon = (\Gamma_1,\Gamma_2,\Gamma_3,\Gamma_4)$ on $4$ sites of lengths $8, 7, 4, 5$, which is also a Łukasiewicz ribbon path belonging to $\mathbf{L}(8, 7, 4, 5)$.
The three pairs of dots of different colors (red, green and blue)
are examples of pairings, and the colored arcs join 
the pairs of preceding steps of the same degree. The degrees of the red, green and blue
pairings are $4$, $2$ and $2$, respectively.}
\label{fig:LukasiewiczEx}
\end{figure}

By definition, a \emph{ribbon path on $k$ sites of lengths $\ell_1,\dots,\ell_k$} 
is a pair $\ribbon = (\vec{\Gamma},\,\mathbf{P}(\ribbon))$ consisting of an ordered tuple 
$\vec{\Gamma}$ of $k$ excursions $\Gamma_1,\dots,\Gamma_k$ of lengths $\ell_1, \dots, \ell_k$, 
respectively, and a set $\mathbf{P}(\ribbon)$ of pairwise disjoint
pairings $p_1, \dots, p_q$ on $\vec{\Gamma}$. This notion was first introduced by the third author in \cite{Moll2023}. We denote by $\mathbf{P}_n(\ribbon)\subset\mathbf{P}(\ribbon)$
the subset of pairings of degree $n$.
For any $n\in\Z\setminus\{0\}$, define 
$\SSS_n(\ribbon) := \SSS_n(\vec{\Gamma})\setminus\mathbf{P}_{|n|}(\ribbon)$ 
as the set obtained by removing the points belonging to $\mathbf{P}_{|n|}(\ribbon)$ from 
$\SSS_n(\vec{\Gamma})$; likewise, $\SSS_n^i(\ribbon) := \SSS_n^i(\vec{\Gamma}) \setminus \mathbf{P}_{|n|}(\ribbon)$.
Finally, define $\SSS(\ribbon) := \SSS(\vec{\Gamma})$ and $\SSS_{\to}^i(\ribbon) := \SSS_{\to}^i(\vec{\Gamma})$.
Then we have the following decomposition:\footnote{In this paragraph, 
we abused the notation: the set of pairings $\mathbf{P}_n(\ribbon)$ contains pairs of
  distinct points $(w_i,w_j)$, but we implicitly treated such pairs as the 2-element sets $\{w_i, w_j\}$, for simplicity of notation.}
\begin{equation}\label{decompos_S}
\SSS(\ribbon) = \bigcup_{n=1}^\infty\mathbf{P}_n(\ribbon)\cup \bigcup_{i=0}^\infty\,\Bigl\{ \SSS^i_{\tiny\rightarrow}(\ribbon)\cup
  \bigcup_{n=1}^\infty \bigl( \SSS^i_{-n} (\ribbon)\cup \SSS^i_{n}(\ribbon) \bigr) \Bigr\}.
\end{equation}

\begin{example}
The decomposition \eqref{decompos_S} for the ribbon path $\ribbon$ from \cref{fig:LukasiewiczEx} is given by:
\begin{align*}
\mathbf{P}_2(\ribbon) &= \bigl\{\bigl( (4,2),(9,2) \bigr), \bigl((10,0),(11,2)\bigr)\bigr\},\quad
\mathbf{P}_4(\ribbon) = \bigl\{ \bigl( (6,1),(20,4) \bigr) \bigr\},\\
\SSS^0_{-1}(\ribbon) &= \{(8,0), (15, 0), (19, 0), (24, 0)\},\\
\SSS^1_{\tiny\rightarrow}(\ribbon) &= \{(7,1),(17,1),(18,1)\},\quad
\SSS^1_{-1}(\ribbon) = \{(14,1),(23,1)\},\quad
\SSS^1_{1}(\ribbon) = \{(16,1)\},\\
\SSS^2_{\tiny\rightarrow}(\ribbon) &= \{(12,2),(13,2)\},\quad
\SSS^2_{-1}(\ribbon) = \{(2,2),(22,2)\},\\
\SSS^3_{-1}(\ribbon) &= \{(21,3)\},\quad \SSS^3_{3}(\ribbon) = \{(1,3)\},\quad
\SSS^4_{2}(\ribbon) = \{(3,4)\},\quad \SSS^5_{3}(\ribbon) = \{(5, 5)\},
\end{align*}
and the remaining sets in the decomposition from \cref{decompos_S} are
empty.
\end{example}

\begin{definition}[Łukasiewicz paths and Łukasiewicz ribbon paths]
{\ } \\
\indent (i) An excursion $\Gamma$ without horizontal steps at height $0$ 
and whose down steps all have degree $1$ 
is called a \textbf{Łukasiewicz path}.
In symbols, if $\SSS^0_{\tiny\rightarrow}(\Gamma)=\SSS_{-n}(\Gamma)=\emptyset$, for all $n\ge 2$, then $\Gamma$ is a Łukasiewicz path.
We denote $\mathbf{L}_0(\ell)$ the set of Łukasiewicz paths of length $\ell$.

(ii) A ribbon path $\ribbon$ without horizontal steps at height $0$ and 
whose non-paired down steps all have degree $1$ is called 
a \textbf{Łukasiewicz ribbon path}.
In other words, if $\SSS^0_{\tiny\rightarrow}(\ribbon)=\SSS_{-n}(\ribbon) = \emptyset$, for all $n\ge 2$, then $\ribbon$ 
is a Łukasiewicz ribbon path.
We denote $\mathbf{L}(\ell_1,\dots,\ell_k)$ the set of 
Łukasiewicz ribbon paths on $k$ sites of lengths $\ell_1,\dots,\ell_k$.
\end{definition}

Łukasiewicz paths are classical objects in combinatorics, see e.g.~\cite{FlajoletSedgewick2009}, 
but observe that, contrary to the literature, our definition \emph{does not allow horizontal steps at height $0$}.

\subsection{The LLN and CLT for Jack--Thoma measures}\label{subsec:LLNCLTCDM}

From~\cref{ex:CDMPois}, recall:

\begin{definition}\label{plancherel_jack}
Let $\alpha > 0$, $u>0$, $\vv=(v_1, v_2,\cdots)\in\R^\infty$ be such that $J_\la^\a(u\cdot\vv)\ge 0$, for all $\la\in\Y$. Then the \textbf{Jack--Thoma measure}, denoted  by $M^\a_{u;\vv}$, is the probability measure on the set of all Young
diagrams $\Y$ defined by
\begin{equation}\label{eq:DefOneJack}
M^\a_{u;\vv}(\la) := \exp\left(-\frac{u^2\cdot v_1}{\alpha}\right)\frac{J_\la^\a(u\cdot\vv)\cdot
u^{|\la|}}{j^\a_\la},\quad\forall\,\la\in\Y.
\end{equation}
Recall that $j^\a_\la$ is the quantity from \cref{j_lambda}.
\end{definition}

\begin{remark}
For any fixed $\alpha>0$, the parameters $u,\vv$, such that $J_\la^\a(u\cdot\vv)\ge 0$, for all $\la\in\Y$, are classified by \cref{thm:KOO}, due to Kerov--Okounkov--Olshanski.
This theorem generalizes Thoma's classification of extremal characters
of the infinite symmetric group~\cite{Thoma1964}, which is equivalent to the $\alpha=1$ case; 
this connection to Thoma's theorem motivated the name of the measures $M^\a_{u;\vv}$.
\end{remark}

\begin{example}\label{exam:size_jack_thoma}
As $M^\a_{u;\vv}$ is a special case Jack measure, we will be able to calculate various statistics of $M^\a_{u;\vv}$-random partitions by known techniques; see \cref{sec:proofs_CDM}.
For example, the simplest equation coming from \cref{theo:Expectations} shows that the expected value of $B_2(\Lambda_{(\alpha;u)})$, where $\Lambda_{(\alpha;u)}=T_{\frac{\al}{u},\frac{1}{u}}\la$ and $\la$ is $M^\a_{u;\vv}$-distributed, is equal to $v_1$.
Since $B_2(\Lambda_{(\alpha;u)}) = \frac{\alpha|\la|}{u^2}$, due to~\eqref{eq:values_X}, it follows that the expected value of the size of a $M^\a_{u;\vv}$-distributed random partition $\la$ is equal to:
\[
\E^\a_{u; \vv}\big[ |\la| \big] = \frac{u^2v_1}{\al}.
\]
\end{example}

From now on, let us assume in this section that $(\alpha(d))_{d\ge 1}$, $(u(d))_{d\ge 1}$ are
sequences of positive real numbers. Let $\alpha$ and $u$ denote 
the general $d$-dependent terms in the corresponding sequences: $\alpha = \alpha(d),\, u = u(d)$. We will impose the following assumption:

\begin{assumption}[Assumptions on parameters $g,g',\vv$, and sequences $\al(d),\,u(d)$]\label{main_assumption}
The sequences $(\alpha(d))_{d\ge 1}$, $(u(d))_{d\ge 1}$ of positive reals and the infinite tuple $\vv=(v_1,v_2,\cdots)\in\R^\infty$, with $v_1>0$, are such that 
$M^\a_{u;\vv}(\la)\geq 0$, for all $\la\in\Y,\,d\in\Z_{\ge 1}$.
Moreover, assume that there exist $g,g'\in\R$ such that one of the following two asymptotic conditions holds:

\smallskip

(i) Weak version:
\begin{equation}\label{eq:assumptions'_weak}
\frac{\al}{u^2} = o(1),\quad \frac{\al-1}{u} = g+o(1),\quad \text{as }d\to\infty.
\end{equation}

(ii) Refined version:
\begin{equation}\label{eq:assumptions'}
\frac{\al}{u^2} = \frac{1}{d} + o\left(\frac{1}{d}\right),\quad
\frac{\al-1}{u} = g+\frac{g'}{\sqrt{d}}+o\left(\frac{1}{\sqrt{d}}\right),\quad \text{as }d\to\infty.
\end{equation}
\end{assumption}

\begin{remark}\label{rem:Assumpt}
The weak version of \cref{main_assumption} is the minimal requirement needed for the LLN in~\cref{theo:LLNCDM} below, while the refined version is needed for the CLT in~\cref{theo:CLTCDM}. These conditions should be compared to the AFP and enhanced AFP from \cref{def:approx-factorization-charactersA}.
\end{remark}

In the next section, we investigate the existence of fixed parameters $g, g', \vv$ and 
sequences $\al(d),\, u(d)$ satisfying the refined version of \cref{main_assumption}, see e.g.~\cref{prop:CDMPar2}.

Our main theorems in this section study the limits of the rescaled random Young diagrams $\Lambda_{(\alpha;u\sqrt{v_1})}$, given by~\eqref{eq:Lambda2} and sampled w.r.t.~the Jack--Thoma measures, as $d\to\infty$.
Let us mention that the choice of scaling is so that the area of the rescaled Young diagram $\Lambda_{(\alpha;u\sqrt{v_1})}$ is equal to $1$, as seen from \cref{exam:size_jack_thoma}.
Moreover, note that the area of a rescaled box in $\Lambda_{(\alpha;u\sqrt{v_1})}$ equals $\,\frac{\al}{u^2v_1}$, 
whereas the difference between the box's width and height is $\,\frac{\al-1}{u\sqrt{v_1}}$; these points show that \cref{main_assumption} is geometrically motivated.
The following LLN and CLT are detailed versions of Theorems \ref{theo:LLNCDM_intro} and \ref{theo:CLTCDM_intro} from the introduction.

\begin{theorem}[Law of Large Numbers]\label{theo:LLNCDM}

Fix $g\in\R,\,\vv=(v_1, v_2,\cdots)\in\R^\infty$, and suppose that the weak version of \cref{main_assumption} is satisfied.
Then there exists a deterministic function $\omega_{\Lambda_{g;\vv}}\colon\R\to\R_{\ge 0}$ such that 
\begin{equation}\label{eq:limitCDM}
\lim_{d\to\infty}\omega_{\Lambda_{(\alpha;u\sqrt{v_1})}} = \omega_{\Lambda_{g;\vv}},
\end{equation}
with respect to the supremum norm, in probability.
In other words, for each $\epsilon>0$,
\[
\lim_{d\to\infty}M^\a_{u;\vv}\big( \|\omega_{\Lambda_{(\alpha;u\sqrt{v_1})}}-\omega_{\Lambda_{g;\vv}} \|_\infty > \epsilon \big) =0.
\]

The associated transition measure (via the Markov--Krein correspondence) $\mu_{g; \vv} := \mu_{\Lambda_{g;\vv}}$ is the unique probability measure on $\R$ with moments given by:
\begin{equation}\label{mu_moments}
\int_\R{x^\ell \mu_{g; \vv}(dx)} = v_1^{-\frac{\ell}{2}}\cdot\sum_{\Gamma\in\mathbf{L}_0(\ell)}
\,\prod_{i\ge 1} (i\cdot g)^{|\SSS_{\tiny\rightarrow}^i(\Gamma)|}\,v_i^{\,|\SSS_i(\Gamma)|},\quad\forall\,\ell\in\Z_{\ge 1}.
\end{equation}
\end{theorem}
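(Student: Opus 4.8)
The plan is to follow the classical method of polynomial observables: reduce the convergence of random profiles to the convergence and concentration of a generating family of functionals of the transition measure, and then transport this back to the profile through the Markov--Krein homeomorphism of \cref{theo:Meliot}. Throughout I would work with the rescaled diagram $\Lambda_{(\alpha;u\sqrt{v_1})}$ and its Boolean cumulants $B_\ell(\Lambda_{(\alpha;u\sqrt{v_1})})$, which determine the moments $M_\ell$ via \cref{prop:relations}; the Łukasiewicz paths are tailored to this: their first-passage decomposition at height $0$ turns the moment--Boolean-cumulant relation \eqref{eq:Moment-Boolean} into the enumeration over $\mathbf{L}_0(\ell)$.

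First I would compute expectations. The key input is the combinatorial formula of \cref{theo:Expectations}, which expresses the expectation $\E^\a_{u;\vv}$ of (products of) these observables as a weighted enumeration of Łukasiewicz ribbon paths, where a pairing of degree $i$ carries the factor $\frac{i\alpha}{u^2}$, a horizontal step at height $i$ carries $i\cdot\frac{\alpha-1}{u}$, and a non-paired step of degree $i$ carries $v_i$. Under the weak version of \cref{main_assumption} I would then pass to the limit $d\to\infty$ inside this sum; since for a fixed $\ell$ the set of ribbon paths of total length $\ell$ is finite, no interchange of limit and summation is at issue. Because $\frac{\alpha}{u^2}\to 0$, every ribbon path carrying at least one pairing dies in the limit, so only pairing-free single-site paths survive. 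A pairing-free ribbon path on one site is an excursion, and the Łukasiewicz constraints (no horizontal step at height $0$, all non-paired down steps of degree $1$) select exactly $\mathbf{L}_0(\ell)$; combining $\frac{\alpha-1}{u}\to g$ with the scaling factor $v_1^{-\ell/2}$ produced by \eqref{eq:scaling}, the surviving weight becomes $\prod_{i\ge 1}(ig)^{|\SSS_{\rightarrow}^i(\Gamma)|}\,v_i^{|\SSS_i(\Gamma)|}$. This identifies $\lim_{d\to\infty}\E^\a_{u;\vv}[M_\ell(\Lambda_{(\alpha;u\sqrt{v_1})})]$ with the right-hand side of \eqref{mu_moments}.

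Next I would establish concentration. Invoking \cref{theo:Expectations} again, the joint cumulant of two or more of these observables corresponds to \emph{connected} multi-site ribbon paths, and connectivity forces at least one pairing joining distinct sites; each such pairing contributes a factor $\frac{\alpha}{u^2}\to 0$. Hence $\Var\big(M_\ell(\Lambda_{(\alpha;u\sqrt{v_1})})\big) = O(\alpha/u^2)\to 0$, and likewise all higher cumulants vanish, so each observable converges in probability to its deterministic limit computed above.

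Finally, with a full generating family of observables concentrating around explicit constants, I would feed this into the general convergence statement \cref{theo:GeneralLLN} together with \cref{theo:Meliot}: convergence in probability of all the $M_\ell$ pins down a limiting transition measure $\mu_{g;\vv}$ through its moments, and the homeomorphism promotes this to convergence in probability of the profiles $\omega_{\Lambda_{(\alpha;u\sqrt{v_1})}}$ to $\omega_{\Lambda_{g;\vv}}$ in the supremum norm. The main obstacle lies precisely here: one must verify the hypotheses of \cref{theo:GeneralLLN}, namely tightness of the random diagrams in the Polish space $\Y^1$ (controlling the tail integrals \eqref{eq:GenContYoung} uniformly in $d$) and moment-determinacy of the limit, so that convergence of moments genuinely identifies the shape. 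This is delicate because in the regimes $g\neq 0$ the measure $\mu_{g;\vv}$ is supported on an unbounded set, so the determinacy of the sequence $\big(v_1^{-\ell/2}\sum_{\Gamma\in\mathbf{L}_0(\ell)}\cdots\big)_{\ell\ge 1}$ must be checked rather than taken for granted.
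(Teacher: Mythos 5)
Your overall strategy coincides with the paper's: compute expectations of polynomial observables from \cref{theo:Expectations}, observe that every ribbon path carrying a pairing acquires a factor $\alpha/u^2\to 0$ so that only the pairing-free paths in $\mathbf{L}_0(\ell)$ survive, get concentration from the fact that connected multi-site Łukasiewicz ribbon paths must contain a pairing, and then pass from observables to profiles via \cref{theo:GeneralLLN} and the Markov--Krein correspondence. (One cosmetic difference: the paper runs the argument with the functionals $S_\ell$ rather than your $M_\ell$, because by \eqref{eq:SFormula} the $S_{\ell+2}$ are, up to the factor $\ell+1$, exactly the moments of the density $(\omega_\Lambda(x)-|x|)/2$ that \cref{theo:GeneralLLN} takes as input; your route through $B_\ell$ and $M_\ell$ is equivalent by \cref{prop:relations}, so this is not a real divergence.)

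The genuine gap is that you flag, but do not prove, precisely the two hypotheses of \cref{theo:GeneralLLN}, and neither is a routine verification in this setting. For the uniform tail bound (condition (a)), the paper's proof extracts from the Cauchy identity \eqref{eq:Cauchy} that $|\la|$ is Poisson-distributed with parameter $U=u^2v_1/\alpha$ under $M^\a_{u;\vv}$, deduces $M^\a_{u;\vv}(|\la|>2e\cdot U)\to 0$ by Stirling, and then uses a rectangle argument to bound the profile deviation by $4e/(\sqrt{x^2+8e}+|x|)$ on that event; nothing in your proposal substitutes for this. For moment determinacy, the issue is exactly where you locate it: for $g\neq 0$ the limit measure is discrete with unbounded support (\cref{main_thm_2}), so determinacy cannot be waved through. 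The paper settles it via Carleman's condition (\cref{lemma_moments}, \cref{cor:UniqueP}), and the key point is that Carleman needs a growth bound of the form $|v_k|\le (Ck)^k$, which is \emph{not} among the hypotheses of the theorem: it must be extracted from the nonnegativity assumption $M^\a_{u;\vv}\ge 0$ through the Kerov--Okounkov--Olshanski classification (\cref{thm:KOO}), i.e.\ from the fact that the parameters come from a Thoma-cone point, and then combined with the path count $|\mathbf{L}_0(\ell)|\le 4^\ell$ of \cref{lemma:PathsProp}. Without these two inputs, your argument only yields convergence in probability of the individual observables, not the sup-norm convergence of profiles that the theorem asserts.
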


\begin{theorem}[Central Limit Theorem]\label{theo:CLTCDM}
Fix $g,g'\in\R,\, \vv = (v_1, v_2,\cdots)\in\R^\infty$ with $v_1>0$, and suppose that the refined version of \cref{main_assumption} is satisfied.
Consider the sequence of random functions
\[ \Delta^\a_{u;\vv} :=
\frac{u}{\sqrt{\al}}\bigg(\frac{\omega_{\Lambda_{(\al;u\sqrt{v_1})}} - \omega_{\Lambda_{g;\vv}}}{2}\bigg). \]
and the polynomial observables $\displaystyle\langle \Delta^\a_{u;\vv},\, x^\ell \rangle := \int_{\R}{x^\ell \, \Delta^\a_{u;\vv}(x)dx}$, for $\ell\in\Z_{\ge 0}$.
Then the joint distribution of the random variables
$\langle \Delta^\a_{u;\vv},\, x^{\ell} \rangle$, for $\ell\in\Z_{\ge 0}$, converges weakly, as $d \to \infty$, to a Gaussian distribution with mean vector and covariance matrix given by the following explicit polynomials in $g,g',v_1,v_2,\dots$ with nonnegative coefficients:
\begin{equation}\label{MeanGaussian_2}
\lim_{d\to\infty}\E^\a_{u;\vv}\!\left[ \langle \Delta^\a_{u;\vv},\, x^{\ell-2} \rangle \right] 
= \frac{v_1^{-\frac{\ell}{2}}}{(\ell-1)}\!\times\!\frac{g'\cdot \partial}{\partial g}
\!\left( \sum_{\Gamma\in\mathbf{L}_0(\ell)}{\frac{1}{|\SSS^0(\Gamma)|}\prod_{i\ge 1}(i\cdot g)^{|\SSS_{\tiny\rightarrow}^i(\ribbon)|}\, v_i^{\,|\SSS_i(\ribbon)|}} \right)\!,
\end{equation}
\begin{multline}\label{CovarianceGaussian_2}
\lim_{d\to\infty}\Cov^\a_{u;\vv}\left( \langle \Delta^\a_{u;\vv},\, x^{k-2} \rangle, 
\langle \Delta^\a_{u;\vv},\, x^{l-2} \rangle \right) = \\
= \frac{v_1^{-\frac{k+l}{2}}}{(k-1) (l-1)}\sum_{\ribbon = (\Gamma_1, \Gamma_2)\in\mathbf{L}^{\conn}(k, l)\atop |\mathbf{P}(\ribbon)|=1}\frac{1}{|\mathbf{S}^0(\Gamma_1)||\mathbf{S}^0(\Gamma_2)|}\prod_{i\ge 1} 
(i\cdot g)^{|\SSS_{\tiny\rightarrow}^i(\ribbon)|}\,
i^{\,|\mathbf{P}_i(\ribbon)|} \, v_i^{\,|\SSS_i(\ribbon)|}.
\end{multline}
These formulas are valid for all $\ell,k, l = 2, 3, \cdots$. In~\eqref{CovarianceGaussian_2}, the sum is over ribbon paths $\ribbon=(\Gamma_1,\Gamma_2)\in\mathbf{L}(k,l)$ with exactly one pairing between a vertex from $\SSS(\Gamma_1)$ and a vertex from $\SSS(\Gamma_2)$ (for the definition of more general versions of the set $\mathbf{L}^{\conn}(k, l)$, see~\cref{subsubsec:Cumulants_step2}).
\end{theorem}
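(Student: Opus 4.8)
The plan is to deduce the CLT from a precise asymptotic analysis of the joint cumulants of the \emph{fundamental functionals of shape} $S_\ell$ of the rescaled random diagram $\Lambda_{(\al;u\sqrt{v_1})}$, evaluated through the combinatorial expectation formulas in terms of Łukasiewicz ribbon paths (\cref{theo:Expectations}). First I would use the Markov--Krein correspondence in the form \eqref{eq:SFormula} to rewrite each observable as
\[
\langle \Delta^\a_{u;\vv},\, x^{\ell-2} \rangle
= \frac{u}{\sqrt{\al}\,(\ell-1)}\Big( S_\ell\big(\Lambda_{(\al;u\sqrt{v_1})}\big) - S_\ell\big(\Lambda_{g;\vv}\big) \Big),
\]
where $S_\ell(\Lambda_{g;\vv})$ is the deterministic value attached to the limit shape of \cref{theo:LLNCDM}. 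In this way the claimed convergence becomes the statement that the centered, rescaled functionals $\frac{u}{\sqrt{\al}}\big(S_\ell - \E^\a_{u;\vv}[S_\ell]\big)$ converge jointly to a centered Gaussian family with the covariance \eqref{CovarianceGaussian_2}, together with the separate identification of the mean drift \eqref{MeanGaussian_2} (the prefactors $v_1^{-\ell/2}$ being produced by the $u\sqrt{v_1}$-rescaling through the scaling property \eqref{eq:scaling}).

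To prove joint Gaussian convergence I would use the method of cumulants: since the limiting law is determined by its moments, it suffices to show that every joint cumulant of order $n\ge 3$ of the family $\big(\tfrac{u}{\sqrt\al}S_\ell\big)_\ell$ tends to $0$, while the first two cumulants converge to the asserted mean and covariance. I would first express each $S_\ell$ as the weighted sum of products of Boolean cumulants $B_k$ given by \eqref{eq:S-Boolean}, so that every joint cumulant of the $S_\ell$ reduces to a finite combination of joint cumulants of the observables $B_k(\Lambda_{(\al;u\sqrt{v_1})})$. These last quantities I would read off from \cref{theo:Expectations}: a joint expectation $\E^\a_{u;\vv}[B_{k_1}\cdots B_{k_n}]$ is a weighted enumeration of Łukasiewicz ribbon paths $\ribbon$ on $n$ sites of lengths $k_1,\dots,k_n$, the weight being a product of local factors --- one factor $i\cdot\frac{\al-1}{u}$ per horizontal step at height $i$, one factor $\frac{i\al}{u^2}$ per pairing of degree $i$, and one factor $v_i$ per unpaired step of degree $i$.

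The convergence is then governed by a single scaling principle. Under the refined \cref{main_assumption}, namely \eqref{eq:assumptions'}, one has $\frac{\al}{u^2}=\frac1d+o(\frac1d)$, hence $\frac{u}{\sqrt\al}\sim\sqrt d$, while each pairing carries a factor of order $\frac{\al}{u^2}\sim\frac1d$ and each horizontal step at height $i$ carries $i\cdot\frac{\al-1}{u}\to i g$. A ribbon path whose $n$ sites are joined into a single connected component has at least $n-1$ pairings, so the $n$-th joint cumulant is of order $d^{n/2}\cdot d^{-(n-1)}=d^{\,1-n/2}$; this vanishes for $n\ge 3$ and stays finite for $n=2$, while for $n=1$ the uncentered mean is of order $\sqrt d$, but the centering by $S_\ell(\Lambda_{g;\vv})$ removes the leading Law-of-Large-Numbers contribution and leaves a finite drift. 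For $n=2$ the surviving terms are exactly the connected two-site paths $\ribbon=(\Gamma_1,\Gamma_2)\in\mathbf{L}^{\conn}(k,l)$ with a single pairing; collecting the two $\frac1n$-weights from \eqref{eq:S-Boolean} as the factors $\frac1{|\SSS^0(\Gamma_1)|\,|\SSS^0(\Gamma_2)|}$, and cancelling $\frac{u^2}{\al}$ against the pairing weight $\frac{i\al}{u^2}$ to leave $i^{|\mathbf{P}_i(\ribbon)|}$, I would recover \eqref{CovarianceGaussian_2} (and similarly the more general connected sets $\mathbf{L}^{\conn}$ of \cref{subsubsec:Cumulants_step2} control the higher cumulants). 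For $n=1$ the leading term reproduces the $\frac1{|\SSS^0|}$-weighted path sum of the LLN, while the only surviving $O(\frac1{\sqrt d})$ correction comes from expanding the horizontal weight $\frac{\al-1}{u}=g+\frac{g'}{\sqrt d}+o(\frac1{\sqrt d})$ to first order; after multiplication by $\frac{u}{\sqrt\al}\sim\sqrt d$ this produces precisely the operator $g'\frac{\partial}{\partial g}$ acting on the path sum, which is \eqref{MeanGaussian_2}.

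The main obstacle is \cref{theo:Expectations} itself: establishing the Łukasiewicz-ribbon-path expansion of $\E^\a_{u;\vv}[B_{k_1}\cdots B_{k_n}]$ with the explicit local weights, via the combinatorics of the Nazarov--Sklyanin operators, is the technical heart of the argument. Once this is available, the remaining difficulty is to justify that the formal leading terms are genuine limits: this requires showing that the contributions of subleading ribbon-path configurations (extra pairings, higher-degree non-paired down steps, and returns to height zero) are uniformly negligible after normalization, and controlling the infinite sums over step-degrees, for which the summability of the admissible parameters $\vv$ forced by the nonnegativity constraint guarantees absolute convergence.
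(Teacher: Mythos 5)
Your proposal is correct and follows essentially the same route as the paper's proof: the Markov--Krein reduction of the observables to the functionals $S_\ell$ via \eqref{eq:SFormula}, the cumulant method combined with the ribbon-path expansion of \cref{theo:Expectations} (transferred to cumulants of the $S_\ell$ through \eqref{eq:S-Boolean} and multilinearity, as in \cref{cor_formulas_cums}), the connectivity argument forcing at least $n-1$ pairings of weight $\sim 1/d$ each so that cumulants of order $n\ge 3$ vanish and only single-pairing connected paths survive in the covariance, and the mean drift extracted from the first-order expansion $\frac{\al-1}{u}=g+\frac{g'}{\sqrt{d}}+o(d^{-1/2})$, which produces the operator $g'\frac{\partial}{\partial g}$. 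Your closing concern about controlling infinite sums over step degrees is unnecessary: the sets $\mathbf{L}(\ell_1,\dots,\ell_n)$ are finite (\cref{claimA}), so all expressions are finite polynomials in $\frac{\al-1}{u}$, $\frac{\al}{u^2}$ and the $v_i$, and passing to the limit is immediate.
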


\begin{remark}
The third author studied in~\cite{Moll2023} a process analogous to
$\Delta^\a_{u;\vv}$ for Jack measures $M^\a_{\rho_1, \rho_2}$ with
identical specializations $\rho_1=\rho_2$ and proved a CLT.
His formula for the covariances of the Gaussian fluctuations are given 
via a `welding operator' $\mathcal{K}$ applied to quantities
determined by the LLN; for more general cases, he also gave certain formula involving signs, which
is very different from ours, e.g.~observe that \cref{CovarianceGaussian_2} 
is a cancellation-free polynomial with nonnegative coefficients.
Our arguments also work verbatim for Jack
measures under the assumptions of Moll, and provide explicit formulas akin to \eqref{CovarianceGaussian_2}
after replacing Łukasiewicz ribbon paths by all ribbon paths.
\end{remark}

\subsection{Proofs of Theorems \ref{theo:LLNCDM} and \ref{theo:CLTCDM}}\label{sec:proofs_CDM}

We use the following statistic on ribbon paths:
\[  F_{a,b,\vv}(\ribbon) := \prod_{i\ge 1}(i\cdot
  a)^{\!|\SSS^i_{\tiny\rightarrow}(\ribbon)|}(i\cdot
  b)^{\!|\mathbf{P}_i(\ribbon)|}\,v_i^{\,|\SSS_i(\ribbon)|},\ \text{ for variables } a,b.  \]

First we study the combinatorics of Nazarov--Sklyanin operators, which have been used before for random partitions in \cite{Huang2021,Moll2023}; however, our setting and results are new.

\subsubsection{Step 1 -- Expressing observables via Nazarov--Sklyanin operators}

Consider the infinite row vector
$P = (P_{1,{k}})_{{k} \in \Z_{\geq 1}}$ and column vector $P^\dagger =
(P^\dagger_{{k},1})_{{k} \in \Z_{\geq 1}}$, where $P_{1,{k}} := p_{{k}}$, and
$P^\dagger_{{k}, 1} := \alpha \cdot k\cdot\frac{\partial}{\partial p_{k}} =: p_{-{k}}$, are regarded as operators on the algebra of symmetric
functions $\Symm = \R[p_1, p_2, \cdots]$.
Let $L = (L_{i,j})_{i,j \in \Z_{\geq 1}}$ be the infinite matrix defined by
$L_{i,j} := p_{j-i}+\delta_{i,j}\,i(\alpha-1)$, for all $i, j\in\Z_{\geq 1}$, with the convention that $p_0:=0$:
$$
L = \begin{bmatrix} 
(\alpha-1) & p_1 & p_2 & \dots\\
p_{-1} & 2(\alpha-1) & p_1 & \ddots\\
p_{-2} & p_{-1} & 3(\alpha-1) & \ddots\\
\vdots & \ddots & \ddots & \ddots
\end{bmatrix}.
$$

\begin{theorem}[Nazarov--Sklyanin \cite{NazarovSklyanin2013}]\label{theo:Nazarov-Sklyanin}
For all $\ell\in\Z_{\geq 0}$ and all $\la\in\Y$:
\begin{equation*}
PL^\ell P^\dagger J_\lambda(\pp) = B_{\ell+2}(\Lambda_{(\alpha;1)}) \cdot J_\lambda(\pp).
\end{equation*}
\end{theorem}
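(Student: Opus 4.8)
The plan is to bundle the whole family $\{PL^\ell P^\dagger\}_{\ell\ge 0}$ into a single resolvent and identify its eigenvalue with a Cauchy transform. Introduce the operator-valued formal series $T(z):=P(z-L)^{-1}P^\dagger=\sum_{\ell\ge 0}z^{-\ell-1}PL^\ell P^\dagger$, a power series in $z^{-1}$ whose coefficients are exactly the operators in the statement. Writing $\mu:=\mu_{\Lambda_{(\alpha;1)}}$ for the transition measure of the anisotropic diagram and using that its mean $B_1$ vanishes, the Boolean cumulant generating function \eqref{eq:Boolean} gives $\sum_{\ell\ge 0}B_{\ell+2}(\Lambda_{(\alpha;1)})\,z^{-\ell-1}=\sum_{m\ge 2}B_m z^{-m+1}=z-\tfrac{1}{G_{\mu}(z)}$. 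Hence the theorem is equivalent to the single identity
\[ T(z)\,J_\lambda(\pp)=\left(z-\frac{1}{G_{\mu}(z)}\right)J_\lambda(\pp), \]
read coefficientwise in $z^{-1}$; extracting the coefficient of $z^{-\ell-1}$ then yields the claim. The base case $\ell=0$ is immediate: $PP^\dagger=\alpha\sum_i i\,p_i\,\tfrac{\partial}{\partial p_i}$ is $\alpha$ times the degree operator, so it acts on the homogeneous $J_\lambda$ as $\alpha|\lambda|=B_2(\Lambda_{(\alpha;1)})$ by \eqref{eq:values_X}, matching the leading term $B_2 z^{-1}$.

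Two structural observations organize the argument. First, with respect to the $\alpha$-deformed Hall inner product \eqref{eq:powersumnorm}, the operator $p_{-k}=\alpha k\,\tfrac{\partial}{\partial p_k}$ is the adjoint of multiplication by $p_k$; therefore $P^\dagger$ is the adjoint of $P$, the matrix $L$ is self-adjoint, and every $PL^\ell P^\dagger$ is self-adjoint, consistent with the reality of $B_{\ell+2}$. Second, expanding $L^\ell$ as a sum over walks $(k_0,\dots,k_\ell)$ with all $k_s\ge 1$, each monomial of $PL^\ell P^\dagger$ has the form $p_{k_0}\bigl(\prod_{s}L_{k_{s-1}k_s}\bigr)p_{-k_\ell}$ with total degree shift $k_0+(k_\ell-k_0)-k_\ell=0$; so $PL^\ell P^\dagger$ is homogeneous of degree $0$ and preserves each finite-dimensional graded piece $\SSym_d$, on which $\{J_\lambda\}_{\lambda\vdash d}$ is an orthogonal basis. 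This walk expansion — vertical steps producing creation/annihilation operators $p_{\pm m}$ and a horizontal step at height $k$ producing the scalar $k(\alpha-1)$ — is precisely the excursion bookkeeping that governs the rest of the paper.

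The engine for both the diagonality and the eigenvalue is a Lax-eigenvector construction. I would solve the linear system $(z-L)\Phi(z)=P^\dagger J_\lambda$ for a column vector $\Phi(z)=(\Phi_i(z))_{i\ge 1}$ of symmetric functions, working degree-by-degree in $z^{-1}$; the recursion forced by the decomposition of $L$ into its diagonal part $i(\alpha-1)$, its strictly upper-triangular creation part, and its strictly lower-triangular annihilation part is solvable because $p_m$ raises and $p_{-m}$ lowers degree. Feeding in the one-box Pieri/branching rules for Jack polynomials (which carry the content and hook data) keeps $\Phi$ proportional to shifts of $J_\lambda$, so that $T(z)J_\lambda=P\Phi(z)=\sum_i p_i\Phi_i(z)$ is a scalar multiple of $J_\lambda$; this is where self-adjointness plus homogeneity get upgraded to honest diagonality in the Jack basis. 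The telescoping of this tridiagonal-in-spirit recursion then collapses to the interlacing product formula \eqref{eq:Cauchy-St}, after rescaling the coordinates to the anisotropic diagram (contents $\alpha(j-1)-(i-1)$), producing exactly $z-\tfrac{1}{G_{\mu}(z)}$.

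The main obstacle is this middle step: proving that the Lax eigenvector stays inside the span of Jack shifts and that its $P$-projection reproduces $z-\tfrac{1}{G_{\mu}(z)}$ on the nose. Self-adjointness and degree-preservation reduce everything to a finite-dimensional problem on each $\SSym_d$, but they do not by themselves deliver the eigenvalue; the genuine content is the precise compatibility between the Heisenberg action of $p_{\pm m}$ and the Jack branching coefficients, which is what makes the resolvent telescope into the product formula. I would isolate this as a lemma about $\Phi(z)$ and prove it by induction on $|\lambda|$ using the explicit single-box Pieri coefficients; alternatively, one can reach diagonality by checking that the $PL^\ell P^\dagger$ commute with a separating commuting family of Sekiguchi--Debiard operators already diagonalized by $\{J_\lambda\}$, at the cost of still having to compute the eigenvalue separately. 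Either way, once the eigenvalue is identified with $z-\tfrac{1}{G_{\mu}(z)}$, comparison with \eqref{eq:Boolean} finishes the proof.
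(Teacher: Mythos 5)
Your reduction of the theorem to the single resolvent identity $T(z)J_\lambda=\bigl(z-\tfrac{1}{G_{\mu}(z)}\bigr)J_\lambda$ --- via the Boolean generating function \eqref{eq:Boolean}, the vanishing of $B_1$, and the rescaled Cauchy transform \eqref{eq:Cauchy-St} --- is exactly the translation step performed in the paper, and it is correct; so are your base case $PP^\dagger=\alpha\sum_k k\,p_k\frac{\partial}{\partial p_k}$ and the self-adjointness/homogeneity observations. The difference is what happens next: the paper does not prove the operator identity at all. It quotes it as Theorem 2 of \cite{NazarovSklyanin2013}, where the eigenvalue appears as $z-(z+\ell(\lambda))\prod_{i=1}^{\ell(\lambda)}\frac{z+i-1-\alpha\lambda_i}{z+i-\alpha\lambda_i}$, and then merely matches this expression with $z-\tfrac{1}{G_{\mu_{T_{\alpha,1}\lambda}}(z)}$ using \eqref{eq:Cauchy-St} and \eqref{eq:Boolean}.

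You instead undertake to prove that identity, and this is where the genuine gap lies. The entire mathematical content of the Nazarov--Sklyanin theorem is the middle step you yourself flag as ``the main obstacle'': that $\Phi(z)=(z-L)^{-1}P^\dagger J_\lambda$ can be controlled so that $P\Phi(z)$ is a scalar multiple of $J_\lambda$, with the scalar telescoping to the interlacing product. Your sketch does not establish this. In particular, the assertion that $\Phi(z)$ ``stays proportional to shifts of $J_\lambda$'' via single-box Pieri data is already problematic at the very first step: the entries of $P^\dagger J_\lambda$ are $p_{-k}J_\lambda=\alpha k\,\partial J_\lambda/\partial p_k$, which expand over \emph{all} $J_\mu$ with $\mu\subset\lambda$ and $|\lambda|-|\mu|=k$ (multi-box removals), so the recursion immediately leaves the span of single-box shifts, and nothing in the plan explains the cancellations needed for the claimed telescoping. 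Self-adjointness plus degree preservation do not force diagonality in the Jack basis, as you concede, and your alternative route (commutation with the Sekiguchi--Debiard family) is likewise asserted rather than proved, and would still leave the eigenvalue uncomputed. As written, the proposal establishes the statement only modulo precisely the theorem of \cite{NazarovSklyanin2013} it sets out to reprove; to be complete it must either carry out that induction in full (essentially reproducing the Nazarov--Sklyanin paper) or, like the authors, cite their Theorem 2 and keep only your correct translation step.
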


\begin{proof}
  Nazarov and Sklyanin proved~\cite[Theorem 2]{NazarovSklyanin2013}
  that
  \[ \sum_{k \geq 0}z^{-k-1}PL^k P^\dagger J_\lambda(\pp)
    = \left(z-(z+\ell(\lambda))\prod_{i=1}^{\ell(\lambda)}\frac{z+i-1-\alpha\lambda_i}{z+i-\alpha\lambda_i}\right)J_\lambda(\pp).\]
Comparing the RHS of the equation
  above with the identity~\eqref{eq:Cauchy-St} and using
  \eqref{eq:Boolean} we can rewrite this RHS as
\[ \left(z-\frac{1}{G_{\mu_{T_{\alpha,1}\lambda}}(z)}\right)J_\lambda(\pp)
    = B_{\mu_{\Lambda_{(\alpha;1)}}}(z) J_\lambda(\pp),\]
which finishes the proof. 
\end{proof}

\begin{theorem}\label{theo:Expectations}
Fix arbitrary $\ell_1,\dots,\ell_n\in\Z_{\ge 1}$. The following expectation of products of Boolean cumulants, with respect to the Jack--Thoma measure $M_{u;\vv}^\a$, is given by:
\begin{equation}\label{eq:ExpectOfBoolean}
\E^\a_{u; \vv}\left(B_{\ell_1}(\Lambda_{(\al;u)})\cdots B_{\ell_n}(\Lambda_{(\al;u)})\right)
= \sum_{\ribbon\in\mathbf{L}(\ell_1,\dots,\ell_n)\atop |\SSS^0(\ribbon)|=n}
{F_{\,\frac{\al-1}{u},\,\frac{\al}{u^2},\,\vv}(\ribbon)}.
\end{equation}
\end{theorem}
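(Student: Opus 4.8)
The plan is to compute the left-hand side by combining the Nazarov--Sklyanin diagonalization (\cref{theo:Nazarov-Sklyanin}) with the Cauchy identity \eqref{eq:Cauchy}, and then to expand the resulting operator expression combinatorially. Write $\mathcal{O} := PL^{\ell_1-2}P^\dagger\cdots PL^{\ell_n-2}P^\dagger$. Since each $J_\la^\a$ is a common eigenvector, \cref{theo:Nazarov-Sklyanin} gives $\mathcal{O}\,J_\la^\a = \big(\prod_{i=1}^n B_{\ell_i}(\Lambda_{(\al;1)})\big)J_\la^\a$: the rightmost factor returns a scalar multiple of $J_\la^\a$, which the next factor again sees as an eigenvector, and so on, so commutativity of the operators is not needed. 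Using the scaling property \eqref{eq:scaling}, $B_{\ell_i}(\Lambda_{(\al;u)}) = u^{-\ell_i}B_{\ell_i}(\Lambda_{(\al;1)})$, hence it suffices to average the eigenvalue of $\mathcal{O}$ and multiply by $u^{-(\ell_1+\cdots+\ell_n)}$.

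Second, I would turn the measure-average into an operator computation. Since the Plancherel specialization gives $\rho_2(J_\la^\a)=u^{|\la|}$ (only $\mu=(1^{|\la|})$ survives, and $\theta^\a_{(1^{|\la|})}(\la)=1$), the Jack--Thoma weight is $M^\a_{u;\vv}(\la)=\exp(-u^2v_1/\al)\,J_\la^\a(u\vv)\,u^{|\la|}/j^\a_\la$. Reading the eigenvalue equation as an identity $\operatorname{eval}_\la(\mathcal{O})\,J_\la^\a(\pp)=\mathcal{O}\,J_\la^\a(\pp)$ of symmetric functions in $\pp$, multiplying by $u^{|\la|}/j^\a_\la$, summing over $\la$, and applying \eqref{eq:Cauchy} specialized at $q_k=u\delta_{1,k}$, I obtain $\sum_\la \frac{u^{|\la|}}{j^\a_\la}\operatorname{eval}_\la(\mathcal{O})\,J_\la^\a(\pp)=\mathcal{O}\exp(up_1/\al)$. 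Evaluating both sides at $\pp=u\vv$ and multiplying by $\exp(-u^2v_1/\al)$ yields
\[
\E^\a_{u;\vv}\!\Big(\prod_{i=1}^n B_{\ell_i}(\Lambda_{(\al;u)})\Big)
= u^{-(\ell_1+\cdots+\ell_n)}\exp\!\Big(-\tfrac{u^2v_1}{\al}\Big)\big[\mathcal{O}\exp(up_1/\al)\big]\big|_{p_k=uv_k},
\]
where the scalar prefactor will cancel the factor $\exp(u^2v_1/\al)$ produced when the surviving exponential is specialized at $p_1=uv_1$.

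The last and main step is the combinatorial expansion of $\mathcal{O}\exp(up_1/\al)$. Expanding the matrix products, $PL^{\ell_i-2}P^\dagger=\sum_{i_0,\dots,i_{\ell_i-2}\ge 1}p_{i_0}\big(\prod_j L_{i_{j-1},i_j}\big)\,\alpha i_{\ell_i-2}\,\partial_{p_{i_{\ell_i-2}}}$, and each index sequence encodes a single excursion whose successive interior heights are exactly the indices $i_j\ge 1$; thus every factor contributes an excursion that returns to height $0$ only at its endpoint, which is precisely why the admissible ribbon paths satisfy $|\SSS^0(\ribbon)|=n$. I would then normal-order, moving every annihilation $p_{-k}=\alpha k\,\partial_{p_k}$ to the right: each contraction $[p_{-k},p_k]=\alpha k$ against a matching creation (possibly at another site) produces a pairing of degree $k$; an unpaired $p_{-k}$ with $k\ge 2$ annihilates $\exp(up_1/\al)$, forcing all unpaired down steps to have degree $1$ (the Łukasiewicz ribbon condition), while an unpaired $p_{-1}$ acts on the exponential to give a factor $u$ and leaves it intact. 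Finally, the surviving creations are specialized via $p_k\mapsto uv_k$, the diagonal entries $L_{ii}=i(\al-1)$ give horizontal steps at height $i$ of weight $i(\al-1)$, and distributing $u^{-1}$ from the prefactor across each of the $\ell_1+\cdots+\ell_n$ steps converts these weights into $i\cdot\frac{\al-1}{u}$ (horizontal step at height $i$), $i\cdot\frac{\al}{u^2}$ (pairing of degree $i$), $v_i$ (unpaired up step of degree $i$), and $1$ (unpaired degree-$1$ down step); this reproduces $F_{\frac{\al-1}{u},\frac{\al}{u^2},\vv}(\ribbon)$ termwise. The main obstacle is exactly this step: making the bijection between index-sequences-with-contractions and Łukasiewicz ribbon paths rigorous, checking that the a priori infinite index sums truncate (the heights of excursions of fixed total length are bounded and every annihilation must be matched), and verifying the weight bookkeeping with no stray factors.
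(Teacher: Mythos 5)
Your proposal is correct and follows essentially the same route as the paper's proof: diagonalize the Boolean-cumulant observables via the Nazarov--Sklyanin operators, convert the $M^\a_{u;\vv}$-average into the evaluation of $PL^{\ell_1-2}P^\dagger\cdots PL^{\ell_n-2}P^\dagger$ on $\exp(up_1/\al)$ via the Cauchy identity, and then normal-order, with contractions giving pairings, unpaired $p_{-k}$ ($k\ge 2$) killing the exponential (forcing the Łukasiewicz condition), and the same weight bookkeeping yielding $F_{\frac{\al-1}{u},\frac{\al}{u^2},\vv}(\ribbon)$. The only point you omit is the trivial edge case where some $\ell_i=1$ (for which $L^{\ell_i-2}$ is undefined): the paper disposes of it first by noting that $B_1(\Lambda_{(\al;u)})=0$ while $\mathbf{L}(\ell_1,\dots,1,\dots,\ell_n)=\emptyset$, so both sides vanish.
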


\begin{proof}
First, observe that if some $\ell_i=1$, then $B_{\ell_i}(\Lambda_{(\alpha; u)}) = 0$, so the LHS of \eqref{eq:ExpectOfBoolean} vanishes. On the other hand, the set $\mathbf{L}(\ell_1,\dots, 1, \dots,\ell_n)$ is empty, because the only excursion of length $1$ is $((0, 0), (1, 0))$, but Łukasiewicz ribbon paths are not allowed to have horizontal steps at height $0$.
As a result, the RHS side of \eqref{eq:ExpectOfBoolean} also vanishes.

In the remainder of the proof, assume that $\ell_1, \dots, \ell_n\ge 2$.
Note that
    \begin{align*}
      &\mathbb{E}^\a_{u;\vv}\left(B_{\ell_1}(\Lambda_{(\alpha;u)})\cdots
        B_{\ell_n}(\Lambda_{(\alpha;u)})\right) =\\
      &=\exp\left(-\frac{u^2\cdot v_1}{\alpha}\right) 
\sum_{\lambda}\frac{\left(B_{\ell_1}(\Lambda_{(\alpha;u)})\cdots
        B_{\ell_n}(\Lambda_{(\alpha;u)})\right) \cdot
      J_{\la}^\a(u\cdot\vv)\cdot u^{|\la|}}{j^\a_\la}\\
      &=\exp\left(-\frac{u^2\cdot v_1}{\al}\right) u^{-\ell_1-\cdots-\ell_n} 
PL^{\ell_1-2}P^\dagger\cdots PL^{\ell_n-2}P^\dagger
\sum_{\lambda}\frac{J_\la^\a(\pp)\cdot u^{|\la|}}
        {j^\a_\la}\bigg|_{\pp = u\cdot\vv}\\
      &=\exp\left(-\frac{u^2\cdot v_1}{\al}\right)
        u^{-\ell_1-\cdots-\ell_n}PL^{\ell_1-2}P^\dagger\cdots PL^{\ell_n-2}P^\dagger\exp\left(\frac{u\cdot p_1}{\alpha}\right)\bigg|_{\pp = u\cdot\vv},
\end{align*}
where the first equality comes from the definition of the expectation and the definition~\eqref{eq:Lambda2} of $\Lambda_{(\alpha;u)}$, the second one comes from \cref{theo:Nazarov-Sklyanin} and~\eqref{eq:scaling}, and the last equality follows from the Cauchy identity~\eqref{eq:Cauchy}.
The final desired equality
\begin{equation}\label{desired_concatenation}
e^{-\frac{u^2\cdot v_1}{\al}}\cdot
u^{-\ell_1-\cdots-\ell_n}\cdot PL^{\ell_1-2}P^\dagger\cdots PL^{\ell_n-2}P^\dagger\, 
\!\!\left(e^{\frac{u\cdot p_1}{\al}}\right)\!\bigg|_{\pp = u\cdot\vv}
\!=\!\sum_{\ribbon\in\mathbf{L}(\ell_1,\dots,\ell_n),\atop |\SSS^0(\ribbon)|=n}
{\!F_{\,\frac{\al-1}{u},\,\frac{\al}{u^2},\,\vv}(\ribbon)}
\end{equation}
follows from the following combinatorial interpretation of each operator $PL^{\ell_i-2}P^\dagger$ in terms of excursions: for any integer $\ell\ge 2$, we have
\[ PL^{\ell-2}P^\dagger = \sum_{\Gamma\colon \ell(\Gamma)=\ell, \atop |\SSS^0(\Gamma)|=1}{L(\Gamma)}, \]
where the sum is over excursions $\Gamma = (w_0=(0,y_0),\dots,w_\ell=(\ell,y_\ell))$ of length $\ell$, 
and $L(\Gamma)$ is the operator $L(\Gamma) := p_{y_1}L_{y_1,y_2}\cdots
L_{y_{\ell-2},y_{\ell-1}}p_{-y_{\ell-1}}$, and the condition $|\SSS^0(\Gamma)| = 1$ indicates that the excursion touches the $x$-axis only at the beginning $w_0=(0,0)$ and the end $w_\ell=(\ell,0)$.
The product of operators $PL^{\ell_1-2}P^\dagger\cdots
PL^{\ell_n-2}P^\dagger$ in the LHS of \eqref{desired_concatenation}
then equals by concatenation to a sum of operators $L(\Gamma_1)\cdots L(\Gamma_n)$ over excursions $\vec{\Gamma} =
(\Gamma_1,\dots,\Gamma_n)$ of length
$\ell_1+\cdots+\ell_n$ touching the $x$-axis at exactly the $n$ prescribed
positions: $\ell_1,\ell_1+\ell_2,\dots,\ell_1+\cdots+\ell_n$.

Next, we would like to express each operator $L(\Gamma_1)\cdots L(\Gamma_n)$ in the
normal ordered form, i.e.~we would like to express it as a linear
combination of the operators of the form $p_{i_1}\cdots p_{i_k}
p_{-j_1}\cdots p_{-j_{k'}}$, where $i_1,\dots,i_k, j_1,\dots,j_{k'}
\geq 1$. Note that the commutation relation
$[p_j,p_i] = \delta_{i+j,0}\cdot i\al$, equivalently $p_jp_i=p_ip_j+\delta_{i+j,0}\cdot i\al$, allows to move all the
operators $p_{-i}$ with a negative index to the right, and the only
nontrivial commutation relation is $p_{-i}p_i=p_ip_{-i}+i\al$, which occurs when we try to move $p_{-i}$ past $p_i$.
These two operators are associated with a
down step and a following up step of the same increments, and we
assign to such pair a pairing. Marking all the possible pairings in
$\vec{\Gamma}$ and remembering that each of them produces two
terms, namely $p_i p_{-i}$ and  $i \al$, leads to the following combinatorial
interpretation of the operator $L(\Gamma_1)\cdots L(\Gamma_n)$
in normal ordered form:
\begin{equation}\label{eq:normalOrder}
L(\Gamma_1)\cdots L(\Gamma_n) = \sum_{\mathbf{P}(\ribbon)}
\prod_{i\ge 1}(i\cdot (\al-1))^{\!|\SSS^i_{\tiny\rightarrow}(\ribbon)|}(i\cdot\al)^{\!|\mathbf{P}_i(\ribbon)|}\,p_i^{\,|\SSS_i(\ribbon)|}
\prod_{i\ge 1}\,p_{-i}^{\,|\SSS_{-i}(\ribbon)|},
\end{equation}
where the sum is over all possible sets of pairings.
Finally, note that
\[
\prod_{i\ge 1}\,p_{-i}^{\,|\SSS_{-i}(\ribbon)|}\!\left(e^{\frac{u\cdot p_1}{\al}}\right)\!\bigg|_{\pp = u\cdot\vv}  = \begin{cases}
e^{\frac{u^2\cdot v_1}{\al}} u^{\, |\SSS_{\tiny\searrow}(\ribbon)|},&\text{ if } \SSS_{\tiny\searrow}(\ribbon) = \SSS_{-1}(\ribbon),\\
0,&\text{ otherwise. }
\end{cases}
\]
In particular, the ribbon paths $\ribbon$ with non-paired down steps
of degree $\ge 2$ give null contributions, therefore the LHS of
\eqref{desired_concatenation} can be rewritten as a sum over the set
$\mathbf{L}(\ell_1,\dots,\ell_n)$ of Łukasiewicz ribbon paths with the
associated operators written in the normal ordered form as in \eqref{eq:normalOrder}.
For $\ribbon=(\vec{\Gamma},\,\mathbf{P}(\ribbon))\in\mathbf{L}(\ell_1,\dots,\ell_n)$,
with $\vec{\Gamma}=(\Gamma_1,\dots,\Gamma_n)$, one has that 
$L(\Gamma_1)\cdots L(\Gamma_n)\left(e^{\frac{u\cdot p_1}{\al}}\right)\!\big|_{\pp = u\cdot\vv}$ is equal to
\begin{multline*}
\sum_{\mathbf{P}(\ribbon)} \prod_{i\ge 1}(i\cdot (\al-1))^{\!|\SSS^i_{\tiny\rightarrow}(\ribbon)|}(i\cdot\al)^{\!|\mathbf{P}_i(\ribbon)|}\,p_i^{\,|\SSS_i(\ribbon)|}\prod_{i\ge 1}\,p_{-i}^{\,|\SSS_{-i}(\ribbon)|}
\!\left(e^{\frac{u\cdot p_1}{\al}}\right)\!\bigg|_{\pp = u\cdot\vv}\\
= \sum_{\mathbf{P}(\ribbon)} \prod_{i\ge 1}(i\cdot (\al-1))^{\!|\SSS^i_{\tiny\rightarrow}(\ribbon)|}(i\cdot\al)^{\!|\mathbf{P}_i(\ribbon)|}\,v_i^{\,|\SSS_i(\ribbon)|}\cdot e^{\frac{u^2\cdot v_1}{\al}}u^{|\SSS_{\tiny\searrow}(\ribbon)|+|\SSS_{\tiny\nearrow}(\ribbon)|}\\
= \sum_{\mathbf{P}(\ribbon)} F_{\,\frac{\al-1}{u},\,\frac{\al}{u^2},\,\vv}(\ribbon)\cdot
e^{\frac{u^2\cdot v_1}{\al}} u^{|\SSS_{\tiny\searrow}(\ribbon)|+|\SSS_{\tiny\rightarrow}(\ribbon)|+|\SSS_{\tiny\nearrow}(\ribbon)|+2|\mathbf{P}(\ribbon)|}\\
= \sum_{\mathbf{P}(\ribbon)} F_{\,\frac{\al-1}{u},\,\frac{\al}{u^2},\,\vv}(\ribbon)\cdot e^{\frac{u^2\cdot v_1}{\al}}u^{\ell_1+\cdots+\ell_n}.
\end{multline*}
This leads to the expression in the RHS of \eqref{desired_concatenation}, as claimed.
\end{proof}

\subsubsection{Step 2 -- Formulas for cumulants of observables of Jack--Thoma measures}\label{subsubsec:Cumulants_step2}

      Let $X_1,\dots,X_n$ be random variables on the same probability space. 
	We recall that the classical (joint) cumulant $\kappa_n(X_1,\dots,X_n)$ is 
	defined combinatorially through the following moment-cumulant formula:
\begin{equation}
  \label{eq:cumu}
  \E(X_1\cdots X_n) := \sum_{\pi \in \SP([1\,..\,n])}\prod_{B \in
      \pi}\kappa(X_b\colon b \in B),
\end{equation}
where $[1\,..\,n] := \{1,2,\dots,n\}$, and for any finite set $X$ we denote by
$\SP(X)$ the set of \emph{set-partitions} of $X$, 
namely $\pi\in\SP(X)$ means that $\pi$ is a set of disjoint subsets of $X$ (called the \emph{blocks of $X$}) 
whose union is the whole set $X$. 

Next, we give a slightly refined notion of 
\emph{connected ribbon paths}, originally introduced in~\cite{Moll2023}.
Let $\ribbon = (\Gamma_1, \dots, \Gamma_n)$ be a ribbon path on $n$ sites and $\pi \in \SP([1\,..\,n])$
be a set-partition. We define $G^\pi_{\ribbon}$ as the following simple graph: the vertex 
set is $\pi$ (so that vertices are indexed by the blocks of $\pi$) and an edge joins 
$B,B' \in \pi$ if and only if there exists at least one pairing $p = (v,w)\in\mathbf{P}(\ribbon)$ 
such that $v\in \SSS(\Gamma_b), w \in \SSS(\Gamma_{b'})$, 
or $w\in \SSS(\Gamma_b), v \in\SSS(\Gamma_{b'})$, 
for some $b \in B,\, b' \in B'$.
We say that a ribbon path $\ribbon$ is \emph{$\pi$-connected} if the associated graph $G^\pi_{\ribbon}$ is connected. If $\pi = \{\{1\},\dots,\{n\}\}$ and $\ribbon$ is $\pi$-connected, we simply say that $\ribbon$ is \emph{connected}. 
See \cref{fig:CumulantGraphs} for some examples.

\begin{figure}[tbp]
\begin{center}
\includegraphics[width = 0.8\linewidth]{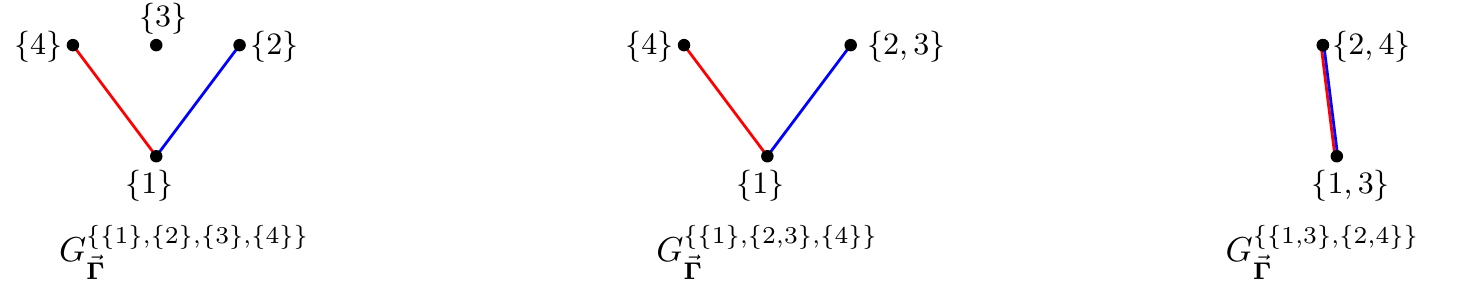}
\end{center}
\caption{The first graph depicts $G^{\{\{1\}, \{2\}, \{3\}, \{4\}\}}_{\ribbon}$ for the ribbon path
  $\ribbon$ from \cref{fig:LukasiewiczEx}; observe that 
$\ribbon$ is not connected. The next two graphs $G^\pi_{\ribbon}$ 
correspond to other set-partitions $\pi \in \SP([1\,..\,4])$; in both cases, $\ribbon$ is $\pi$-connected.}
\label{fig:CumulantGraphs}
\end{figure}

Recall that $\mathbf{L}(\ell_1,\dots,\ell_n)$ denotes the set of
Łukasiewicz ribbon paths on $n$ sites of lengths
$\ell_1,\dots,\ell_n$. We denote by $\mathbf{L}^{\pi-\conn}(\ell_1,\dots,\ell_n)$
(resp.~$\mathbf{L}^{\conn}(\ell_1,\dots,\ell_n)$) the subset of 
$\mathbf{L}(\ell_1,\dots,\ell_n)$ of those Łukasiewicz ribbon paths 
that are $\pi$-connected (resp.~connected).

\begin{proposition}\label{prop:kumuLukasiewicz}
    Fix arbitrary $\ell^1_1,\dots,\ell^1_{k_1}, \ell^2_1,\dots,\ell^2_{k_2},
    \dots,  \ell^n_1,\dots,\ell^n_{k_n}\in\Z_{\ge 1}$. 
We have the following formula for a cumulant with respect to the Jack--Thoma measure $M^\a_{u;\vv}$:
\begin{equation}
\kappa^\a_{u;\vv}\!\left(\!B_{\ell^1_1}(\Lambda_{(\al;u)})\cdots
      B_{\ell^1_{k_1}}\!\!(\Lambda_{(\al;u)}),\dots, B_{\ell^n_1}(\Lambda_{(\alpha;u)})\cdots
      B_{\ell^n_{k_n}}\!\!(\Lambda_{(\al;u)})\!\right) \!=\!
\sum_{\ribbon}{F_{\frac{\al-1}{u},\frac{\al}{u^2},\vv}(\ribbon)}.
\end{equation}
The sum above is over $\ribbon\in\mathbf{L}^{\pi-\conn}(\ell^1_1,\dots,\ell^1_{k_1},\dots, \ell^n_1,\dots,\ell^n_{k_n})$, with $|\SSS^0(\ribbon)| = k_1+\cdots+k_n$, and where $\pi\in\SP([1\,..\,k_1+\cdots+k_n])$ is the set-partition with $n$ blocks defined by $\pi := \{[1\,..\,k_1],[k_1+1\,..\,k_1+k_2],\dots,[k_1+\cdots+k_{n-1}+1\,..\,k_1+\cdots+k_{n}]\}$, and we employed the usual notation $[a\,..\,b] := \{a, a+1, \dots, b\}$, for any integers $a\le b$.
\end{proposition}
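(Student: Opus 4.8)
The plan is to deduce this joint-cumulant formula from the product-expectation formula of \cref{theo:Expectations} through the formal dictionary between moments and cumulants, exploiting that the ribbon-path weight factorizes over connected components. First I would record the key \emph{multiplicativity}: for any ribbon path $\ribbon$ on $K$ sites, declare each site to be a vertex and join two sites by an edge whenever a pairing of $\mathbf{P}(\ribbon)$ links their excursions; the connected components of the resulting graph induce a set-partition $\rho(\ribbon)$ of the site set. Each of the sets $\SSS^i_{\tiny\rightarrow}(\ribbon)$, $\mathbf{P}_i(\ribbon)$, $\SSS_i(\ribbon)$ and $\SSS^0(\ribbon)$ splits as a disjoint union over the blocks of $\rho(\ribbon)$, so that $F_{a,b,\vv}(\ribbon) = \prod_{R\in\rho(\ribbon)} F_{a,b,\vv}(\ribbon|_R)$, where $\ribbon|_R$ is the Łukasiewicz ribbon path induced on the sites of $R$ with the induced order, and the constraint $|\SSS^0(\ribbon)| = K$ decouples into $|\SSS^0(\ribbon|_R)| = |R|$ for each block. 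Conversely, a ribbon path with prescribed component partition $\rho$ is exactly the data of a \emph{connected} ribbon path on each block $R$ of $\rho$, since pairings are recorded by relative order and never cross blocks.

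Second, write $K := k_1 + \cdots + k_n$, label the individual Boolean-cumulant factors $Z_1,\dots,Z_K$ (so $Z_s = B_{\ell(s)}(\Lambda_{(\al;u)})$), and for a subset $R$ of sites set
\[ C[R] := \sum_{\substack{\ribbon \text{ connected on } R \\ |\SSS^0(\ribbon)| = |R|}} F_{\frac{\al-1}{u},\frac{\al}{u^2},\vv}(\ribbon). \]
Grouping the sum in \cref{theo:Expectations} by the component partition $\rho(\ribbon)$ and using the multiplicativity above gives, for every subset $R$ of sites,
\[ \E^\a_{u;\vv}\Big(\prod_{s\in R} Z_s\Big) = \sum_{\rho\in\SP(R)}\ \prod_{R'\in\rho} C[R']. \]
Here one uses that \cref{theo:Expectations} applies to an arbitrary tuple of lengths, and that both sides are symmetric in the sites, so non-contiguous $R$ cause no trouble. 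Since this identity holds for all $R$ and is precisely the moment--cumulant relation \eqref{eq:cumu}, the uniqueness of that relation (Möbius inversion on the partition lattice) forces $C[R] = \kappa^\a_{u;\vv}(Z_s : s\in R)$ for every $R$; i.e. the connected weighted sum over ribbon paths computes the classical joint cumulant of the corresponding individual Boolean cumulants.

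Third, I would invoke the classical Leonov--Shiryaev formula for cumulants of products: with $Y_j := \prod_{s\in A_j} Z_s$ and $\pi := \{A_1,\dots,A_n\}$ the set-partition recording the grouping,
\[ \kappa^\a_{u;\vv}(Y_1,\dots,Y_n) = \sum_{\rho\,:\,\rho\vee\pi=\hat{1}}\ \prod_{R\in\rho} \kappa^\a_{u;\vv}\big(Z_s : s\in R\big) = \sum_{\rho\,:\,\rho\vee\pi=\hat{1}}\ \prod_{R\in\rho} C[R]. \]
The join condition $\rho\vee\pi=\hat{1}$ in the partition lattice is exactly the statement that the graph $G^\pi_{\ribbon}$ is connected for any $\ribbon$ with $\rho(\ribbon)=\rho$, that is, that $\ribbon$ is $\pi$-connected. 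Expanding each $C[R]$ and reassembling the tuples of connected ribbon paths into global ribbon paths (the inverse of the restriction from the first paragraph), the right-hand side collapses to the single sum $\sum_{\ribbon} F_{\frac{\al-1}{u},\frac{\al}{u^2},\vv}(\ribbon)$ over $\ribbon\in\mathbf{L}^{\pi-\conn}(\ell^1_1,\dots,\ell^n_{k_n})$ with $|\SSS^0(\ribbon)|=K$, which is the claimed formula. The degenerate cases with some $\ell^j_s=1$ contribute $0$ on both sides, exactly as in \cref{theo:Expectations}.

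The main obstacle, and the only place genuinely specific to this setting, is the decoupling in the first paragraph: one must verify carefully that the four ribbon statistics $\SSS^i_{\tiny\rightarrow}$, $\mathbf{P}_i$, $\SSS_i$, $\SSS^0$ are additive over connected components and that restriction and reassembly of ribbon paths along a component partition is a bijection respecting the induced order. Once this multiplicativity is established, the remainder is the purely formal moment--cumulant and cumulant-of-products dictionary, and no further information about $M^\a_{u;\vv}$ (positivity, the specific specialization, etc.) is needed beyond \cref{theo:Expectations}.
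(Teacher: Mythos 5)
Your proof is correct, and it reaches the proposition by a route that differs meaningfully from the paper's. Both arguments rest on the same combinatorial core — the multiplicativity of $F_{a,b,\vv}$ over connected components, and the fact that restriction/reassembly of ribbon paths along a component partition is a bijection because pairings depend only on the \emph{relative} order of sites — but they deploy it over different lattices. The paper applies the decomposition once, at the coarse level $\SP(\pi)\cong\SP([1\,..\,n])$: it groups the ribbon paths of \cref{theo:Expectations} by the partition $\sigma(\ribbon)$ of the blocks of $\pi$ into connected components of $G^{\pi}_{\ribbon}$, writes the joint moment of the products as $\sum_{\sigma\in\SP([1\,..\,n])}\prod_{B\in\sigma}\bigl(\text{$\pi|_B$-connected sums}\bigr)$, and concludes by matching this against the defining moment--cumulant relation \eqref{eq:cumu}; no external cumulant identity is needed. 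You instead work at the finest level, over set-partitions of all $k_1+\cdots+k_n$ sites: you first identify the fully connected ribbon sums $C[R]$ with the joint cumulants of the individual observables $B_{\ell(s)}(\Lambda_{(\al;u)})$ via M\"obius inversion, and then pass to cumulants of products through the Leonov--Shiryaev formula, using the lattice equivalence ($\rho\vee\pi=\hat{1}$ if and only if $G^{\pi}_{\ribbon}$ is connected) to reassemble the $\pi$-connected sum. The trade-off: your argument imports a classical but nontrivial identity (Leonov--Shiryaev) that the paper avoids entirely, but in exchange it makes explicit two things the paper leaves implicit — the finer intermediate statement that connected ribbon sums compute cumulants of the individual $B_\ell$'s (the $k_1=\cdots=k_n=1$ case of the proposition), and the order-independence of the connected sums, which is exactly what justifies handling non-contiguous subsets of sites. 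Your flagged ``main obstacle'' (additivity of the statistics $\SSS^i_{\tiny\rightarrow}$, $\mathbf{P}_i$, $\SSS_i$, $\SSS^0$ over components and compatibility of restriction with the induced order) is indeed the only setting-specific verification, and it is the same verification the paper performs when it asserts $F(\ribbon)=\prod_{B\in\sigma}F(\ribbon_B)$ and the two-way construction between $\ribbon$ and the tuple $(\ribbon_B)_{B\in\sigma}$.
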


\begin{proof}
Fix the set-partition $\pi :=
\{[1\,..\,k_1],[k_1+1\,..\,k_1+k_2],\dots,[k_1+\cdots+k_{n-1}+1\,..\,k_1+\cdots+k_{n}]\}$ of $[1\,..\,k_1+\cdots+k_n]$.
	Let $\ribbon\in\mathbf{L}(\ell^1_1,\dots,\ell^1_{k_1},\dots,
    \ell^n_1,\dots,\ell^n_{k_n})$ be any Łukasiewicz ribbon path.
	Then we can define $\sigma = \sigma(\ribbon)\in\SP(\pi)$ as the set-partition whose blocks are
    sets of vertices of the connected components of
    $G^{\,\pi}_{\ribbon}$. For each $B \in \sigma$, let
    $\ribbon_B$ be the concatenation of excursions 
    $\Gamma_b$, $b \in \bigcup B$, with respect to the natural order. Since $\SP(\pi)$ is isomorphic
    to $\SP([1\,..\,n])$, we will denote by $\tilde{B}$ the image of a
    block $B \in \sigma$ by the natural map
    $[k_1+\cdots+k_{i-1}+1\,..\,k_1+\cdots+k_i] \to i$.
	By construction, it is evident that each 
    $\ribbon_B$ is a well-defined, $\pi|_B$-connected Łukasiewicz
    ribbon path $\ribbon_B \in
    \mathbf{L}^{\pi|_B-\conn}(\ell^b_1,\dots,\ell^b_{k_b}\colon b \in \tilde{B})$.

    Conversely, let $\sigma\in\SP(\pi)$ be arbitrary, and for any block $B\in\sigma$ 
    assume that we have a $\pi|_B$-connected Łukasiewicz ribbon path $\ribbon_B \in
    \mathbf{L}^{\pi|_B-\conn}(\ell^b_1,\dots,\ell^b_{k_b}\colon b \in \tilde{B})$.
	Then one can construct a (non-necessarily $\pi$-connected)
    Łukasiewicz ribbon path $\ribbon\in\mathbf{L}(\ell^1_1,\dots,\ell^1_{k_1},\dots,
    \ell^n_1,\dots,\ell^n_{k_n})$ as a concatenation of
    the excursions of lengths $\ell_1^1,\dots,\ell_{k_n}^n$ that make up the $\ribbon_B$'s,
	and the set of pairings of $\ribbon$ is defined as the union of sets of pairings for $\ribbon_B$'s, for 
	$B\in\sigma$.

Note that $F_{\,\frac{\al-1}{u},\,\frac{\al}{u^2},\,\vv}(\ribbon) = \prod_{B\in\sigma}{F_{\,\frac{\al-1}{u},\,\frac{\al}{u^2},\,\vv}(\ribbon_B)}$.
Using \cref{theo:Expectations}, this gives:
      \begin{multline}
    \mathbb{E}^\a_{u;\vv}\left(B_{\ell^1_1}(\Lambda_{(\alpha;u)})\cdots
      B_{\ell^1_{k_1}}(\Lambda_{(\alpha;u)})\cdots B_{\ell^n_1}(\Lambda_{(\alpha;u)})\cdots
      B_{\ell^n_{k_n}}(\Lambda_{(\alpha;u)})\right)\\
    =  \sum_{\sigma \in \SP(\pi)}\prod_{B \in
    \sigma}\sum_{\,\ribbon_B \in
    \mathbf{L}^{\pi|_B-\conn}(\ell^b_1,\dots,\ell^b_{k_b}\colon b \in
    \tilde{B})\atop |\SSS^0(\ribbon_B)| = \sum_{b \in \tilde{B}}k_b}{F_{\,\frac{\al-1}{u},\,\frac{\al}{u^2},\,\vv}(\ribbon_B)}.
\end{multline}
Comparing this equation with \eqref{eq:cumu} that defines cumulants, we conclude the proof.
\end{proof}

\begin{corollary}\label{cor_formulas_cums}
For arbitrary $\ell_1,\dots,\ell_n\in\Z_{\ge 1}$, we have 
\begin{equation}\label{eq:Cumu-Ss}
\kappa^\a_{u; \vv} \left(S_{\ell_1}(\Lambda_{(\alpha;u)}),\dots, S_{\ell_{n}}
(\Lambda_{(\alpha;u)})\right) =
\sum_{\ribbon\in\mathbf{L}^{\conn}(\ell_1,\dots,\ell_{n})\atop \ribbon = (\Gamma_1, \dots,\Gamma_n)}
\,\prod_{j=1}^n \frac{1}{|\mathbf{S}^0(\Gamma_j)|}\cdot F_{\,\frac{\al-1}{u},\,\frac{\al}{u^2},\,\vv}(\ribbon).
\end{equation}
\end{corollary}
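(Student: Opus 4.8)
The plan is to combine the Boolean-cumulant expansion of the fundamental functionals of shape from \eqref{eq:S-Boolean} with the cumulant formula of \cref{prop:kumuLukasiewicz}, exploiting the multilinearity of the classical joint cumulant. First I would use \eqref{eq:S-Boolean} to expand each argument as
\[
S_{\ell_j}(\Lambda_{(\al;u)}) = \sum_{m_j\ge 1}\frac{1}{m_j}\sum_{k^j_1+\cdots+k^j_{m_j}=\ell_j}B_{k^j_1}(\Lambda_{(\al;u)})\cdots B_{k^j_{m_j}}(\Lambda_{(\al;u)}),
\]
and then, since $\kappa^\a_{u;\vv}$ is multilinear in its entries, I would expand (suppressing the arguments $\Lambda_{(\al;u)}$ for readability)
\[
\kappa^\a_{u;\vv}\!\left(S_{\ell_1},\dots,S_{\ell_n}\right) = \sum_{m_1,\dots,m_n\ge 1}\Big(\prod_{j=1}^n\frac{1}{m_j}\Big)\sum_{\substack{k^j_1+\cdots+k^j_{m_j}=\ell_j\\ 1\le j\le n}}\kappa^\a_{u;\vv}\!\left(B_{k^1_1}\cdots B_{k^1_{m_1}},\dots,B_{k^n_1}\cdots B_{k^n_{m_n}}\right).
\]
To each inner cumulant I would apply \cref{prop:kumuLukasiewicz}, obtaining a sum of $F_{\,\frac{\al-1}{u},\,\frac{\al}{u^2},\,\vv}(\ribbon)$ over $\ribbon\in\mathbf{L}^{\pi-\conn}(k^1_1,\dots,k^n_{m_n})$ with $|\SSS^0(\ribbon)|=m_1+\cdots+m_n$, where $\pi$ is the set-partition of $[1\,..\,m_1+\cdots+m_n]$ whose blocks are the consecutive intervals of sizes $m_1,\dots,m_n$. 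The condition $|\SSS^0(\ribbon)|=m_1+\cdots+m_n$ is exactly the requirement that each of the $m_1+\cdots+m_n$ excursions comprising $\ribbon$ touch the $x$-axis only at its two endpoints, i.e.\ that each excursion be \emph{irreducible}.

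The heart of the argument is a reindexing. Every excursion $\Gamma$ decomposes uniquely into a concatenation of $|\SSS^0(\Gamma)|$ irreducible excursions, whose lengths form a composition of the length of $\Gamma$; moreover concatenation preserves the Łukasiewicz property, since the conditions $\SSS^0_{\tiny\rightarrow}=\SSS_{-n}=\emptyset$ ($n\ge 2$) are intrinsic to the concatenated path and its pairings, not to the bracketing into sites. Hence gluing, inside each block of $\pi$, the $m_j$ irreducible excursions into a single excursion $\Gamma_j$ of length $\ell_j$ gives a bijection between, on one side, the data of a tuple $(m_1,\dots,m_n)$ together with compositions $(k^j_1,\dots,k^j_{m_j})$ of each $\ell_j$ and a path $\ribbon\in\mathbf{L}^{\pi-\conn}$ with $|\SSS^0(\ribbon)|$ equal to its number of sites, and, on the other side, a single Łukasiewicz ribbon path $\ribbon'=(\Gamma_1,\dots,\Gamma_n)\in\mathbf{L}(\ell_1,\dots,\ell_n)$. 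Under this bijection $m_j=|\SSS^0(\Gamma_j)|$; the pairing set is untouched, so the graph $G^\pi_{\ribbon}$ coincides with the connectivity graph of $\ribbon'$, whence $\pi$-connectedness of $\ribbon$ is equivalent to connectedness of $\ribbon'$; and, since $F_{\,\frac{\al-1}{u},\,\frac{\al}{u^2},\,\vv}$ depends only on the points of $\vec{\Gamma}$, their heights, their incoming step-degrees, and the pairings, one has $F(\ribbon)=F(\ribbon')$. Substituting, the factor $\prod_j\frac{1}{m_j}$ becomes $\prod_j\frac{1}{|\SSS^0(\Gamma_j)|}$, and summing over all connected $\ribbon'\in\mathbf{L}^{\conn}(\ell_1,\dots,\ell_n)$ yields precisely \eqref{eq:Cumu-Ss}.

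The only point that demands genuine care is the compatibility of the two notions of connectivity together with the invariance of the weight $F$ under regrouping the excursions; both follow once one observes that the pairings, the heights, and the step-degrees are all attributes of the concatenated path $\vec{\Gamma}$ and are unaffected by how its returns to the $x$-axis are bracketed into sites. I would therefore expect the bookkeeping of the bijection (and the verification that each connected $\ribbon'$ is produced exactly once, with the factor $\prod_j 1/|\SSS^0(\Gamma_j)|$ matching the product of the $1/m_j$'s) to be the main, though routine, obstacle.
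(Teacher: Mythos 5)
Your proposal is correct and follows exactly the route of the paper's own (one-line) proof, which invokes the same three ingredients: the expansion \eqref{eq:S-Boolean} of $S_\ell$ in Boolean cumulants, multilinearity of $\kappa^\a_{u;\vv}$, and \cref{prop:kumuLukasiewicz}. The regrouping bijection you spell out—gluing irreducible excursions within each block of the interval partition $\pi$, noting that the weight $F$, the pairings, and the connectivity graph depend only on the concatenated path—is precisely the bookkeeping the paper leaves implicit behind the word ``immediately.''
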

\begin{proof}
It follows immediately from \cref{prop:kumuLukasiewicz} by using
multilinearity of the cumulant $\kappa^\a_{u;\vv}$ and \eqref{eq:S-Boolean}.
\end{proof}

\subsubsection{Step 3 -- Concluding the proofs of Theorems \ref{theo:LLNCDM} and \ref{theo:CLTCDM}}\label{subsubsec:Cumulants}

We start with the following:

\begin{claim}\label{claimA}
Any Łukasiewicz ribbon path $\ribbon\in\mathbf{L}(\ell_1,\dots,\ell_n)$ stays below height $\ell_1+\cdots+\ell_n$, i.e.~$\SSS^i(\ribbon) = \emptyset$, whenever $i \geq \ell_1+\cdots+\ell_n$.
Moreover, $\mathbf{L}(\ell_1,\dots,\ell_n)$ is finite.
\end{claim}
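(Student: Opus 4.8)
The plan is to treat the ordered tuple $\ribbon = (\Gamma_1,\dots,\Gamma_n)$ as a single concatenated lattice path of length $N := \ell_1+\cdots+\ell_n$ running from $(0,0)$ to $(N,0)$ inside the first quadrant, and to bound its maximal height. Writing $y_j$ for the second coordinate of the $j$-th vertex, it suffices to prove $y_j \le N-1$ for every $j$, since this is exactly the assertion that $\SSS^i(\ribbon) = \emptyset$ for $i \ge N$. To this end I would first classify the steps of $\ribbon$ into: horizontal steps (all at height $\ge 1$, by the Łukasiewicz condition); unpaired up steps; unpaired down steps; and paired steps, which by definition of a pairing of degree $n$ come as a down step of degree $n$ occurring strictly before its matching up step of degree $n$. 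Denoting by $d_u$ the number of unpaired down steps (each of degree $1$, again by the Łukasiewicz condition), by $r$ the number of unpaired up steps, by $q = |\mathbf{P}(\ribbon)|$ the number of pairings, and by $h$ the number of horizontal steps, one has $N = 2q + d_u + r + h$.

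The key step is the height bound. Since each excursion, hence the whole concatenation, returns to height $0$, the total up-degree equals the total down-degree; because unpaired down steps have degree $1$ while the paired up and down steps of a common pairing share the same degree, this balance simplifies to the identity (total unpaired up-degree) $= d_u$. Now fix a vertex $w_j$ and compute its height $y_j$ as the sum of the increments of the first $j$ steps. Horizontal steps contribute $0$; any pairing whose down step and up step both lie among the first $j$ steps contributes $-n+n = 0$, while any pairing straddling position $j$ (necessarily down step before, up step after, since the down step precedes the up step) contributes $-n \le 0$; the unpaired down steps contribute $-1$ each. Hence $y_j$ is at most the sum of the unpaired up-degrees among the first $j$ steps, so $y_j \le d_u$. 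Finally, if $d_u \ge 1$ then there is at least one unpaired up step, i.e.\ $r \ge 1$, because the unpaired up-degrees sum to $d_u$, whence $N = 2q + d_u + r + h \ge d_u + 1$; and if $d_u = 0$ then $y_j \le 0$ for all $j$. Either way $y_j \le d_u \le N-1$, proving the first assertion.

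For finiteness, the height bound confines every vertex of $\ribbon$ to the finite set $\{0,1,\dots,N-1\}$ of admissible second coordinates, while the first coordinate of each vertex is fixed by its position. Thus each excursion $\Gamma_i$ of length $\ell_i$ is determined by the heights of its $\ell_i-1$ interior vertices, giving finitely many choices, and hence finitely many tuples $(\Gamma_1,\dots,\Gamma_n)$. Since $\mathbf{P}(\ribbon)$ is a subset of the finite set of pairs of vertices of $\ribbon$, there are also finitely many admissible pairing sets, and therefore $\mathbf{L}(\ell_1,\dots,\ell_n)$ is finite.

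The only subtle point, which I would flag as the main obstacle, is the height bound itself: the naive estimate $y_j \le (\text{total up-degree})$ is far too weak, since a single high-degree pairing could a priori force a large height. The resolution is the observation that, in a pairing, the down step precedes the matching up step, so a paired up step never contributes positively to the height before its (earlier) partner down step has already been subtracted; this is precisely what collapses the estimate down to the unpaired up-degrees and delivers $y_j \le d_u < N$.
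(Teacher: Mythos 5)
Your proof is correct and is essentially the fleshed-out version of the argument the paper merely sketches: the paper's proof states that the height bound ``easily follows from the fact that the non-paired down steps \ldots have degree $1$'' and leaves the details to the reader, and your prefix-sum argument supplies exactly those details (including the essential, implicitly used fact that a pairing's down step precedes its up step, without which the claim would be false). Your finiteness argument likewise matches the paper's deduction of finiteness from the height bound.
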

\begin{proof}
The first statement easily follows from the fact that the non-paired
down steps of any Łukasiewicz ribbon path have degree $1$, and we leave the details to the reader. The first statement implies that the set $\mathbf{L}(\ell_1,\dots,\ell_n)$ has cardinality upper bounded by $\big(2(\ell_1+\cdots+\ell_n)+1\big)^{\ell_1+\cdots+\ell_n}$,	and in particular is finite.
\end{proof}

\vspace{3pt}
\textbf{\emph{End of proof of~\cref{theo:LLNCDM}.}}
For the LLN, i.e.~\cref{eq:limitCDM}, we first prove a general result that might be useful for studying the asymptotic behavior of random partitions in other contexts.

Let $\mathcal{M}_{< \infty} (\R)$ be the set of positive finite measures on
$\R$ that have finite moments of all orders. Recall that $M_\ell(\mu) := \int_\R{x^\ell d\mu(x)}$ denotes the $\ell$-th moment of the measure $\mu$.

\begin{theorem}[\cite{Billingsley1995}, Thm.~30.2]\label{theo:Billingsley}
Let $(\mu_d)_{d \geq 1} \subset \mathcal{M}_{< \infty}(\R)$ be any sequence and $\mu\in\mathcal{M}_{<\infty}(\R)$ be a measure that is uniquely determined by its moments. Suppose that $\lim_{d \to \infty} M_\ell(\mu_d) =  M_\ell(\mu)$ for every $\ell \geq 0$, then $\mu_d$ converges to $\mu$ weakly.
\end{theorem}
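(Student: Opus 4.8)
The plan is to prove this by the classical \emph{method of moments}: combine a compactness/tightness step with an identification of every subsequential weak limit through its moments, and then invoke the subsequence principle for weak convergence. Since weak convergence of finite measures (with uniformly bounded mass) is metrizable, it suffices to show that every subsequence of $(\mu_d)_{d\ge 1}$ admits a further subsequence converging weakly to $\mu$; this forces $\mu_d\to\mu$ weakly.

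First I would establish tightness of $\{\mu_d\}$. The hypothesis is imposed for $\ell=0$ as well, so the total masses $M_0(\mu_d)=\mu_d(\R)$ converge to $\mu(\R)<\infty$ and are uniformly bounded; likewise $M_2(\mu_d)\to M_2(\mu)<\infty$, so $\sup_d M_2(\mu_d)<\infty$. A Chebyshev-type estimate then gives, uniformly in $d$,
\[ \mu_d(\{x\colon |x|>R\}) \le \frac{1}{R^2}\int_\R x^2\,d\mu_d(x) = \frac{M_2(\mu_d)}{R^2} \xrightarrow[R\to\infty]{} 0. \]
Together with the uniform mass bound, this yields tightness, so by Prokhorov's theorem (or Helly selection) any subsequence has a further subsequence $(\mu_{d_k})$ converging weakly to some finite measure $\nu$ with $\nu(\R)=\lim_k M_0(\mu_{d_k})=M_0(\mu)$.

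Next I would identify $\nu$ by matching moments. Weak convergence alone does not transfer moments, since $x\mapsto x^\ell$ is unbounded; the point is uniform integrability. For each fixed $\ell$, on the set $\{|x|^\ell>A\}$ one has $|x|^\ell<|x|^{2\ell}/A$, hence
\[ \sup_k \int_{\{|x|^\ell>A\}}|x|^\ell\,d\mu_{d_k}(x) \le \frac{1}{A}\,\sup_k\int_\R x^{2\ell}\,d\mu_{d_k}(x) \xrightarrow[A\to\infty]{} 0, \]
because $M_{2\ell}(\mu_{d_k})\to M_{2\ell}(\mu)<\infty$ and hence is uniformly bounded. Uniform integrability of $x^\ell$ together with the weak convergence $\mu_{d_k}\to\nu$ justifies passing to the limit under the integral, so $M_\ell(\nu)=\lim_k M_\ell(\mu_{d_k})$, which by assumption equals $M_\ell(\mu)$. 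Thus $\nu$ and $\mu$ share all moments.

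Finally, since $\mu$ is uniquely determined by its moments, $\nu=\mu$. Therefore every subsequence of $(\mu_d)$ has a further subsequence converging weakly to the single limit $\mu$, and the subsequence principle gives $\mu_d\to\mu$ weakly. I expect the moment-transfer step to be the main obstacle: one must produce the uniform integrability needed to push the moment identities through the weak limit, and simultaneously handle that the $\mu_d$ are merely finite (not probability) measures. This is precisely why the hypothesis includes $\ell=0$, guaranteeing convergence of total masses and ruling out loss of mass to infinity.
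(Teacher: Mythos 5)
Your proof is correct and is essentially the same argument as the one the paper invokes: the paper gives no proof of its own but cites Billingsley's Theorem 30.2 (remarking that the proof extends verbatim from probability measures to $\mathcal{M}_{<\infty}(\R)$), and Billingsley's proof is exactly your method of moments — tightness from the second-moment bound, subsequential weak limits via Prokhorov/Helly, moment identification through uniform integrability of $x^\ell$ using the bounded $2\ell$-th moments, uniqueness of $\mu$ by its moments, and the subsequence principle. Your explicit handling of the $\ell=0$ case to control total mass is precisely the point the paper's remark glosses over when passing to finite measures.
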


\begin{remark}
The above theorem is stated in~\cite{Billingsley1995} for
probability measures, but the proof in~\cite{Billingsley1995}
works for $\mathcal{M}_{<\infty}(\R)$ in the same way.
\end{remark}

\begin{theorem}\label{theo:GeneralLLN}
  Let $(f_d)_{d \geq 1}$ be a sequence of random $C$-Lipschitz
  functions $f_d\colon\R\to\R_{\ge 0}$ such that $f_d(x)dx \in \mathcal{M}_{<
    \infty}(\R)$. Suppose that
\begin{enumerate}[label=(\alph*)]
    \item for every $\eps>0$, there exists $N>0$ such that $\lim_{d\to\infty}\PP_d\left( \|f_d\|_{L^\infty(\R\setminus [-N,N])}>\epsilon\right) = 0$,
    \item there exists a measure $\mu\in\mathcal{M}_{<\infty}(\R)$, uniquely determined by its moments, such that $f_d(x)dx$ converges to $\mu$ in moments, in probability, i.e.~for every $\eps>0$ and $\ell\in\Z_{\ge 0}$, we have
\[ \lim_{d\to\infty}{\PP_d\left(\left| \int_{\R}{x^\ell f_d(x)dx} - M_\ell(\mu) \right|>\eps\right)} = 0. \]
\end{enumerate}
Then, $(f_d)_{d \geq 1}$ has a deterministic limit $f\colon\R\to\R_{\ge 0}$ in the supremum norm, in probability:
\begin{equation*}
\forall \epsilon > 0, \qquad \lim_{d \to \infty}\PP_d\left( \|f_d - f\|_{\infty}> \epsilon\right) = 0.
\end{equation*}
The function $f$ is $C$-Lipschitz, and is a probability density of the measure $\mu$.
\end{theorem}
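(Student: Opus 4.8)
The plan is to combine the classical moments-to-weak-convergence principle with the equicontinuity provided by the uniform Lipschitz bound, organizing the probabilistic part through the subsequence criterion for convergence in probability. The backbone is an elementary a priori sup-norm estimate: any nonnegative $C$-Lipschitz $g$ with $\int_\R g\,dx<\infty$ satisfies $\|g\|_\infty\le\sqrt{C\int_\R g\,dx}$, because near a near-maximizer $x_0$ the graph of $g$ dominates the triangle of height $g(x_0)$ and base $2g(x_0)/C$, whose area $g(x_0)^2/C$ cannot exceed $\int_\R g\,dx$. Applying this to $g=f_d$ and using that $M_0(f_d(x)dx)=\int_\R f_d\,dx\to M_0(\mu)$ in probability by hypothesis (b), the random quantities $\|f_d\|_\infty$ are bounded with probability tending to $1$; this is precisely what feeds Arzel\`a--Ascoli below.

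Next I would construct $f$ and identify it as the density of $\mu$. Using (b) and a diagonal extraction over the countably many moments $\ell\in\Z_{\ge 0}$, I pass to a subsequence along which, almost surely, $M_\ell(f_d(x)dx)\to M_\ell(\mu)$ for every $\ell$. Fixing one realization in this full-probability event, the resulting deterministic sequence is uniformly bounded (by the a priori estimate) and $C$-Lipschitz, hence equicontinuous; by Arzel\`a--Ascoli and a further diagonal argument it has a subsequence converging uniformly on compacts to some $C$-Lipschitz $f\ge 0$, while by \cref{theo:Billingsley} the measures $f_d(x)dx$ converge weakly to $\mu$. Testing against $\phi\in C_c(\R)$ and comparing the two limits forces $\int_\R\phi f\,dx=\int_\R\phi\,d\mu$, so $\mu=f(x)dx$ admits the $C$-Lipschitz density $f$; being integrable and Lipschitz, $f$ also satisfies $f(x)\to 0$ as $|x|\to\infty$.

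The key step is to promote weak convergence to uniform convergence on compacts, in probability. I would first isolate the deterministic lemma: if $C$-Lipschitz functions $g_d$ are uniformly bounded and $g_d(x)dx\to f(x)dx$ weakly, then $g_d\to f$ uniformly on every compact $[-N,N]$, since any subsequential uniform limit $h$ on $[-N,N]$ extracted by Arzel\`a--Ascoli must satisfy $\int\phi h\,dx=\int\phi f\,dx$ for all $\phi\in C_c(-N,N)$, whence $h=f$ by continuity; thus $f$ is the unique limit point and the full sequence converges. To make this probabilistic I invoke the subsequence criterion: it suffices that every subsequence of the random numbers $\|f_d-f\|_{L^\infty[-N,N]}$ admits a further subsequence tending to $0$ almost surely, which follows by extracting (as in the previous paragraph) a subsequence with a.s. moment convergence, applying \cref{theo:Billingsley} and then the deterministic lemma realization-wise.

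Finally I would pass from compacts to all of $\R$ using the tail hypothesis (a). Given $\epsilon>0$, I choose $N$ large enough that simultaneously $\sup_{|x|>N}f(x)\le\epsilon/2$ (possible since $f(x)\to 0$) and hypothesis (a) gives $\PP_d\big(\|f_d\|_{L^\infty(\R\setminus[-N,N])}>\epsilon/2\big)\to 0$; then off $[-N,N]$ one has $|f_d-f|\le f_d+f\le\epsilon$ with probability tending to $1$, while on $[-N,N]$ the previous step gives $\|f_d-f\|_{L^\infty[-N,N]}\to 0$ in probability. A union bound over these two events yields $\PP_d\big(\|f_d-f\|_\infty>\epsilon\big)\to 0$, which is the claim. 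I expect the main obstacle to be exactly the weak-to-uniform upgrade on compacts: this is where the uniform Lipschitz hypothesis is indispensable, since it supplies the equicontinuity needed for Arzel\`a--Ascoli, and the interaction with convergence in probability must be handled through the subsequence criterion rather than attempted directly.
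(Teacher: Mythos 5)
Your proof is correct in substance, but it follows a genuinely different route from the paper's, so let me compare the two. The paper never leaves the ``in probability'' framework: it metrizes convergence in moments (by an explicit series metric $\pi$) and weak convergence (by a L\'evy-type metric $\tilde\pi$), extracts from \cref{theo:Billingsley} the uniform implication that $\pi$-closeness to $\mu$ forces $\tilde\pi$-closeness to $\mu$, identifies $f(x_0)$ pointwise by testing against mollifiers $\psi^{x_0}_\eps$ supported near $x_0$, and then upgrades pointwise to locally uniform convergence by a covering argument based on the Lipschitz bound; the density claim is then checked at the level of moments with a tail estimate. You instead argue through compactness: the a priori bound $\|g\|_\infty\le\sqrt{C\int_\R g\,dx}$ for nonnegative $C$-Lipschitz $g$ (a clean observation the paper does not use), Arzel\`a--Ascoli plus diagonal extraction both to produce $f$ and to prove the deterministic ``weak-to-locally-uniform'' lemma, and the subsequence criterion to convert everything back into convergence in probability. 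Your route is shorter on the analytic side and yields for free that $f$ is the unique continuous density of $\mu$ and vanishes at infinity; the paper's route is longer but has the advantage of never invoking almost-sure convergence.

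That advantage matters, and it is the one point in your write-up that needs repair. Your extraction of subsequences converging ``almost surely'', and your appeal to the subsequence criterion, presuppose that all the $f_d$ are defined on a common probability space. In the theorem (note the notation $\PP_d$) and in the paper's application, each $f_d$ lives on its own probability space $(\Omega_d,\PP_d)$ --- the Jack--Thoma measures vary with $d$ --- so ``a.s.\ convergence of $f_{d_k}$ as $k\to\infty$'' is a priori meaningless. Because every limit in your argument is deterministic (the constants $M_\ell(\mu)$, the fixed function $f$), this is fixed by the standard coupling: place all the $f_d$ on the product space $\prod_{j}(\Omega_j,\PP_j)$, where $f_d$ depends only on the $d$-th coordinate; convergence in probability to a deterministic limit is unchanged under this coupling, and both the Borel--Cantelli extraction and the subsequence criterion then apply verbatim. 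With this one-line amendment, your proof is complete.
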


\begin{proof}
\emph{Step 1.} Let $\eps>0,\, \psi\in C_b(\R)$ be arbitrary.
In this step, we show that (b) implies 
\begin{equation}\label{limit_observable}
\lim_{d\to\infty}{\PP_d\left(\left|\int_\R{\psi(x)f_d(x)dx} - \int_\R{\psi(x)d\mu(x)} \right|\leq\eps \right)} = 1.
\end{equation}

Consider the following metric $\pi$ on $\mathcal{M}_{<\infty}(\R)$:
\[ \pi(\nu_1,\nu_2) := \sum_{i=0}^\infty\frac{\min\left( 1,\, |M_i(\nu_1)-M_i(\nu_2)| \right)}{2^i}. \]
From the definition of $\pi$, it follows that for any $\eps>0$, there exist $\eps'>0,\, k\in\Z_{\ge 1}$ such that:
\[  \text{ if $\nu\in\mathcal{M}_{<\infty}(\R)$ satisfies }\ |M_1(\nu) - M_1(\mu)|, \dots, |M_k(\nu) - M_k(\mu)| \le\eps' \Longrightarrow \pi(\nu, \mu) \le \eps. \]

As a result, $\pi$ metrizes convergence in moments, so that the assumption (b) is equivalent to the following condition:
for every $\eps>0$, we have
\begin{equation}\label{limit_mom_metric}
\lim_{d\to\infty}{\PP_d\left( \pi(f_d(x)dx,\mu)\le\eps \right)} = 1.
\end{equation}

A classical theorem of Le\'vy says that the weak convergence in
$\mathcal{M}_{< \infty}(\R)$ is metrizable (see~\cite{Varadarajan1958}
for a more general result); let $\tilde{\pi}$ denote the corresponding
metric. In other terms, for any sequence of measures 
$(\nu_n)_{n\ge 1}\subset\mathcal{M}_{<\infty}(\R)$ one has $\nu_n\to\mu$ weakly, if and only if
$\tilde{\pi}(\nu_n,\mu) \to 0$, as $n \to \infty$. Consequently, \eqref{limit_observable} would be implied if for every $\eps'>0$, we have
\begin{equation}\label{limit_levy_metric}
\lim_{d\to\infty}{\PP_d\left( \tilde{\pi}(f_d(x)dx,\mu)\le\eps' \right)} = 1.
\end{equation}
Note that \cref{theo:Billingsley}
implies that for any $\eps'>0$, there exists $\eps>0$ such that
$\pi(\nu,\mu)\le\eps \Longrightarrow\tilde{\pi}(\nu,\mu)\le\eps'$.
Therefore $\PP_d\left( \pi(f_d(x)dx,\mu)\le\eps \right)\le\PP_d\left(
  \tilde{\pi}(f_d(x)dx,\mu)\le\eps' \right)$, and
\eqref{limit_mom_metric} implies the desired limit~\eqref{limit_levy_metric}. 

\smallskip
\emph{Step 2.} Fix $x_0 \in \R$ and let $\psi_\epsilon^{x_0}\in C_c(\R)$ be a nonnegative function supported on $[x_0-\eps, x_0+\eps]$ with $\int_\R \psi_\eps^{x_0}(x) dx = 1$.
  For any $C$-Lipschitz function $g$, one has
\begin{equation*}
\left|g(x_0)-\int_\R{\psi_\eps^{x_0}(x)g(x)dx}\right| \leq \!\int_\R{\psi_\eps^{x_0}(x)|g(x_0)-g(x)|dx} \leq C\eps,
\end{equation*}
therefore
\begin{equation*}
\left|g(x_0) - \!\int_\R \psi_\eps^{x_0}(x) d\mu(x)\right| \leq 
C\epsilon+\left|\int_\R{\psi_\eps^{x_0}(x)g(x)dx} - \int_\R{\psi_\epsilon^{x_0}(x)d\mu(x)}\right|.
\end{equation*}
When we take $g = f_d$ we obtain
\begin{multline*}
\PP_d\left( \left|f_d(x_0) - \int_\R \psi_\eps^{x_0}(x) d\mu(x)\right|\le (C+1)\eps\!\right)\\
\geq \PP_d\left(\left|\int_\R\psi_\eps^{x_0}(x)f_d(x)dx - \int_\R \psi_\eps^{x_0}(x) d\mu(x)\right|\le \eps \right).
\end{multline*}
Step 1 shows that this RHS above converges to $1$, as $d\to\infty$.
Consequently, for any $\eps>0$, we have:
$\lim_{d\to \infty}\PP_d\left( |f_d(x_0) - \int_\R \psi_\epsilon^{x_0}(x) d\mu(x)| \leq
(C+1)\epsilon\right) = 1$. Passing to the limit $\epsilon \to 0$, we get that
  \[
    \lim_{d \to \infty} f_d(x_0) =
    f(x_0) := \lim_{\epsilon \to 0}\int_\R \psi_\epsilon^{x_0}(x)
    d\mu(x),\]
  where the first equality holds in probability.
  Since $x_0$ was arbitrary, we have a pointwise convergence $f_d \to
  f$, in probability. This pointwise convergence can be extended to the
  uniform convergence on any
   closed interval, in probability, by a standard
   argument of covering a closed interval by finitely many small
   intervals, and the fact that that the supremum norm of $|f_d-f|$ on a
   small interval is small by the Lipschitz condition. Therefore for every $N, \eps >0$
\[ \lim_{d \to \infty}\PP_d\left( \|f_d - f\|_{L^\infty[-N,N]}> \eps\right) = 0.\]

Finally, fix any $\eps>0$ and take $N>0$ as in condition (a) from the hypothesis; then 
\[ \PP_d\left( \|f_d - f\|_{\infty} > \epsilon\right) \leq \PP_d\left( \|f_d - f\|_{L^\infty[-N,N]} > \epsilon\right) +
  \PP_d\left( \|f_d - f\|_{L^\infty(\R\setminus[-N,N])} >
    \epsilon\right) \to 0 \]
as $d\to\infty$. Since the functions $f_d$ are $\ge 0$ and $C$-Lipschitz, $f$ has the same properties.

\smallskip
\emph{Step 3.} It remains to show that $f$ is the density of $\mu$. As
$\mu$ is uniquely determined by its moments, it suffices to verify
$\int_{\R}{x^\ell f(x)dx} = M_\ell(\mu)$, for all $\ell\in\Z_{\ge
  0}$. Fix $\ell\ge 0$ and $R>0$,
\begin{gather*}
\left|\int_{|x|\ge R}{\!x^\ell f_d(x)dx}\right| \le
\int_{|x|\ge R}{\!|x|^\ell f_d(x)dx} \le
\frac{1}{R^{\ell+2}}\!\int_{|x|\ge R}{\!x^{2\ell+2}f_d(x)dx}\\
\le \frac{1}{R^{\ell+2}}\int_\R{x^{2\ell+2}f_d(x)dx}\le
\frac{1}{R^{\ell+2}} \left| \int_\R{x^{2\ell+2}f_d(x)dx} - M_{2\ell+2}(\mu) \right| + \frac{M_{2\ell+2}(\mu)}{R^{\ell+2}}.
\end{gather*}
This inequality and condition (b) show that for any $\eps>0$, there exists $R_0>0$ such that 
\begin{equation}
  \label{eq:PomocDensity}
  \lim_{d\to\infty}{\PP_d\left( \left|\int_{|x|\ge R}{x^\ell
          f_d(x)dx}\right| < \eps\right)} = 1, \quad \forall\,
  R>R_0.
  \end{equation}
Note that $\left| M_\ell(\mu) - \int_{|x|\le R}{x^\ell f(x)dx}
\right|$ is bounded from above by
\[ \bigg|M_\ell(\mu) - M_\ell(f_d(x)dx)\bigg| + \left|\int_{|x|\ge R}{x^\ell
          f_d(x)dx}\right|+\left| \int_{-R}^{R}x^\ell(f_d(x)-f(x))dx\right|.\]
For any $\epsilon >0$, there exists $R_0>0$ such that for every $R > R_0$
the probability $\PP_d$ that each of these terms is smaller than
$\epsilon$ tends to $1$, as $d \to \infty$. Indeed, for the first term this is condition
(b), for the second this is~\eqref{eq:PomocDensity}, and for the third
one this is convergence of $f_d\to f$ uniformly on $[-R, R]$ in
probability, proved in Step 2. Taking the limit as $R\to\infty$, it follows that the integral $\int_\R{x^\ell f(x)dx}$ converges and equals $M_\ell(\mu)$, as desired.
\end{proof}

\medskip

We are going to prove~\cref{theo:LLNCDM} by
applying~\cref{theo:GeneralLLN} to the collection of functions
\[f_{(\al;u\sqrt{v_1})}(x) :=
\frac{\omega_{\Lambda_{(\al;u\sqrt{v_1})}}(x)-|x|}{2},\quad x\in\R, \]
that are parametrized by an integer $d\ge 1$ (recall that $\al = \al(d),\, u = u(d)$).
In our setting, $\la$ is an $M^\a_{u;\vv}$-distributed random partition, and 
$\Lambda_{(\al;u\sqrt{v_1})} := T_{\frac{\al}{u\sqrt{v_1}}, \frac{1}{u\sqrt{v_1}}}\la$, therefore $(f_{(\al;u\sqrt{v_1})})_{d\ge 1}\colon\R\to\R_{\ge 0}$ is a collection of random $1$--Lipschitz compactly supported functions. Let us verify the two conditions required by \cref{theo:LLNCDM}.

\smallskip
\emph{Step 1: For every $\epsilon > 0$, there exists $N > 0$ such that
\[ \lim_{d \to\infty}{M^\a_{u;\vv}\left(\|f_{(\al;u\sqrt{v_1})}\|_{L^\infty(\R\setminus [-N,N])} > \eps\right)} = 0. \]}

Let $U = U(d) := \frac{u^2\cdot v_1}{\al}$. Note that if the point $(x_0,y_0)$ 
in the first quadrant belongs to the rescaled diagram $\Lambda_{(\al;u\sqrt{v_1})}$, then the whole rectangle
$(0,0)$--$(x_0,0)$--$(x_0,y_0)$--$(0,y_0)$ is contained in
$\Lambda_{(\al;u\sqrt{v_1})}$, therefore $x_0\cdot y_0 \leq |\Lambda_{(\al;u\sqrt{v_1})}| = \frac{|\lambda|}{U}$. Applying this inequality to $x_0=(\omega_{\Lambda_{(\al;u\sqrt{v_1})}}(x) + |x|)/2\,$ and $y_0=(\omega_{\Lambda_{(\al;u\sqrt{v_1})}}(x) - |x|)/2$, we obtain
\[ |x| \leq \omega_{\Lambda_{(\al;u\sqrt{v_1})}}(x) \le \sqrt{x^2+4\cdot\frac{|\lambda|}{U}},\]
and consequently
\begin{equation}\label{eq:IneqForF}
0 \leq f_{(\al;u\sqrt{v_1})}(x)\le
\frac{\sqrt{x^2+4\cdot \frac{|\lambda|}{U}} - |x|}{2}\le
\frac{\sqrt{x^2+8e} - |x|}{2}=
\frac{4e}{\sqrt{x^2+8e} + |x|},
\end{equation}
whenever $|\la|\le 2e\cdot U$. Note that for every $\eps > 0$, there exists $N>0$ such that the RHS of
\eqref{eq:IneqForF} is smaller or equal to $\eps$, for all $|x|\ge N$. Therefore it is enough to prove that
    \begin{equation}
      \label{eq:WhatIWant}
      \lim_{d \to \infty}M^\a_{u;\vv}\left(|\lambda| > 2e\cdot U\right) = 0.
    \end{equation}

    By looking at the coefficient of $t^i$ in the Cauchy identity~\eqref{eq:Cauchy}, we deduce that the measure $M^\a_{u;\vv}$ gives $\Y_i$ a mass of $e^{-U}\cdot \frac{U^i}{i!}$, therefore for any $k\in\Z_{\ge 1}$, one has
\[M^\a_{u;\vv}\left(|\lambda| > k\right) \leq
   \exp(-U)\sum_{i=k}^\infty\frac{U^i}{i!} \leq \frac{U^k}{k!} =
   O\left(\left(\frac{e\cdot U}{k}\right)^{\!k}\cdot k^{-\frac{1}{2}}\right)\!,\text{ as }k\to\infty,\]
where the last equality follows from the Stirling's formula. 
Observe that our assumptions \eqref{eq:assumptions'_weak} imply that $U\to\infty$ as $d\to\infty$, therefore if we choose $k := \lfloor 2e\cdot U\rfloor$, then also $k\to\infty$ as $d\to\infty$.
We conclude that \eqref{eq:WhatIWant} holds true, and this finishes the proof of this step.

\smallskip
\emph{Step 2: There exists a probability measure $P_\infty\in\mathcal{M}_{<\infty}(\R)$ such that $f_{(\al;u\sqrt{v_1})}(x)dx$ converges to $P_\infty$ in moments, in probability.}

Consider the function $\mathbb{E}^\a_{u;\vv}[f_{(\al;u\sqrt{v_1})}(x)]$, which for simplicity we denote $\mathbb{E}[f(x)]$. \cref{eq:SFormula} shows that, for any $\ell\in\Z_{\ge 0}$, we have
\begin{equation}\label{moments_and_S}
\int_{\R}{x^{\ell}\,\mathbb{E}[f(x)]dx} = 
\mathbb{E}\left[ \!\int_{\R}{x^{\ell} f(x) dx} \right] \!=\! 
\frac{\mathbb{E}\left[S_{\ell+2}(\Lambda_{(\al; u\sqrt{v_1})}) \right]}{\ell+1}.
\end{equation}
When $\ell=0$, this shows $\int_{-\infty}^{\infty}{\E[f(x)]dx} = \E[S_2(\Lambda_{(\al; u\sqrt{v_1})})] = v_1^{-1}\cdot\E[S_2(\Lambda_{(\al; u)})]$ and this equals $1$, by~\cref{eq:Cumu-Ss}. Therefore $\E[f(x)]$ is the density of a probability measure,\footnote{We point out that this is why we've considered the rescaled partition $\Lambda_{(\al; u\sqrt{v_1})}$ with $u\sqrt{v_1}$ instead of $u$.} to be denoted by $P_d$.

Since $\mathbf{L}^{\conn}(\ell) = \mathbf{L}(\ell)$ is finite and $S_{\ell}(\Lambda_{(\al; u\sqrt{v_1})}) = 
v_1^{-\ell/2}\cdot S_{\ell}(\Lambda_{(\al; u)})$, then \cref{eq:Cumu-Ss} shows that the following limits exist:
\begin{multline}\label{lth_moment}
\lim_{d\to\infty} \E^\a_{u;\vv}\left[ S_{\ell}(\Lambda_{(\alpha;u\sqrt{v_1})}) \right] =
v_1^{-\frac{\ell}{2}}\lim_{d\to\infty}\kappa^\a_{u;\vv}\left( S_{\ell}(\Lambda_{(\al;u)}) \right) \\
= v_1^{-\frac{\ell}{2}}\sum_{\Gamma\in\mathbf{L}_0(\ell)} 
	\frac{1}{|\SSS^0(\Gamma)|}\prod_{i\geq 1}  (i\cdot g)^{|\SSS^i_{\tiny\rightarrow}(\Gamma)|}
	\, v_i^{\,|\SSS_i(\Gamma)|},
\end{multline}
for all $\ell\in\Z_{\ge 1}$. Note that since $\alpha/u^2 \to 0$, then the only 
paths that survive from \eqref{eq:Cumu-Ss} are the ones with $|\mathbf{P}(\ribbon)|=0$; 
this explains why after taking the limit as $d\to\infty$, the sum above is over 
$\Gamma\in\mathbf{L}_0(\ell)$ (with no pairings) and not over $\Gamma\in\mathbf{L}(\ell)$.
Similarly, note that any Łukasiewicz ribbon path $\ribbon\in\mathbf{L}^{\conn}(\ell_1,\ell_2)$ 
must have at least $1$ pairing for the connectivity condition to hold,
i.e. $|\mathbf{P}(\ribbon)| = \sum_{i\ge 1}{|\mathbf{P}_i(\ribbon)|} \ge 1$, 
therefore \cref{eq:Cumu-Ss} also implies
\begin{equation}\label{lth_variance}
\lim_{d\to\infty} \text{Var}^\a_{u; \vv}\left[ S_{\ell}(\Lambda_{(\al;u\sqrt{v_1})}) \right] =
v_1^{-\ell}\lim_{d\to\infty} 
{\kappa^\a_{u;\vv}\left(S_{\ell}(\Lambda_{(\al;u)}), S_{\ell}(\Lambda_{(\al;u)}) \right)} = 0.
\end{equation}

From \eqref{lth_moment}, \eqref{lth_variance}, and Chebyshev's inequality, 
we have that for any $\epsilon > 0$ and any $\ell\in\Z_{\ge 1}$, 
\begin{equation}\label{s_ell_prob}
\lim_{d\to\infty}{ M^\a_{u;\vv}\left( | S_\ell(\Lambda_{(\al;u\sqrt{v_1})}) - s_\ell | \ge\epsilon \right) } = 0,
\end{equation}
where 
\begin{equation}\label{s_ell}
s_\ell := v_1^{-\frac{\ell}{2}}\sum_{\Gamma\in\mathbf{L}_0(\ell)} 
\frac{1}{|\SSS^0(\Gamma)|}\prod_{i\geq 1} (i\cdot g)^{|\SSS^i_{\tiny\rightarrow}(\Gamma)|}
\, v_i^{\,|\SSS_i(\Gamma)|}.
\end{equation}

Next, observe that \eqref{moments_and_S}, \eqref{lth_moment} and
\eqref{lth_variance} imply that the moments and variances of the
measures $P_d$ are uniformly bounded. Thus $(P_d)_{d\ge 1}$ is a tight
sequence of probability measures, and thus it contains a subsequence
$(P_{d_k})_{k\ge 1}$ that converges weakly to some probability measure
$P_\infty$. The limit \eqref{lth_moment} shows further that the
$\ell$-th moment of $P_\infty$ is $\frac{s_{\ell+2}}{\ell+1}$, for all
$\ell\ge 0$. Finally \eqref{moments_and_S} and \eqref{s_ell_prob}
imply that for any $\eps>0$ and any $\ell\in\Z_{\ge 1}$, we have that
\[  \lim_{d\to\infty}{ M^\a_{u;\vv}\left( \left| \int_\R{x^\ell f_{(\al; u\sqrt{v_1})}(x)dx} - \int_\R{x^\ell P_\infty(dx)} \right| \ge\eps \right)} = 0. \]
In \cref{sec:LimitShape}, we show that $P_\infty$ is uniquely
determined by its moments (see~\cref{cor:UniqueP}), which finishes the proof.

\smallskip
\emph{Step 3: Conclusion of the argument.} 

Steps 1 and 2 prove that the two conditions in \cref{theo:GeneralLLN} are satisfied for the sequence $(f_{(\al;u\sqrt{v_1})})_{d\ge 1}$ and $\mu = P_\infty$.
Consequently, there exists a deterministic function $f_{\Lambda_{g;\vv}}\colon\R\to\R_{\ge 0}$, which is a probability density function of $P_\infty$, such that for every $\epsilon > 0$:
\[ \lim_{d \to \infty}M^\a_{u;\vv} \left(
\|f_{(\al;u\sqrt{v_1})} - f_{\Lambda_{g; \vv}}\|_{\infty} > \eps \right) =0.\]
Consequently, setting $\omega_{\Lambda_{g;\vv}}(x) := 2f_{\Lambda_{g;\vv}}(x)+|x|$, we conclude the convergence of $\omega_{\Lambda_{(\al;u\sqrt{v_1})}}$ to
$\omega_{\Lambda_{g;\vv}}$ in the supremum norm, in probability.

Finally, the fact that formula \eqref{mu_moments} gives the moments of the 
associated transition measure $\mu_{g;\vv}$ follows from Markov--Krein correspondence (\cref{theo:Meliot}) and \cref{prop:relations}.

\medskip

\textbf{\emph{End of proof of~\cref{theo:CLTCDM}.}} For the CLT, let us first prove the following limits:
\begin{align}
    \lim_{d\to\infty}\ &\kappa^\a_{u;\vv}
    \left(\langle \Delta^\a_{u;\vv}, x^{k-2} \rangle, \langle \Delta^\a_{u;\vv} ,x^{l-2} \rangle\right) =  \frac{v_1^{-\frac{k+l}{2}}}{(k-1) (l-1)}\times\label{eq:2InFinalProof}\\
\times&\sum_{\ribbon=(\Gamma_1, \Gamma_2)\in\mathbf{L}^{\conn}(k,l)\atop |\mathbf{P}(\ribbon)|=1}\frac{1}{|\mathbf{S}^0(\Gamma_1)||\mathbf{S}^0(\Gamma_2)|}
	\prod_{i\ge 1} (i\cdot g)^{|\SSS_{\tiny\rightarrow}^i(\ribbon)|}\,
	i^{\,|\mathbf{P}_i(\ribbon)|}\, v_i^{\,|\SSS_i(\ribbon)|},\quad \forall\, k, l \ge 2,\nonumber\\
        \lim_{d\to\infty}\ &\kappa^\a_{u;\vv}\left(\langle \Delta^\a_{u;\vv} ,x^{\ell_1-2} \rangle,\dots, \langle \Delta^\a_{u;\vv} ,x^{\ell_n-2} \rangle\right) = 0,\quad \forall\, n>2,\ \forall\, \ell_1, \cdots, \ell_n \ge 2.
\label{eq:3InFinalProof}
\end{align}

In order to compute
  \[\lim_{d\to\infty}\kappa^\a_{u; \vv}
	\left(\langle \Delta^\a_{u;\vv} ,x^{\ell_1-2} \rangle,\dots,
    \langle \Delta^\a_{u;\vv} ,x^{\ell_n-2} \rangle\right)\] for $n \geq 2$,
  first use the fact that the cumulant $\kappa^\a_{u;\vv} $ of order $n \geq 2$ is invariant by any translation
 of any argument by a scalar, so that using also \eqref{eq:SFormula} we
 have the following equalities
\begin{multline}\label{kappas_cums}
\kappa^\a_{u; \vv}\left(\langle \Delta^\a_{u;\vv} ,x^{\ell_1-2} \rangle,\dots,
    \langle \Delta^\a_{u;\vv} ,x^{\ell_n-2} \rangle\right)\\
= \left( \frac{u}{\sqrt{\al}} \right)^{\!n}\cdot\,
	\kappa^\a_{u;\vv}\left(\langle f_{(\alpha;u\sqrt{v_1})}, x^{\ell_1-2} \rangle,\dots,
    \langle f_{(\alpha;u\sqrt{v_1})}, x^{\ell_n-2} \rangle\right)\\
= \left( \frac{u}{\sqrt{\al}} \right)^{\!n}\cdot\prod_{i=1}^n\frac{1}{(\ell_i-1)}\cdot
	\kappa^\a_{u;\vv}\left(S_{\ell_1}(\Lambda_{(\alpha;u\sqrt{v_1})}),\dots,
    S_{\ell_n}(\Lambda_{(\alpha;u\sqrt{v_1})})\right).
\end{multline}
Next, use the scaling property~\eqref{eq:scaling} that shows 
$S_\ell(\Lambda_{(\al;u\sqrt{v_1})}) = v_1^{-\frac{\ell}{2}}\cdot S_\ell(\Lambda_{(\al;\sqrt{v_1})})$, 
and then~\eqref{eq:Cumu-Ss} from \cref{cor_formulas_cums}.
Note that the sum in the last equation is over connected Łukasiewicz ribbon paths 
$\ribbon\in\mathbf{L}^{\conn}(\ell_1,\dots,\ell_{n})$ on $n$ sites, and 
each such $\ribbon$ must have at least $(n-1)$ pairings for it to be connected,
i.e. $|\mathbf{P}(\ribbon)| = \sum_{i\ge 1}{|\mathbf{P}_i(\ribbon)|} \ge n-1$.
As a result:
\begin{multline}\label{kappa_conn}
\kappa^\a_{u; \vv}\left(S_{\ell_1}(\Lambda_{(\alpha;u\sqrt{v_1})}),\dots,
    S_{\ell_n}(\Lambda_{(\alpha;u\sqrt{v_1})})\right) = 
  \bigg(\frac{\alpha}{u^2}\bigg)^{\!n-1}\cdot v_1^{-\frac{\ell_1+\cdots+\ell_n}{2}}\times\\
  \times\Bigg(\sum_{\ribbon\in\mathbf{L}^{\conn}(\ell_1,\dots,\ell_{n}),\atop
  |\mathbf{P}(\ribbon)|=n-1}\prod_{j=1}^n \frac{1}{|\mathbf{S}^0(\Gamma_j)|}\prod_{i \geq 1} 
\bigg ( i\cdot\frac{\alpha-1}{u}\bigg)^{\!|\SSS^i_{\tiny\rightarrow}(\ribbon)|}i^{\,|\mathbf{P}_i(\ribbon)|}\, v_i^{\,|\SSS_i(\ribbon)|} +\frac{\alpha}{u^2}\cdot X_n \Bigg),
\end{multline}
where $X_n$ is some polynomial in $\QQ[\frac{\alpha}{u^2}, \frac{\alpha-1}{u},v_1,v_2,\dots]$.
By combining \eqref{kappas_cums} and \eqref{kappa_conn}, we have an expression of the form
\begin{equation*}
\kappa^\a_{u;\vv}\left(\langle \Delta^\a_{u;\vv} ,x^{\ell_1-2} \rangle,\dots,
\langle \Delta^\a_{u;\vv} ,x^{\ell_n-2} \rangle\right)
= \bigg(\frac{\sqrt{\alpha}}{u}\bigg)^{\!n - 2}\cdot\frac{v_1^{-\frac{\ell_1+\cdots+\ell_n}{2}}}{\prod_{i=1}^n{(\ell_i-1)}}\cdot 
\left( Y_n + \frac{\alpha}{u^2}\cdot X_n \right),
\end{equation*}
where $Y_n$ is the sum in the RHS of \cref{kappa_conn}.
Notice that $X_n$ and $Y_n$ have finite limits as $d\to\infty$: in fact, just replace $\frac{\al-1}{u} \mapsto g,\, \frac{\al}{u^2} \mapsto 0$ in their expressions.
By taking the $d\to\infty$ limit of this equation, we obtain 
\eqref{eq:2InFinalProof} and \eqref{eq:3InFinalProof}.

To conclude the theorem, we show that $\,\lim_{d\to\infty}{\kappa^\a_{u;\vv}(\langle \Delta^\a_{u;\vv} ,x^{\ell-2} \rangle)}$ is equal to the RHS of~\cref{MeanGaussian_2}.
Let us use the notations inside the proof of~\cref{theo:LLNCDM}. By definition, 
$\Delta^\a_{u;\vv} 
= \frac{u}{2\sqrt{\al}}( \omega_{\Lambda_{(\al;\,u\sqrt{v_1})}} - \omega_{\Lambda_{g;\vv}} )
= \frac{u}{\sqrt{\al}}( f_{(\al;\,u\sqrt{v_1})} - f_{g;\vv} )$. 
Together with \eqref{eq:SFormula} and \eqref{eq:scaling}, we obtain
\begin{align}
\kappa^\a_{u;\vv}\left(\langle \Delta^\a_{u;\vv}, x^{\ell-2} \rangle\right) &=
\frac{u}{\sqrt{\al}}\cdot\left\{ \kappa^\a_{u;\vv}\left(\langle f_{(\al;u\sqrt{v_1})}, x^{\ell-2} \rangle\right) -
\langle f_{g;\vv}, x^{\ell-2} \rangle \right\} \nonumber\\
&= \frac{u}{\sqrt{\al}}\cdot \left\{ 
\frac{v_1^{-\ell/2}}{\ell - 1}\cdot\mathbb{E}^\a_{u;\vv}\left[ S_{\ell}(\Lambda_{(\al; u)}) \right] -
\frac{s_\ell}{\ell-1} \right\}. \label{diff_moments}
\end{align}

 By \eqref{eq:Cumu-Ss} and \eqref{s_ell}, 
 the expression inside brackets in \eqref{diff_moments} can be written as a sum of 
 weighted ribbon paths in $\mathbf{L}(\ell)$. 
 Let $U$ be the sum over those $\ribbon\in\mathbf{L}(\ell)$ 
 with $|\mathbf{P}(\ribbon)| \ge 1$, and $V$ the sum over those 
 with $|\mathbf{P}(\ribbon)| = 0$.
 Note that $U = \frac{\alpha}{u^2}\cdot U'$, for some 
 $U'\in\QQ[\frac{\alpha-1}{u}, \frac{\alpha}{u^2}, v_1^{-1/2}, \{v_i\}_{i\ge 1}]$. 
 On the other hand, $V = \widetilde{V}(\frac{\al-1}{u}) - \widetilde{V}(g)$, 
where $\widetilde{V}(g)$ is the 
polynomial expression for $\frac{s_\ell}{\ell-1}$, see \eqref{s_ell}, and therefore 
 $V=\left( \frac{\alpha-1}{u} - g \right)\cdot V'$, for some 
 $V'\in\QQ[\frac{\alpha-1}{u}, g, v_1^{-1/2}, \{v_i\}_{i\ge 1}]$.

Then the expression in brackets equals $\frac{\alpha}{u^2}\cdot U' 
 + \left( \frac{\al-1}{u} - g \right)\cdot V'$, and it follows that
\begin{equation}\label{CLT_bound}
\kappa^\a_{u;\vv}\left(\langle \Delta^\a_{u;\vv}, x^{\ell-2} \rangle\right) 
= \frac{\sqrt{\al}}{u}\cdot U' + \frac{u}{\sqrt{\al}}\left( \frac{\al-1}{u} - g \right)\cdot V'.
\end{equation}
Both $U'$ and $V'$ have finite limits as $d\to\infty$ and moreover, by the refined version of \cref{main_assumption}, $\frac{\sqrt{\al}}{u} = \frac{1}{\sqrt{d}}(1+o(1)),\, \frac{u}{\sqrt{\al}} = \sqrt{d}(1+o(1)),\, \frac{\al-1}{u}-g=\frac{g'}{\sqrt{d}}(1+o(1))$; they imply that 
$\,\lim_{d\to\infty}{\kappa^\a_{u;\vv}(\langle \Delta^\a_{u;\vv} ,x^{\ell-2} \rangle)} = g'\cdot \lim_{d\to\infty}{V'}$. But we also have $\lim_{d\to\infty}{V'} = \lim_{\frac{\al-1}{u}\to g}{(\widetilde{V}(\frac{\al-1}{u})-\widetilde{V}(g))(\frac{\al-1}{u}-g)^{-1}}=\frac{\partial}{\partial g}\widetilde{V}(g)$.
Hence, we have finally obtained the RHS of \cref{MeanGaussian_2}, and have thus concluded the proof of the CLT.

\section{Parameters for Jack--Thoma measures}\label{sec:JackPos}

In this section, we investigate possible choices of parameters $u>0,\, \vv\in\R^{\infty}$, for which the Jack--Thoma measures $M^\a_{u;\vv}$ from \cref{plancherel_jack} are well-defined probability measures, both for fixed $\alpha>0$, and for all $\alpha>0$.
Moreover, we find large families of parameters for which \cref{main_assumption} is satisfied.

\subsection{Jack-positive specializations}
\label{subsec:JackPositive}

A \emph{specialization} of the $\R$-algebra of symmetric functions $\Symm$ is by definition a unital algebra
homomorphism $\rho: \Symm\to\R$.
Since $\Symm = \R[p_1, p_2, \cdots]$, then 
$\rho$ is uniquely determined by the values $\rho(p_1), \rho(p_2), \cdots$.
If we set $v_k := \rho(p_k)$, for all $k\in\Z_{\ge 1}$, and $\vv:=(v_1, v_2, \cdots)$,
we say that \emph{$\vv$ determines the specialization $\rho$}.
For any $f\in\Symm$, we denote its image under $\rho$ by either $\rho(f)$ or $f(\vv)$.

\begin{definition}\label{alpha_jack_positive}
The specialization $\rho:\Symm\to\R$ is an \textbf{$\pmb{\alpha}$-Jack-positive specialization} if
\begin{equation*}
\rho\left( J_{\la}^\a \right) \ge 0\quad\forall\,\la\in\Y.
\end{equation*}
\end{definition}

Observe that the previous definition depends on the value of $\alpha > 0$.

\begin{theorem}[Kerov--Okounkov--Olshanski~\cite{KerovOkounkovOlshanski1998}]\label{thm:KOO}
Assume that $\alpha > 0$ is fixed. The set of $\alpha$-Jack-positive specializations 
$\rho: \Symm\to\R$ is in bijection with the \textbf{Thoma cone}
\begin{multline*}
\Omega = \{ (a, b, c)\in \R^{\infty}\times \R^{\infty}\times \R \mid
a = (a_1, a_2, \cdots),\quad b = (b_1, b_2, \cdots),\\
a_1 \ge a_2 \ge \cdots \ge 0,\quad b_1 \ge b_2 \ge \cdots \ge 0,\quad
\sum_{i=1}^{\infty}{(a_i + b_i)} \le c \}.
\end{multline*}
For any $\omega = (a, b, c)\in\Omega$, the corresponding $\alpha$-Jack-positive 
specialization $\rho_{\omega}\!: \Symm\to\R$ is defined by
\begin{equation*}
\rho_{\omega}(p_1) = c,\qquad
\rho_{\omega}(p_k) = \sum_{i=1}^{\infty}{a_i^k} + 
(-\alpha)^{1 - k}\sum_{i=1}^{\infty}{b_i^k},\quad k=2,3,\cdots.
\end{equation*}
\end{theorem}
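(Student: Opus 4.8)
The plan is to read \cref{thm:KOO} as the identification of the boundary of the Jack-deformed Young graph, and to prove the claimed bijection by treating the map $\omega\mapsto\rho_\omega$ and its inverse separately. First I would set up the graded graph on $\Y$ whose covering edges $\la\nearrow\Lambda$ (adding one box) carry the Jack--Pieri multiplicities $\kappa^\a(\la,\Lambda)$ defined by $p_1\cdot J_\la^\a=\sum_{\la\nearrow\Lambda}\kappa^\a(\la,\Lambda)\,J_\Lambda^\a$, which are nonnegative for $\al>0$. To a unital homomorphism $\rho\colon\Symm\to\R$ with $\rho(p_1)=c$ I attach the normalized function $\varphi_\rho(\la):=c^{-|\la|}\,\rho(J_\la^\a)/j^\a_\la$ (up to the Jack dimension of $\la$), and a standard computation from the Cauchy identity \eqref{eq:Cauchy} and the Pieri rule shows that $\varphi_\rho$ is a normalized, multiplicative harmonic function for the down-transition on $\Y$, with $\rho$ being $\al$-Jack-positive in the sense of \cref{alpha_jack_positive} exactly when $\varphi_\rho\ge0$. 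Since specializations are multiplicative by construction, the $\al$-Jack-positive specializations are precisely the extremal normalized nonnegative harmonic functions, so the task becomes to show that this boundary is parametrized by $\Omega$. Injectivity of $\omega\mapsto\rho_\omega$ is immediate: the numbers $\rho(p_k)=\sum_i a_i^k+(-\al)^{1-k}\sum_i b_i^k$ determine, and are determined by, the data $(a,b,c)$ through a moment problem.

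Second, for the inclusion $\Omega\hookrightarrow\{\al\text{-Jack-positive specializations}\}$ I would build $\rho_\omega$ out of three elementary positive specializations and prove positivity by degeneration rather than by convolution. The pure-$a$ part $\rho^a(p_k)=\sum_i a_i^k$ is the evaluation $f\mapsto f(a_1,a_2,\dots)$, and $\rho^a(J_\la^\a)=J_\la^\a(a_1,a_2,\dots)\ge0$ because the integral-form Jack polynomials expand with nonnegative coefficients in the monomial basis when $\al>0$. The pure-$b$ part $\rho^b(p_k)=(-\al)^{1-k}\sum_i b_i^k$ is reduced to the previous case by the involution with $\omega_{1/\al}(p_r)=(-1)^{r-1}\al^{-1}p_r$: using \eqref{eq:OmegaDuality}, the value $\rho^b(J_\la^\a)$ becomes a positive multiple of $J_{\la'}^{1/\al}(b_1,b_2,\dots)\ge0$. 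The $\gamma$-part ($\gamma:=c-\sum_i(a_i+b_i)\ge0$) is the Plancherel specialization $p_1\mapsto\gamma,\ p_k\mapsto0\ (k\ge2)$, which is positive because the coefficient of $p_1^{|\la|}$ in $J_\la^\a$ is $1$. To assemble these into $\rho_\omega$ without invoking the (still open) Stanley conjecture on nonnegativity of Jack structure constants, I would realize $\rho_\omega$ as a limit of single honest evaluations at a finite point set growing in prescribed directions — rows encoding the $a_i$, columns (through the $\omega_{1/\al}$-duality) encoding the $b_i$, and a Plancherel/exponential degeneration supplying $\gamma$ — so that $\rho_\omega(J_\la^\a)\ge0$ holds as a limit of nonnegative quantities.

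Third, the reverse inclusion — that every $\al$-Jack-positive $\rho$ equals some $\rho_\omega$ — is the substantive part. Multiplicativity being automatic, I would encode $\rho$ by the generating series $H_\rho(t):=\sum_{n\ge0}\rho(g_n^\a)\,t^n$ in the Jack complete-homogeneous functions $g_n^\a$ and argue that $\al$-Jack-positivity forces $H_\rho$ to be a totally positive (Toeplitz) power series. By the Edrei--Thoma (Schoenberg--Edrei) classification of such series one obtains $H_\rho(t)=e^{\gamma t}\prod_i\frac{1+\beta_i t}{1-\alpha_i t}$ with $\alpha_1\ge\alpha_2\ge\cdots\ge0$, $\beta_1\ge\beta_2\ge\cdots\ge0$, $\sum_i(\alpha_i+\beta_i)<\infty$, and $\gamma\ge0$. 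Converting this product into the power sums through the Jack-deformed Newton identities — where the column factors acquire the sign-and-scale $(-\al)^{1-k}$ from the $\omega_{1/\al}$-duality — recovers exactly $\rho(p_1)=\gamma+\sum_i(\alpha_i+\beta_i)=:c$ and $\rho(p_k)=\sum_i\alpha_i^k+(-\al)^{1-k}\sum_i\beta_i^k$, i.e.\ $(\alpha_\bullet,\beta_\bullet,c)\in\Omega$ with $\rho=\rho_\omega$.

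I expect the hard part to be the reduction in the third step: showing that $\al$-Jack-positivity — a condition indexed by all of $\Y$ — is equivalent to the minor-nonnegativity of a single Toeplitz-type matrix, so that Edrei--Thoma applies. In the classical case $\al=1$ this equivalence is supplied by the Jacobi--Trudi determinant expressing $s_\la$ in the $g_n$; for general $\al$ there is no determinantal formula for $J_\la^\a$, so the total-positivity condition must be extracted indirectly, e.g.\ by computing the Martin kernel of the Jack-deformed Young graph through the asymptotics of normalized Jack polynomials (equivalently via the Okounkov--Olshanski binomial formula) and showing its boundary is exactly $\Omega$. Controlling these asymptotics uniformly along all sequences $\la\to\infty$, and matching the limiting row and column growth rates to $(a,b)$ and the total-size rate to $c$, is the technical heart of the argument.
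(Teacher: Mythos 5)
A preliminary remark: the paper does not prove \cref{thm:KOO} at all — it is imported verbatim from \cite{KerovOkounkovOlshanski1998}, and all of \cref{sec:JackPos} is built on top of it. So your proposal can only be measured against the original Kerov--Okounkov--Olshanski argument. At the level of skeleton you have reproduced that argument correctly: the translation of $\alpha$-Jack-positivity into nonnegative normalized harmonic functions on the Jack graph, the identification of multiplicativity with extremality (the ring theorem), and the computation of the Martin boundary via the asymptotics of (shifted) Jack polynomials through the binomial formula are exactly the ingredients of their proof. The problem is that both of your substantive steps, as written, contain genuine gaps.

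In Step 2, the construction of the mixed $(a,b)$ specializations as ``limits of single honest evaluations'' cannot work. An evaluation of $\Symm$ at a finite nonnegative alphabet satisfies $\rho(p_k)\ge 0$ for \emph{every} $k$, and this persists under pointwise limits; but a Thoma point with $b\ne 0$ has $\rho_\omega(p_2)=\sum_i a_i^2-\alpha^{-1}\sum_i b_i^2$, which is negative for pure-$b$ points. Twisting by $\omega_{1/\alpha}$ rescues the pure-$b$ ray, but a specialization mixing $a$- and $b$-parameters is neither an evaluation nor a twisted evaluation, and gluing the two back together is precisely the coproduct/convolution step whose positivity is equivalent to the nonnegativity of Jack structure constants that you correctly set out to avoid. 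The quantities that are manifestly nonnegative \emph{and} converge to the mixed $\rho_\omega$ are not evaluations of $J_\lambda^{(\alpha)}$ at alphabets, but Martin kernels (relative Jack dimensions) along partitions whose rows grow like the $a_i$ and whose columns grow like the $b_i$; in other words, the binomial-formula machinery that you postpone to the ``hard part'' is already indispensable for the inclusion you present as easy.

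In Step 3, Plan A is not merely difficult, it is false for generic $\alpha$. Compute the generating series: for $\rho=\rho_\omega$ one gets
\begin{equation*}
\sum_{n\ge 0}\rho(g_n^{(\alpha)})\,t^n
=\exp\Bigl(\tfrac{1}{\alpha}\sum_{k\ge 1}\rho(p_k)\tfrac{t^k}{k}\Bigr)
= e^{\gamma t/\alpha}\prod_{i\ge 1}\frac{1+b_i t/\alpha}{\left(1-a_i t\right)^{1/\alpha}},
\qquad \gamma:=c-\sum_{i}(a_i+b_i),
\end{equation*}
(and the analogous computation with the ordinary $h_n$ produces factors $(1+b_it/\alpha)^{\alpha}(1-a_it)^{-1}$). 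Because of the fractional exponents, this is an Edrei--Thoma (totally positive) series only when $\alpha$ or $1/\alpha$ is a positive integer; matching logarithmic coefficients shows no choice of Edrei--Thoma data can reproduce it otherwise. Hence $\alpha$-Jack-positivity is \emph{not} equivalent to total positivity of any single Toeplitz matrix, and no ``Jack-deformed Newton identity'' can extract the factor $(-\alpha)^{1-k}$ from an Edrei--Thoma product — had the series been of that form, the $(-\alpha)^{1-k}$ scaling could not appear. Your fallback Plan B (Martin kernel asymptotics via the Okounkov--Olshanski binomial formula, uniform control along arbitrary sequences of partitions, plus the Vershik--Kerov ergodic theorem) is indeed the actual proof in \cite{KerovOkounkovOlshanski1998}, but in your write-up it is only named, not executed; as it stands, neither inclusion of the bijection is established.
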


\begin{example}\label{ex:plancherel}
Given $u>0$, the \emph{Plancherel $u$-specialization} $\tau_u: \Symm\to\R$ is 
defined by
\begin{equation*}
\tau_u(p_1) = u,\qquad \tau_u(p_2) = \tau_u(p_3) = \cdots = 0.
\end{equation*}
This is the $\alpha$-Jack-positive specialization corresponding to $c=u$, and $a_i=b_i=0,\ \forall\, i\ge 1$, in Theorem \ref{thm:KOO}.
\end{example}

\begin{example}\label{exam:integer_pos}
For any infinite tuple $\vv=(v_1, v_2, \cdots)\in\R^{\infty}$ and $u\in\R$, recall that $u\cdot\vv := (uv_1, uv_2, \cdots)$. 
We claim that if $\vv=(v_1, v_2, \cdots)\in\R^{\infty}$ determines an
$\alpha$-Jack-positive specialization and if $m\in\Z_{>0}$, then $m\cdot\vv$ determines
another $\alpha$-Jack-positive specialization. In fact, let $(a, b, c)\in\Omega$,\,
$a = (a_1, a_2, \cdots)$,\, $b = (b_1, b_2, \cdots)$, be the triple corresponding
to $\vv$, as given by Theorem \ref{thm:KOO}.
Then $m\cdot\vv$ corresponds to the triple
$(a^{\otimes m}, b^{\otimes m}, m\cdot c)\in\Omega$, where
\begin{equation*}
a^{\otimes m} := (\underbrace{a_1, \cdots, a_1}_{m \text{ times}},
\underbrace{a_2, \cdots, a_2}_{m \text{ times}}, a_3, \cdots),
\quad b^{\otimes m} := (\underbrace{b_1, \cdots, b_1}_{m \text{ times}},
\underbrace{b_2, \cdots, b_2}_{m \text{ times}}, b_3, \cdots).
\end{equation*}
\end{example}

\begin{example}\label{exam:integer_pos_2}
For any infinite tuple $\vv=(v_1, v_2, \cdots)\in\R^{\infty}$, define
\begin{equation*}
\pm\vv := (v_1, -v_2, v_3, -v_4, \cdots).
\end{equation*}
We claim that if $\pm\vv$ determines a
$(1/\alpha)$-Jack-positive specialization, and if $m\in\Z_{>0}$,
then $(\alpha \cdot m)\cdot\vv$ determines an $\alpha$-Jack-positive specialization.
Recall that the automorphism $\omega_{1/\alpha}$ of $\Symm$ defined by
$\omega_{1/\alpha}(p_r) = (-1)^{r-1}\alpha^{-1} p_r$ has the following
property~\eqref{eq:OmegaDuality}:
\begin{equation*}
\omega_{1/\alpha}(J^{(1/\alpha)}_{\mu}) = \alpha^{-|\mu|} J^\a_{\mu'}
\quad \forall\, \mu\in\Y.
\end{equation*}
We show that $J_\la^\a((\al\cdot m)\cdot\vv) \ge 0$, for all $\la\in\Y$. 
\begin{align*}
J_{\la}^\a((\al\cdot m)\cdot\vv)
&= J_{\la}^\a(\al\cdot m \cdot v_1,\, \alpha \cdot m \cdot v_2,\, \alpha \cdot m \cdot v_3,\, \alpha \cdot m \cdot v_4, \cdots)\\
&= \alpha^{|\la|}\cdot (\omega_{1/\alpha} J_{\la'}^{(1/\alpha)})(\alpha \cdot m \cdot v_1,\, \alpha \cdot m \cdot v_2,\, \alpha \cdot m \cdot v_3,\, \alpha \cdot m \cdot v_4, \cdots)\\
&= \alpha^{|\la|}\cdot J_{\la'}^{(1/\alpha)}(m \cdot v_1,\, -m \cdot v_2,\, m \cdot v_3,\, -m \cdot v_4, \cdots)\\
&= \alpha^{|\la|}\cdot J_{\la'}^{(1/\alpha)}(m\cdot(\pm\vv)).
\end{align*}
Since $\pm\vv$ determines a $(1/\alpha)$-Jack positive
specialization, and $m\in\Z_{\ge 1}$, then $m\cdot (\pm\vv)$
determines a $(1/\alpha)$-Jack positive specialization, by Example
\ref{exam:integer_pos}. Therefore $J_\la^\a((\al\cdot m)\cdot\vv) = \alpha^{|\la|}\cdot J_{\la'}^{(1/\alpha)}(m\cdot(\pm\vv)) \ge 0$, as claimed.
\end{example}

\subsection{Totally Jack-positive specializations}

\begin{definition}
The specialization $\rho:\Symm\to\R$ is a \textbf{totally Jack-positive specialization} if
\begin{equation*}
\rho\left( J_{\la}^\a \right) \ge 0,\quad \forall\,\la\in\Y,\ \forall\,\alpha > 0.
\end{equation*}
\end{definition}

Note that the condition is imposed for all $\alpha > 0$.

\begin{theorem}[Classification of totally Jack-positive specializations]\label{thm_jack_spec}
The specialization $\rho: \Symm\to\R$ is totally Jack-positive if and only if there exist 
$c\ge 0,\ a_1 \ge a_2 \ge \cdots \ge 0$, with $\sum_{i=1}^{\infty}{a_i} \le c$, such that
\begin{equation*}
\rho(p_1) = c,\qquad \rho(p_k) = \sum_{i=1}^{\infty}{a_i^k},\quad \forall\, k=2, 3, \cdots.
\end{equation*}
\end{theorem}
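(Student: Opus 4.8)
The plan is to reduce everything to the Kerov--Okounkov--Olshanski classification (\cref{thm:KOO}) of $\alpha$-Jack-positive specializations for a \emph{fixed} $\alpha$, and then to isolate which triples $(a,b,c)\in\Omega$ work \emph{simultaneously for all} $\alpha>0$. The key structural observation is that the $\alpha$-dependence in the KOO formula enters only through the factor $(-\alpha)^{1-k}$ multiplying $\sum_i b_i^k$: the sufficiency direction exploits that this term vanishes identically when $b=0$, while the necessity direction shows it can be washed out in the limit $\alpha\to\infty$, leaving a representation with $b=0$.

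For sufficiency, suppose $\rho$ has the stated form with data $c\ge 0$ and $a_1\ge a_2\ge\cdots\ge 0$, $\sum_i a_i\le c$. I fix an arbitrary $\alpha>0$ and consider the triple $(a,0,c)\in\Omega$; membership is immediate since $b=0$ and $\sum_i a_i\le c$. By \cref{thm:KOO} the associated specialization $\rho_{(a,0,c)}$ satisfies $\rho_{(a,0,c)}(p_1)=c$ and $\rho_{(a,0,c)}(p_k)=\sum_i a_i^k+(-\alpha)^{1-k}\cdot 0=\sum_i a_i^k$ for $k\ge 2$, so it agrees with $\rho$ on every $p_k$ and hence equals $\rho$. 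Thus $\rho$ is $\alpha$-Jack-positive, and since $\alpha$ was arbitrary, $\rho$ is totally Jack-positive.

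For necessity, assume $\rho$ is totally Jack-positive. For each $\alpha>0$, \cref{thm:KOO} provides a triple $(a^{(\alpha)},b^{(\alpha)},c^{(\alpha)})\in\Omega$ realizing $\rho$; since the KOO formula forces $c^{(\alpha)}=\rho(p_1)$ and $\rho(p_1)$ is a fixed number, we get $c^{(\alpha)}=\rho(p_1)=:c\ge 0$ for every $\alpha$. Writing $A_k(\alpha):=\sum_i(a_i^{(\alpha)})^k$ and $B_k(\alpha):=\sum_i(b_i^{(\alpha)})^k$, the defining inequalities of $\Omega$ yield the $\alpha$-uniform bounds $A_k(\alpha),B_k(\alpha)\le c^k$ for $k\ge 2$ (using $a_i,b_i\le c$ together with $\sum_i a_i,\sum_i b_i\le c$) as well as the pointwise decay $a_i^{(\alpha)}\le c/i$. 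I then let $\alpha\to\infty$ along a sequence: the region $\{x_1\ge x_2\ge\cdots\ge 0:\sum_i x_i\le c\}$ is compact in the product topology (closed in $[0,c]^{\infty}$, with $\sum_i x_i\le c$ preserved under pointwise limits by Fatou), so after passing to a subsequence $a^{(\alpha)}\to a^{\ast}$ pointwise, with $a^{\ast}$ weakly decreasing, nonnegative, and $\sum_i a_i^{\ast}\le c$. Since $(-\alpha)^{1-k}\to 0$ while $B_k(\alpha)$ stays bounded, the $b$-contribution to the identity $\rho(p_k)=A_k(\alpha)+(-\alpha)^{1-k}B_k(\alpha)$ disappears, and the uniform tail estimate $\sum_{i>N}(a_i^{(\alpha)})^k\le c^k\sum_{i>N}i^{-k}$ (valid precisely because $k\ge 2$) allows me to pass $A_k(\alpha)$ to the limit, giving $\rho(p_k)=\sum_i(a_i^{\ast})^k$ for all $k\ge 2$, alongside $\rho(p_1)=c$. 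This is exactly the asserted form, so no separate argument that $b^{(\alpha)}\equiv 0$ is even needed.

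The main obstacle is the interchange of the limit with the infinite summation in the necessity step: a priori, mass could escape to infinity in the index as $\alpha\to\infty$, so $A_k(\alpha)$ need not converge to $\sum_i(a_i^{\ast})^k$. What rescues the argument is that only exponents $k\ge 2$ arise here — the exponent $k=1$, which governs the total mass and could legitimately leak into the defect $c-\sum_i a_i$, is pinned down harmlessly by $\rho(p_1)=c$ — and for $k\ge 2$ the bound $a_i\le c/i$ produces a summable, $\alpha$-uniform tail $\sum_{i>N} i^{-k}$ that annihilates any escaping mass. I would therefore single out this tail control, together with the accompanying Fatou bound $\sum_i a_i^{\ast}\le c$, as the one genuinely analytic lemma, with every other step a direct invocation of \cref{thm:KOO}.
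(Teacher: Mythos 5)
Your proposal is correct and follows essentially the same route as the paper: sufficiency by taking $b=0$ in the Kerov--Okounkov--Olshanski classification, and necessity by applying KOO for each $\alpha$, extracting a pointwise-convergent subsequence of the $a^{(\alpha)}$ as $\alpha\to\infty$, killing the $b$-contribution via the bound $\sum_i (b_i^{(\alpha)})^k\le c^k$ against the factor $(-\alpha)^{1-k}$, and interchanging limit and summation for the $a$-term. Your explicit uniform tail estimate $a_i^{(\alpha)}\le c/i$ is just the dominating function $(c/i)^k$ that the paper's dominated-convergence step uses implicitly, so the two arguments coincide.
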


\begin{proof}
Let $\rho: \Symm\to\R$ be a specialization of $\Symm$.

\smallskip

\emph{Part 1.} In this first part of the proof, assume that $\rho$ is a totally Jack-positive specialization, and let us prove the existence of $c, a_1, a_2, \cdots\in\R_{\ge 0}$, satisfying the conditions of the theorem.

Set $c := \rho(p_1)$. The Jack symmetric function $J_{(1)}^\a$ associated to the
partition $(1)$ is $p_1$. Since $\rho$ is totally Jack-positive,
it follows that $c = \rho(J_{(1)}^\a) \ge 0$.
Theorem \ref{thm:KOO} shows that for any $\al > 0$, there exist reals
$a_{1;\,\al}\ge a_{2;\,\al}\ge \cdots \ge 0,\ b_{1;\,\al}\ge b_{2;\,\al}\ge \cdots \ge 0$,
with
\begin{equation}\label{bounded_seqs}
c \ge \sum_{i=1}^{\infty}{(a_{i;\,\al} + b_{i;\,\al})},
\end{equation}
such that
\begin{equation}\label{rho_pks}
\rho(p_k) = \sum_{i=1}^{\infty}{(a_{i;\,\al})^k} + (-\al)^{1-k}\sum_{i=1}^{\infty}{(b_{i;\,\al})^k},\quad\forall\, k\ge 2.
\end{equation}

By virtue of inequality \eqref{bounded_seqs}, both sequences $\{a_{i;\,\al}\}_{i=1}^{\infty}$
and $\{b_{i;\,\al}\}_{i=1}^{\infty}$ are uniformly bounded over all $\al > 0$.
Thus, using Cantor's diagonal extraction, we can find a sequence
$\{\alpha_{n}\}_{n\ge 1}$ such that the following limits exist
\begin{equation}\label{a_limit}
\lim_{n\to\infty}{ \al_{n} } = \infty,\qquad \lim_{n\to\infty}{a_{j;\,\al_{n}}} = a_j,\ \forall j\in\Z_{\ge 1}.
\end{equation}

Fix an arbitrary integer $k\ge 2$.
As each $\{b_{i;\,\al_{n}}\}_{i\ge 1}$ is a sequence of nonnegative reals whose 
sum is bounded above by $c$, we have
$$
0\le\sum_{i=1}^{\infty}{ (b_{i;\,\al_{n}})^k } \le
\left( \sum_{i=1}^{\infty}{ b_{i;\,\al_{n}} } \right)^k \le c^k,\quad \forall\, n\ge 1.
$$
Consequently, by \eqref{a_limit},
\begin{equation}\label{limit_bs}
\lim_{n\to\infty}{ (-\al_{n})^{1-k}\sum_{i=1}^{\infty}{(b_{i;\,\al_{n}})^k} } = 0.
\end{equation}
The previous reasoning also shows 
$0\le\sum_{i=1}^{\infty}{ (a_{i;\,\al_{n}})^k } \le c^k$, for all $n\ge 1$.
Then we can apply the dominated convergence theorem; due to \eqref{a_limit}, we have
\begin{equation}\label{limit_as}
\lim_{n\to\infty}{ \sum_{i=1}^{\infty}{(a_{i;\,\al_{n}})^k} } = \sum_{i=1}^{\infty}{a_i^k}.
\end{equation}
By combining the relations \eqref{rho_pks}, \eqref{limit_bs} and \eqref{limit_as}, we conclude
$$\rho(p_k) = \sum_{i=1}^{\infty}{a_i^k},\quad\forall\, k\ge 2.$$
Finally, since $a_{1;\,\al_{n}}\ge a_{2;\,\al_{n}}\ge\cdots\ge 0$, it follows by \eqref{a_limit} that
$a_1\ge a_2\ge \cdots\ge 0$. Likewise, $c \ge \sum_{i\ge 1}{ a_{i;\,\al_{n}} }$
implies $c\ge \sum_{i\ge 1}{a_i}$. This ends the proof of the ``only if" direction.

\smallskip

\emph{Part 2.}
In this second part, we prove the ``if'' direction of the theorem. Assume that 
$\rho(p_1) = c$, and $\rho(p_k) = \sum_{i=1}^{\infty}{a_i^k},\ k\ge 2$, 
for some parameters $c\ge 0,\ a_1 \ge a_2 \ge \cdots \ge 0$, with 
$\sum_{i=1}^{\infty}{a_i} \le c$. 
Then for any $\alpha>0$, we claim that $\rho$ is an $\alpha$-Jack-positive specialization.
Indeed, this is an immediate consequence of Theorem \ref{thm:KOO}
(the parameters $b_1, b_2, \cdots$ are all equal to zero). 
Hence $\rho$ is a totally Jack-positive specialization.
\end{proof}

\begin{definition}\label{def:admissible_pair}
A pair $(g, \vv)\in\R\times\R^\infty$ is called an \textbf{admissible pair} if one of the following two conditions is satisfied:
\begin{itemize}
	\item $g>0$, and $\vv = (v_j)_{j\ge 1}$ determines a totally Jack-positive specialization.
	\item $g<0$, and $\pm\vv = \left( (-1)^{j-1}v_j \right)_{j\ge 1}$ determines a totally Jack-positive specialization.
\end{itemize}
\end{definition}

\begin{proposition}\label{prop:CDMPar2}
Let $(g, \vv)$ be an admissible pair. Then there exist sequences $(\al(d))_{d\ge 1}$, $(u(d))_{d\ge 1}$ of positive real numbers such that the refined version of \cref{main_assumption} is satisfied with $g'=0$; explicitly:

\begin{enumerate}
	\item If $g > 0$, then we can choose $\alpha(d) = g^2d$,\, $u(d) = \lceil gd\rceil$.
	\item If $g < 0$, then we can choose $\alpha(d) =
          \frac{1}{g^2 d}$,\, $u(d) = \frac{\lceil -gd\rceil}{g^2 d}$.
\end{enumerate}
\end{proposition}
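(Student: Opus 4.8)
The plan is to verify separately the two requirements of the refined version of \cref{main_assumption}: first the pointwise nonnegativity $M^\a_{u;\vv}(\la)\ge 0$ for all $\la\in\Y$, which by the defining formula \eqref{eq:DefOneJack} (together with $u>0$, $j^\a_\la>0$ and a strictly positive exponential factor) is equivalent to $J^\a_\la(u\cdot\vv)\ge 0$; and second the two asymptotic relations in \eqref{eq:assumptions'}, aiming at the value $g'=0$. The guiding principle behind the explicit formulas is that $u$ (in case (1)) or $u/\al$ (in case (2)) is a \emph{positive integer}, which is exactly the hypothesis needed to feed into the integer-scaling results \cref{exam:integer_pos} and \cref{exam:integer_pos_2} for Jack-positive specializations. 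Note first that the proposed sequences are indeed positive: $\al(d)=g^2d>0$ and $u(d)=\lceil gd\rceil>0$ in case (1), and $\al(d)=\tfrac{1}{g^2d}>0$, $u(d)=\tfrac{\lceil -gd\rceil}{g^2d}>0$ in case (2).

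For the nonnegativity in case (1), I would argue as follows. Admissibility with $g>0$ says that $\vv$ determines a totally Jack-positive specialization, hence in particular an $\al$-Jack-positive specialization for the value $\al=g^2d$. Since $u=\lceil gd\rceil\in\Z_{>0}$, \cref{exam:integer_pos} shows that $u\cdot\vv$ again determines an $\al$-Jack-positive specialization, so $J^\a_\la(u\cdot\vv)\ge 0$ for all $\la$, as required. In case (2), set $m:=\lceil -gd\rceil\in\Z_{>0}$ (legitimate since $-g>0$); the explicit formulas give $u=\al\cdot m$, hence $u\cdot\vv=(\al\cdot m)\cdot\vv$. Admissibility with $g<0$ says that $\pm\vv$ determines a totally Jack-positive specialization, hence a $(1/\al)$-Jack-positive one for $1/\al=g^2d$; \cref{exam:integer_pos_2} then yields that $(\al\cdot m)\cdot\vv=u\cdot\vv$ determines an $\al$-Jack-positive specialization, giving $J^\a_\la(u\cdot\vv)\ge 0$. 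In both cases $v_1$ equals the Thoma-cone parameter $c$ of the underlying totally Jack-positive specialization, which is strictly positive for a nontrivial admissible pair, supplying the remaining hypothesis $v_1>0$ of \cref{main_assumption}.

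It remains to check the asymptotics, an elementary computation based on $\lceil x\rceil=x+O(1)$. In case (1), $\frac{\al}{u^2}=\frac{g^2d}{\lceil gd\rceil^2}$; writing $\lceil gd\rceil=gd+O(1)$ gives $\lceil gd\rceil^2=g^2d^2\bigl(1+O(1/d)\bigr)$, so $\frac{\al}{u^2}=\frac1d\bigl(1+O(1/d)\bigr)=\frac1d+o(1/d)$, while $\frac{\al-1}{u}=\frac{g^2d-1}{\lceil gd\rceil}=g+O(1/d)=g+o(1/\sqrt d)$, which is the required form with $g'=0$. Case (2) reduces to the same estimates: a short manipulation of the formulas gives $\frac{\al}{u^2}=\frac{g^2d}{\lceil -gd\rceil^2}=\frac1d+o(1/d)$ and $\frac{\al-1}{u}=\frac{1-g^2d}{\lceil -gd\rceil}=g+O(1/d)=g+o(1/\sqrt d)$, again with $g'=0$.

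The only genuinely delicate point — and the reason the formulas look slightly ad hoc — is the tension between \emph{integrality} and \emph{precision}: positivity forces $u$ (resp.\ $u/\al$) to be a positive integer, which is why the ceiling functions appear, whereas the refined version of \cref{main_assumption} demands that $\frac{\al-1}{u}$ match $g$ up to $o(1/\sqrt d)$. The main thing to verify carefully is therefore that rounding $gd$ (resp.\ $-gd$) up to its ceiling perturbs $\frac{\al-1}{u}$ only by $O(1/d)$, comfortably within the permitted $o(1/\sqrt d)$ tolerance, so that the rounding does not generate a spurious nonzero $g'$.
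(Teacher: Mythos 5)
Your proposal is correct and follows essentially the same route as the paper's proof: the nonnegativity of $M^\a_{u;\vv}$ is reduced to $J^\a_\la(u(d)\cdot\vv)\ge 0$ and then settled via \cref{exam:integer_pos} in case (1) and \cref{exam:integer_pos_2} (using $u(d)=\al(d)\lceil -gd\rceil$) in case (2), exactly as in the paper. Your explicit verification of the asymptotics in \eqref{eq:assumptions'} with $g'=0$ spells out what the paper dismisses as ``readily verified,'' and is accurate.
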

\begin{proof}
In both cases, we first observe that the conditions in
\eqref{eq:assumptions'} with $g'=0$ are readily verified. It remains to verify that $M^{(\alpha(d))}_{u(d);\,\vv}(\la)\geq 0$, for all $\la\in\Y,\,d\in\Z_{\ge 1}$. By~\cref{plancherel_jack} of Jack--Thoma measures, the inequality to verify is equivalent to:
\begin{equation}\label{ineq_jack}
J^{(\alpha(d))}_{\lambda}(u(d)\cdot\vv)\geq 0,\quad \forall\, \la\in\Y,\ \forall\, d\in\Z_{\ge 1}.
\end{equation}
Let us begin with case (1). Since $\vv$ is totally Jack-positive and $u(d) = \lceil gd\rceil \in\Z_{\ge 1}$, then $u(d)\cdot\vv$ is also totally Jack-positive, by virtue of \cref{exam:integer_pos}, and hence \eqref{ineq_jack} follows.

In case (2), since $\pm\vv$ is totally Jack-positive and $\lceil -gd\rceil\in\Z_{\ge 1}$, then $(\alpha(d)\lceil -gd\rceil)\cdot\vv$ is $\alpha(d)$-Jack-positive, by virtue of \cref{exam:integer_pos_2}. Notice that $\alpha(d)\lceil -gd\rceil =\frac{\lceil -gd\rceil}{g^2 d} = u(d)$, so the claim just verified is that $u(d)\cdot\vv$ is $\alpha(d)$-Jack-positive, which proves \eqref{ineq_jack}.
\end{proof}

As a consequence of this proposition, note that Theorems \ref{theo:LLNCDM} and \ref{theo:CLTCDM} can be applied to show limit theorems for the measures $M^{(\al(d))}_{u(d);\,\vv}$. 
In particular, for any admissible pair $(g,\vv)$, there exists a probability measure $\mu_{g;\vv}$ with moments given by \eqref{mu_moments}.
This proposition also played a role in the construction of the most general examples from Sections \ref{sec:examples} and \ref{sec:examples_2}.

\section{Universal formulas for the global asymptotics of Jack-deformed random Young diagrams}\label{sec:Duality}

Recall that \cref{theo:LLNDS} states that random Young diagrams
sampled w.r.t.~a family of characters $\chi_d$ with the AFP concentrate 
around a limit shape $\omega_{\Lambda_\infty}$.
In this section, we prove a \emph{universality} result: the formula \eqref{mu_moments} 
for the moments of the transition measure $\mu_{g;\vv}$ arising from the 
Jack--Thoma measures also computes the moments of the
transition measure $\mu_{\Lambda_\infty}$ associated to a general limit shape 
$\omega_{\Lambda_\infty}$ (and such moments uniquely determine the limit shape).
Further, we prove that the combinatorics of Łukasiewicz ribbon paths from \cref{sec:Jack-deformed-2} yields universal formulas for the covariances of 
Gaussian fluctuations around $\omega_{\Lambda_\infty}$.

\begin{theorem}[Universality of the transition measure of the limit shape]\label{theo:ShapeDS}
Assume that the sequence $\al=\al(d)$ satisfies \eqref{eq:double-scaling-refined}, 
for some $g,g'\in\R$. Further, assume that $\chi_d\colon\Y_d\to\R$ are Jack characters 
such that the sequence $(\chi_d)_{d\ge 1}$ fulfills the AFP and let $v_2,v_3,\dots$ 
be the associated parameters given by \cref{def:approx-factorization-charactersA}. 
Let $\omega_{\Lambda_\infty}$ be the corresponding limit shape given by
\cref{theo:LLNDS}. Then $\omega_{\Lambda_\infty}$ is exactly equal to $\omega_{\Lambda_{g;\vv}}$, the limit shape for Jack--Thoma measures obtained in~\cref{theo:LLNCDM}. 
Equivalently, the transition measure $\mu_{\Lambda_\infty}$ associated to 
$\omega_{\Lambda_\infty}$ is equal to $\mu_{g;\vv}$, and therefore it is 
uniquely determined by its moments 
\begin{equation}\label{mu_moments:DS}
\int_\R{x^\ell \mu_{\Lambda_\infty}(dx)} = \sum_{\Gamma\in\mathbf{L}_0(\ell)}
\prod_{i\ge 1}{(i\cdot g)^{|\SSS_{\tiny\rightarrow}^i(\Gamma)|}\, v_i^{\,|\SSS_i(\Gamma)|}},\quad\forall\,\ell\in\Z_{\ge 1},
\end{equation}
where we set $v_1:=1$.
\end{theorem}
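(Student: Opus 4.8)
The plan is to identify the \emph{a priori} unknown universal polynomials produced by the abstract LLN of \cite{DolegaSniady2019} with the explicit Łukasiewicz generating function of \cref{theo:LLNCDM}, by exploiting a family of characters that lives in both worlds and is parametrically rich enough to force a polynomial identity.

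First I would recall the structural output of the dual approach of \cite{DolegaFeray2016,DolegaSniady2019}: each moment $M_\ell$, equivalently each free cumulant $R_\ell$, of the transition measure is a polynomial function on Young diagrams lying in the algebra generated by the normalized character observables $\mu\mapsto\Ch_\mu$. Under $\mathbb{P}^\a_{\chi_d}$ the expectation of such an observable reduces, via the duality $\chi_d(\mu)=\sum_\lambda \mathbb{P}^\a_{\chi_d}(\lambda)\,\chi^\a_\lambda(\mu)=\E_{\mathbb{P}^\a_{\chi_d}}[\chi^\a_\lambda(\mu)]$ coming directly from \eqref{eq:proba-linearcomb-A}, to $\chi_d$ evaluated on partitions, while joint fluctuations are controlled by the conditional cumulants $\kappa^{\chi_d}_n$. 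Feeding in the AFP asymptotics from \cref{def:approx-factorization-charactersA} and the scaling \eqref{eq:double-scaling-refined}, only the first-order data $v_2,v_3,\dots$ and $g$ survive in the limit (the higher data enter lower-order terms that vanish under the rescaling), and the variances tend to $0$. The upshot, which is exactly the content of the LLN of \cite{DolegaSniady2019}, is that there exist \emph{universal} polynomials $Q_\ell(g;v_2,\dots,v_\ell)$, depending only on $\ell$ and not on the particular sequence $(\chi_d)$, with $M_\ell(\mu_{\Lambda_\infty})=Q_\ell(g;v_2,\dots,v_\ell)$ in probability; what \cite{DolegaSniady2019} leaves open is the polynomial $Q_\ell$ itself.

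Next I would pin down $Q_\ell$ on a rich set of parameters using \cref{ex:CDM}. For the conditional Jack--Thoma characters there, the associated measure $\mathbb{P}^\a_{\chi_d}$ is, by \cref{prop:Depoissonization}, the depoissonization of a Jack--Thoma measure $M^\a_{u;\vv}$ whose size is Poisson distributed and concentrates around its mean; since the Poissonized diagram concentrates on sizes $d$ for which the two anisotropic rescalings agree asymptotically, \cref{theo:LLNCDM} transfers to the fixed-size measure, so for these characters $M_\ell(\mu_{\Lambda_\infty})$ equals the Łukasiewicz sum \eqref{mu_moments:DS} with $v_1=1$. Equating with Step~1 gives $Q_\ell(g;v_2,\dots,v_\ell)=\sum_{\Gamma\in\mathbf{L}_0(\ell)}\prod_{i\ge 1}(i\cdot g)^{|\SSS^i_{\tiny\rightarrow}(\Gamma)|}\,v_i^{\,|\SSS_i(\Gamma)|}$ for every $(g,\vv)$ realized by \cref{ex:CDM}. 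In the high-temperature regime these parameters are $v_k=\sum_i a_i^k$ with $a_1\ge a_2\ge\cdots\ge 0$, $\sum_i a_i\le 1$, and $g>0$ arbitrary; taking sufficiently many nonzero $a_i$ the power-sum map $(a_i)\mapsto(v_2,\dots,v_\ell)$ is a submersion at generic points, so its image has nonempty interior, and together with $g$ ranging over an interval the realized parameters fill a nonempty open subset of $\R^\ell$. Both sides being polynomials in $(g,v_2,\dots,v_\ell)$ and agreeing on a nonempty open set, they agree identically, so \eqref{mu_moments:DS} holds for \emph{every} AFP sequence.

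Finally I would close the loop: since $\mu_{g;\vv}$ is determined by its moments (\cref{cor:UniqueP}) and $\mu_{\Lambda_\infty}$ has the same moments, we get $\mu_{\Lambda_\infty}=\mu_{g;\vv}$, whence $\omega_{\Lambda_\infty}=\omega_{\Lambda_{g;\vv}}$ by the Markov--Krein correspondence (\cref{theo:Meliot}). The main obstacle is Step~1 --- rigorously extracting from the filtered algebra of Jack-character observables that the limiting moments are polynomials in precisely $g$ and $v_2,\dots,v_\ell$, with all higher-order character data provably irrelevant; controlling this gradation so that exactly the first-order data survive (and the variances vanish) is the technical heart, and is where the machinery of \cite{DolegaFeray2016,DolegaSniady2019} is essential. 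A secondary point requiring care is the depoissonization/concentration argument of Step~2, ensuring that the fixed-size limit shape genuinely coincides with the Poissonized one.
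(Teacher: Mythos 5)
Your proposal follows essentially the same route as the paper's proof: polynomiality of the limiting moments in $(g,v_2,\dots,v_{\ell-1})$ (the paper's \cref{lem:PolynomnialityOfMoments}, built on \cref{lem:pomocnicz}), identification with the Łukasiewicz formula on the family of conditional Jack--Thoma characters via depoissonization (the paper's \cref{theo:Depoissonization}), and a Jacobian/open-set argument for the power-sum map to upgrade the identity from admissible pairs to all AFP sequences (the paper's \cref{equal_polys}), followed by moment determinacy and Markov--Krein. The only difference in execution is the point you yourself flagged: the paper does not transfer the Poissonized LLN by a Poisson-size concentration argument (which by itself would not suffice), but instead computes the fixed-size conditional expectations directly via a falling-factorial variant of the Nazarov--Sklyanin formula \cref{theo:Expectations} and matches moments of $\E[\mu_{\Lambda^\a_d}]$ against $\mu_{g;\vv}$.
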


\begin{remark}\label{rem:LukasiewiczNC}
In the special case of fixed $\alpha=1$, our formula~\eqref{mu_moments:DS} is equivalent to a result of Biane~\cite{Biane1998,Biane2001} that
  connects asymptotic representation theory of symmetric groups
 and free probability. He proved that
  the asymptotic behavior of characters with the AFP evaluated on 
  long cycles is given by free cumulants of the limiting
  transition measure $\mu_{\Lambda_\infty}$:
\begin{equation}\label{eq:Biane}
    R_{1}^{\mu_{\Lambda_\infty}}=0,\ \
    R_{i+1}^{\mu_{\Lambda_\infty}} = v_{i},\ \ \ i \geq 1.
\end{equation}
If $\alpha=1$ is fixed, then we are in the fixed temperature 
  regime and $g=0$. If we set $v_0 := 0$, our formula~\eqref{mu_moments:DS} 
	can be rewritten as 
\begin{equation}\label{moments_v}
M_{\ell}^{\mu_{\Lambda_\infty}} =
    \sum_{\Gamma\in\mathbf{L}_0(\ell)} \prod_{i\ge 0}v_i^{\,|\SSS_i(\Gamma)|}.
\end{equation}

On the other hand, there is a well known relation between the free cumulants 
and moments of a probability measure, see e.g.~\cite[Lecture 11]{NicaSpeicherBook}. 
It expresses the moment $M_{\ell}^{\mu_{\Lambda_\infty}}$ as a polynomial 
on the free cumulants $R_j^{\mu_{\Lambda_\infty}}$, in terms of the 
non-crossing set-partitions of $[1\,..\,\ell]$. By using the classical bijection between
non-crossing set-partitions of $[1\,..\,\ell]$ and Łukasiewicz paths of length $\ell$ 
(see~\cite[Lecture 9]{NicaSpeicherBook}), the resulting relations are 
\begin{equation}\label{moments_R}
M_{\ell}^{\mu_{\Lambda_\infty}} =
\sum_{\Gamma\in\mathbf{L}_0(\ell)} \prod_{i\ge 0}
\left(R_{i+1}^{\mu_{\Lambda_\infty}}\right)^{\!|\SSS_i(\Gamma)|}.
\end{equation}
By comparing the equations \eqref{moments_v} and \eqref{moments_R}, 
for all $\ell\in\Z_{\ge 1}$, we give a new proof of Biane's result~\eqref{eq:Biane}.
\end{remark}

\begin{remark}
In \cref{sec:JackPos} we showed that the Jack--Thoma measures 
are parametrized by $\Omega$, and \cref{ex:CDM} shows that
one can construct the associated Jack characters with the AFP with
the same sequence $\vv$. It is natural to ask if the
parameters $\vv$ must be the same in both models. The answer is no --- there are Jack characters with the
AFP whose limit shape will not appear as the limit shape of Jack--Thoma measures. 
The easiest example is given by the following construction
described by Śniady in~\cite{Sniady2006c}. Let $\alpha=1$, and
$\lambda_d$ be a sequence of Young diagrams of size $d$ whose shapes
tend to a fixed limit $\lambda$ (i.e. $\omega_{\Lambda^{(\alpha=1)}} \to \omega_\lambda$). Then $\chi_{\lambda^d}$ is a sequence
of characters with the AFP and $\Lambda_\infty = \lambda$. Now, one can choose $\lambda$ to be any
shape whose moments are not given by the Jack-positive specialization, for instance $\lambda$
can be a square of size $1$. We will discuss more examples of Jack
characters with the AFP in the forthcoming work.
\end{remark}

In this section, we shall use the notions from \cref{Luk_sec}.
Additionally, it will be useful to define the partition $\mu(\ribbon)$ that encodes the number of
up steps of each degree in a given Łukasiewicz ribbon path $\ribbon$ (recall that the paired up steps are not counted):
\[ \mu(\ribbon) := (1^{|\SSS_1(\ribbon)|}, 2^{|\SSS_2(\ribbon)|}, 3^{|\SSS_3(\ribbon)|},\dots). \]
Moreover, for any partition $\mu$, we use the standard notation $v_\mu := \prod_{i\geq 1}{v_{\mu_i}}$. 
In particular,
\[ v_{\mu(\ribbon)} := \prod_{j\geq 1}v_j^{|\SSS_j(\ribbon)|}. \]
We need some extra notations for the next theorem. If $\Gamma=(w_0, w_1,\dots, w_\ell)$, $\,w_i=(i,y_i)$, is a Łukasiewicz path, denote $\SSS_{\ne 0}(\Gamma) := \SSS(\Gamma)\setminus\SSS_0(\Gamma)$ the set of vertices $w_i,\,i\ge 1$, that are not preceded by a horizontal step.
Further, if $s=w_j\in\SSS_{\ne 0}(\Gamma)$, let $\deg(s)\in\{-1, 0, 1, 2, \dots\}$ be the difference $y_j - y_{j-1}$, i.e.~if the step preceding $s$ is an up step, then $\deg(s)$ is the degree of that up step, and if the step preceding $s$ is a down step, then $\deg(s) = -1$.

\begin{theorem}[Universality of the Gaussian fluctuations around the limit shape]\label{theo:CovDS}
Assume that $\al=\al(d)$ satisfies \eqref{eq:double-scaling-refined}, for some $g,g'\in\R$, also assume that $(\chi_d)_{d\ge 1}$ fulfills the enhanced AFP, and let $(v_k, v_k')_{k\ge 2},\, (v_{(k|l)})_{k, l\ge 2}$ be the associated parameters given by~\cref{def:approx-factorization-charactersA}.

Recall that \cref{theo:CLTDS} shows that the random functions $\Delta^\a_d = \frac{\sqrt{d}}{2}\cdot (\omega_{\Lambda^\a_d} - \omega_{\Lambda_\infty})$ converge to a Gaussian process $\Delta_\infty$, in the sense that $\langle\Delta^\a_d, x^m\rangle$ converge weakly to $\langle \Delta_\infty, x^m\rangle$, as $d\to\infty$.
Then the means and covariances of polynomial observables of $\Delta_\infty$ have explicit combinatorial formulas, given by the following polynomials in $g,g',\,(v_k, v_k')_k,\,(v_{(k|l)})_{k,l}\,$:
\begin{multline}\label{eq:ExpDS}
\E[\langle \Delta_\infty,\, x^{\ell-2} \rangle] = \frac{1}{\ell-1}
\!\left( g'\frac{\partial}{\partial g} + \sum_{i\ge 2}{v_i'\frac{\partial}{\partial v_i}} \right)
\!\sum_{\Gamma\in\mathbf{L}_0(\ell)}{\frac{1}{|\SSS^0(\Gamma)|}\prod_{i\ge 1}(i\cdot g)^{|\SSS_{\tiny\rightarrow}^i(\ribbon)|}\, v_i^{\,|\SSS_i(\ribbon)|}},
\end{multline}
\begin{multline}\label{eq:CovDS}
\Cov\left(\langle \Delta_\infty,\, x^{k-2}\rangle, \langle \Delta_\infty,\, x^{l-2}\rangle \right) = \frac{1}{(k-1)(l-1)}\times\\
\times\left( \sum_{\ribbon=(\Gamma_1, \Gamma_2)\in\mathbf{L}^{\conn}(k, l)\atop |\mathbf{P}(\ribbon)| = 1}\frac{v_{\mu(\ribbon)}}{|\mathbf{S}^0(\Gamma_1)|\cdot |\mathbf{S}^0(\Gamma_2)|}\cdot\prod_{i\geq 1} (i\cdot g)^{|\SSS^i_{\tiny\rightarrow}(\ribbon)|}\,i^{\,|\mathbf{P}_i(\ribbon)|} \,+\right.\\
\left. + \sum_{\ribbon=(\Gamma_1, \Gamma_2)\in\mathbf{L}(k, l)\atop |\mathbf{P}(\ribbon)| = 0} \sum_{s_1 \in \SSS_{\neq 0}(\Gamma_1)\atop s_2 \in \SSS_{\neq 0}(\Gamma_2)}
v_{(\deg(s_1)|\deg(s_2))}\cdot\frac{v_{\mu(\ribbon)\setminus\{ \deg(s_1), \deg(s_2) \}}}{|\mathbf{S}^0(\Gamma_1)|\cdot |\mathbf{S}^0(\Gamma_2)|}\prod_{i\geq 1} (i\cdot g)^{|\SSS^i_{\tiny\rightarrow}(\ribbon)|} \right),
\end{multline}
where $v_1:=1$. Both are valid for all $\ell,k,l \geq 2$. 
For~\eqref{eq:CovDS}, we additionally used the following conventions: 
$\,v_{(-1|-1)}:=-1$, and $\,v_{(-1|j)} = v_{(j|-1)} = v_{(1|j)} = v_{(j|1)} :=0$, for all $j \geq 1$.
\end{theorem}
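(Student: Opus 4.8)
The plan is to prove both \eqref{eq:ExpDS} and \eqref{eq:CovDS} by \emph{universality}: I would first establish them for the restricted family of conditional Jack--Thoma measures of \cref{ex:CDM}, where all second-order data vanishes, and then bootstrap to an arbitrary sequence $(\chi_d)_{d\ge 1}$ with the enhanced AFP. The point of the reduction is that the limiting mean and covariance of $\Delta_\infty$ are, by the dual approach of \cite{DolegaFeray2016,DolegaSniady2019}, \emph{universal polynomial functions} of the parameters $(g,g',(v_k),(v_k'),(v_{(k|l)}))$, depending only on these numbers and not on the particular character sequence; so it suffices to match these polynomials against the combinatorial right-hand sides on enough parameter values, supplemented by a direct identification of the coefficients carrying the genuine second-order data.

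\emph{Step 1 (reduction to \cref{ex:CDM} via depoissonization).} By \cref{prop:Depoissonization}, when $v_1=1$ the multiplicative characters of \cref{ex:CDM} give measures $\mathbb{P}^\a_{\chi_d}$ that are exactly the conditionings to $\Y_d$ of the Jack--Thoma measures $M^\a_{u;\vv}$, whose Poisson size-parameter $U=u^2v_1/\al$ matches $d$ under the scaling of \cref{prop:CDMPar2}. Since $|\la|\sim\mathrm{Poisson}(U)$ under $M^\a_{u;\vv}$, I would transfer the CLT of \cref{theo:CLTCDM} to the conditioned measures by depoissonization. Here $g'=v_k'=v_{(k|l)}=0$, so the mean shift is $0$ in accordance with \eqref{eq:ExpDS}, and the covariance reduces to the first sum of \eqref{eq:CovDS} together with the $\deg=-1$ part of the second sum — note this part does \emph{not} vanish, because the convention $v_{(-1|-1)}=-1$ contributes through pairs of non-paired down steps. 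This already pins down the entire right-hand side of \eqref{eq:CovDS} in this case, on the family of admissible pairs $(g,\vv)$ of \cref{def:admissible_pair}, which (for each fixed $k,l$, in the finitely many relevant $v_j$) sweeps out an open set of parameter values.

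\emph{Step 2 (bootstrapping to arbitrary AFP characters).} Using that each fundamental functional of shape $S_\ell$ is a polynomial in the Jack character observables $\Ch^\a_k$, and that the product $\Ch^\a_k\cdot\Ch^\a_l=\Ch^\a_{(k,l)}+\sum(\text{structure constants})\,\Ch^\a_\nu$ in the algebra of $\al$-polynomial functions produces exactly the cycle-gluings recorded by single-pairing ribbon paths, I would expand $\Cov(S_k,S_l)$ through the moment--cumulant relation \eqref{eq:cumu}. The leading contribution splits into terms built from products of first cumulants $v_j$ with the gluing structure constants, plus terms containing exactly one genuine second cumulant $\kappa_2^{\chi_d}$ carrying a $v_{(i|j)}$; all remaining terms vanish in the limit just as the remainder $X_n$ did in \eqref{kappa_conn}. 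This exhibits the covariance as a polynomial in $(g,v_j)$ that is affine-linear in the $v_{(i|j)}$. The part independent of the genuine second cumulants agrees with the first sum of \eqref{eq:CovDS} plus the $\deg=-1$ terms on the open set from Step 1, hence the two polynomials coincide identically; and the coefficient of each $v_{(i|j)}$ is computed directly, since inserting one second cumulant in place of two independent single cycles $\Ch^\a_i,\Ch^\a_j$ merely replaces the weight $v_iv_j$ on two marked up steps by $v_{(i|j)}$ in the disconnected, pairing-free configurations, which is precisely the second sum of \eqref{eq:CovDS}. For the mean, \cref{theo:ShapeDS} already yields the deterministic limit $s_\ell$ of $\E[S_\ell(\Lambda^\a_d)]$ as a polynomial in $(g,v_j)$; Step 1 shows the fluctuation mean is zero when $g'=v_k'=0$, so the whole mean shift comes from propagating the $O(d^{-1/2})$ corrections $g\mapsto g+g'/\sqrt d$ and $v_k\mapsto v_k+v_k'/\sqrt d$ of the enhanced AFP through the limit, giving by the chain rule $\tfrac{1}{\ell-1}\left(g'\frac{\partial}{\partial g}+\sum_i v_i'\frac{\partial}{\partial v_i}\right)s_\ell$, which is \eqref{eq:ExpDS}.

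\emph{Main obstacle.} I expect the delicate point to be the depoissonization of Step 1: since the size fluctuations of $|\la|$ live on the same $\sqrt d$ scale as $\Delta^\a_d$, and since the Jack--Thoma scaling $\Lambda_{(\al;u\sqrt{v_1})}$ fixes the box size through the parameter rather than the realized size used by $\Lambda^\a_d$, one must show that conditioning on $|\la|=d$ preserves Gaussianity and produces exactly the covariance and mean corrections recorded in \eqref{eq:CovDS}, \eqref{eq:ExpDS}. The second genuine difficulty is justifying, within the $\al$-polynomial function algebra, that only single cycle-gluings (one pairing) and single second-cumulant insertions survive in the limit and generate cancellation-free nonnegative weights; this rests on the gradation estimates of \cite{DolegaFeray2016,DolegaSniady2019} controlling all subleading contributions.
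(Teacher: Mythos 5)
Your proposal is correct in outline and follows the same two-step architecture as the paper's proof, with the bootstrap step essentially identical: the paper also splits the covariance as $\kappa^{\chi_d,\times}_2(S^\a_k,S^\a_l)=\E^\a_{\chi_d}\kappa^{\times}_\bullet(S^\a_k,S^\a_l)+\kappa^{\chi_d,\bullet}_2(S^\a_k,S^\a_l)$, where the first term is your ``gluing structure constants'' piece — a universal polynomial in $(g,v_j)$ by \cref{lem:pomocnicz}, identified by matching against the special family of \cref{ex:CDM} and invoking the polynomial-identity \cref{equal_polys} on admissible pairs — while the second ($\bullet$-cumulant) term carries the genuine second cumulants and is computed directly from \cref{cor:MSTop} together with \eqref{eq:KumuOfCh}, exactly your ``replace $v_iv_j$ on two marked up steps by $v_{(i|j)}$'' description; your chain-rule derivation of the mean is likewise the mechanism encoded in \cref{lem:pomocnicz}. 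Where you genuinely deviate is Step 1, and it is worth comparing: you propose to obtain the fixed-size CLT by depoissonizing \cref{theo:CLTCDM}, correctly flagging as the main obstacle that the Poisson fluctuations of $|\la|$ live on the same scale as $\Delta^\a_d$, so conditioning shifts the covariance. The paper never conditions the Poissonized CLT; instead it reruns the Nazarov--Sklyanin computation of \cref{theo:Expectations} directly under the conditional measure $M^\a_{u;\vv}(\,\cdot\mid\la\in\Y_d)$ — the only change is replacing $\exp\!\left(\frac{u\cdot p_1}{\al}\right)$ by $\left(\frac{u}{\al}\right)^{d}\frac{p_1^d}{d!}$, which turns powers of the Poisson parameter into falling factorials $d_{\overline{|\SSS_{-1}(\ribbon)|}}$, and the identity $d_{\overline{k+l}}-d_{\overline{k}}\,d_{\overline{l}}=-kl\,d^{k+l-1}+O(d^{k+l-2})$ then produces the $\deg=-1$ correction (equivalently the convention $v_{(-1|-1)}=-1$) mechanically, with no appeal to Gaussian conditioning. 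Your route can be made rigorous — the correction would appear as the Schur-complement term when conditioning the joint Gaussian limit of the observables together with $|\la|$ (which is essentially $S_2$) — but it requires a joint CLT with uniformity in the Poisson parameter, whereas the paper's direct conditional computation makes Step 1 cheap and dissolves the obstacle you identified.
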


\begin{remark}\label{funny_sum}
Let us make some clarifications about the last summation in the RHS of \eqref{eq:CovDS}.
Since $v_{(-1|j)} = v_{(j|-1)} = v_{(1|j)} = v_{(j|1)} := 0$, for all $j \geq 1$, the inner sum is over pairs $(s_1, s_2)$ with $\deg(s_1),\,\deg(s_2)\ge 2$, or pairs with $\deg(s_1)=\deg(s_2)=-1$.
In the former case, $v_{(\deg(s_1)|\deg(s_2))}$ is obtained from \cref{eq:SecondCumu} of \cref{def:approx-factorization-charactersA}, and the partition $\mu(\ribbon)\setminus\{\deg(s_1), \deg(s_2)\}$ has exactly two parts less than $\mu(\ribbon)$. On the other hand, if $\deg(s_1)=\deg(s_2)=-1$, then $v_{(\deg(s_1)|\deg(s_2))}\cdot v_{\mu(\ribbon)\setminus\{ \deg(s_1), \deg(s_2) \}} = v_{(-1|-1)}\cdot v_{\mu(\ribbon)} = -v_{\mu(\ribbon)}$.
\end{remark}

\begin{remark}
Similarly as~\cref{theo:ShapeDS} recovers Biane's result
from~\eqref{eq:Biane} in the special case of a fixed $\alpha=1$, \cref{theo:CovDS} recovers Śniady's result
from~\cite{Sniady2006c} (see also \cref{rem:CompWithSniady}). In
particular, the special case of our formula for the
covariance~\eqref{eq:CovDS} at $\alpha=1$ extends the formula for the case of the Plancherel measure found
in~\cite{IvanovOlshanski2002} to the general setting of random Young diagrams studied by Śniady
in~\cite{Sniady2006c}. Our formula does not appear
in~\cite{Sniady2006c}, where instead the
formulae for the covariance of free cumulants and characters was
given~\cite[Eq.~(3.6)]{Sniady2006c}; \eqref{eq:CovDS} can be used to
recover these formulae.
  \end{remark}

The proofs consist of two steps. In the first step, we show that the random Young diagrams 
sampled by the specific Jack characters constructed in \cref{ex:CDM} are the ``depoissonized'' 
versions of the random Young diagrams sampled with respect to the Jack--Thoma measures, 
and prove that they both have the same asymptotic behavior. 
The theme of \emph{depoissonization} has been explored many times in the literature,
e.g.~see~\cite{Johansson1998,BorodinOkounkovOlshanski2000}. 
In the second step, we show that the observables of interest in our model are polynomials in 
the variables $v_k,\, v_{(k|l)}$; this will finish the proof after combining it with the observation 
that if two polynomials agree on a large enough set, then the polynomials must coincide.

\subsection{Depoissonization of Jack--Thoma measures}

\begin{proposition}\label{prop:Depoissonization}
Let $\al>0,\,d\in\Z_{>0}$ be fixed.
Take any sequence $\vv = (1,v_2,v_3,\dots)\in\R^\infty$, and let 
$\chi_d:\Y_d\to\R$ be the function defined by $\chi_d(\mu) := v_\mu$, for all $\mu\in\Y_d$. 
Let $\mathbb{P}^\a_{\chi_d}$ be the (signed) measure on $\Y_d$ 
associated to $\chi_d$, according to \cref{def:first}.
Recall also that for any $u>0$, \cref{plancherel_jack} gives $M^\a_{u;\vv}$ 
as a (signed) measure on $\Y$.

The measure $\mathbb{P}^\a_{\chi_d}(\cdot)$ coincides with the 
measure $M^\a_{u;\vv_{u,\al}}(\cdot\,|\,\la\in\Y_d)$, 
conditioned on the size of $\la\in\Y_d$, where $u>0$ is arbitrary, and 
  \[ \vv_{u,\alpha} := 
  \left(\left(\frac{u}{\sqrt{\alpha}}\right)^{0}\!,\,v_2\cdot
  \left(\frac{u}{\sqrt{\alpha}}\right)^{1}\!,\, v_3\cdot
  \left(\frac{u}{\sqrt{\alpha}}\right)^{2},\dots\right).\]
In particular, this is a probability measure if the infinite tuple 
$(\sqrt{\alpha},\,\sqrt{\alpha}\cdot v_2,\,\sqrt{\alpha}\cdot v_3,\dots)$ 
determines an $\alpha$-Jack-positive specialization.
\end{proposition}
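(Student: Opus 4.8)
The plan is to compute both measures explicitly on $\Y_d$ and match them termwise, the key algebraic input being a \emph{dual orthogonality} of the irreducible Jack characters. First I would restrict $M^\a_{u;\vv_{u,\al}}$ to $\Y_d$: since $v_1=1$ forces the first coordinate of $\vv_{u,\al}$ to be $1$, the prefactor $\exp(-u^2/\alpha)$ and the factor $u^{|\la|}=u^d$ are constant on $\Y_d$, so the conditional measure satisfies
\[ M^\a_{u;\vv_{u,\al}}(\la \mid \la\in\Y_d) = \frac{J^\a_\la(u\cdot\vv_{u,\al})/j^\a_\la}{\sum_{\nu\in\Y_d}J^\a_\nu(u\cdot\vv_{u,\al})/j^\a_\nu}, \]
provided the denominator is nonzero (which I verify at the end). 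It thus suffices to identify $J^\a_\la(u\cdot\vv_{u,\al})/j^\a_\la$ with a constant multiple of $\mathbb{P}^\a_{\chi_d}(\la)$.

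Next I would unfold the specialization. Writing $\rho(p_k)=u\cdot(\vv_{u,\al})_k=v_k\,u^k\alpha^{-(k-1)/2}$ and using $J^\a_\la=\sum_{\mu\in\Y_d}\theta^\a_\mu(\la)p_\mu$, a short computation using $\|\mu\|=d-\ell(\mu)$ gives $\rho(p_\mu)=v_\mu\,u^d\,\alpha^{-\|\mu\|/2}$ for $\mu\in\Y_d$. Substituting the definition \eqref{eq:character-Jack-unnormalized-zmiana} in the form $\alpha^{-\|\mu\|/2}\theta^\a_\mu(\la)=\tfrac{d!}{z_\mu}\chi^\a_\la(\mu)$, and recalling $\chi_d(\mu)=v_\mu$, yields
\[ J^\a_\la(u\cdot\vv_{u,\al}) = u^d\,d!\sum_{\mu\in\Y_d}\frac{\chi^\a_\la(\mu)\,\chi_d(\mu)}{z_\mu}. \]

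The crux is then the dual orthogonality
\[ \sum_{\mu\in\Y_d}\frac{\chi^\a_\la(\mu)\,\chi^\a_\nu(\mu)}{z_\mu} = \frac{\alpha^{-d}}{(d!)^2}\,\delta_{\la\nu}\,j^\a_\la, \]
which I would derive by rewriting the left side, via \eqref{eq:character-Jack-unnormalized-zmiana} and $\alpha^{-\|\mu\|}=\alpha^{-d}\alpha^{\ell(\mu)}$, as $\tfrac{\alpha^{-d}}{(d!)^2}\sum_\mu \alpha^{\ell(\mu)}z_\mu\,\theta^\a_\mu(\la)\theta^\a_\mu(\nu)$, and then recognizing this last sum as the $\alpha$-deformed Hall inner product $\langle J^\a_\la, J^\a_\nu\rangle$, using $\|p_\mu\|^2=\alpha^{\ell(\mu)}z_\mu$ from \eqref{eq:powersumnorm}; by \eqref{j_lambda_2} it equals $\delta_{\la\nu}j^\a_\la$. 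Feeding the expansion $\chi_d=\sum_\nu \mathbb{P}^\a_{\chi_d}(\nu)\chi^\a_\nu$ into the previous display and applying this orthogonality collapses the sum to a single term, giving
\[ J^\a_\la(u\cdot\vv_{u,\al}) = \frac{u^d\alpha^{-d}}{d!}\,\mathbb{P}^\a_{\chi_d}(\la)\,j^\a_\la. \]
Dividing by $j^\a_\la$ shows the conditional weights are proportional to $\mathbb{P}^\a_{\chi_d}(\la)$; since $\chi_d(1^d)=v_1^d=1$ and $\chi^\a_\nu(1^d)=1$ by \eqref{norm_jack}, evaluating the expansion at $\mu=(1^d)$ gives $\sum_\nu\mathbb{P}^\a_{\chi_d}(\nu)=1$, which simultaneously shows the denominator $\sum_{\nu}J^\a_\nu(u\cdot\vv_{u,\al})/j^\a_\nu$ equals the positive constant $u^d\alpha^{-d}/d!$ (hence is nonzero, so conditioning is legitimate) and identifies the two measures.

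Finally, for the probability-measure assertion I would invoke the homogeneity of $J^\a_\la$: since every $\mu$ occurring in $J^\a_\la$ has $|\mu|=d$, rescaling a specialization by $p_k\mapsto t^k p_k$ multiplies $\rho(J^\a_\la)$ by $t^d$. Taking $t=u/\sqrt{\alpha}$ and $\sigma(p_k):=\sqrt{\alpha}\,v_k$, one checks $\rho(p_k)=(u/\sqrt{\alpha})^k\sigma(p_k)$, whence $J^\a_\la(u\cdot\vv_{u,\al})=(u/\sqrt{\alpha})^d\,\sigma(J^\a_\la)$; if $(\sqrt{\alpha},\sqrt{\alpha}\,v_2,\sqrt{\alpha}\,v_3,\dots)$ determines an $\alpha$-Jack-positive specialization then $\sigma(J^\a_\la)\ge 0$, so every weight $\mathbb{P}^\a_{\chi_d}(\la)$ is nonnegative, and with total mass $1$ it is a genuine probability measure. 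The main obstacle is the dual orthogonality step: the fact that $\{\chi^\a_\la\}$ is a basis guarantees $\mathbb{P}^\a_{\chi_d}$ exists, but to invert the character expansion one must route through the Hall inner product rather than the more familiar column-orthogonality of the $\theta^\a_\mu$, and the interlocking $\alpha^{\pm\|\mu\|/2}$, $z_\mu$, and $d!$ normalizations must be tracked with care throughout.
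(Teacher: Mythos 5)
Your proof is correct. It rests on the same two ingredients as the paper's own proof --- the normalization \eqref{eq:character-Jack-unnormalized-zmiana} relating $\chi^\a_\la$ to $\theta^\a_\mu$, and the orthogonality $\langle J^\a_\la, J^\a_\nu\rangle = \delta_{\la\nu}\,j^\a_\la$ --- but runs them in the opposite direction. The paper expands $p_\mu$ in the Jack basis (its identity \eqref{eq:IrrCharScalar}), applies the specialization to both sides to produce an expansion $\chi_d = \sum_{\la} c_\la\,\chi^\a_\la$ with explicit coefficients, recognizes these coefficients as the conditional Jack--Thoma weights by computing the mass of $\Y_d$ from the Cauchy identity \eqref{eq:Cauchy}, and finishes by uniqueness of the decomposition in the basis $\{\chi^\a_\la\}$. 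You instead expand $J^\a_\la$ in power sums, specialize, and then \emph{invert} the character expansion through the row orthogonality $\sum_{\mu\in\Y_d}\chi^\a_\la(\mu)\chi^\a_\nu(\mu)/z_\mu = \al^{-d}(d!)^{-2}\,\delta_{\la\nu}\,j^\a_\la$, which is the transpose of the paper's identity and an equally direct consequence of Jack orthogonality (your derivation of it is sound). What your route buys: the closed formula $\PP^\a_{\chi_d}(\la) = \frac{\al^d\, d!}{u^d}\,J^\a_\la(u\cdot\vv_{u,\al})/j^\a_\la$ falls out explicitly (exactly the formula quoted in \cref{ex:CDM}), and the normalization is handled by the evaluation at $(1^d)$ together with \eqref{norm_jack}, which also legitimizes conditioning a signed measure without invoking the Cauchy identity. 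What the paper's route buys: the Cauchy-identity computation simultaneously establishes the Poissonization structure (the mass of $\Y_d$ is $e^{-u^2/\al}\,u^{2d}/(\al^d d!)$), which is used elsewhere, e.g.\ in \cref{ex:CDMPois}. Your homogeneity argument for the final positivity claim is the paper's ``same ray of the Thoma cone'' observation in different words; the only detail worth adding is that passing from $J^\a_\la(u\cdot\vv_{u,\al})\ge 0$ to $\PP^\a_{\chi_d}(\la)\ge 0$ uses $j^\a_\la>0$ for $\al>0$, recorded after \eqref{j_lambda_2}.
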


\begin{proof}
Using definition~\eqref{eq:character-Jack-unnormalized-zmiana} of the
irreducible Jack character $\chi^\a_\lambda $, the fact that the 
power sums $\{p_\mu\}_{\mu\in\Y}$ form an orthogonal basis of $\SSym$ 
with respect to the $\al$-deformed Hall scalar product, and formula~\eqref{eq:powersumnorm} 
for the norms of the power sums, we have
\begin{equation*}
\chi^\a_\lambda(\mu) = 	\al^{-(d - \ell(\mu))/2}\cdot
	\frac{z_\mu}{d!}\cdot\frac{\langle p_\mu, J_\la^\a \rangle}{\langle p_\mu, p_\mu\rangle} = \frac{\al^{-(d+\ell(\mu))/2}}{d!}\cdot\langle p_\mu,J^\a_\lambda\rangle,
\end{equation*}
for all $\la,\mu\in\Y_d$.
Then use this equation and the fact that the Jack symmetric functions $J^\a_\la$ form an orthogonal basis with norms given by \eqref{j_lambda_2} in order to obtain 
\begin{equation}\label{eq:IrrCharScalar}
p_\mu = \sum_{\la\in\Y_d}{ \frac{\langle p_\mu, J^\a_\la\rangle}{\langle J^\a_\la, J^\a_\la\rangle}\,
J^\a_\la } = \sum_{\la\in\Y_d}{ \frac{\langle p_\mu, J^\a_\la\rangle}{j^\a_\la}\, J^\a_\la } = 
\al^{(d+\ell(\mu))/2}\cdot d!\cdot\sum_{\la\in\Y_d}{ \frac{\chi^\a_\la(\mu)}{j^\a_\la}\, J^\a_\la }.
\end{equation}

Let $\rho:\SSym\to\R$ be the specialization determined by the infinite tuple $u\cdot\vv_{u,\al}$, i.e.
\[
\rho(p_k) := u\cdot (\vv_{u,\al})_k = u\cdot v_k\cdot\left( \frac{u}{\sqrt{\al}} \right)^{k-1}
= v_k\cdot u^k\cdot\al^{(1-k)/2},\quad \forall\, k\in\Z_{\ge 1},
\]
where we are setting $v_1:=1$.
Then $\rho(p_\mu)=v_{\mu}\cdot u^d\cdot\al^{(\ell(\mu)-d)/2}$ and, by our usual notation, $\rho(J^\a_\la)=J^\a_\la(u\cdot\vv_{u,\al})$.
Then applying $\rho$ to both sides of \cref{eq:IrrCharScalar}, using the assumption that $\chi_d(\mu) = v_\mu$, and rearranging terms we obtain
\begin{equation}\label{chi_expansion_1}
\chi_d(\mu) = \frac{\al^d d!}{u^d}\sum_{\la\in\Y_d}
\frac{J_{\la}^\a(u\cdot\vv_{u,\alpha})}{j^\a_\la}\cdot\chi^\a_\la(\mu).
\end{equation}
Recall that the Jack--Thoma measure $M^\a_{u; \vv_{u,\al}}$ from \cref{plancherel_jack} is the 
Jack measure with specializations $\rho_1(p_k)=u\cdot(\vv_{u,\al})_k$ and $\rho_2(p_k)=u\cdot\delta_{1,k}$, 
as observed in \cref{ex:CDMPois}.
Thus, as a direct consequence of Cauchy's identity~\eqref{eq:Cauchy}, the measure 
$M^\a_{u; \vv_{u,\al}}$ assigns $\Y_d\,$ a mass of 
$\,\exp\left(-\frac{u^2}{\al}\right)\cdot\frac{u^{2d}}{\al^d d!}$.
Hence, the Jack--Thoma measure, conditioned on the size of $\la\in\Y_d$, is given by:
\[ M^\a_{u;\vv_{u,\al}}(\la \,|\, \la \in \Y_d) = \frac{\al^d d!}{u^{2d}}\cdot
\frac{J_\la^\a(u\cdot\vv_{u,\al})\cdot u^d}{j^\a_\la}. \]
Plugging back into \eqref{chi_expansion_1} gives
\begin{equation}\label{chi_expansion_2}
\chi_d(\mu) = \sum_{\la\in\Y_d} M^\a_{u;\vv_{u,\al}}(\la \,|\, \la\in\Y_d) \
  \chi^\a_\lambda(\mu).
\end{equation}
This last equation is precisely the definition of the measure
$\PP^\a_{\chi_d}$ (compare \cref{def:first} and \eqref{chi_expansion_2}), therefore we conclude the desired equality of measures: $\PP^\a_{\chi_d}(\cdot) = M^\a_{u;\vv_{u,\al}}(\cdot \,|\, \la \in \Y_d)$.

Finally, if $(\sqrt{\alpha},\sqrt{\alpha}\cdot v_2, \sqrt{\alpha}\cdot v_3,\dots)$ 
determines an $\alpha$-Jack-positive specialization, then so does $u\cdot\vv_{u,\alpha}$, 
since they both belong to the same ray of the Thoma cone (see \cref{thm:KOO}).
Then \cref{plancherel_jack} shows that $M^\a_{u;\vv_{u,\al}}$ is a probability 
(not just a signed) measure, so the same is true for the conditional version 
$M^\a_{u;\vv_{u,\al}}(\cdot \,|\, \la \in \Y_d)$. The proof is then finished.
\end{proof}

\smallskip

Let $g\in\R\setminus\{0\}$ and $\vv = (v_1, v_2, \cdots)\in\R^\infty$ be such that 
$(g,\vv)$ is an admissible pair, in the sense of \cref{def:admissible_pair}, 
with $v_1=1$. Motivated by \cref{ex:CDM}, consider the following functions 
$\chi_{d;g,\vv}\colon\Y_d\to\R$:
\begin{equation}\label{eq:JackCDMHighLow}
\chi_{d; g, \vv}(\mu) := \left(\frac{g^2 d}{\big\lceil |g|d \big\rceil^2}\right)^{\!\frac{\|\mu\|}{2}}\cdot v_\mu.
\end{equation}

\begin{theorem}\label{theo:Depoissonization}
Let $(g,\vv)$ be an admissible pair with $v_1=1$, and let 
$(\chi_{d; g,\vv}\colon\Y_d \to \R)_{d\ge 1}$  be the sequence of functions 
given by \eqref{eq:JackCDMHighLow}. Then $\chi_{d; g,\vv}$ is a 
Jack character when $\al$ is equal to 
\begin{equation}\label{alpha_d}
\al = \begin{cases} g^2d, &\text{ if } g>0,\\
\frac{1}{g^2 d}, &\text{ if } g<0. \end{cases}
\end{equation}
This sequence $\al=\al(d)$ satisfies condition 
\eqref{eq:double-scaling-refined} with $g'=0$. 
Moreover, the sequence $(\chi_{d;g,\vv})_{d\ge 1}$ satisfies the enhanced AFP with $v_2, v_3, \cdots$ being the associated paramaters in 
\cref{eq:refined-asymptotics-characters} from 
\cref{def:approx-factorization-charactersA}., while the other parameters are $v_k'=v_{(k|l)}=0$, for all $k,l\ge 2$.

Let $\omega_{\Lambda_\infty}$ be the limit shape given by \cref{theo:LLNDS}, 
and let $\omega_{\Lambda_{g;\vv}}$ be the limit shape given by \cref{theo:LLNCDM} (for Jack--Thoma measures). 
Then 
\[ \omega_{\Lambda_\infty} = \omega_{\Lambda_{g;\vv}}.\]

Furthermore, the Gaussian process $\Delta_\infty$, given
by~\cref{theo:CLTDS} is determined by the explicit combinatorial formulas.
$\E[\langle \Delta_\infty,\, x^{k-2}\rangle]$, for all $k\ge 2$, is equal to the RHS of~\cref{MeanGaussian_2} where we set $v_1=1$. As for the covariances, we have the following formula, valid for all $k, l\ge 2$:
\begin{multline*}
\Cov\left( \langle \Delta_\infty,\, x^{k-2}\rangle, \langle \Delta_\infty,\, x^{l-2}\rangle \right) = \\
\frac{1}{(k-1)(l-1)}\Bigg(\sum_{\ribbon=(\Gamma_1, \Gamma_2)\in\mathbf{L}^{\conn}(k,l)\atop |\mathbf{P}(\ribbon)| = 1}\frac{v_{\mu(\ribbon)}}{|\mathbf{S}^0(\Gamma_1)|\cdot |\mathbf{S}^0(\Gamma_2)|}\prod_{i\geq 1} (i\cdot g)^{|\SSS^i_{\tiny\rightarrow}(\ribbon)|}
  \,i^{\,|\mathbf{P}_i(\ribbon)| } \\
  - \sum_{\ribbon=(\Gamma_1, \Gamma_2)\in\mathbf{L}(k, l)\atop
    |\mathbf{P}(\ribbon)| = 0}\frac{|\SSS_{-1}(\Gamma_1)|\cdot |\SSS_{-1}(\Gamma_2)|\cdot v_{\mu(\ribbon)}}{|\mathbf{S}^0(\Gamma_1)|\cdot |\mathbf{S}^0(\Gamma_2)|}\prod_{i
    \geq 1}(i\cdot g)^{|\SSS^i_{\tiny\rightarrow}(\ribbon)|}\Bigg).
\end{multline*}
\end{theorem}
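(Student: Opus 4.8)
The plan is to realize $\PP^\a_{\chi_{d;g,\vv}}$ as a conditioned Jack--Thoma measure via \cref{prop:Depoissonization} and then transport the LLN and CLT of \cref{theo:LLNCDM,theo:CLTCDM} from the Poissonized to the depoissonized model. Writing $w_k:=(g^2d/\lceil|g|d\rceil^2)^{(k-1)/2}v_k$ (so $w_1=1$), the function $\chi_{d;g,\vv}$ is multiplicative with $\chi_{d;g,\vv}(k)=w_k$; hence \cref{prop:Depoissonization}, applied to the tuple $(w_k)_{k\ge1}$, identifies $\PP^\a_{\chi_{d;g,\vv}}$ with the Jack--Thoma measure conditioned on $\{\la\in\Y_d\}$ whose specialization sends $p_k\mapsto u\cdot w_k\cdot(u/\sqrt{\al})^{k-1}$. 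Substituting the sequences $\al(d),u(d)$ of \cref{prop:CDMPar2} collapses this value to $u(d)\cdot v_k$ in both temperature regimes, so the conditioned measure is exactly the depoissonization of $M^{(\al(d))}_{u(d);\vv}$; since the latter is a genuine probability measure by \cref{prop:CDMPar2} (via \cref{exam:integer_pos,exam:integer_pos_2}), $\chi_{d;g,\vv}$ is a Jack character. The double scaling \eqref{eq:double-scaling-refined} with $g'=0$ is immediate from $\al=g^2d$ (resp.\ $\al=1/(g^2d)$), multiplicativity forces all cumulants of order $\ge2$ to vanish so that $v_{(k|l)}=0$, and $\chi_{d;g,\vv}(k)\,d^{(k-1)/2}=(|g|d/\lceil|g|d\rceil)^{k-1}v_k=v_k+O(1/d)$ gives the enhanced AFP with the stated $v_k$ and $v_k'=0$.

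For the limit shape, \cref{theo:LLNDS} applied to $\chi_{d;g,\vv}$ and \cref{theo:LLNCDM} applied to $M^{(\al(d))}_{u(d);\vv}$ both produce limit shapes, and under the chosen $\al(d),u(d)$ the two anisotropic rescalings $\Lambda^\a_d=T_{\sqrt{\al/d},\sqrt{1/(\al d)}}\la$ and $\Lambda_{(\al;u\sqrt{v_1})}=T_{\al/u,1/u}\la$ agree asymptotically. Under $M^\a_{u;\vv}$ the size $|\la|$ is Poisson with parameter $U=u^2/\al\sim d$, so the size concentrates and conditioning on $\{|\la|=d\}$ does not move the leading-order profile; transferring the in-probability convergence $S_\ell(\Lambda_{(\al;u\sqrt{v_1})})\to s_\ell$ of \cref{theo:LLNCDM} to the conditional measure then gives $\mu_{\Lambda_\infty}=\mu_{g;\vv}$ and hence \eqref{mu_moments:DS}.

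The CLT is the delicate point, because the size fluctuation $|\la|-U=O(\sqrt d)$ lives at the scale probed by $\Delta^\a_d$. The relevant size observable is $Z:=\langle\Delta^\a_{u;\vv},x^0\rangle=\tfrac{u}{\sqrt{\al}}\big(S_2(\Lambda_{(\al;u\sqrt{v_1})})-1\big)$, for which \eqref{eq:2InFinalProof} gives $\Var(Z)\to v_1^{-1}=1$, while conditioning on $\{|\la|=d\}$ fixes $Z$ at a value tending to its mean $0$. I would first prove joint Gaussian convergence of the enlarged family $(\langle\Delta^\a_{u;\vv},x^m\rangle)_{m\ge0}$ under the unconditional Jack--Thoma measure (the cumulant formulas of \cref{cor_formulas_cums} extend verbatim to the new observable $m=0$), and then show that conditioning on $\{|\la|=d\}$ converges to Gaussian conditioning on $\{Z=0\}$. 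Granting this, the Gaussian regression identity
\[ \Cov_{\mathrm{cond}}(A,B)=\Cov_{\mathrm{Poi}}(A,B)-\frac{\Cov_{\mathrm{Poi}}(A,Z)\,\Cov_{\mathrm{Poi}}(B,Z)}{\Var_{\mathrm{Poi}}(Z)},\qquad A=\langle\Delta^\a_{u;\vv},x^{k-2}\rangle,\ B=\langle\Delta^\a_{u;\vv},x^{l-2}\rangle, \]
reproduces the claimed covariance: the first term is \eqref{CovarianceGaussian_2}, the connected one-pairing sum, while evaluating $\Cov_{\mathrm{Poi}}(A,Z)$ through \eqref{eq:2InFinalProof} with second argument $x^0$ --- where the length-$2$ excursion forces exactly one degree-$1$ pairing onto a down step of the first path --- turns $\Cov_{\mathrm{Poi}}(A,Z)$ into $\tfrac{1}{k-1}\sum_{\Gamma\in\mathbf{L}_0(k)}\tfrac{|\SSS_{-1}(\Gamma)|}{|\SSS^0(\Gamma)|}\prod_i(ig)^{|\SSS^i_{\tiny\rightarrow}(\Gamma)|}v_i^{|\SSS_i(\Gamma)|}$, so that after $\Var(Z)\to1$ the product of the two covariances is precisely the sum over $\ribbon=(\Gamma_1,\Gamma_2)\in\mathbf{L}(k,l)$ with $|\mathbf{P}(\ribbon)|=0$ weighted by $|\SSS_{-1}(\Gamma_1)|\,|\SSS_{-1}(\Gamma_2)|/(|\SSS^0(\Gamma_1)|\,|\SSS^0(\Gamma_2)|)$, matching the subtracted term. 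Since $g'=0$ and $v_k'=v_{(k|l)}=0$, the unconditional mean of $\Delta^\a_{u;\vv}$ vanishes and the regression shift of the mean is proportional to the conditioned value of $Z$, which tends to $0$, so $\E[\langle\Delta_\infty,x^{k-2}\rangle]$ equals the (vanishing) right-hand side of \eqref{MeanGaussian_2} with $v_1=1$.

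The hard part is the final transfer: justifying that discrete conditioning on $\{|\la|=d\}$, an event of only polynomially small probability $\sim(2\pi U)^{-1/2}$, converges to Gaussian conditioning on $\{Z=0\}$. This is where a joint local limit theorem is required, but it is tractable here because $|\la|$ is exactly Poisson, so an exact local CLT for $|\la|$ combined with the joint Gaussian convergence of $(Z,A,B)$ delivers the conditional law; I expect essentially all the remaining difficulty to be in making this conditional convergence uniform enough to pass to the covariance, every other step being explicit algebra or a direct appeal to \cref{prop:Depoissonization,prop:CDMPar2,cor_formulas_cums,theo:CLTCDM}.
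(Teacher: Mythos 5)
The first half of your proposal (Jack character via \cref{prop:Depoissonization} and \cref{prop:CDMPar2}, the double scaling \eqref{eq:double-scaling-refined} with $g'=0$, and the enhanced AFP from multiplicativity) is correct and essentially identical to the paper's argument. Where you genuinely diverge is in how the conditioning on $\{|\la|=d\}$ is handled, and this is where the gap lies. The paper never transfers a limit theorem across the conditioning: it redoes the Nazarov--Sklyanin computation of \cref{theo:Expectations} directly under the conditional measure, where conditioning amounts to replacing $\exp(u p_1/\al)$ by $(u/\al)^d p_1^d/d!$; this yields the exact finite-$d$ identity \eqref{exp_CMD_d} with falling factorials $d_{\overline{|\SSS_{-1}(\ribbon)|}}$ in place of powers of $d$, and the subtracted term in the covariance then falls out of the elementary identity $d_{\overline{k+l}}-d_{\overline{k}}\,d_{\overline{l}}=-kl\,d^{k+l-1}+O(d^{k+l-2})$. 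Your Gaussian-regression bookkeeping is a correct \emph{consistency check} of the final formula: evaluating \eqref{eq:2InFinalProof} at $l=2$ does give $\Cov(A,Z)\to\frac{1}{k-1}\sum_{\Gamma\in\mathbf{L}_0(k)}\frac{|\SSS_{-1}(\Gamma)|}{|\SSS^0(\Gamma)|}\prod_i(ig)^{|\SSS^i_{\tiny\rightarrow}(\Gamma)|}v_i^{|\SSS_i(\Gamma)|}$, one has $\Var(Z)\to 1$, and the product of the two regression covariances factorizes into exactly the pairing-free double sum, since a two-site ribbon path with $|\mathbf{P}(\ribbon)|=0$ is just a pair of independent Łukasiewicz paths. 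But this remains a check, not a proof, until the conditional convergence is established.

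That conditional convergence is the genuine gap, and your proposed fix (an ``exact local CLT for $|\la|$'' plus joint Gaussian convergence of $(Z,A,B)$ under the Poissonized measure) does not suffice as stated. A local CLT for the Poisson marginal $|\la|$ only controls the denominator $\PP(|\la|=d)\sim(2\pi d)^{-1/2}$; to compute $\E[AB \mid |\la|=d]$ you need the joint law of $(A,B,|\la|)$ at the \emph{local} scale in the last coordinate (a mixed local/weak limit theorem), and on top of that convergence of conditional \emph{second moments}, not merely of the conditional law. Weak convergence of $(Z,A,B)$ says nothing about events of probability $O(d^{-1/2})$: conditioning on such an event can in principle distort means and covariances by factors of order $\sqrt{d}$, so you would need uniform-in-$d$ integrability or characteristic-function estimates under the conditional measure. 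None of this is supplied, and supplying it (say, via joint cumulant bounds uniform in the conditioning) is essentially as much work as the paper's direct computation --- which is precisely why the paper works at fixed $d$ with exact formulas and never invokes any local limit theorem. The same objection applies, more mildly, to your LLN step ``the size concentrates so conditioning does not move the leading-order profile'': concentration of $|\la|$ around $d$ does not by itself control conditioning on the exact value $|\la|=d$; the paper also proves $\omega_{\Lambda_\infty}=\omega_{\Lambda_{g;\vv}}$ through the exact conditional moment formula \eqref{exp_CMD_d}, combined with \cref{theo:LLNDS}, \cref{lemma_moments} and the Markov--Krein homeomorphism, rather than by a transfer argument.
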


\begin{proof}
It is evident that the sequence $(\al(d))_{d\ge 1}$ satisfies the 
assumption~\eqref{eq:double-scaling-refined}.
Let us verify that $\chi_{d; g,\vv}$ is a Jack character. Due to
\cref{prop:DualityAFP}, it is enough to verify it in case $g>0$. Let $u(d) := \lceil gd \rceil$.
As usual, denote $\al := \al(d),\, u:= u(d)$. 
\cref{prop:CDMPar2} shows that $M^\a_{u;\vv}$ is a probability measure. 
On the other hand, if we set $v^{(d)}_k := (\sqrt{\al}/u)^{k-1}\cdot v_k$, 
then the function $\chi_{d;g,\vv}$ can be expressed as
\begin{equation}\label{chi_dgv}
\chi_{d;g,\vv}(\mu) = v^{(d)}_\mu,\quad\textrm{for all }\mu\in\Y_d.
\end{equation}
Then \cref{prop:Depoissonization} implies that the measure 
$\mathbb{P}^\a_{\chi_{d;g,\vv}}(\cdot)$ associated to $\chi_{d;g,\vv}$ is equal 
to the conditional probability measure $M^\a_{u; \vv}(\cdot \,|\, \la\in\Y_d)$; 
therefore $\mathbb{P}^\a_{\chi_{d;g,\vv}}$ 
is also a probability measure, and consequently $\chi_{d;g,\vv}$ is a Jack character.

To check that $(\chi_{d;g,\vv})_{d\ge 1}$ satisfies the enhanced AFP,
note 
that each $\chi_{d;g,\vv}$ is multiplicative, meaning that 
$\chi_{d;g,\vv}(\mu^1\cdots\mu^k) = \chi_{d;g,\vv}(\mu^1)\cdots\chi_{d;g,\vv}(\mu^k)$. 
Therefore all cumulants $\kappa_n^{\chi_{d;g,\vv}}(k_1, \dots, k_n)$
with $n\ge 2$ vanish, in particular, \eqref{eq:SecondCumu} holds with
$v_{(k|l)}=0$. Recall from \cref{subsec:CharactersAFP} that in order
to make sense of the cumulants $\kappa_n^{\chi_{d;g,\vv}}(k_1, \dots, k_n)$,
for $k_1+\cdots+k_n < d$, we need to add an appropriate number of
$1$'s to construct partitions of size $d$. In particular,
\[ \kappa_1^{\chi_{d;g,\vv}}(k)\cdot d^{\frac{k-1}{2}} =
  \chi_{d;g,\vv}(k,1^{d-k}) \cdot d^{\frac{k-1}{2}} = 
v_k^{(d)}(v_1^{(d)})^{d-k}\cdot d^{\frac{k-1}{2}} =
(\sqrt{\al d}/u)^{k-1}\cdot v_k,\]
where the second equality is due to~\eqref{chi_dgv}. Therefore
\[ \kappa_1^{\chi_{d;g,\vv}}(k)\cdot
  d^{\frac{k-1}{2}}=v_k+o(d^{-1/2}),\]
for all $k\ge 2$, because $\sqrt{\al d}/u = 1+o(d^{-1/2})$ for our choices of $\al,\, u$. Then \eqref{eq:refined-asymptotics-characters-2} is satisfied with $v_k'=0$.
        
Denote the expectation w.r.t.~the conditional probability measure $M^\a_{u; \vv}(\cdot \,|\, \la\in\Y_d)$ by $\mathbb{E}^{\al; d}_{u; \vv}$. We can compute 
$\mathbb{E}^{\al,d}_{u;\vv}\left(B_{\ell_1}(\Lambda_{(\al;u)})\cdots
      B_{\ell_n}(\Lambda_{(\alpha;u)})\right)$ in the same way we
    computed $\mathbb{E}^\a_{u;\vv}\left(B_{\ell_1}(\Lambda_{(\al;u)})\cdots
   B_{\ell_n}(\Lambda_{(\alpha;u)})\right)$
   in the proof of~\cref{theo:Expectations}, except that 
   $\exp\!\left(-\frac{u^2}{\al}\right)$ and $\exp\!\left(\frac{u\cdot q_1}{\al}\right)$ 
	should be replaced by $d!\!\cdot\!\left(\frac{\alpha}{u^2}\right)^d$ and 
   $\left(\frac{u}{\alpha}\right)^d\!\cdot\!\frac{q_1^d}{d!}$, respectively.
	This gives 
\begin{multline}\label{exp_CMD_d}
\E^{\alpha;d}_{u;\vv}\left(B_{\ell_1}(\Lambda_{(\alpha;u)})\cdots
      B_{\ell_n}(\Lambda_{(\alpha;u)})\right) =\\
\sum_{\ribbon\in\mathbf{L}(\ell_1,\dots,\ell_n)\atop |\SSS^0(\ribbon)|=n} 
d_{\overline{|\SSS_{-1} (\ribbon)|}}\cdot
\bigg(\frac{\alpha}{u^2}\bigg)^{\!|\SSS_{-1} (\ribbon)|}\cdot {\bigone\bigtwo},
\end{multline}
where $d_{\overline{k}} := d(d-1)\cdots(d-k+1)$ is the falling factorial.
Since $d_{\overline{|\SSS_{-1} (\ribbon)|}} = d^{|\SSS_{-1} (\ribbon)|}(1 + O(d^{-1}))$, $\,\frac{\al-1}{u}= g(1+O(d^{-1}))$, and $\,\frac{\al}{u^2} = d^{-1}(1+O(d^{-1}))$,
	then the asymptotics of~\eqref{exp_CMD_d} is 
\begin{equation}\label{exp_d_prodB}
\E^{\al;d}_{u;\vv}\left(B_{\ell_1}
	(\Lambda_{(\al;u)})\cdots B_{\ell_n}(\Lambda_{(\al;u)})\right) =
	B^{\mu_{g;\vv}}_{\ell_1}\cdots B_{\ell_n}^{\mu_{g;\vv}}(1+O(d^{-1})).
\end{equation}
Recall that $\Lambda^\a_d = T_{\sqrt{\al/d},\,1/\sqrt{\al d}}{\,\la}\,$ and 
$\,\Lambda_{(\al; u)} = T_{\al/u,\,1/u}{\,\la}$, for any $\la\in\Y_d$. Then 
$\Lambda^\a_d = T_{u/\sqrt{\al d},\,u/\sqrt{\al d}}{\,\Lambda_{(\al; u)}}$, 
so that the scaling property \eqref{eq:scaling0} implies 
\[
B_{\ell}(\Lambda^\a_d) = \left(\frac{u}{\sqrt{\al d}}\right)^{\!\ell}\cdot B_{\ell}(\Lambda_{(\al;u)}), \ \text{ for all }\ell\in\Z_{\ge 1}.
\]
Since $\frac{u}{\sqrt{\al d}} = 1 + O(d^{-1})$ in our case, 
\cref{exp_d_prodB} shows 
\[ \lim_{d\to\infty}\E^{\al;d}_{u;\vv}
\left( B_{\ell_1}(\Lambda^\a_d) \cdots B_{\ell_n}(\Lambda^\a_d) \right)
= B_{\ell_1}^{\mu_{g;\vv}}\cdots B_{\ell_n}^{\mu_{g;\vv}}. \]
By \cref{prop:relations}, it follows that for any $\ell\in\Z_{\ge 1}$, we have
\begin{equation}\label{expected_moments}
\lim_{d\to\infty}\E^{\al;d}_{u;\vv}\left( M_\ell(\Lambda^\a_d) \right) = M_{\ell}^{\mu_{g;\vv}}.
\end{equation}
Recall that the transition measure $\mu_{\Lambda^\a_d}$ of $\Lambda^\a_d$ has moments $M_{\ell}(\Lambda^\a_d)$.
Since $\mu_{\Lambda^\a_d}$ is a random measure, we can define the average $\E^{\al;d}_{u;\vv}[\mu_{\Lambda^\a_d}]$, or simply $\E[\mu_{\Lambda^\a_d}]$; this is a deterministic probability measure with moments $\E^{\al;d}_{u;\vv}\left( M_\ell(\Lambda^\a_d) \right)$.
The limits \eqref{expected_moments} then show the convergence $\E[\mu_{\Lambda^\a_d}]\to\mu_{g;\vv}$ in the sense of moments. But since $\mu_{g;\vv}$ is uniquely determined by its moments, as shown in~\cref{lemma_moments}, then the convergence $\E[\mu_{\Lambda^\a_d}]\to\mu_{g;\vv}$ holds weakly, by virtue of \cref{theo:Billingsley}.

On the other hand, \cref{theo:LLNDS} proves that the profiles $\omega_{\Lambda^\a_d}$ converge to the limit shape $\omega_{\Lambda_\infty}$ uniformly, in probability. Because of the homeomorphism given by the Markov-Krein correspondence, namely \cref{theo:Meliot}, the previous convergence implies $\mu_{\Lambda^\a_d}\to\mu_{\Lambda_\infty}$ weakly, in probability. This implies the weak convergence $\E[\mu_{\Lambda^\a_d}]\to\mu_{\Lambda_\infty}$. But we already knew that $\E[\mu_{\Lambda^\a_d}]\to\mu_{g;\vv}$ weakly, therefore $\mu_{\Lambda_\infty} = \mu_{g;\vv}$. By the Markov-Krein correspondence, we conclude $\omega_{\Lambda_\infty} = \omega_{\Lambda_{g;\vv}}$, as desired.

\smallskip

Next compute $\Cov(\langle \Delta_\infty, x^{k-2}\rangle, \langle \Delta_\infty, x^{l-2}\rangle)=\lim_{d\to\infty}{\Cov^\a_{\chi_{d; g,\vv}}(\langle \Delta^\a_d, x^{k-2}\rangle, \langle \Delta^\a_d, x^{l-2}\rangle)}$.
We follow the same ideas as in \cref{theo:CLTCDM}, but now use \eqref{exp_CMD_d} instead of the formula from \cref{theo:Expectations}. 
The result will differ slightly because of the asymptotic identity: 
$d_{\overline{k+l}}-d_{\overline{k}}\cdot d_{\overline{l}} = 
-k\cdot l\cdot d^{k+l-1}+O(d^{k+l+2})$. 
This implies that the limiting covariance will be the same as the one found in \cref{theo:CLTCDM}, but corrected by the term which comes from the limit 
\[  \lim_{d \to \infty}\frac{d_{\overline{|\SSS_{-1}(\ribbon)|}} \,-\, d_{\overline{|\SSS_{-1}(\Gamma_1)|}}\cdot d_{\overline{|\SSS_{-1}(\Gamma_2)|}}}{d^{|\SSS_{-1}(\ribbon)|}} = 
-|\SSS_{-1}(\Gamma_1)||\SSS_{-1}(\Gamma_2)|  \]
that accompanies the Łukasiewicz ribbon paths $\ribbon$ without pairings, i.e.~with $\mathbf{P}(\ribbon) = \emptyset$:
\begin{multline*}
(k-1)(l-1)\cdot\lim_{d \to \infty} \!\Cov^\a_{\chi_{d; g,\vv}}
	\!\left( \langle \Delta^\a_d, x^{k-2}\rangle, \langle \Delta^\a_d, x^{l-2}\rangle \right) =\\
= \lim_{d \to \infty}{\!d\cdot \Cov^\a_{\chi_{d; g,\vv}}\!\left(S_k(\Lambda^\a_d), S_l(\Lambda^\a_d)\right)}
	= \lim_{d \to \infty}d\cdot \left(\frac{u}{\sqrt{\alpha\cdot d}}\right)^{\!\!k+l}\times\\
	\times\left\{\mathbb{E}^{\alpha;d}_{u;\vv}\left(S_{k}(\Lambda_{(\alpha;u)})
      S_l(\Lambda_{(\al;u)})\right) -\mathbb{E}^{\al;d}_{u;\vv}
	\left(S_k(\Lambda_{(\alpha;u)})\right)\cdot 
	\mathbb{E}^{\alpha;d}_{u;\vv}\left(S_l(\Lambda_{(\alpha;u)})\right)\!\right\} \\
	= \sum_{\ribbon=(\Gamma_1, \Gamma_2)\in\mathbf{L}^{\conn}(k,l)\atop 
					|\mathbf{P}(\ribbon)| = 1}
	\frac{v_{\mu(\ribbon)}}{|\mathbf{S}^0(\Gamma_1)|\cdot |\mathbf{S}^0(\Gamma_2)|}
	\,\prod_{i\geq 1} (i\cdot g)^{|\SSS^i_{\tiny\rightarrow}(\ribbon)|}
  \,i^{\,|\mathbf{P}_i(\ribbon)| }\\
  - \sum_{\ribbon=(\Gamma_1, \Gamma_2)\in\mathbf{L}(k, l)\atop |\mathbf{P}(\ribbon)| = 0}
	\frac{|\SSS_{-1}(\Gamma_1)|\cdot |\SSS_{-1}(\Gamma_2)|\cdot v_{\mu(\ribbon)}}{|\mathbf{S}^0(\Gamma_1)|\cdot |\mathbf{S}^0(\Gamma_2)|}
	\,\prod_{i\geq 1}(i\cdot g)^{|\SSS^i_{\tiny\rightarrow}(\ribbon)|}.
\end{multline*}
The calculation of $\E[\langle \Delta_\infty,\, x^{k-2}\rangle]=\lim_{d\to\infty}{\E^\a_{\chi_{d;g,\vv}}[\langle\Delta^\a_d, x^{k-2}\rangle]}$ follows the same ideas as in \cref{theo:CLTCDM}, and in this case one obtains exactly the RHS of~\cref{MeanGaussian_2}, but with $v_1=1$.
\end{proof}

\subsection{The algebra of $\alpha$-polynomial functions and universality of formulas}

\subsubsection{The algebra $\Poly$ of $\alpha$-polynomial functions}
We will use the following notations:
\begin{gather*}
\gamma := -\sqrt{\alpha}+\frac{1}{\sqrt{\alpha}},\\
X^\a_\ell(\la) := X_\ell(T_{\sqrt{\al},\frac{1}{\sqrt{\al}}}\la),\quad 
\forall\, X = M,B,R,S,\ \ \forall\, \la\in\Y,\ \ \forall\, \ell\in\Z_{\ge 1}.
\end{gather*}
We will treat each $X^\a_\ell$ as a real-valued function on $\Y$. 
Note that $X^\a_1 \equiv 0$ is identically zero, for all $X = M,B,R,S$.
The following polynomial algebra is called the \emph{algebra of $\al$-polynomial functions}:
\begin{equation*}
	\Poly := \QQ[\gamma,R^\a_2,R^\a_3,\dots].
\end{equation*}
Any element $x\in\Poly$ will be called an \emph{$\al$-polynomial function} (or just a 
polynomial function, for short), and it will be regarded as a function on $\Y$.
The product in $\Poly$ is given by (pointwise) multiplication of functions; 
if $x, y\in\Poly$, their product is denoted $x\times y$, or simply $xy$.

Using \eqref{eq:Moment}--\eqref{eq:Stransform}, it is not
difficult to see that
\[  \Poly = \QQ[\gamma,M^\a_2, M^\a_3,\dots]=
  \QQ[\gamma,B^\a_2,B^\a_3,\dots] =
  \QQ[\gamma,S^\a_2,S^\a_3,\dots].\]
In particular, if we let $X^\a_\mu := \prod_{i\ge 1}{X^\a_{\mu_i}}$, for all $X=M,R,B,S$ and $\mu\in\Y$, then we can write $\Poly = \bigoplus_{\ell\in\Z_{\ge 0}}{V_\ell} = \bigoplus_{\ell\in\Z_{\ge 0}}{\tilde{V}_\ell}$, where $V_\ell,\, \tilde{V}_\ell$ are the following subspaces:
\begin{equation}\label{def_Vs}
V_\ell := \Span_{\QQ}\{\gamma^k\cdot X^\a_\mu\colon |\mu|=\ell, k
  \in \Z_{\geq 0}\},\quad \tilde{V}_\ell := \Span_{\QQ}\{\gamma^k\cdot
  X^\a_\mu\colon k+|\mu|=\ell\}.
\end{equation}
Note that because of \cref{prop:relations}, the definitions~\eqref{def_Vs} of $V_\ell,\, \tilde{V}_\ell$ are the same regardless of whether we use $M,R,B$ or $S$ for $X$. We say that a nonzero $x\in V_\ell$ is homogenous of degree $\deg_1(x) = \ell$, and a nonzero $x\in\tilde{V}_\ell$ is homogenous of degree $\deg_2(x) = \ell$.

It will be important for us that $\Poly$ is linearly generated (over $\QQ[\gamma]$) 
by the following so-called \emph{normalized Jack characters}.

\begin{definition}\label{def:jack-character-classical}
	Let $\alpha>0$ and $\mu\in\Y$ be given. 
	For any $\lambda\in\Y$, the value of the \textbf{normalized Jack character} 
	$\Ch_\mu^\a(\lambda)$ is given by:
	\begin{equation}\label{eq:what-is-jack-character-zmiana}
	\Ch_{\mu}^\a(\lambda):=
	\begin{cases}
	|\la|_{\overline{|\mu|}}\cdot \chi^\a_\lambda(\mu), &\text{if }|\la| \ge |\mu| ,\\
	0, & \text{if }|\lambda| < |\mu|,
	\end{cases}
	\end{equation}
	where $\chi^\a_\lambda(\mu)$ is the irreducible Jack character
	\eqref{eq:character-Jack-unnormalized-zmiana} understood with
        the convention~\eqref{eq:extended-character}.
	In particular, for the empty partition $\mu=\emptyset$, the corresponding
	normalized Jack character is the constant function $\Ch_\emptyset^\a\equiv 1$, due to~\eqref{norm_jack}.
\end{definition}

\begin{theorem}[\cite{DolegaFeray2016}]\label{theo:DF16}
We have:
\begin{align*}
\Poly = \Span_{\QQ}&\{\gamma^k\Ch^\a_\mu \mid k\in\Z_{\ge 0},\, \mu\in\Y \},\\
\deg_1\left( \Ch_{\mu}^\a - \prod_{i=1}^{\ell(\mu)}R^\a_{\mu_i+1}\right) &< |\mu|+\ell(\mu),\\
\Ch_{\mu}^\a &\in \bigoplus^{\lfloor\frac{\mu+\ell(\mu)}{2}\rfloor}_{i = 0}\tilde{V}_{|\mu|+\ell(\mu)-2i}.
\end{align*}
\end{theorem}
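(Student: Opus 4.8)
The plan is to reduce the three assertions to one genuinely hard structural input — the behaviour of the top $\deg_1$-homogeneous part of $\Ch^\a_\mu$ — and then to obtain the spanning statement by triangularity and the $\deg_2$-grading statement by a duality argument. Throughout I regard $\Poly = \QQ[\gamma, R^\a_2, R^\a_3,\dots]$ with its two gradings from~\eqref{def_Vs}, so that a monomial $\gamma^k R^\a_\nu$ (with every part of $\nu$ at least $2$) has $\deg_1 = |\nu|$ and $\deg_2 = k+|\nu|$, and these monomials form a $\QQ$-basis of $\Poly$.

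I would first establish the second assertion, namely $\Ch^\a_\mu = \prod_{i=1}^{\ell(\mu)} R^\a_{\mu_i+1} + (\text{terms of }\deg_1 < |\mu|+\ell(\mu))$, together with the membership $\Ch^\a_\mu\in\Poly$ and the degree bound $\deg_2(\Ch^\a_\mu)\le|\mu|+\ell(\mu)$, by induction on $|\mu|+\ell(\mu)$ tracking both gradings through one recursion. The engine is the single-part case $\Ch^\a_{(k)} = R^\a_{k+1} + (\deg_1 < k+1)$, a Jack-deformed leading-term identity of Kerov type: here \cref{theo:Nazarov-Sklyanin} is the natural tool, since it realizes the Boolean cumulants $B_{\ell+2}(\Lambda_{(\alpha;1)})$ — and hence, via \cref{prop:relations} and the explicit rescaling $T_{\sqrt{\al},1/\sqrt{\al}}\la = T_{1/\sqrt{\al},1/\sqrt{\al}}\Lambda_{(\alpha;1)}$ with \eqref{eq:scaling0}, the free cumulants $R^\a_{\ell+2}$ — as eigenvalues of $PL^\ell P^\dagger$ on $J^\a_\lambda$, which one compares against the coefficient-extraction definition of $\Ch^\a_{(k)}$. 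The general case follows from multiplicativity to top degree: the product $\prod_i\Ch^\a_{(\mu_i)}$ differs from $\Ch^\a_\mu$ only by terms of strictly smaller $\deg_1$ (a statement about the structure constants of the $\Ch$-basis), and expanding $\prod_i(R^\a_{\mu_i+1}+\text{lower})$ then exhibits the claimed product as leading term.

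Granting the leading-term identity, the spanning assertion $\Poly = \Span_\QQ\{\gamma^k\Ch^\a_\mu\}$ is a clean triangularity argument. The map $\mu\mapsto\prod_i R^\a_{\mu_i+1}$ is a bijection from $\Y$ onto the monomials $R^\a_\nu$ with all parts $\ge 2$, so the top $\deg_1$-part of $\gamma^k\Ch^\a_\mu$ is $\gamma^k\prod_i R^\a_{\mu_i+1}$, and these top parts exhaust the monomial basis $\{\gamma^k R^\a_\nu\}$ of $\Poly$. To express an arbitrary $\gamma^k R^\a_\nu$, subtract $\gamma^k\Ch^\a_{\tilde\nu}$, where $\tilde\nu$ is obtained from $\nu$ by decreasing every part by $1$, to cancel the top $\deg_1$-part, and recurse on $\deg_1$, which strictly decreases and is bounded below by $0$; the base case $\deg_1=0$ is covered by $\Ch^\a_\emptyset\equiv 1$, so the $\gamma^k\Ch^\a_\emptyset=\gamma^k$ span $V_0=\QQ[\gamma]$. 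Since each $\Ch^\a_\mu$ is a genuine finite element of $\Poly$, the recursion terminates.

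For the $\deg_2$-grading assertion it remains, beyond the degree bound already obtained, to prove the parity constraint that every monomial $\gamma^k R^\a_\nu$ occurring in $\Ch^\a_\mu$ satisfies $k+|\nu|\equiv|\mu|+\ell(\mu)\pmod 2$. I would extract this from the conjugation duality~\eqref{eq:duality}, which gives $\Ch^\a_\mu(\lambda)=(-1)^{|\mu|-\ell(\mu)}\Ch^{(1/\alpha)}_\mu(\lambda')$. Under $\alpha\mapsto 1/\alpha$ one has $\gamma\mapsto-\gamma$, while conjugation $\lambda\mapsto\lambda'$ reflects the rescaled profile across the origin, so that $R^{(1/\alpha)}_\nu(\lambda')=(-1)^{|\nu|}R^\a_\nu(\lambda)$. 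Writing $\Ch^\a_\mu=\sum_{k,\nu}c_{k,\nu}\,\gamma^k R^\a_\nu$ with universal $c_{k,\nu}\in\QQ$ (the membership proved above) and feeding these transformations into the duality yields the relation $c_{k,\nu}=(-1)^{|\mu|-\ell(\mu)+k+|\nu|}c_{k,\nu}$, which forces $c_{k,\nu}=0$ unless $k+|\nu|\equiv|\mu|+\ell(\mu)\pmod 2$, as desired. The main obstacle is precisely the core input of the second paragraph — proving the single-part leading-term identity together with the sharp two-gradation control, i.e.\ the $\deg_1$ top term and the bound $\deg_2\le|\mu|+\ell(\mu)$ — which carries the deep structural content of Jack-character theory; by contrast the spanning and parity statements are comparatively formal consequences.
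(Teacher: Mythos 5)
This statement is not proved in the paper at all: it is imported verbatim from \cite{DolegaFeray2016} (the theorem is labelled with that citation, and the paper's later arguments, e.g.\ \cref{lem:pomocnicz} and \cref{cor:MSTop}, take it as a black box). So there is no internal proof to compare against, and your proposal has to stand on its own. Within it, the two ``formal'' reductions are indeed sound: the triangularity argument for the spanning statement (subtracting $\gamma^k\Ch^\a_{\tilde\nu}$ to kill the top $\deg_1$-part of $\gamma^k R^\a_\nu$ and recursing) works once the leading-term identity and membership $\Ch^\a_\mu\in\Poly$ are available, and the parity argument via the duality \eqref{eq:duality}, using $\gamma\mapsto-\gamma$ under $\alpha\mapsto 1/\alpha$ and $R^{(1/\alpha)}_j(\lambda')=(-1)^jR^\a_j(\lambda)$, correctly forces $c_{k,\nu}=0$ unless $k+|\nu|\equiv|\mu|+\ell(\mu)\pmod 2$ (granted the coefficients $c_{k,\nu}$ are independent of $\alpha$ and that $\gamma,R^\a_2,R^\a_3,\dots$ are algebraically independent as functions of $(\alpha,\lambda)$ --- both need to be said, but are manageable).

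The genuine gap is the ``engine'' itself, which you acknowledge as the main obstacle but do not actually prove: (a) membership $\Ch^\a_\mu\in\Poly$ with $\alpha$-independent rational coefficients (this is Lassalle's polynomiality conjecture, the central theorem of \cite{DolegaFeray2016}), (b) the single-part identity $\Ch^\a_{(k)}=R^\a_{k+1}+(\deg_1<k+1)$ together with the bound $\deg_2\le|\mu|+\ell(\mu)$, and (c) the multiplicativity-to-top-degree claim $\Ch^\a_\mu=\prod_i\Ch^\a_{(\mu_i)}+(\text{lower }\deg_1)$, which you dismiss parenthetically as ``a statement about the structure constants of the $\Ch$-basis'' but which for general $\alpha$ (where no group exists) is itself a deep theorem of Do{\l}\k{e}ga--F\'eray. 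Your proposed route for (a)--(b) via \cref{theo:Nazarov-Sklyanin} does not close: that theorem only says the operator $PL^\ell P^\dagger$ acts diagonally on $J^\a_\lambda$ with Boolean-cumulant eigenvalues, whereas $\Ch^\a_{(k)}(\lambda)$ is a coefficient $\theta^\a_\mu(\lambda)$ in the power-sum expansion of $J^\a_\lambda$; bridging the two requires expanding $PL^\ell P^\dagger$ in normal-ordered form, organizing the result as an exact linear combination of characters (\L ukasiewicz-path combinatorics plus a depoissonization or coefficient-extraction argument), and then inverting a triangular system --- precisely the content of the later work \cite{BenDaliDolega2023}. Note also that the in-paper statement of this flavor, \cref{cor:MSTop}, is \emph{derived from} \cref{theo:DF16} via \cref{lem:pomocnicz}, so appealing to it here would be circular. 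As written, the proposal reduces the theorem to its hardest part and then gestures at a tool that, in the form available in this paper, does not reach it.
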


Note that \cref{theo:DF16} implies that $\Poly = \QQ[\gamma,\Ch^\a_{(1)}, \Ch^\a_{(2)},\dots]$.
We define also a different algebra structure on the set $\Poly$, defined on its basis by
\[ \gamma^k\Ch^\a_\mu \bullet \gamma^l\Ch^\a_\nu :=
\gamma^{k+l}\Ch^\a_{\mu\cdot\nu}, \]
 and extended by linearity. Here, $\mu\cdot\nu$ is the product of
 two partitions $\mu,\nu\in\Y$, as defined in \cref{sec:afp}. We
 denote this algebra by $\Poly^\bullet$.

		Let $\chi_d:\Y_d\to\R$ be a Jack character with associated probability measure 
		$\PP^\a_{\chi_d}$ given by \cref{def:first}; we denote the expectation w.r.t.~this 
		measure by $\E^\a_{\chi_d}$.
      Each polynomial function $x\in\Poly$ can be restricted to the subset $\Y_d\subset\Y$ 
		of Young diagrams of size $d$, therefore $x$ can be treated as 
		a random variable on $\Y_d$, so it makes sense to compute the
      expectation $\E^\a_{\chi_d}(x)$. In particular, from the definition of
      the normalized Jack character, one has
      \begin{equation}\label{eq:IdentityCh}
        \E^\a_{\chi_d}(\Ch^\a_\mu) = d_{\overline{|\mu|}}\cdot\chi_d(\mu),\ \text{ for all }\mu\in\Y.
        \end{equation}
		More generally, for any polynomial functions $x_1,\dots,x_n
      \in \Poly$, we can define the cumulants
      $\kappa^{\chi, \times}_n(x_1,\dots,x_n)$ and
      $\kappa^{\chi, \bullet}_n(x_1,\dots,x_n)$ by the same equation
      \eqref{eq:log-laplace} used previously, but replacing $\chi$ by $\E^\a_\chi$, 
		and the product of partitions by the products $\times$ and $\bullet$, respectively:
\begin{align*}
\kappa^{\chi, \times}_n(x_1,\dots,x_n) &:= 
\left. \frac{\partial^n}{\partial t_1 \cdots \partial t_n}
\log \E^\a_\chi\left( \exp_\times(t_1 x_1+\cdots+t_n x_n) \right) \right|_{t_1=\cdots=t_n=0},\\
\kappa^{\chi, \bullet}_n(x_1,\dots,x_n) &:= 
\left. \frac{\partial^n}{\partial t_1 \cdots \partial t_n}
\log \E^\a_\chi\left( \exp_\bullet(t_1 x_1+\cdots+t_n x_n) \right) 
\right|_{t_1=\cdots=t_n=0},
\end{align*}
where
      \[ \exp_\times(x) := \sum_{n \geq 0}\frac{\overbrace{x\times x
            \cdots \times x}^{n \text{ times}}}{n!},
        \qquad \exp_\bullet(x) := \sum_{n \geq 0}\frac{\overbrace{x\bullet x \bullet
            \cdots \bullet x}^{n \text{ times}}}{n!}.\]

The following equivalent characterization of the AFP will be important.

\begin{theorem}[Theorem~2.3 from \cite{DolegaSniady2019}]\label{theo:AFPCharacterization}
Let $(\chi_d:\Y_d\to\R)_{d\ge 1}$ be any sequence of Jack characters.
The following conditions are equivalent:
		\begin{enumerate}[label=(\emph{\Alph*})]
			\setlength\itemsep{1em}
			\item for each integer $n\geq 1$, and all integers $k_1,\dots,k_n\geq 2$, the limit
			\begin{equation*}
			\lim_{d\to\infty} \kappa^{\chi_d}_n(k_1,\dots,k_n)\ d^{\frac{k_1+\cdots+k_n+n-2}{2}} 
			\end{equation*}
			exists and is finite;
			\item for each integer $n\geq 1$, and all $x_1,\dots,x_n\in\Poly$, the limit
			\begin{equation*}
			\lim_{d\to\infty} \kappa^{\chi_d, \times}_n(x_{1},\dots,x_{n})\ 
			d^{- \frac{\deg_2(x_1)+ \cdots + \deg_2(x_n) - 2(n-1)}{2}} 
			\end{equation*}
			exists and is finite;
			\item for each integer $n\geq 1$, and all $x_1,\dots,x_n\in\Poly^\bullet$, the limit
			\begin{equation*}
			\lim_{d\to\infty} \kappa^{\chi_d, \bullet}_n(x_{1},\dots,x_{n})\ 
			d^{- \frac{\deg_2(x_1)+ \cdots + \deg_2(x_n) - 2(n-1)}{2}} 
			\end{equation*}
			exists and is finite.
	\end{enumerate}
\end{theorem}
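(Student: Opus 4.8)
The plan is to prove the two equivalences (A)$\Leftrightarrow$(C) and (B)$\Leftrightarrow$(C), which together give the statement. The whole argument rests on a single dictionary between the degree filtration of $\Poly$ and powers of $d$: by \cref{theo:DF16} the normalized character $\Ch^\a_\mu$ has top $\deg_2$-degree $|\mu|+\ell(\mu)$, the scalar $\gamma$ has $\deg_2(\gamma)=1$ and grows like $\gamma=\Theta(\sqrt d)$ under \eqref{eq:double-scaling-refined}, and the identity \eqref{eq:IdentityCh}, $\E^\a_{\chi_d}(\Ch^\a_\mu)=d_{\overline{|\mu|}}\,\chi_d(\mu)$, links expectations of characters to the falling factorials $d_{\overline{k}}=d^{k}(1+O(1/d))$. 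The normalizations $d^{-(\sum_i\deg_2(x_i)-2(n-1))/2}$ in (B) and (C) are chosen exactly so that one unit of $\deg_2$-degree is matched by one factor $d^{1/2}$; this is what makes the three scalings compatible.

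First I would prove (A)$\Leftrightarrow$(C). Since $\Poly^\bullet$ is generated under $\bullet$ by $\gamma$ and the single-row characters $\Ch^\a_{(k)}$, and since $\gamma$ is a scalar that pulls out of any cumulant by multilinearity (each factor contributing a power of $\gamma=\Theta(\sqrt d)=d^{\deg_2(\gamma)/2}$), one reduces (C) to the cumulants $\kappa^{\chi_d,\bullet}_n(\Ch^\a_{(k_1)},\dots,\Ch^\a_{(k_n)})$. Here $\Ch^\a_{(k_1)}\bullet\cdots\bullet\Ch^\a_{(k_n)}=\Ch^\a_{(k_1,\dots,k_n)}$, so by \eqref{eq:IdentityCh} the $\bullet$-moment generating function equals the original one $\chi_d\bigl(\exp(\sum_i t_i\,(k_i))\bigr)$ with each monomial $\prod_i t_i^{m_i}$ reweighted by $d_{\overline{N}}$, $N=\sum_i m_i k_i$. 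To leading order $d_{\overline{N}}\approx d^{N}$, so this is essentially the rescaling $t_i\mapsto d^{k_i}t_i$, whence $\kappa^{\chi_d,\bullet}_n(\Ch^\a_{(k_i)})=d^{\sum_i k_i}\,\kappa^{\chi_d}_n(k_1,\dots,k_n)\bigl(1+o(1)\bigr)$, reconciling the exponents in (A) and (C). The cases with some $k_i=1$ are harmless because $\Ch^\a_{(1)}(\lambda)=|\lambda|$ is deterministic on $\Y_d$, so its higher cumulants vanish. The non-multiplicativity of $d_{\overline{\cdot}}$, controlled by $d_{\overline{k+l}}-d_{\overline{k}}d_{\overline{l}}=-kl\,d^{k+l-1}+O(d^{k+l-2})$, produces only corrections of strictly lower order in $d$; expressing $\kappa^{\bullet}$ through the $\kappa^{\chi_d}$ gives a system triangular in degree and in $n$, so an induction shows that the normalized limits in (A) exist and are finite if and only if those in (C) do.

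Next I would prove (B)$\Leftrightarrow$(C). The crux is a purely algebraic comparison of the two products: for all $x,y\in\Poly$ one has $\deg_2(x\times y-x\bullet y)<\deg_2(x)+\deg_2(y)$, i.e.\ the pointwise product $\times$ and the disjoint product $\bullet$ share the same top $\deg_2$-degree part. On the generators this reads $\Ch^\a_\mu\times\Ch^\a_\nu=\Ch^\a_{\mu\cdot\nu}+(\text{strictly lower }\deg_2\text{ terms})$, which is the approximate multiplicativity of normalized Jack characters encoded in the gradation of \cref{theo:DF16}. Granting this, write both families of cumulants in terms of the common generators; because cumulants are built multilinearly from products of moments of several $\Ch^\a_\mu$ and the filtration is respected, every discrepancy between $\kappa^{\chi_d,\times}_n$ and $\kappa^{\chi_d,\bullet}_n$ lowers $\sum_i\deg_2$ by at least one, and hence carries at least one extra factor $d^{-1/2}$ after the normalization. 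Consequently the two normalized sequences differ by $o(1)$, so existence and finiteness of the (B)-limits is equivalent to that of the (C)-limits. Combining the two equivalences yields (A)$\Leftrightarrow$(B)$\Leftrightarrow$(C).

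The main obstacle is the filtration bookkeeping underlying both halves. One must prove not merely that $x\times y$ and $x\bullet y$ agree at top degree, but that taking cumulants — a nonlinear operation mixing products of moments — preserves the $\deg_2$-filtration finely enough that lower-degree corrections become genuinely subleading powers of $d$; simultaneously one must thread the non-multiplicativity of $d_{\overline{\cdot}}$ and the growth $\gamma=\Theta(\sqrt d)$ consistently through the same degree count. Once this dictionary between the $\deg_2$-filtration and powers of $d^{1/2}$ is made rigorous, both equivalences follow by induction on $n$ and on total degree.
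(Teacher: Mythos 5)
First, a point of reference: the paper does not actually prove this statement --- it is imported verbatim as Theorem~2.3 of \cite{DolegaSniady2019} --- so your proposal can only be compared with the route taken there and with the way the present paper \emph{uses} the result. Your skeleton (reduce (C) to the generators $\gamma$ and $\Ch^\a_{(k)}$ via multilinearity and $\Ch^\a_{(k_1)}\bullet\cdots\bullet\Ch^\a_{(k_n)}=\Ch^\a_{(k_1,\dots,k_n)}$; convert to conditional cumulants through $\E^\a_{\chi_d}(\Ch^\a_\mu)=d_{\overline{|\mu|}}\,\chi_d(\mu)$; compare $\times$ with $\bullet$ through top-degree agreement of the two products; induct on $n$ and on degree) is indeed the skeleton of the Dołęga--Śniady argument.

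However, there is a genuine error, and it occurs in the central quantitative claim of \emph{both} halves. You assert that the falling-factorial corrections are ``of strictly lower order in $d$'', and that the $\times$/$\bullet$ discrepancy ``carries at least one extra factor $d^{-1/2}$ after the normalization'', so that ``the two normalized sequences differ by $o(1)$''. Both claims are false: the normalizations in (B)--(C) carry the bonus $+2(n-1)$ calibrated \emph{exactly} to the degree drops, so these corrections are borderline --- they converge to finite, generically nonzero limits rather than vanishing. Concretely, for $n=2$,
\begin{equation*}
\kappa^{\chi_d,\bullet}_2\big(\Ch^\a_{(k)},\Ch^\a_{(l)}\big)
= d_{\overline{k+l}}\,\kappa^{\chi_d}_2(k,l)
+\big(d_{\overline{k+l}}-d_{\overline{k}}\,d_{\overline{l}}\big)\,\chi_d(k)\,\chi_d(l),
\end{equation*}
and under (A) the second term, after dividing by $d^{\frac{k+l}{2}}$ (the normalization of (C), since $\deg_2\Ch^\a_{(k)}=k+1$), tends to $-kl\,v_kv_l\neq 0$: indeed $d_{\overline{k+l}}-d_{\overline{k}}d_{\overline{l}}\sim -kl\,d^{k+l-1}$ while $\chi_d(k)\chi_d(l)\sim v_kv_l\,d^{-\frac{k+l-2}{2}}$, so this ``correction'' is of the \emph{same} order as the leading term. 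Likewise, with $\kappa^{\times}_{\bullet}(x,y):=x\times y-x\bullet y$, the discrepancy $\E^\a_{\chi_d}\kappa^{\times}_{\bullet}(x,y)$ normalized by $d^{-\frac{\deg_2 x+\deg_2 y-2}{2}}$ converges to a nonzero limit. This is visible inside the present paper: the proof of \cref{theo:CovDS} must keep $\lim_{d}d^{-\frac{k+l-2}{2}}\E^\a_{\chi_d}\kappa^{\times}_{\bullet}(S^\a_k,S^\a_l)=P(g,v_2,\dots)$ as a separate, nonvanishing contribution to the covariance, and the covariance in \cref{theo:Depoissonization} correspondingly acquires the extra term weighted by $|\SSS_{-1}(\Gamma_1)|\cdot|\SSS_{-1}(\Gamma_2)|$ coming precisely from the falling factorials. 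If your negligibility claims were true, those terms would be absent and the limiting normalized cumulants under (A), (B), (C) would coincide --- they do not. The equivalence itself is nevertheless true, but for a different reason than the one you give: the corrections do not vanish, they \emph{converge}, with limits expressible through cumulants of strictly lower order $n$ (or lower degree), so existence and finiteness of the normalized limits propagates in both directions by the induction you invoke. As written, your argument proves too much --- it would show that the three families of limits are equal, which is false --- so the negligibility claims must be replaced by convergence-of-corrections statements before the induction can close.
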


\begin{lemma}\label{lem:pomocnicz}
Let $\al=\al(d)$ be a sequence of positive reals such that
\eqref{eq:double-scaling-refined} is satisfied, for some
$g\in\R$. Also, let $\chi_d\colon\Y_d\to\R$ be Jack characters such
that the sequence $(\chi_d)_{d\ge 1}$ fulfills the AFP (enhanced AFP, respectively) and
let $v_2,v_3,\dots,$ (and $v_2',v_3',\dots$ ,respectively) be the associated parameters given by \cref{def:approx-factorization-charactersA}. 
Suppose that $x \in \tilde{V}_\ell \oplus\bigoplus_{i \leq \ell-2}\tilde{V}_i$. 
Then the following conditions hold true.
	\begin{itemize}
		\item There exists a homogenous polynomial 
		$P_x \in \QQ[x_1,\dots,x_{\ell}]$ of degree $\ell$, where
      $\deg(x_i) := i$, such that
\begin{equation*}
\E^\a_{\chi_d}(x) = P_x(g,v_1,\dots,v_{\ell-1})\bigg|_{v_1=1}\cdot d^{\frac{\ell}{2}} 
+ o(d^{\frac{\ell}{2}}), \text{ if }(\chi_d)_{d\ge 1}\,
\text{ fulfills the AFP, }
\end{equation*}
and
\begin{multline*}
\E^\a_{\chi_d}(x) = P_x(g,v_1,\dots,v_{\ell-1})\bigg|_{v_1=1}\!\cdot d^{\frac{\ell}{2}}\\
+ \left( g'\frac{\partial}{\partial g} + \sum_{i\ge 2}{v_i'\frac{\partial}{\partial v_i}} \right)\! P_x(g,v_1,\dots,v_{\ell-1})\bigg|_{v_1=1}\!\cdot d^{\frac{\ell-1}{2}}\\
+ o(d^{\frac{\ell-1}{2}}), \text{ if }(\chi_d)_{d\ge 1}\,\text{ fulfills the enhanced AFP. }
\end{multline*}
   
 \item Expand $P_x$ in monomials:
      \begin{equation}
        \label{eq:DefOfP_x}
        P_x = 
		\sum_{m_1,\dots,m_\ell\in\Z_{\ge 0},\atop m_1+2m_2+\cdots+\ell m_\ell =
                  \ell}a_{m_1,\dots,m_\ell}\,x_1^{m_1}\cdots x_\ell^{m_\ell}.
                \end{equation}
       Then \[ x - \!\!\sum_{m_1,\dots,m_\ell\in\Z_{\ge 0}\atop
		m_1+2m_2+\cdots+\ell m_\ell = \ell}
		a_{m_1,\dots,m_\ell}\cdot
                (-\gamma)^{m_1}\cdot\Ch^\a_{(1^{m_2},\dots,(\ell-1)^{m_\ell})}
                \in \bigoplus_{i \leq \ell-2}\tilde{V}_i.\]
	\end{itemize}
\end{lemma}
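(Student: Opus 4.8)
The plan is to isolate the top-degree ($\deg_2=\ell$) component of $x$ and show that both bullets are governed entirely by it, the lower-degree part being asymptotically negligible. I write $x=x_{\mathrm{top}}+x_{\mathrm{low}}$ with $x_{\mathrm{top}}\in\tilde V_\ell$ and $x_{\mathrm{low}}\in\bigoplus_{i\le\ell-2}\tilde V_i$. Two asymptotic inputs drive everything. First, \eqref{eq:double-scaling-refined} gives $\gamma=-\sqrt\al+\tfrac1{\sqrt\al}=-g\sqrt d-g'+o(1)$, so $(-\gamma)^{m}=g^m d^{m/2}+m\,g^{m-1}g'\,d^{(m-1)/2}+o(d^{(m-1)/2})$. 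Second, I will establish the following asymptotics for an arbitrary partition $\nu$: using \eqref{eq:IdentityCh}, the convention \eqref{eq:extended-character} (which lets me drop the parts of $\nu$ equal to $1$, since $\chi_d(\nu)=\chi_d(\nu^{\ge2})$ and $v_1=1$), and the moment–cumulant expansion of $\chi_d(\nu^{\ge2})$, the fully disconnected term $\prod_{\nu_i\ge2}\chi_d(\nu_i)$ supplies the two leading orders while every block of size $\ge2$ costs a full extra power of $d$. Concretely, under the AFP one gets $\E^\a_{\chi_d}(\Ch^\a_\nu)=v_\nu\,d^{(|\nu|+\ell(\nu))/2}+o(d^{(|\nu|+\ell(\nu))/2})$, and under the enhanced AFP the next term is $\bigl(\sum_{k\ge2}v_k'\,\partial_{v_k}v_\nu\bigr)d^{(|\nu|+\ell(\nu)-1)/2}$. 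The crucial quantitative point, which I verify by this power count, is that the second cumulants $v_{(k|l)}$ enter only at order $d^{(|\nu|+\ell(\nu))/2-1}$ and hence do not contribute to the claimed $d^{(\ell-1)/2}$ term.

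Next is the algebraic step producing $P_x$ and the second bullet. By \cref{theo:DF16} the family $\{\gamma^{m_1}\Ch^\a_\nu\}$ spans $\Poly$, and since $\Poly=\QQ[\gamma,R^\a_2,R^\a_3,\dots]$ is graded by $\deg_2$ (with $\deg_2\gamma=1$, $\deg_2 R^\a_k=k$), the space $\tilde V_\ell$ has the monomial basis $\{\gamma^k(R^\a_2)^{b_2}\cdots:\ k+\sum_{j\ge2}jb_j=\ell\}$. I will show that the top-$\deg_2$ parts of the elements $\gamma^{m_1}\Ch^\a_\nu$ with $m_1+|\nu|+\ell(\nu)=\ell$ form a basis of $\tilde V_\ell$: by \cref{theo:DF16} the top-$\deg_1$ part of $\Ch^\a_\nu$ is $\prod_i R^\a_{\nu_i+1}$, hence the top-$\deg_2$ part of $\gamma^{m_1}\Ch^\a_\nu$ equals $\gamma^{m_1}\prod_i R^\a_{\nu_i+1}$ plus terms carrying strictly more factors of $\gamma$. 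Ordering the monomial basis by $\gamma$-exponent makes the transition matrix unitriangular, and since both index sets are indexed by partitions of $\ell$ this is indeed a basis. Writing $x_{\mathrm{top}}=\sum_{m_1+|\nu|+\ell(\nu)=\ell}c_{m_1,\nu}\,(\text{top part of }\gamma^{m_1}\Ch^\a_\nu)$ and setting $a_{\vec m}:=(-1)^{m_1}c_{m_1,\nu}$ with $\nu=(1^{m_2},\dots,(\ell-1)^{m_\ell})$, the difference $x-\sum_{\vec m}a_{\vec m}(-\gamma)^{m_1}\Ch^\a_{(1^{m_2},\dots,(\ell-1)^{m_\ell})}$ collects only the lower, same-parity degrees from the expansions of the $\Ch^\a_\nu$ together with $x_{\mathrm{low}}$, so it lies in $\bigoplus_{i\le\ell-2}\tilde V_i$; this is the second bullet. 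I then define $P_x:=\sum_{\vec m}a_{\vec m}\,x_1^{m_1}\cdots x_\ell^{m_\ell}$ as in \eqref{eq:DefOfP_x}, homogeneous of degree $\ell$ for $\deg x_i=i$.

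Finally I deduce the first bullet by taking expectations of this decomposition. Since $x_{\mathrm{low}}$ (expanded likewise in the $\gamma$–$\Ch$ family) and the lower parts of each $\Ch^\a_\nu$ have $\deg_2\le\ell-2$, they contribute only $O(d^{(\ell-2)/2})=o(d^{(\ell-1)/2})$, so $\E^\a_{\chi_d}(x)=\sum_{\vec m}a_{\vec m}(-1)^{m_1}\gamma^{m_1}\E^\a_{\chi_d}(\Ch^\a_{\nu(\vec m)})+o(d^{(\ell-1)/2})$. Substituting the expansions of $(-\gamma)^{m_1}$ and of $\E^\a_{\chi_d}(\Ch^\a_\nu)$, and using $v_{\nu(\vec m)}=v_1^{m_2}\cdots v_{\ell-1}^{m_\ell}$, the leading coefficient is $\sum_{\vec m}a_{\vec m}\,g^{m_1}v_1^{m_2}\cdots v_{\ell-1}^{m_\ell}=P_x(g,v_1,\dots,v_{\ell-1})$, giving the AFP statement after setting $v_1=1$. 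Under the enhanced AFP the $d^{(\ell-1)/2}$ coefficient splits into the part coming from the $g'$-correction of $\gamma^{m_1}$, which reassembles as $g'\partial_g P_x$, and the part coming from the $\sum_k v_k'\partial_{v_k}v_\nu$-correction of $\E^\a_{\chi_d}(\Ch^\a_\nu)$, which reassembles as $\sum_{i\ge2}v_i'\partial_{v_i}P_x$ (here I use that $\partial_g$ reaches only the variable $x_1$ and $\partial_{v_i}$ only $x_{i+1}$); together this is exactly $(g'\partial_g+\sum_{i\ge2}v_i'\partial_{v_i})P_x\big|_{v_1=1}$.

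The main obstacle is the interplay of the two gradings in the algebraic step: because $\Ch^\a_\nu$ is not $\deg_2$-homogeneous and its top-$\deg_2$ part differs from $\prod_i R^\a_{\nu_i+1}$ by genuine $\gamma$-corrections, one must order the monomial basis by $\gamma$-exponent to obtain unitriangularity and hence the basis property. The second delicate point is purely quantitative: confirming, by the block power-count of the character asymptotics, that the covariance parameters $v_{(k|l)}$ sit one full power of $d$ below the $d^{(\ell-1)/2}$ term, so that only $g'$ and the $v_i'$ survive in the enhanced-AFP formula.
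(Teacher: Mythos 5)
Your proposal is correct and follows essentially the same route as the paper's proof: both derive the two-term asymptotics of $\E^\a_{\chi_d}\big((-\gamma)^{m_1}\Ch^\a_\nu\big)$ from the factorization of character cumulants under the (enhanced) AFP — with every block of size $\ge 2$, and hence the parameters $v_{(k|l)}$, suppressed by a full power of $d$ — and both then invoke \cref{theo:DF16} to expand $x$, modulo $\bigoplus_{i\le\ell-2}\tilde{V}_i$, in the elements $(-\gamma)^{m_1}\Ch^\a_{(1^{m_2},\dots,(\ell-1)^{m_\ell})}$ whose top-degree coefficients define $P_x$. The only differences are presentational: you make explicit the unitriangularity argument showing the top-$\deg_2$ parts form a basis of $\tilde{V}_\ell$ (the paper simply cites \cref{theo:DF16}), and you bound the non-singleton blocks directly from \eqref{eq:aprox-fact-property} rather than through the $\bullet$-cumulant characterization of \cref{theo:AFPCharacterization}.
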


\begin{proof}
\emph{Step 1.} For any $\mu\in\Y$, we claim that 
\begin{equation}\label{claim_lim_chars}
\E^\a_{\chi_d}(\Ch^\a_\mu) - 
\prod_{i=1}^{\ell(\mu)}{\E^\a_{\chi_d}(\Ch^\a_{(\mu_i)})} = 
O\left(d^{\frac{|\mu|+\ell(\mu)-2}{2}}\right).
\end{equation}

Let $r := \ell(\mu)$. The moment-cumulant formula~\eqref{eq:cumu}
shows the ``$\bullet$-moment'' $\E^\a_{\chi_d}(\Ch^\a_\mu) = 
\E^\a_{\chi_d}(\Ch^\a_{(\mu_1)}\bullet \cdots \bullet \Ch^\a_{(\mu_r)})$
is given by the formula
\[  \E^\a_{\chi_d}(\Ch^\a_\mu) = \sum_{\pi\in\SP([1 \,..\, r])}
{ \prod_{B\in\pi} \kappa^{\chi_d, \bullet}_{|B|}\left( \left\{ \Ch^\a_{(\mu_i)}:i\in B \right\} \right) },  \]
where the sum is over set-partitions of $[1 \,..\, r]$. 
The term associated to $\pi = \{ \{1\}, \cdots, \{r\} \}$ is 
$\prod_{i=1}^r{\kappa_1^{\chi_d, \bullet}(\Ch^\a_{(\mu_i)})}
= \prod_{i=1}^r{\E^\a_{\chi_d}(\Ch^\a_{(\mu_i)})}$.
Then our claim \eqref{claim_lim_chars} will be proved if we verify that 
\begin{equation}\label{limit_chis}
\prod_{B\in\pi}
{ \kappa^{\chi_d, \bullet}_{|B|}\left( \left\{ \Ch^\a_{(\mu_i)}:i\in B \right\} \right)} = 
O\left(d^{\frac{|\mu|+\ell(\mu)-2}{2}}\right),
\end{equation}
for all $\pi\in\SP([r])$, $\pi\ne\{ \{1\}, \cdots, \{r\} \}$.
\cref{theo:DF16} shows that $\deg_2(\Ch^\a_{(\mu_i)}) = \mu_i + 1$, 
therefore \cref{theo:AFPCharacterization} implies that 
$\kappa^{\chi_d, \bullet}_{|B|}\left( \{ \Ch^\a_{(\mu_i)}:i\in B \} \right)
\cdot d^{-\frac{\sum_{i\in B}{(\mu_i+1)} - 2(|B|-1) }{2}} = O(1)$, as $d\to\infty$.
The reader can check that this estimate proves the desired \eqref{limit_chis}.

\smallskip
\emph{Step 2.} Set $v_1:=1$. For any $\mu=(1^{p_1}, 2^{p_2}, \cdots)\in\Y$, we claim 
\begin{align}
\E^\a_{\chi_d}(\Ch^\a_\mu) =& 
\,(v_1^{p_1}\cdot v_2^{p_2}\cdots)\cdot d^{\frac{|\mu|+\ell(\mu)}{2}} +
o(d^{\frac{|\mu|+\ell(\mu)}{2}}), \text{ if } (\chi_d)_{d\ge 1}\,
  \text{ fulfills the AFP, }\nonumber\\
\E^\a_{\chi_d}(\Ch^\a_\mu) =& 
\,(v_1^{p_1}\cdot v_2^{p_2}\cdots)\cdot d^{\frac{|\mu|+\ell(\mu)}{2}} +
\left(\sum_{i\geq 2}{v_i'\frac{\partial}{\partial v_i}}\right)
(v_1^{p_1}\cdot v_2^{p_2}\cdots)\cdot d^{\frac{|\mu|+\ell(\mu)-1}{2}}\label{claim_lim_chars_2}\\
&+ o(d^{\frac{|\mu|+\ell(\mu)-1}{2}}), \text{ if } (\chi_d)_{d\ge 1}\,
\text{ fulfills the enhanced AFP.}  \nonumber
\end{align}

We prove \eqref{claim_lim_chars_2} only when $\chi_d$ fulfills
the enhanced AFP, since the case of AFP is a simplified version of
this proof.

From \eqref{eq:IdentityCh} and the enhanced AFP, we have that for any integer $k\ge 2$:
\[  \E^\a_{\chi_d}(\Ch^\a_{(k)})= 
d_{\overline{k}}\cdot \chi_d(k)= 
\chi_d(k)\cdot d^{\frac{k-1}{2}}\cdot
(d^{\frac{k+1}{2}}+O(d^{\frac{k-1}{2}})) = v_k \cdot d^{\frac{k+1}{2}}
+ (v'_k +o(1))\cdot d^{\frac{k}{2}}.\]
For $k=1$, we have $\E^\a_{\chi_d}(\Ch^\a_{(1)})\cdot d^{-1} = \chi_d(1) = 1 = v_1$. 
Consequently,
\begin{gather}
\prod_{i=1}^{\ell(\mu)}
\!\left(  \E^\a_{\chi_d}(\Ch^\a_{(\mu_i)}) \right) = 
\prod_{i=1}^{\ell(\mu)}
\!\left(  v_{\mu_i} \cdot d^{\frac{\mu_i+1}{2}}
+ (v'_{\mu_i} +o(1))\cdot d^{\frac{\mu_i}{2}}\right)\nonumber\\
= (v_1^{p_1}\cdot v_2^{p_2}\cdots)\cdot d^{\frac{|\mu|+\ell(\mu)}{2}} 
+ \left(\sum_{i\geq 2}{v_i'\frac{\partial}{\partial v_i}}\right)\!
(v_1^{p_1}\cdot v_2^{p_2}\cdots)\cdot d^{\frac{|\mu|+\ell(\mu)-1}{2}}
  + o(d^{\frac{|\mu|+\ell(\mu)-1}{2}}). \label{claim_lim_chars_3}
\end{gather}
The desired \eqref{claim_lim_chars_2} then follows from 
\eqref{claim_lim_chars_3} and the conclusion \eqref{claim_lim_chars} from Step 1.

\smallskip
\emph{Step 3.} Set $v_1:=1$. For any $p\in\Z_{\ge 0}$ and
$\mu=(1^{p_1}, 2^{p_2}, \cdots)\in\Y$, we claim that
\begin{align}
\E^\a_{\chi_d}((-\ga)^p\cdot\Ch^\a_\mu) &= 
g^p (v_1^{p_1}\cdot v_2^{p_2}\cdots)\!\cdot\! d^{\frac{p+|\mu|+\ell(\mu)}{2}} \! +
o(d^{\frac{p+|\mu|+\ell(\mu)}{2}}), \text{ if } (\chi_d)_{d\ge 1}
  \text{ fulfills AFP },\nonumber\\
\E^\a_{\chi_d}((-\ga)^p\cdot\Ch^\a_\mu) &= 
g^p (v_1^{p_1}\cdot v_2^{p_2}\cdots)\!\cdot\! d^{\frac{p+|\mu|+\ell(\mu)}{2}} \! +
\left( g'\frac{\partial}{\partial g} + \sum_{i\ge 2}{v_i'\frac{\partial}{\partial v_i}} \right)
                             g^p (v_1^{p_1}\cdot v_2^{p_2}\cdots)\nonumber\\
  \cdot d^{\frac{p+|\mu|+\ell(\mu)-1}{2}}&+ o(d^{\frac{p+|\mu|+\ell(\mu)-1}{2}}), \text{ if } (\chi_d)_{d\ge 1}\text{ fulfills enhanced AFP. }\label{claim_lim_chars_4}
\end{align}
As before, we focus on the case of enhanced AFP. Then this follows from \eqref{claim_lim_chars_2} in Step 2, and the fact that 
$\frac{-\ga}{\sqrt{d}} = \frac{\sqrt{\al} - \frac{1}{\sqrt{\al}}}{\sqrt{d}} = g+g'\cdot d^{-\frac{1}{2}}+o(d^{-\frac{1}{2}})$, as $d\to\infty$.

\smallskip
\emph{Step 4.} Recall that $\{\gamma^p\Ch^\a_\mu \mid p\in\Z_{\ge 0},\, \mu\in\Y\}$ 
is a $\QQ$-basis of $\Poly$. Since $x \in \tilde{V}_\ell \oplus\bigoplus_{i \leq \ell-2}\tilde{V}_i$, then \cref{theo:DF16} implies
that there exist $a_{m_1,\dots,m_\ell}\in\QQ$ such that
\[ x - \sum_{m_1+2m_2+\cdots+\ell m_\ell =\ell}
a_{m_1,\dots,m_\ell}\cdot
(-\gamma)^{m_1}\cdot\Ch^\a_{(1^{m_2},\dots,(\ell-1)^{m_\ell})}
\in \bigoplus_{i \leq \ell-2}\tilde{V}_i.\]

Note that $\deg_2((-\ga)^p\cdot\Ch^\a_\mu) = p+|\mu|+\ell(\mu)$, 
therefore \eqref{claim_lim_chars_4} implies that for any $y\in \bigoplus_{i \leq \ell-2}\tilde{V}_i$ one 
has $\E^\a_{\chi_d}(y) = o(d^{\frac{\ell-1}{2}})$, and moreover:
\begin{align*}
        \E^\a_{\chi_d}(x)
      = P_x(g,v_1,\dots,v_{\ell-1})\cdot
        d^{\frac{\ell}{2}} &+o(d^{\frac{\ell}{2}}), \text{ if } (\chi_d)_{d\ge 1}\,
        \text{ fulfills the AFP,}\\ 
        \E^\a_{\chi_d}(x)
      = P_x(g,v_1,\dots,v_{\ell-1})\cdot
        d^{\frac{\ell}{2}}
&+ \left( g'\frac{\partial}{\partial g} + \sum_{i\geq 2}{v_i'\frac{\partial}{\partial v_i}} \right) 
P_x(g,v_1,\dots,v_{\ell-1})\cdot d^{\frac{\ell-1}{2}}\\        &+o(d^{\frac{\ell-1}{2}}), \text{ if } (\chi_d)_{d\ge 1}\,
        \text{ fulfills the enhanced AFP,}
\end{align*}
where $P_x$ is defined by \eqref{eq:DefOfP_x}. This finishes the proof.
\end{proof}

\begin{corollary}\label{lem:PolynomnialityOfMoments}
Let $\al=\al(d)$ be a sequence of positive reals such that \eqref{eq:double-scaling-refined} is satisfied, for some $g\in\R$. Also, let $\chi_d\colon\Y_d\to\R$ be Jack characters such that the sequence $(\chi_d)_{d\ge 1}$ fulfills the AFP and let $v_2,v_3,\dots$ be the associated parameters given by \cref{def:approx-factorization-charactersA}. 
Let $\omega_{\Lambda_\infty}$ be the corresponding limit shape given by \cref{theo:LLNDS}, and let $\mu_{\Lambda_\infty}$ be its transition measure.

Then the moment $\int_\R{x^\ell \mu_{\Lambda_\infty}(dx)}$ is a 
polynomial in $g,v_2,\dots,v_{\ell-1}$.
\end{corollary}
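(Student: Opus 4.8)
The plan is to extract the polynomiality from the algebraic machinery already set up in \cref{lem:pomocnicz}, rather than from the combinatorial formula \eqref{mu_moments:DS}. The key observation is that the $\ell$-th moment $M_\ell(\lambda)$ of the transition measure $\mu_\lambda$ is, up to the shape-rescaling, a polynomial function lying in the filtered piece $\tilde V_\ell \oplus \bigoplus_{i\le \ell-2}\tilde V_i$ of $\Poly$. First I would recall the scaling relation between $X^\a_\ell$ and $X_\ell(\Lambda^\a_d)$: since $\Lambda^\a_d = T_{\sqrt{\al/d},\sqrt{1/(\al d)}}\la = T_{1/\sqrt d,1/\sqrt d}(T_{\sqrt\al,1/\sqrt\al}\la)$, the scaling property \eqref{eq:scaling0} gives $M_\ell(\Lambda^\a_d) = d^{-\ell/2}\,M^\a_\ell(\la)$, so that $\E^\a_{\chi_d}[M_\ell(\Lambda^\a_d)] = d^{-\ell/2}\,\E^\a_{\chi_d}(M^\a_\ell)$.

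Next I would check that $M^\a_\ell \in \tilde V_\ell \oplus \bigoplus_{i\le \ell-2}\tilde V_i$. By \eqref{eq:Moment-Boolean}, $M^\a_\ell$ is a sum of products $B^\a_{k_1}\cdots B^\a_{k_n}$ with $k_1+\cdots+k_n=\ell$; recalling that $B^\a_1\equiv 0$, each such product is homogeneous of degree $\ell$ in the $\deg_1$-grading and lives in $\bigoplus_{i\le \ell}\tilde V_i$ with top piece in $\tilde V_\ell$ and remaining pieces dropping by even amounts (this is exactly the filtration description already used, compatible with \cref{theo:DF16}). In fact the parity is what matters: all contributions have $\deg_2 \equiv \ell \pmod 2$, so $M^\a_\ell \in \tilde V_\ell \oplus \bigoplus_{i\le \ell-2}\tilde V_i$, which is precisely the hypothesis of \cref{lem:pomocnicz}. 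Applying the AFP case of that lemma yields
\[
\E^\a_{\chi_d}(M^\a_\ell) = P_{M^\a_\ell}(g,v_1,\dots,v_{\ell-1})\big|_{v_1=1}\cdot d^{\ell/2} + o(d^{\ell/2}),
\]
where $P_{M^\a_\ell}$ is a homogeneous polynomial of degree $\ell$. Dividing by $d^{\ell/2}$ and taking $d\to\infty$ gives
\[
\lim_{d\to\infty}\E^\a_{\chi_d}[M_\ell(\Lambda^\a_d)] = P_{M^\a_\ell}(g,v_1,\dots,v_{\ell-1})\big|_{v_1=1},
\]
a polynomial in $g,v_2,\dots,v_{\ell-1}$.

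It then remains to identify this limit with $\int_\R x^\ell\,\mu_{\Lambda_\infty}(dx)$. For this I would argue that $\E[\mu_{\Lambda^\a_d}] \to \mu_{\Lambda_\infty}$ in the sense of moments, exactly as in the proof of \cref{theo:Depoissonization}: the LLN (\cref{theo:LLNDS}) gives uniform convergence $\omega_{\Lambda^\a_d}\to\omega_{\Lambda_\infty}$ in probability, which by the Markov--Krein homeomorphism (\cref{theo:Meliot}) yields $\mu_{\Lambda^\a_d}\to\mu_{\Lambda_\infty}$ weakly in probability, hence $\E[\mu_{\Lambda^\a_d}]\to\mu_{\Lambda_\infty}$ weakly; combined with the uniform moment bounds from the lemma, the $\ell$-th moments converge. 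The main subtlety, and the one step I would be most careful about, is the interchange of limit and moment: weak convergence alone does not transmit moments, so I would invoke the uniform boundedness of all moments of $\E[\mu_{\Lambda^\a_d}]$ (which follows from the leading-order estimate in \cref{lem:pomocnicz} applied to each $M^\a_{2\ell}$) to upgrade weak convergence to convergence of every moment. This pins down $\int_\R x^\ell\,\mu_{\Lambda_\infty}(dx) = P_{M^\a_\ell}(g,1,v_2,\dots,v_{\ell-1})$, which is manifestly a polynomial in $g,v_2,\dots,v_{\ell-1}$, completing the proof.
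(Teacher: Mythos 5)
Your proposal is correct and takes essentially the same route as the paper: the scaling identity $M_\ell(\Lambda^\a_d)=d^{-\ell/2}M^\a_\ell$, an application of \cref{lem:pomocnicz} to $x=M^\a_\ell$ (where the membership $M^\a_\ell\in\tilde{V}_\ell$ is immediate from \eqref{def_Vs} with $k=0$, $\mu=(\ell)$, so your Boolean-cumulant detour is not needed), and the identification of $\lim_{d\to\infty}\E^\a_{\chi_d}[M_\ell(\Lambda^\a_d)]$ with $\int_\R x^\ell\,\mu_{\Lambda_\infty}(dx)$ via weak convergence of the averaged transition measures plus uniform integrability. The only divergence is the source of the uniform moment bounds: you derive them internally from \cref{lem:pomocnicz} applied to $M^\a_{2\ell}$, which is valid and slightly more self-contained, whereas the paper instead cites the finiteness of the limits of $\E^\a_{\chi_d}[S_{k_1}(\Lambda^\a_d)\cdots S_{k_n}(\Lambda^\a_d)]$ from \cite[(5.5)]{DolegaSniady2019} combined with \cref{prop:relations}.
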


\begin{proof}
Let $\mu_{\Lambda^\a_d}$ be the transition measure of $\Lambda^\a_d=T_{\sqrt{\frac{\al}{d}},\,\sqrt{\frac{1}{\al d}}}\la$. If $\la$ is $\mathbb{P}_{\chi_d}$-distributed, then $\mu_{\Lambda^\a_d}$ is a random probability measure, but the expectation $\mathbb{E}_{\chi_d}[\mu_{\Lambda^\a_d}]$ is a deterministic probability measure with moments $\mathbb{E}_{\chi_d}[M_\ell(\Lambda^\a_d)]$.
We shall need two facts:

$\bullet$ The convergence $\mathbb{E}_{\chi_d}[\mu_{\Lambda^\a_d}]\to\mu_{\Lambda_\infty}$ holds in the sense of moments, as $d\to\infty$.

\noindent In the proof of \cref{theo:Depoissonization}, we argued that $\mathbb{E}_{\chi_d}[\mu_{\Lambda^\a_d}]\to\mu_{\Lambda_\infty}$ weakly. In general, weak convergence does not imply convergence of moments. However, this is true if the sequences of moments $\{\mathbb{E}_{\chi_d}[M_\ell(\Lambda^\a_d)]\}_d$ are uniformly bounded (since this would imply uniform integrability).
In \cite[(5.5)]{DolegaSniady2019}, it is shown that the all limits of the form $\lim_{d\to\infty}{\mathbb{E}_{\chi_d}[S_{k_1}(\Lambda^\a_d)\cdots S_{k_n}(\Lambda^\a_d)]}$ exist and are finite.
Then by \cref{prop:relations}, all limits $\lim_{d\to\infty}{\mathbb{E}_{\chi_d}[M_\ell(\Lambda^\a_d)]}$ also exist and are finite; this implies the desired boundedness.

$\bullet$ $M_\ell(\Lambda^\a_d) 
= M_\ell(T_{\sqrt{\frac{\al}{d}},\,\sqrt{\frac{1}{\al d}}}\la) 
= d^{-\frac{\ell}{2}}\cdot M_\ell( T_{\sqrt{\al},\,\frac{1}{\sqrt{\al}}}\la) 
= d^{-\frac{\ell}{2}}\cdot M^\a_\ell(\la)$.

\noindent This is a result of the scaling property \eqref{eq:scaling0}.

\smallskip

Putting these facts together gives:
\begin{equation}\label{eq:PomocCor5.10}
  \int_\R{x^\ell \mu_{\Lambda_\infty}(dx)} = 
\lim_{d\to\infty}{\E^\a_{\chi_d} \left[ M_\ell(\Lambda^\a_d) \right]} =
\lim_{d\to\infty}{d^{-\frac{\ell}{2}}\cdot
  \E^\a_{\chi_d}\left[M^\a_\ell\right]},
\end{equation}
for any $\ell\in\Z_{\ge 1}$. 
The proof is then finished by \cref{lem:pomocnicz} applied to $x=M^\a_\ell$.
\end{proof}

\begin{lemma}\label{equal_polys}
Let $\ell\in\Z_{\geq 1}$, and $P, Q\in\R[x_1, \dots, x_\ell]$ be polynomials such that
\begin{equation}\label{P_equal_Q}
P(u_1, \dots, u_\ell) = Q(u_1, \dots, u_\ell),
\end{equation}
for all $(u_1, \dots, u_\ell)$ belonging to an open subset $U\subseteq\R^{\ell}$. Then $P = Q$.

In particular, if \eqref{P_equal_Q} holds whenever $u_1 = g,\, u_k = \left(a_1^k+\dots +a_{\ell-1}^k\right),\ k=2, \dots, \ell$, 
for all $a_1 > \dots > a_{\ell-1} > 0$, $\sum_{i=1}^{\ell-1}a_i
< 1$, and $g \in W$, for some open set $W\subset\R$, then $P=Q$.
\end{lemma}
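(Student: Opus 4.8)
The plan is to prove the two assertions in order: first the general fact that a polynomial vanishing on a nonempty open subset of $\R^\ell$ is identically zero, and then to deduce the specific claim by exhibiting an open subset of $\R^\ell$ inside the prescribed locus, so that the general fact applies. Set $R := P - Q$; it suffices in every case to show $R \equiv 0$.

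For the first assertion I would argue by induction on $\ell$, knowing that $R$ vanishes on a nonempty open set $U$, hence on some box $\prod_{i=1}^{\ell}(c_i - \delta, c_i + \delta)$. The base case $\ell = 1$ is the classical fact that a one-variable polynomial with infinitely many roots vanishes. For the inductive step I would write $R = \sum_{j} R_j(x_1, \dots, x_{\ell-1})\, x_\ell^{\,j}$; for each fixed $(x_1,\dots,x_{\ell-1})$ in the corresponding smaller box, $R$ vanishes for all $x_\ell$ in an interval, so each coefficient $R_j$ vanishes on a box in $\R^{\ell-1}$, and by the inductive hypothesis $R_j \equiv 0$, whence $R \equiv 0$.

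For the second assertion, consider the map $\Phi\colon a = (a_1,\dots,a_{\ell-1}) \mapsto (p_2(a),\dots,p_\ell(a))$, where $p_k(a) := \sum_{i=1}^{\ell-1} a_i^{\,k}$, defined on the open set $D := \{\, a_1 > a_2 > \dots > a_{\ell-1} > 0,\ \sum_i a_i < 1 \,\} \subset \R^{\ell-1}$. The key computation is that its Jacobian matrix $J = (\partial p_k/\partial a_j) = (k\, a_j^{\,k-1})_{2 \le k \le \ell,\ 1 \le j \le \ell-1}$ factors: pulling the factor $k$ out of each row and $a_j$ out of each column, one finds $\det J = \bigl(\prod_{k=2}^{\ell} k\bigr)\bigl(\prod_{j=1}^{\ell-1} a_j\bigr)\, \det\bigl(a_j^{\,k-2}\bigr)$, and the remaining matrix $(a_j^{\,k-2})$ with exponents $0,1,\dots,\ell-2$ is Vandermonde, with determinant $\prod_{i<j}(a_j - a_i)$ up to sign. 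Since points of $D$ have distinct positive coordinates, $\det J \ne 0$ throughout $D$.

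By the inverse function theorem, $\Phi$ is then a local diffeomorphism at every point of $D$, so $\Phi(D)$ contains a nonempty open subset $V \subseteq \R^{\ell-1}$. Because the hypothesis gives $P = Q$ on all points $(g, u_2, \dots, u_\ell)$ with $g \in W$ and $(u_2,\dots,u_\ell) \in \Phi(D)$, the polynomials agree on $W \times V$, which is a nonempty open subset of $\R^\ell$, and the first assertion yields $P = Q$. The hard part is precisely this second step: everything hinges on the image $\Phi(D)$ having nonempty interior, which in turn hinges on the nonvanishing of the Vandermonde-type Jacobian, and that fails without the assumption that the $a_i$ be \emph{distinct} — this is exactly why the statement restricts to strictly decreasing tuples $a_1 > \dots > a_{\ell-1}$.
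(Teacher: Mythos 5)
Your proof is correct and takes essentially the same approach as the paper: both reduce the second claim to the first via the power-sum map, whose Jacobian is a Vandermonde-type determinant that is nonzero on tuples of distinct positive $a_i$'s, so the image of the admissible region contains a nonempty open set. The only differences are cosmetic --- you handle $g$ via the product $W\times\Phi(D)$ whereas the paper absorbs $g$ as an extra coordinate of the map, and you spell out the induction behind the well-known vanishing fact and the explicit Vandermonde factorization, both of which the paper leaves implicit.
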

\begin{proof}
The first statement claims that if the $\ell$-variate polynomial $P - Q$ vanishes on an 
open subset of $\R^\ell$, then it must be the zero polynomial: this is a well-known fact. 
For the second statement, note that the map $\R^\ell\to\R^\ell$ given by 
\begin{equation}\label{powers_map}
(g, a_1, a_2, \dots, a_{\ell-1}) \mapsto \left(g,\sum_{i=1}^{\ell-1}{a_i^2},\ \sum_{i=1}^{\ell-1}{a_i^3}, \cdots,\ \sum_{i=1}^{\ell-1}{a_i^\ell} \right),
\end{equation}
has nonzero Jacobian whenever all $a_i$'s are distinct and nonzero, in particular,
for all points in the open set $V\subseteq\R^{\ell}$ of points defined by the inequalities $\ell^{-1}>a_1>\dots>a_{\ell-1}>0$.
Note that this set $V$ is contained in the set described in the second statement of the lemma (the condition $\ell^{-1}>a_1$ ensures that $\sum_{i=1}^{\ell-1}a_i<1$),
and it is mapped into an open subset $U\subseteq\R^\ell$
by the map \eqref{powers_map}. As a result, the first statement can be applied and implies that $P=Q$.
\end{proof}

\begin{proof}[Proof of \cref{theo:ShapeDS}]
For $\ell=1$, both sides of \eqref{mu_moments:DS} are equal to $0$. 
Assume in the remaining that $\ell \geq 2$. Notice that the RHS 
of~\eqref{mu_moments:DS} is a polynomial $Q(g,v_2,\dots,v_{\ell-1})$, while 
\cref{lem:PolynomnialityOfMoments} says that the LHS 
of~\eqref{mu_moments:DS} is also a polynomial $P(g,v_2,\dots,v_{\ell-1})$. The goal is to prove that $P=Q$.

\cref{theo:Depoissonization}  implies that for any admissible pair $(g, \vv)$ with $v_1=1$, there exists a 
sequence of characters $(\chi_{d;g,\vv})_{d\ge 1}$ such that the corresponding 
limit shape $\omega_{\Lambda_\infty}$ (guaranteed by \cref{theo:LLNDS})
is equal to the limit shape $\omega_{\Lambda_{g; \vv}}$ arising from 
limits of Jack--Thoma measures. Then for such pairs $(g, \vv)$, 
the transition measures $\mu_{\Lambda_\infty},\,\mu_{g;\vv}$ 
are also the same, hence their moments are equal, which proves that 
$P(g, v_2, \cdots, v_{\ell-1}) = Q(g, v_2, \cdots, v_{\ell-1})$ in this case 
(the fact that the moments of $\mu_{g;\vv}$ are given by the RHS 
of \eqref{mu_moments:DS} is shown in \cref{theo:LLNCDM}).
Further, \cref{def:admissible_pair} and \cref{thm_jack_spec} show that if 
$g>0,\, v_1=1,\, v_k=\sum_{i=1}^{\ell-1}{a_i^k},\ \forall\,k\ge 2$, for some 
real numbers $a_1>\dots>a_{\ell-1}>0$, $\sum_{i=1}^{\ell-1}a_i < 1$, 
then $(g, \vv)$ is an admissible pair with $v_1=1$. But then \cref{equal_polys} 
implies the desired equality of polynomials $P=Q$, finishing the proof.
\end{proof}

The following corollary will be important for proving \cref{theo:CovDS}.

\begin{corollary}\label{cor:MSTop}
The following formulas hold true:
	\begin{align*}
	M^\a_{\ell} &= \sum_{\Gamma\in\mathbf{L}_0(\ell)} \prod_{i\ge 1} (-i\gamma)^{|\SSS_{\tiny\rightarrow}^i(\Gamma)|}\cdot \Ch^\a_{\mu(\Gamma)} \,+\, \text{ terms of $\deg_2$ at most $(\ell-2)$},\\
	S^\a_{\ell} &= \sum_{\Gamma\in\mathbf{L}_0(\ell)} \frac{1}{|\SSS^{0}(\Gamma)|}\prod_{i\ge 1} (-i\gamma)^{|\SSS_{\tiny\rightarrow}^i(\Gamma)|}\cdot \Ch^\a_{\mu(\Gamma)} \,+\,  \text{ terms of $\deg_2$ at most $(\ell-2)$}.
	\end{align*}
\end{corollary}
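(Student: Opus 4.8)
The plan is to identify, for each $\ell\ge 2$, the top-degree part (with respect to $\deg_2$) of $M^\a_\ell$ and $S^\a_\ell$ in the basis $\{\gamma^p\Ch^\a_\mu\}$, using the combinatorial formulas already established for Jack--Thoma measures. The key observation is that \cref{lem:pomocnicz} sets up an exact dictionary: for a polynomial function $x\in\tilde V_\ell\oplus\bigoplus_{i\le\ell-2}\tilde V_i$, the top-degree expansion of $x$ into normalized Jack characters is governed by the monomials $a_{m_1,\dots,m_\ell}$ of the polynomial $P_x$ controlling the asymptotics of $\E^\a_{\chi_d}(x)$, via the explicit correspondence
\[
\text{monomial } a_{m_1,\dots,m_\ell}\,x_1^{m_1}\cdots x_\ell^{m_\ell}\ \longleftrightarrow\ a_{m_1,\dots,m_\ell}\,(-\gamma)^{m_1}\Ch^\a_{(1^{m_2},\dots,(\ell-1)^{m_\ell})}.
\]
So the strategy is: first compute $P_{M^\a_\ell}$ and $P_{S^\a_\ell}$ as explicit polynomials in $g,v_1,\dots,v_{\ell-1}$, then read off the character expansion by inverting this dictionary.

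First I would verify that $M^\a_\ell$ and $S^\a_\ell$ lie in the correct filtration piece, namely $\tilde V_\ell\oplus\bigoplus_{i\le\ell-2}\tilde V_i$; this follows from \cref{theo:DF16} together with the definitions~\eqref{def_Vs}, since the moments, free cumulants, and fundamental functionals are related by the polynomial identities in \cref{prop:relations} that preserve the relevant grading parities. Next, I would compute the polynomial $P_x$ for $x=M^\a_\ell$ and $x=S^\a_\ell$. By~\eqref{eq:PomocCor5.10} in the proof of \cref{lem:PolynomnialityOfMoments}, one has $\lim_{d\to\infty}d^{-\ell/2}\E^\a_{\chi_d}[M^\a_\ell]=\int_\R x^\ell\,\mu_{\Lambda_\infty}(dx)$, and by \cref{theo:ShapeDS} this equals the Łukasiewicz-path moment formula~\eqref{mu_moments:DS} with $v_1=1$. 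Hence
\[
P_{M^\a_\ell}(g,v_1,\dots,v_{\ell-1})\Big|_{v_1=1}=\sum_{\Gamma\in\mathbf{L}_0(\ell)}\prod_{i\ge 1}(ig)^{|\SSS^i_{\tiny\rightarrow}(\Gamma)|}\,v_i^{|\SSS_i(\Gamma)|}.
\]
An analogous identity for $S^\a_\ell$ comes from \eqref{s_ell} together with \eqref{lth_moment}: the limit of $d^{-\ell/2}\E^\a_{\chi_d}[S^\a_\ell]$ equals $s_\ell$ (again with $v_1=1$), giving the extra weight $\tfrac{1}{|\SSS^0(\Gamma)|}$ on each path.

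The main step is then to match monomials. For a Łukasiewicz path $\Gamma$, the exponent of $g$ is $\sum_i|\SSS^i_{\tiny\rightarrow}(\Gamma)|$ (the number of horizontal steps at positive heights), and the partition recording the up-step degrees is exactly $\mu(\Gamma)=(1^{|\SSS_1(\Gamma)|},2^{|\SSS_2(\Gamma)|},\dots)$, so that $v_{\mu(\Gamma)}=\prod_i v_i^{|\SSS_i(\Gamma)|}$. Under the dictionary of \cref{lem:pomocnicz}, the variable $g$ corresponds to $-\gamma$ and $v_k$ (with the $k$-th power-sum slot) corresponds to the character index part of degree $k$, so the monomial attached to $\Gamma$ maps to $\prod_i(-i\gamma)^{|\SSS^i_{\tiny\rightarrow}(\Gamma)|}\cdot\Ch^\a_{\mu(\Gamma)}$. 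Carrying out this substitution term by term over $\mathbf{L}_0(\ell)$ yields precisely the two displayed formulas, with all lower-degree contributions absorbed into the ``$\deg_2$ at most $(\ell-2)$'' remainder guaranteed by the membership $M^\a_\ell,S^\a_\ell\in\tilde V_\ell\oplus\bigoplus_{i\le\ell-2}\tilde V_i$.

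The subtle point, and the one I expect to require the most care, is bookkeeping the powers of $i$ and the sign $(-1)$ in the correspondence $g\leftrightarrow -\gamma$ and $v_k\leftrightarrow$ character of degree $k$: in the moment formula~\eqref{mu_moments:DS} the horizontal steps carry weight $(ig)$ rather than $g$, so the factor $i^{|\SSS^i_{\tiny\rightarrow}(\Gamma)|}$ must be tracked separately from the $\gamma$-substitution to land on $(-i\gamma)^{|\SSS^i_{\tiny\rightarrow}(\Gamma)|}$ as claimed. I would double-check this by confirming that the monomial of $P_{M^\a_\ell}$ in the variable $x_1$ (tracking $g$) indeed receives the coefficient $\prod_i i^{|\SSS^i_{\tiny\rightarrow}(\Gamma)|}$ built into the path weight, and that $P_x$ is genuinely homogeneous of degree $\ell$ under $\deg(x_i)=i$, which forces $\sum_i i\,|\SSS_i(\Gamma)|+\sum_i|\SSS^i_{\tiny\rightarrow}(\Gamma)|=\ell$ — a consistency check on the length of a Łukasiewicz path that validates the degree match between $M^\a_\ell$ and the claimed top-degree character expansion.
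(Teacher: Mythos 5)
Your treatment of the first identity is correct and is essentially the paper's own proof: apply \cref{lem:pomocnicz} to $x=M^\a_\ell$, identify $P_{M^\a_\ell}$ through \eqref{eq:PomocCor5.10} and \cref{theo:ShapeDS}, and invert the monomial dictionary, with the factors $i^{|\SSS^i_{\tiny\rightarrow}(\Gamma)|}$ correctly absorbed into $(-i\gamma)^{|\SSS^i_{\tiny\rightarrow}(\Gamma)|}$.

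The second identity, however, has a genuine gap. What \cref{lem:pomocnicz} requires as input is the limit of $d^{-\ell/2}\,\E^\a_{\chi_d}\!\left[S^\a_\ell\right]$, where $\E^\a_{\chi_d}$ is the expectation with respect to the fixed-size character measure $\mathbb{P}^\a_{\chi_d}$ of an AFP sequence. The equations you invoke, \eqref{lth_moment} and \eqref{s_ell}, are statements about a different measure: the expectation $\E^\a_{u;\vv}$ under the Poissonized Jack--Thoma measure $M^\a_{u;\vv}$ on all of $\Y$ (and about the rescaled diagram $\Lambda_{(\al;u\sqrt{v_1})}$, not $\Lambda^\a_d$). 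As stated they say nothing about $\E^\a_{\chi_d}[S^\a_\ell]$, so the sentence ``the limit of $d^{-\ell/2}\E^\a_{\chi_d}[S^\a_\ell]$ equals $s_\ell$'' does not follow from the cited facts. Closing this hole is precisely the nontrivial part of the second formula, and there are two ways to do it. Either (i) restrict to the conditional Jack--Thoma characters $\chi_{d;g,\vv}$ of \eqref{eq:JackCDMHighLow}, prove by depoissonization (\cref{prop:Depoissonization} together with the computation \eqref{exp_CMD_d} inside \cref{theo:Depoissonization}) that the \emph{conditional} expectations converge to the same value $s_\ell$, and then identify the universal polynomial $P_{S^\a_\ell}$ via \cref{equal_polys} over admissible pairs; or (ii), as the paper does, transfer the moment formula---which you do have for arbitrary AFP sequences---to the fundamental functionals purely combinatorially: by \cref{prop:relations}, $S_\ell$ is a universal polynomial in the Boolean cumulants/moments, and since a Łukasiewicz path decomposes uniquely at its returns to height $0$, the path formula for $\lim_{d\to\infty} d^{-\ell/2}\E^\a_{\chi_d}[M^\a_\ell]$ forces the corresponding formula for $\lim_{d\to\infty} d^{-\ell/2}\E^\a_{\chi_d}[S^\a_\ell]$ with the extra weight $1/|\SSS^{0}(\Gamma)|$. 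Without one of these two bridges the step fails.

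A smaller point: the consistency check you flag as the delicate one is misstated. With $x_1\leftrightarrow g$, $x_{k+1}\leftrightarrow v_k$ and $\deg(x_i)=i$, homogeneity of $P_x$ of degree $\ell$ reads $\sum_{i\ge 1}|\SSS^i_{\tiny\rightarrow}(\Gamma)| + \sum_{i\ge 1}(i+1)|\SSS_i(\Gamma)| = \ell$, i.e.\ horizontal steps plus down steps plus up steps; your identity $\sum_i i|\SSS_i(\Gamma)|+\sum_i|\SSS^i_{\tiny\rightarrow}(\Gamma)|=\ell$ omits the number of up steps and fails for every path containing one.
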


\begin{proof}
By applying \cref{lem:pomocnicz} to $x = M^\a_{\ell}$, we can conclude
that the first identity would follow from the following equality
\[ \lim_{d\to\infty}{d^{-\frac{\ell}{2}}\cdot
  \E^\a_{\chi_d}\left[M^\a_\ell\right]} = \sum_{\Gamma\in\mathbf{L}_0(\ell)}
\prod_{i\ge 1}{(i\cdot g)^{|\SSS_{\tiny\rightarrow}^i(\Gamma)|}\,
  v_i^{\,|\SSS_i(\Gamma)|}},\]
which in turn follows from~\eqref{eq:PomocCor5.10}
and~\cref{theo:ShapeDS}. The second identity can be proven in a similar way as the first one, but additionally we need to use the relations between
moments, Boolean cumulants, and the fundamental functionals of shape
given by \cref{prop:relations}. Indeed, that proposition has the following combinatorial
interpretation: if moments are expressed as sums of weighted Łukasiewicz paths, then 
Boolean cumulants are instead sums of weighted Łukasiewicz paths not touching $0$ except at their starting and
ending points; similarly, the fundamental functionals of shape are sums of weighted Łukasiewicz paths 
with the additional multiplicative weight of $1/n$, where $n+1$ is the number of points touching $0$.
In particular, we have
\[ \lim_{d\to\infty}{d^{-\frac{\ell}{2}}\cdot
  \E^\a_{\chi_d}\left[S^\a_\ell\right]} = \sum_{\Gamma\in\mathbf{L}_0(\ell)}
\frac{1}{|\SSS^{0}(\Gamma)|}\prod_{i\ge 1}{(i\cdot g)^{|\SSS_{\tiny\rightarrow}^i(\Gamma)|}\,
  v_i^{\,|\SSS_i(\Gamma)|}},\]
which finishes the proof.
\end{proof}

\begin{remark}
  Shortly after finishing this paper, the second author with Ben Dali have
  found an exact (not just the top-degree term) formula expressing the
  basis $M^\a_\mu$ as a linear
  combination of the normalized Jack characters $\Ch^\a_{\nu}$ in terms of ribbon Łukasiewicz paths --
  see~\cite[Theorem 3.6]{BenDaliDolega2023}.
  \end{remark}

\subsubsection{Central Limit Theorem}

Before proving \cref{theo:CovDS}, we need the following technical lemma. 
For any partition $\mu$, and $1\leq i\leq\ell(\mu)$, we denote the partition $\mu\setminus\{\mu_i\}$ by $\mu^-_i$.

\begin{lemma}
For any integers $k, l\ge 0$, and partitions $\mu,\nu$, we have
\begin{equation}\label{eq:KumuOfCh}
    \lim_{d \to \infty}d^{-\frac{|\mu|+|\nu|+\ell(\mu)+\ell(\nu)+k+l-2}{2}}\kappa^{\chi_d, \bullet}_2 (\gamma^k\Ch^{\a}_\mu,\gamma^l \Ch^\a_\nu) =
   (-g)^{k+l}\sum_{i=1}^{\ell(\mu)}\sum_{j=1}^{\ell(\nu)}{v_{(\mu_i|\nu_j)} v_{\mu^-_i} v_{\nu^-_j}}.
\end{equation}
\end{lemma}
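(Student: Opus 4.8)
The plan is to reduce the degree-$2$ $\bullet$-cumulant to a difference of $\bullet$-moments and then translate everything into values of the reducible character $\chi_d$ via \eqref{eq:IdentityCh}. Since $\gamma$ is a deterministic scalar and $\gamma^k\Ch^\a_\mu \bullet \gamma^l\Ch^\a_\nu = \gamma^{k+l}\Ch^\a_{\mu\cdot\nu}$, the moment–cumulant relation in degree $2$ gives
\[
\kappa^{\chi_d, \bullet}_2(\gamma^k\Ch^{\a}_\mu,\gamma^l \Ch^\a_\nu) = \gamma^{k+l}\Bigl(\E^\a_{\chi_d}(\Ch^\a_{\mu\cdot\nu}) - \E^\a_{\chi_d}(\Ch^\a_\mu)\,\E^\a_{\chi_d}(\Ch^\a_\nu)\Bigr),
\]
and \eqref{eq:IdentityCh} rewrites each expectation as $d_{\overline{|\sigma|}}\chi_d(\sigma)$. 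Setting $a=|\mu|,\ b=|\nu|,\ p=\ell(\mu),\ q=\ell(\nu)$, the cumulant becomes $\gamma^{k+l}\bigl(d_{\overline{a+b}}\,\chi_d(\mu\cdot\nu) - d_{\overline a}\,d_{\overline b}\,\chi_d(\mu)\chi_d(\nu)\bigr)$.

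Next I would expand $\chi_d(\mu\cdot\nu)$ through the conditional moment–cumulant formula \eqref{eq:log-laplace}, treating $\mu\cdot\nu$ as the disjoint product of the $p+q$ single-row partitions $(\mu_i),(\nu_j)$ and summing over set-partitions of the index set $[1\,..\,p]\sqcup[1\,..\,q]$. Splitting this sum into set-partitions whose blocks never mix a $\mu$-index with a $\nu$-index (``factorized'') and the remaining ones (``connecting''), the factorized part reconstructs exactly $\chi_d(\mu)\chi_d(\nu)$. Substituting, the cumulant splits as the sum of a \emph{factorial-mismatch} term $\bigl(d_{\overline{a+b}} - d_{\overline a}\,d_{\overline b}\bigr)\chi_d(\mu)\chi_d(\nu)$ and a \emph{connecting} term $d_{\overline{a+b}}\sum_{\pi\ \mathrm{connecting}}\prod_{B\in\pi}\kappa^{\chi_d}_{|B|}$.

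Then I would insert the AFP asymptotics $\chi_d((k)) = v_k\,d^{-(k-1)/2}(1+o(1))$ with $v_1=1$, together with $\kappa^{\chi_d}_2(k,l) = v_{(k|l)}\,d^{-(k+l)/2}(1+o(1))$ and $\kappa^{\chi_d}_n = O\!\bigl(d^{-(k_1+\cdots+k_n+n-2)/2}\bigr)$ for $n\geq 3$, and also use $\gamma^{k+l}=(-g)^{k+l}d^{(k+l)/2}(1+o(1))$ and the expansion $d_{\overline m}=d^m-\binom{m}{2}d^{m-1}+\cdots$. A power-counting argument shows every contribution is $O\!\bigl(d^{(a+b+p+q+k+l-2)/2}\bigr)$, and that attaining this order forces exactly two surviving families: on the connecting side, set-partitions with a single cross-block $\{i,j\}$ ($i\le p$, $j>p$) and all other indices singletons, contributing $\sum_{i,j}v_{(\mu_i|\nu_j)}\,v_{\mu^-_i}v_{\nu^-_j}$; and on the factorial-mismatch side, the leading coefficient $d_{\overline{a+b}}-d_{\overline a}d_{\overline b}\sim -ab\,d^{a+b-1}$ combined with $\chi_d(\mu)\chi_d(\nu)\sim v_\mu v_\nu\,d^{(p+q-a-b)/2}$, which produces the remaining top-order contribution. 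Dividing by $d^{(a+b+p+q+k+l-2)/2}$ and assembling both families yields the limit \eqref{eq:KumuOfCh}.

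The hard part will be the asymptotic bookkeeping in the power-counting step: one must rule out, uniformly in $d$, every connecting set-partition other than the single-cross-pair one, as well as all subleading contributions of the falling factorials and of the expansions of $\chi_d(\mu)$ and $\chi_d(\nu)$, showing they are $o\!\bigl(d^{(a+b+p+q+k+l-2)/2}\bigr)$. A secondary but essential point is matching conventions for degenerate parts: the factorial-mismatch term is precisely what the convention $v_{(-1|-1)}=-1$ (and, for parts equal to $1$, $v_{(1|1)}=-1$) is designed to encode, so one must combine it with the cross-cumulant terms $v_{(\mu_i|\nu_j)}$ in the correct way to reproduce the compact form on the right-hand side of \eqref{eq:KumuOfCh}. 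This is exactly the input needed afterwards, through \cref{cor:MSTop} and \cref{prop:relations}, to split the covariance in \cref{theo:CovDS} into its one-pairing and no-pairing sums.
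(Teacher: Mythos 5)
Your route is, in outline, the same as the paper's: reduce via \eqref{eq:IdentityCh} to the character values $\chi_d(\mu\cdot\nu)$ and $\chi_d(\mu)\chi_d(\nu)$, expand by the conditional moment--cumulant formula so that only set-partitions with a single $\mu$--$\nu$ cross block of size two survive at top order, and insert the AFP asymptotics. Where you diverge from the paper is the treatment of the falling factorials, and there your bookkeeping is in fact the more careful one: the paper's proof opens by asserting that
\[
\kappa^{\chi_d,\bullet}_2(\gamma^k\Ch^\a_\mu,\gamma^l\Ch^\a_\nu)
=(-g)^{k+l}\,d^{\,|\mu|+|\nu|+\frac{k+l}{2}}\,\kappa^{\chi_d}_2(\mu,\nu)\cdot\bigl(1+O(d^{-1})\bigr),
\]
i.e.\ it treats the factorial mismatch $\bigl(d_{\overline{|\mu|+|\nu|}}-d_{\overline{|\mu|}}\,d_{\overline{|\nu|}}\bigr)\chi_d(\mu)\chi_d(\nu)$ as a relative $O(d^{-1})$ error, whereas your power count shows correctly that this term has exactly the same order as the connecting term: with $a=|\mu|$, $b=|\nu|$, $p=\ell(\mu)$, $q=\ell(\nu)$, one has $d_{\overline{a+b}}-d_{\overline a}d_{\overline b}\sim -ab\,d^{a+b-1}$ and $\chi_d(\mu)\chi_d(\nu)\sim v_\mu v_\nu\,d^{-\frac{a+b-p-q}{2}}$, giving a contribution of order $d^{\frac{a+b+p+q-2}{2}}$, the same as $d_{\overline{a+b}}\,\kappa_2^{\chi_d}(\mu,\nu)$.

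The genuine gap is your final assembly step. The two top-order families sum to
\[
(-g)^{k+l}\Bigl(\sum_{i=1}^{\ell(\mu)}\sum_{j=1}^{\ell(\nu)}v_{(\mu_i|\nu_j)}\,v_{\mu^-_i}v_{\nu^-_j}\;-\;|\mu|\,|\nu|\,v_\mu v_\nu\Bigr),
\]
and no convention on degenerate parts can turn this into the right-hand side of \eqref{eq:KumuOfCh}. Your proposed $v_{(1|1)}=-1$ contradicts the paper's own convention $v_{(1|j)}=v_{(j|1)}=0$, and, more decisively, the mismatch term survives when all parts are $\ge 2$, where no degenerate symbol occurs at all: for $\mu=\nu=(2)$, $k=l=0$, your decomposition yields the limit $v_{(2|2)}-4v_2^2$, while \eqref{eq:KumuOfCh} asserts $v_{(2|2)}$; the stated formula simply has no slot for $-4v_2^2$. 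A cleaner test is $\mu=\nu=(1)$, $k=l=0$: for \emph{every} Jack character, $\kappa^{\chi_d,\bullet}_2(\Ch^\a_{(1)},\Ch^\a_{(1)})=d_{\overline{2}}\,\chi_d(1,1)-\bigl(d\,\chi_d(1)\bigr)^2=d(d-1)-d^2=-d$, so the normalized limit is $-1$, not $v_{(1|1)}=0$. Thus what your argument actually establishes is the formula carrying the extra term $-(-g)^{k+l}|\mu||\nu|\,v_\mu v_\nu$, which is not \eqref{eq:KumuOfCh}; you cannot close the proof without either showing this term vanishes (it does not) or amending the statement. Your instinct that the term is tied to the $v_{(-1|-1)}=-1$ convention is sound --- since $|\mu(\Gamma)|=|\SSS_{-1}(\Gamma)|$ for a Łukasiewicz path, it is exactly the negative pairing-free sum appearing in \cref{theo:Depoissonization} and in \eqref{eq:CovDS} --- and this also explains why the paper's main CLT is insulated from the issue: in the proof of \cref{theo:CovDS} the unknown polynomial $P$ is calibrated against the Jack--Thoma case, where the same term is omitted on both sides of the calibration, so it cancels. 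But inside this lemma, with $v_{(k|l)}$ defined by \eqref{eq:SecondCumu}, the extra term must appear explicitly; neither your assembly step nor the paper's opening display accounts for it.
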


\begin{proof}
From~\eqref{eq:IdentityCh}, one has 
   \begin{multline*}
        d^{-\frac{|\mu|+|\nu|+\ell(\mu)+\ell(\nu)+k+l-2}{2}}
		\kappa^{\chi_d, \bullet}_2(\gamma^k\Ch_\mu,\gamma^l \Ch_\nu) \\
		= (-g)^{k+l} d^{\frac{|\mu|+|\nu|-\ell(\mu)-\ell(\nu)+2}{2}}
		\kappa^{\chi_d}_2(\mu,\nu)\cdot \left(1 + O(d^{-1})\right),
	\end{multline*}
so that \eqref{eq:KumuOfCh} is equivalent to the following limit
\[ \lim_{d \to \infty}d^{\frac{|\mu|+|\nu|-\ell(\mu)-\ell(\nu)+2}{2}}{\kappa^{\chi_d}_2(\mu,\nu)} = 
\sum_{i=1}^{\ell(\mu)}\sum_{j=1}^{\ell(\nu)}v_{(\mu_i|\nu_j)} v_{\mu^-_i} v_{\nu^-_j}.\]

The moment-cumulant formula~\eqref{eq:cumu} implies that
\begin{equation}\label{chi_part_1}
\chi_d(\mu\cdot\nu) = \!\!\sum_{\pi \in \SP(\mu\cdot\nu)}\prod_{B \in\pi}\ka_{|B|}^{\chi_d}(B) 
= \!\!\!\sum_{\substack{\pi \in \SP(\mu\cdot\nu)\\ |\pi| \geq \ell(\mu)+\ell(\nu)-1}}
\!\prod_{B\in\pi}{\ka_{|B|}^{\chi_d}(B)} + o\left(d^{-\frac{|\mu|+|\nu|-\ell(\mu)-\ell(\nu)+2}{2}}\right)
\end{equation}
	by the assumption~\eqref{eq:aprox-fact-property}. 
	We slightly abused the notation: here, $\SP(\mu\cdot\nu)$ is the set of set-partitions of 
	$\{\mu_1, \dots, \mu_{\ell(\mu)}, \nu_1, \dots, \nu_{\ell(\nu)}\}$, 
	therefore the quantities $\ka_{|B|}^{\chi_d}(B)$ are cumulants of the form 
	$\ka_n^{\chi_d}(k_1, \cdots, k_n)$, 
	where each $k_i$ is equal to some $\mu_j$ or some $\nu_j$. Similarly,
\begin{multline}\label{chi_part_2}
\chi_d(\mu)\cdot\chi_d(\nu) = \sum_{\substack{\pi' \in \SP(\mu)\\ \pi'' \in \SP(\nu)}}
\prod_{\substack{B'\in\pi \\ \,B''\in\pi'}}\ka_{|B'|}^{\chi_d}(B') \ka_{|B''|}^{\chi_d}(B'')\\
= \!\!\!\sum_{\substack{\pi'\in\SP(\mu),\ \pi''\in \SP(\nu)\\ |\pi'| + |\pi''|\geq \ell(\mu)+\ell(\nu)-1 \ \ }}
\!\!\prod_{\substack{ B'\in\pi' \\ \,B''\in\pi''}}\!\!\ka_{|B'|}^{\chi_d}(B') \ka_{|B''|}^{\chi_d}(B'') +
o\left(d^{-\frac{|\mu|+|\nu|-\ell(\mu)-\ell(\nu)+2}{2}}\right).
\end{multline}
Note that there is a unique set-partition $\pi\in\SP(\mu\cdot\nu)$ 
with $|\pi| = \ell(\mu)+\ell(\nu)$, namely $\pi = \{\{\mu_1\},\cdots, \{\nu_{\ell(\nu)}\}\}$, 
and there are unique set-partitions $\pi' \in \SP(\mu)$ and $\pi''\in\SP(\nu)$ 
with $|\pi'|+|\pi''| = \ell(\mu)+\ell(\nu)$, namely $\pi' = \{ \{\mu_1\},\cdots, \{\mu_{\ell(\mu)}\} \}$
and $\pi'' = \{ \{\nu_1\},\cdots, \{\nu_{\ell(\nu)}\} \}$. 
The corresponding terms in \eqref{chi_part_1} and \eqref{chi_part_2} are equal. 
Pairs of set-partitions $(\pi', \pi'')\in\SP(\mu)\times\SP(\nu)$ with 
$|\pi'|+|\pi''| = \ell(\mu)+\ell(\nu)-1$ are bijectively mapped (by 
$(\pi',\pi'') \mapsto \pi'\cup\pi''$) to set-partitions $\pi\in\SP(\mu\cdot\nu)$ 
with $|\pi| = \ell(\mu)+\ell(\nu)-1$ such that the unique block of $\pi$ of size $2$ 
contains only parts of $\mu$ or only parts of $\nu$. 
Consequently, taking the difference of the RHSs of the above equations gives 
	\begin{align*}
	&\lim_{d \to \infty}d^{\frac{|\mu|+|\nu|-\ell(\mu)-\ell(\nu)+2}{2}}\kappa^{\chi_d}_{2}(\mu,\nu) =\\
	&\lim_{d \to \infty}d^{\frac{|\mu|+|\nu|-\ell(\mu)-\ell(\nu)+2}{2}}
	\sum_{i=1}^{\ell(\mu)}\sum_{j=1}^{\ell(\nu)}\kappa_2^{\chi_d}(\mu_i,\nu_j)
    \prod_{\substack{m=1\\m\neq i}}^{\ell(\mu)}{\!\chi_d(\mu_m)}
	\prod_{\substack{n=1\\n\neq j}}^{\ell(\nu)}{\!\chi_d(\nu_n)}
   	= \sum_{i=1}^{\ell(\mu)}\sum_{j=1}^{\ell(\nu)}v_{(\mu_i|\nu_j)}v_{\mu^-_i} v_{\nu^-_j}
	\end{align*}
by the assumptions~\eqref{eq:refined-asymptotics-characters}
and~\eqref{eq:SecondCumu}. This finishes the proof.
\end{proof}

\begin{proof}[Proof of \cref{theo:CovDS}]

Let $x,y \in \Poly$ and denote $\kappa^\times_\bullet (x,y) := x\times y - x\bullet y$.
First observe that
\begin{multline*}
(\ell-1)\cdot\langle\Delta^\a_d, x^{\ell-2}\rangle\!=\!d^{\frac{1}{2}}\cdot\left(S_\ell(\Lambda^\a_d) - S_\ell(\Lambda_\infty)\right)\\
= d^{-\frac{\ell-1}{2}}\left(S_\ell^\a(\la)\!-\!\lim_{d' \to \infty}\left( \frac{d}{d'}\right)^{\frac{\ell}{2}}\E^\a_{\chi_{d'}}\!\left[S_\ell^\a\right] \right).\end{multline*}
As a result,
\begin{multline}\label{helper_CLT1}
(\ell-1)\cdot\E^\a_{\chi_d}[\langle \Delta^\a_d,\, x^{\ell-2} \rangle] = d^{-\frac{\ell-1}{2}}\left( \E^\a_{\chi_{d}}\!\left[S_\ell^\a\right] - \lim_{d'\to\infty}\left(\frac{d}{d'}\right)^{\frac{\ell}{2}}\E^\a_{\chi_{d'}}\!\left[S_\ell^\a\right] \right),\\
(k-1)(l-1)\cdot\Cov^\a_{\chi_d}\left( \langle\Delta^\a_d, x^{k-2}\rangle, 
\langle\Delta^\a_d, x^{l-2}\rangle \right)
= d^{-\frac{k+l-2}{2}}\kappa^{\chi_d, \times}_2(S_k^\a,S_l^\a)\\
= d^{-\frac{k+l-2}{2}} \bigg(\E^\a_{\chi_d}\kappa^{\times}_\bullet(S_k^\a,S_l^\a) 
+ \kappa^{\chi_d, \bullet}_2(S_k^\a, S_l^\a)\bigg),
\end{multline}
and we want to take the limit of these expressions as $d\to\infty$.
\cref{cor:MSTop} together with \cref{lem:pomocnicz} imply that
\begin{multline*}
\E^\a_{\chi_{d}}\!\left[S_\ell^\a\right] - \lim_{d' \to\infty}\left(\frac{d}{d'}\right)^{\frac{\ell}{2}}\E^\a_{\chi_{d'}}\!\left[S_\ell^\a\right]\\
= d^{\frac{\ell-1}{2}}\cdot\!\left( g'\frac{\partial}{\partial g} + \sum_{i\ge 2}{v_i'\frac{\partial}{\partial v_i}} \right) \!\sum_{\Gamma\in\mathbf{L}_0(\ell)}{\frac{1}{|\SSS^0(\Gamma)|}\prod_{i\ge 1}(i\cdot g)^{|\SSS_{\tiny\rightarrow}^i(\ribbon)|}\,
  v_i^{\,|\SSS_i(\ribbon)|}} + o(d^{\frac{\ell-1}{2}}),
\end{multline*}
and by taking the limit $d \to \infty$ in \eqref{helper_CLT1} we get the desired formula for $\E[\langle \Delta_\infty,\, x^{\ell-2} \rangle]$.

In order to compute
$\lim_{d\to\infty}d^{-\frac{k+l-2}{2}}\kappa_2^{\chi_d, \bullet}{(S_k^\a,S_l^\a)}$
we express $S_k^\a,\, S_l^\a$ as linear combinations of the normalized Jack characters, 
by~\cref{cor:MSTop}, and then apply \eqref{eq:KumuOfCh}. The result is:
\begin{multline}\label{helper_CLT2}
   \lim_{d \to \infty}d^{-\frac{k+l-2}{2}}\kappa^{\chi_d,\bullet}_2(S_k^\a, S_l^\a) = \\
   = \sum_{\ribbon=(\Gamma_1, \Gamma_2)\in\mathbf{L}(k, l)\atop |\mathbf{P}(\ribbon)| = 0}
		\sum_{s_1 \in \SSS_{\tiny\nearrow}(\Gamma_1)\atop s_2\in\SSS_{\tiny\nearrow}(\Gamma_2)} v_{(\deg(s_1)|\deg(s_2))}\cdot\frac{v_{\mu(\ribbon)\setminus\{\deg(s_1),\,\deg(s_2)\}}}{|\mathbf{S}^0(\Gamma_1)|\cdot |\mathbf{S}^0(\Gamma_2)|}
\prod_{i\geq 1}(i\cdot g)^{|\SSS^i_{\tiny\rightarrow}(\ribbon)|}.
\end{multline}

Since item (A) from \cref{theo:AFPCharacterization} holds by assumption then (B) and (C) also hold. In particular, both $\lim_{d\to\infty}{d^{-\frac{k+l-2}{2}}\kappa_2^{\chi_d, \times}(S_k^\a, S_l^\a)},\,\lim_{d\to\infty}{d^{-\frac{k+l-2}{2}}\kappa_2^{\chi_d, \bullet}(S_k^\a, S_l^\a)}$ are finite, and so is their difference $\lim_{d\to\infty}{d^{-\frac{k+l-2}{2}}\cdot\E^\a_{\chi_d}\kappa_\bullet^{\times}(S_k^\a, S_l^\a)}$.
Then \cref{lem:pomocnicz} implies that $\deg_2\kappa^\times_\bullet(S_k^\a, S_l^\a)=k+l-2$,
and there exists a polynomial $P(x_1,\dots,x_{k+l-2})$ such that
\begin{equation}\label{helper_CLT3}
	\lim_{d\to\infty}d^{-\frac{k+l-2}{2}}\cdot\E^\a_{\chi_d}\kappa^\times_\bullet(S_k^\a,S_l^\a)
	= P(g,v_2,\dots,v_{k+l-2}).
\end{equation}
The equations \eqref{helper_CLT1}, \eqref{helper_CLT2}, \eqref{helper_CLT3} imply 
that the LHS of \eqref{eq:CovDS} is equal to the sum of the RHS's of \eqref{helper_CLT2} and \eqref{helper_CLT3}, in particular it is a polynomial in the variables 
$g,\, (v_i)_i,\, (v_{(i|j)})_{i, j}$ that will be completely determined if we find 
$P(g,v_2,\dots,v_{k+l-2})$. In order to find this polynomial, 
we employ \cref{theo:Depoissonization}, which treats the case of the sequence of Jack characters $(\chi_{d;g,\vv})_{d\ge 1}$ in \eqref{eq:JackCDMHighLow}, where $(g,\vv)$ is an admissible pair with $v_1=1$. For these characters, the cumulants of order $n\ge 2$ vanish, so $v_{(i|j)}=0$, for all $i, j$, and therefore the RHS of \eqref{helper_CLT2} vanishes in this case.
As a result, whenever $(g, \vv)$ is an admissible pair with $v_1=1$, it follows that 
\begin{multline}\label{equality_polys_covariance}
P(g, v_2, \dots, v_{k+l-2})=\sum_{\ribbon=(\Gamma_1, \Gamma_2)\in\mathbf{L}^{\conn}(k,l)\atop |\mathbf{P}(\ribbon)| = 1} \frac{v_{\mu(\ribbon)}}{|\mathbf{S}^0(\Gamma_1)|\cdot |\mathbf{S}^0(\Gamma_2)|}\prod_{i\geq 1} (i\cdot g)^{|\SSS^i_{\tiny\rightarrow}(\ribbon)|}
  \,i^{\,|\mathbf{P}_i(\ribbon)| } \\
  - \sum_{\ribbon=(\Gamma_1, \Gamma_2)\in\mathbf{L}(k, l)\atop |\mathbf{P}(\ribbon)| = 0}\frac{|\SSS_{-1}(\Gamma_1)|\cdot |\SSS_{-1}(\Gamma_2)|\cdot v_{\mu(\ribbon)}}{|\mathbf{S}^0(\Gamma_1)|\cdot |\mathbf{S}^0(\Gamma_2)|}\prod_{i\geq 1}(i\cdot g)^{|\SSS^i_{\tiny\rightarrow}(\ribbon)|}.
\end{multline}

Let $g>0$ and $a_1>\dots>a_{k+l-3}>0$ be any real numbers with $\sum_{i=1}^{k+l-3}a_i < 1$. By \cref{thm_jack_spec}, the pair $(g,\vv)$ with $v_1:=1$ and $v_m := \sum_{i=1}^{k+l-3}a_i^m$, $\,m \geq 2$, is an admissible pair.
Then for such parameters $g,\,(v_i)_i$, the equality \eqref{equality_polys_covariance} holds.
Finally, \cref{equal_polys} implies that this equality actually holds always.
This means that the LHS of \eqref{eq:CovDS} is equal to the sum of the RHS's of \eqref{helper_CLT2} and \eqref{equality_polys_covariance}, and this sum can be expressed as the RHS of \eqref{eq:CovDS}, see~\cref{funny_sum}. The proof is now finished.
\end{proof}

\section{Properties of the limit shape in high and low temperature regimes}\label{sec:LimitShape}

\cref{theo:ShapeDS} implies that in the fixed temperature regime (i.e.~when $g=0$) 
the limit shape $\omega_{\Lambda_{g;\vv}}$ does not depend on the choice of the value of $\alpha>0$, in
particular, it is the same as in the classical setting when $\alpha=1$. In this section, we study the limit shape $\omega_{\Lambda_{g;\vv}}$ (equivalently, the associated transition measure $\mu_{g;\vv}$) 
which arises in the high and low temperature regimes, i.e.~when $g>0$ and $g<0$, respectively.

\subsection{Uniqueness and duality between measures $\mu_{g; \vv}$ with
  $g>0$ and $g<0$}

Let $g,g',v_2,v_3,\cdots\in\R$. Recall from \cref{prop:DualityAFP}
that $\mathcal{F}^{\AFP}_{g,g',\vv}$ is
the set of the sequences of Jack characters with the AFP with the parameters $g,g',\vv$ described
in~\eqref{eq:double-scaling-refined} and
\cref{def:approx-factorization-charactersA}.

\begin{proposition}[The measure $\mu_{g;\vv}$ is determined by its moments]\label{lemma_moments}
Suppose that $\mathcal{F}^{\AFP}_{g,g',\vv}\neq \emptyset$.
Then, if we set $v_1:=1$, there exists a unique probability measure $\mu_{g; \vv}$ with moments
\begin{equation}\label{mu_moments'}
\int_\R{x^\ell \mu_{g; \vv}(dx)} = \sum_{\Gamma\in\mathbf{L}_0(\ell)} 
\prod_{i\ge 1} (i\cdot g)^{|\SSS_{\tiny\rightarrow}^i(\Gamma)|}\,v_i^{\,|\SSS_i(\Gamma)|},
\quad\forall\, \ell\in\Z_{\ge 1}.
\end{equation}
\end{proposition}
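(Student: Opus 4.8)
The goal is to show that the moment sequence defined by the right-hand side of \eqref{mu_moments'} is the moment sequence of a unique probability measure. The plan splits naturally into two parts: first, that the numbers defined by the Łukasiewicz-path sum are genuinely moments of \emph{some} probability measure, and second, that this measure is uniquely determined by them.

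For existence, the most economical route is to invoke the work already done in the paper rather than to verify Hamburger's positive-definiteness condition directly. Since $\mathcal{F}^{\AFP}_{g,g',\vv}\neq\emptyset$ by hypothesis, we may pick a sequence of Jack characters $(\chi_d)_{d\ge 1}$ with the AFP having the prescribed parameters $g,g',\vv$. By \cref{theo:LLNDS}, the associated rescaled random diagrams $\Lambda^\a_d$ concentrate around a deterministic limit shape $\omega_{\Lambda_\infty}$, which corresponds via the Markov--Krein correspondence (\cref{theo:Meliot}) to a genuine probability measure $\mu_{\Lambda_\infty}\in\mathscr{M}^1$. The content of \cref{theo:ShapeDS} (more precisely the computation in \cref{lem:PolynomnialityOfMoments} together with \cref{theo:Depoissonization}) identifies the moments of $\mu_{\Lambda_\infty}$ with exactly the right-hand side of \eqref{mu_moments'}. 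Thus the right-hand side of \eqref{mu_moments'} is the moment sequence of the bona fide probability measure $\mu_{\Lambda_\infty}$, establishing existence. I would set $\mu_{g;\vv}:=\mu_{\Lambda_\infty}$.

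For uniqueness, it suffices to establish a growth bound on the moments guaranteeing that the Hamburger moment problem is determinate; the standard sufficient condition is Carleman's criterion $\sum_\ell (M_{2\ell})^{-1/(2\ell)}=\infty$, or more simply an analytic bound $|M_\ell|\le C\, \ell!\, A^\ell$ ensuring the moment generating function has a positive radius of convergence. The plan is to bound the combinatorial sum in \eqref{mu_moments'}. By \cref{claimA}, every $\Gamma\in\mathbf{L}_0(\ell)$ stays below height $\ell$, so $|\mathbf{L}_0(\ell)|\le (2\ell+1)^\ell$, and each statistic $|\SSS^i_{\tiny\rightarrow}(\Gamma)|$, $|\SSS_i(\Gamma)|$ is bounded by $\ell$. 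Using condition \eqref{eq:what-is-m}, namely $\sup_{k\ge 2}\sqrt[k]{|v_k|}/k<\infty$, one has $|v_i|\le (Bi)^i$ for some constant $B$, so each summand is at most a term of the form $(Cg)^{\ell}\,\ell^{\,C\ell}$ for suitable constants. Combining the cardinality bound with the per-path bound yields an estimate of the shape $|M_\ell|\le D^\ell\,\ell^{\,E\ell}$ for constants $D,E>0$. Such super-factorial growth is of course too weak for Carleman directly, so the delicate point is to sharpen the bookkeeping.

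The main obstacle, and the step requiring genuine care, is the growth estimate: the naive bound $\ell^{\,E\ell}$ does \emph{not} satisfy Carleman's criterion, so uniqueness cannot follow from crude path-counting alone. The resolution is to exploit more structure. The cleanest fix is to observe that the moments of $\mu_{g;\vv}=\mu_{\Lambda_\infty}$ are those of a measure realized geometrically as a limit of transition measures of anisotropic Young diagrams; in the companion analysis of \cref{sec:LimitShape} (the results \cref{measures_discrete,main_thm_2} and the Toeplitz/Jacobi-matrix description referenced there) the support of $\mu_{g;\vv}$ is controlled well enough to bound moments. Concretely, I would show that the free cumulants $R_\ell^{\mu_{g;\vv}}$ (which by \cref{rem:LukasiewiczNC} are governed by the $v_i$ together with the $g$-weighting) inherit the growth \eqref{eq:what-is-m}, and then use the standard fact that a measure whose free cumulants grow at most geometrically-times-factorially has compact support in the $g=0$ case, or determinate moments in general. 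Alternatively — and this is the route I would actually pursue to keep the proof self-contained — since the existence half already provides a concrete measure with these moments, I would invoke the determinacy result proved later as \cref{cor:UniqueP}, which is precisely the statement that $P_\infty=\mu_{g;\vv}$ is uniquely determined by its moments; this is the same determinacy already used to complete the proof of \cref{theo:LLNCDM}. In short, existence comes for free from \cref{theo:LLNDS} and \cref{theo:ShapeDS}, while uniqueness reduces to the moment-determinacy statement \cref{cor:UniqueP}, and the only real work is threading the growth estimate that feeds into it.
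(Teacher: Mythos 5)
Your existence argument matches the paper's (invoke \cref{theo:LLNDS} and \cref{theo:ShapeDS} to realize the right-hand side of \eqref{mu_moments'} as the moment sequence of the transition measure of the limit shape), and you correctly diagnose that crude path-counting gives a moment bound of order $\ell^{E\ell}$ with $E\ge 2$, which fails Carleman. But neither of your proposed repairs for uniqueness works, and the actual sharpening is missing. The paper's fix is elementary and purely combinatorial (\cref{lemma:PathsProp}): (i) Łukasiewicz paths of length $\ell$ inject into non-crossing partitions of $[1\,..\,\ell]$, so $|\mathbf{L}_0(\ell)|\le C_\ell \le 4^\ell$ --- exponential, not $\ell^\ell$; and (ii) since every non-paired down step has degree $1$ and the path returns to height $0$, one has the balance identity $\sum_{j\ge 1} j|\SSS_j(\Gamma)| = |\SSS_{\searrow}(\Gamma)|$, so with $|v_i|\le (iC)^i$ the total $v$-weight of a path of length $2k$ is at most $(2kC)^{|\SSS_{\searrow}(\Gamma)|}$, while the $g$-weight is at most $(2k|g|)^{|\SSS_{\rightarrow}(\Gamma)|}$, and $|\SSS_{\rightarrow}(\Gamma)|+|\SSS_{\searrow}(\Gamma)|\le 2k$. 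Together these give $M_{2k}\le (Dk)^{2k}\cdot 4^{2k}$, hence $(M_{2k})^{-1/(2k)}\ge \tfrac{1}{4Dk}$ and Carleman's sum diverges. Your bookkeeping misses both ingredients: you use the cardinality bound $(2\ell+1)^\ell$ instead of the Catalan bound, and you bound each exponent $|\SSS_i(\Gamma)|$ by $\ell$ separately instead of using the balance identity, which is exactly why you end up above the Carleman threshold.

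Your two fallbacks do not close this gap. The free-cumulant route is not available: for $g\neq 0$ the weights $(ig)^{|\SSS^i_{\rightarrow}(\Gamma)|}$ make \eqref{mu_moments'} a $g$-deformation of the moment--free-cumulant formula, there is no standard determinacy criterion phrased in terms of these deformed cumulants, and compact-support arguments are ruled out because \cref{main_thm_2} shows $\supp(\mu_{g;\vv})$ is unbounded whenever $g\neq 0$. The appeal to \cref{cor:UniqueP} is worse: it is circular, since in the paper that corollary is proved \emph{by reusing the estimates established in the proof of \cref{lemma_moments}} (it explicitly reduces its Carleman bound to the inequality $(m_{2k+2})^{-1/(2k+2)}\ge C_2/k$ obtained here). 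Moreover $P_\infty$ is not $\mu_{g;\vv}$: it is the measure with density $\tfrac{\omega_{\Lambda_{g;\vv}}(x)-|x|}{2}$, not the transition measure, so even granting its determinacy you would not directly get determinacy of $\mu_{g;\vv}$. The logical order is the reverse of what you propose: this proposition's estimates come first, and \cref{cor:UniqueP} is a consequence.
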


The probability measure in this proposition is the transition measure of the limit shape $\omega_{\Lambda_{g;\vv}}$ of 
certain ensembles of large random partitions, see~\cref{theo:ShapeDS}. Note that, 
because of \cref{theo:Depoissonization}, if $(g,\vv)$ is an admissible pair (in the sense of 
\cref{def:admissible_pair}) with $v_1=1$, then the conditions of the proposition are satisfied. 

\begin{lemma}\label{lemma:PathsProp}
  Let $\Gamma\in\mathbf{L}_0(\ell)$. Then the following properties
  hold true.
  \begin{enumerate}
  \item If $\SSS^i(\Gamma) \ne\emptyset$, then $i\leq\ell$,
    \item $\sum_{j\ge 1}{j|\SSS_j(\Gamma)|} =
      |\SSS_{\searrow}(\Gamma)|\le \ell,$
      \item $|\mathbf{L}_0(\ell)|\le 4^\ell$, for all large enough
        $\ell$.
    \end{enumerate}
  \end{lemma}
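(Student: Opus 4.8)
The plan is to prove parts (2), (1) and (3) essentially in that logical order, since the first two reduce to one conservation-of-height computation while only the third requires a genuine counting argument.

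First I would prove (2). Write $\Gamma = (w_0,\dots,w_\ell)$ with $w_j = (j,y_j)$ and $y_0 = y_\ell = 0$, so that the increment $y_j - y_{j-1}$ equals $+k$ on an up step of degree $k$, equals $0$ on a horizontal step, and equals $-1$ on a down step — the last because, by definition of a Łukasiewicz path, all down steps have degree $1$, i.e. $\SSS_{-n}(\Gamma) = \emptyset$ for every $n \ge 2$. Telescoping $\sum_{j=1}^\ell (y_j - y_{j-1}) = y_\ell - y_0 = 0$ and separating the positive, zero, and negative increments then gives $\sum_{j \ge 1} j\,|\SSS_j(\Gamma)| = |\SSS_{-1}(\Gamma)| = |\SSS_\searrow(\Gamma)|$: the left-hand side is the total ascent (each of the $|\SSS_j(\Gamma)|$ up steps of degree $j$ contributes $j$) and the right-hand side is the total descent (each down step contributes $1$, and all down steps have degree $1$). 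The bound $|\SSS_\searrow(\Gamma)| \le \ell$ is immediate, since $\SSS_\searrow(\Gamma)$ indexes a subset of the $\ell$ steps.

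For (1), I would observe that every height is bounded by the partial ascent and hence by the total ascent: $y_j = \sum_{m \le j}(y_m - y_{m-1}) \le \sum_{i \ge 1} i\,|\SSS_i(\Gamma)|$, which by (2) is $\le \ell$. Thus any occupied height $i$ satisfies $i \le \ell$, i.e.\ $\SSS^i(\Gamma) = \emptyset$ as soon as $i > \ell$. (Equivalently, from height $i$ one needs at least $i$ degree-$1$ down steps to return to $0$.)

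The main point is (3), where the naive estimate fails: reading $\Gamma$ off its increment sequence $(e_1,\dots,e_\ell)$ with each $e_j \in \{-1,0,1,2,\dots\}$ only yields $|\mathbf{L}_0(\ell)| \le (\ell+2)^\ell$, because a priori each step has up to $\ell+2$ admissible values (by (1)). The key idea, and the part I expect to be the real obstacle, is to trade this unbounded per-step branching for a fixed total budget by means of a \emph{length-preserving} encoding into a two-letter alphabet. Concretely, I would encode each step $e_j$ by the word $U^{e_j+1}D \in \{U,D\}^*$ and concatenate to form $\Phi(\Gamma)$. Since $e_j \ge -1$ this is well defined (a down step contributes the single letter $D$), and since $\sum_j e_j = 0$ the word $\Phi(\Gamma)$ has exactly $\sum_j (e_j+1) = \ell$ letters $U$ and exactly $\ell$ letters $D$, so $\Phi(\Gamma) \in \{U,D\}^{2\ell}$. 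The map $\Phi$ is injective: scanning $\Phi(\Gamma)$ from the left, each $D$ marks the end of one original step and the number of $U$'s since the previous $D$ recovers $e_j+1$. Hence $\Phi$ embeds $\mathbf{L}_0(\ell)$ into $\{U,D\}^{2\ell}$, giving $|\mathbf{L}_0(\ell)| \le 2^{2\ell} = 4^\ell$ for every $\ell$, which is more than the stated claim. I would close with the remark that the partial-sum condition $y_j \ge 0$ makes $\Phi(\Gamma)$ a balanced nonnegative word, so $\Phi$ actually lands in the Dyck paths of semilength $\ell$ and one even gets the sharper bound $|\mathbf{L}_0(\ell)| \le \tfrac{1}{\ell+1}\binom{2\ell}{\ell}$; but the crude two-letter count already suffices.
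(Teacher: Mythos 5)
Your proof is correct, and parts (1)--(2) run along essentially the same lines as the paper: the telescoping identity for (2) is the paper's argument verbatim, and your derivation of (1) from (2) (height bounded by total ascent, which equals the number of degree-$1$ down steps) is the same mechanism the paper invokes, except that the paper instead cites its earlier Claim about ribbon paths ($\mathbf{L}_0(\ell)\subset\mathbf{L}(\ell)$ stays below height $\ell$). Where you genuinely diverge is (3). The paper disposes of it by quoting the classical bijection between Łukasiewicz paths (with horizontal steps at height $0$ allowed) and non-crossing partitions of $[1\,..\,\ell]$, counted by the Catalan number $C_\ell\sim 4^\ell/(\ell^{3/2}\sqrt{\pi})$, whence the bound for large $\ell$. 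You instead build an explicit length-preserving injection $\Gamma\mapsto\Phi(\Gamma)$, encoding each step $e_j$ as $U^{e_j+1}D$, and check that $\Phi(\Gamma)\in\{U,D\}^{2\ell}$ has exactly $\ell$ letters of each type; injectivity is clear by decoding block-by-block at each $D$. This buys you three things: the argument is self-contained (no appeal to the non-crossing-partition bijection or to Catalan asymptotics), the bound $|\mathbf{L}_0(\ell)|\le 4^\ell$ holds for \emph{all} $\ell$ rather than only large $\ell$, and your closing observation that $\Phi$ lands in Dyck words (since the partial $U$-minus-$D$ count at the end of block $j$ equals $y_j\ge 0$, and is larger inside a block) recovers the sharper Catalan bound $C_\ell$ as well. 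In effect you have reconstructed, in injective form, the same bijection the paper cites, but your packaging is more elementary and slightly stronger than what the lemma asks for.
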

  \begin{proof}
    Since $\mathbf{L}_0(\ell) \subset \mathbf{L}(\ell)$, 
    property (1) is a special case of \cref{claimA}.

    Note that for any $j\in\Z$, the quantity $j|\SSS_j(\Gamma)|$ can be interpreted as
    $\sum_{w_i := (i,y_i) \in \SSS_j(\Gamma)}
    (y_{i}-y_{i-1})$. Since any Łukasiewicz path
    has only down steps of degree $1$ and $y_\ell = y_0 = 0$, one has
    \[ \sum_{j\ge 1}{j|\SSS_j(\Gamma)|} - |\SSS_{\searrow}(\Gamma)|
      = \sum_{i=1}^{\ell}(y_{i} - y_{i-1}) = y_\ell - y_0 = 0,\]
which proves property (2).

Finally, Łukasiewicz paths of length $\ell$ (with the classical
convention that we allow horizontal steps at height $0$) are in
bijection with non-crossing partitions of $[1\,..\,\ell]$, whose number is famously given
by the Catalan number $C_\ell = \frac{1}{\ell+1}{2\ell \choose \ell}\sim
\frac{4^\ell}{\ell^{3/2}\sqrt{\pi}}$, as $\ell\to\infty$. This proves (3).
\end{proof}

\begin{proof}[Proof of \cref{lemma_moments}]      
The existence follows from~\cref{theo:LLNDS} and~\cref{theo:ShapeDS}.
For the uniqueness, we will check Carleman's condition.
In other words, let $M_\ell$ be the $\ell$-th moment
of $\mu_{g; \vv}$ given by the RHS of \eqref{mu_moments'}; 
we verify that the sum $\sum_{k\ge 1}{(M_{2k})^{-1/(2k)}}$ diverges.

Let us find an upper bound for $M_{2k}$.
Take any $\Gamma\in\mathbf{L}_0(2k)$ and upper bound the absolute values of the 
summand in the RHS of \eqref{mu_moments'} that corresponds to $\Gamma$. From
\cref{lemma:PathsProp} (1), 
\begin{equation*}
\prod_{i\ge 1}{|i\cdot g|^{|\SSS_{\tiny\rightarrow}^i(\Gamma)|}} = 
\prod_{i=1}^{2k}{|i\cdot g|^{|\SSS_{\tiny\rightarrow}^i(\Gamma)|}} \le 
\prod_{i=1}^{2k}{(2k|g|)^{|\SSS_{\tiny\rightarrow}^i(\Gamma)|}} = 
(2k|g|)^{|\SSS_{\tiny\rightarrow}(\Gamma)|}.
\end{equation*}

On the other hand, our assumption \eqref{eq:what-is-m} implies that
there exists a constant $C>0$ such that $|v_i|\le (i \cdot C)^i$, for all $i\ge 1$. 
Then from \cref{lemma:PathsProp} (2),
\begin{equation*}
\prod_{i\ge 1}{|v_i|^{\,|\SSS_i(\Gamma)|}}
\le \prod_{i\ge 1}{(i\cdot C)^{i |\SSS_i(\Gamma)|}}  \le (2kC)^{\,\sum_{i\ge 1}{i|\SSS_i(\Gamma)|}}
= (2kC)^{\,|\SSS_{\searrow}(\Gamma)|}.
\end{equation*}

Since $|\SSS_{\tiny\rightarrow}(\Gamma)| + |\SSS_{\searrow}(\Gamma)| \le |\SSS(\Gamma)| = 2k$, 
these inequalities imply that 
\begin{equation*}
0\le\left|\prod_{i\ge 1}{ (i\cdot g)^{|\SSS_{\tiny\rightarrow}^i(\Gamma)|}\,v_i^{\,|\SSS_i(\Gamma)|}}\right|
\le (Dk)^{2k},
\end{equation*}
for some constant $D>0$ that is independent of $k$ and of the Łukasiewicz path $\Gamma\in\mathbf{L}_0(2k)$. Then by \cref{lemma:PathsProp} (3), the $(2k)$-th moment of $\mu_{g; \vv}$ is bounded as follows (for large $k$):
\begin{equation*}
0\le M_{2k}\le (Dk)^{2k}\cdot |\mathbf{L}_0(2k)| \le (Dk)^{2k}\cdot 4^{2k}.
\end{equation*}
The previous inequality shows that for large $k$,
we have $(M_{2k})^{-1/(2k)}\ge \frac{1}{4Dk}$.
Since $\sum_{k=1}^\infty{\frac{1}{k}}$ diverges, so does the sum 
$\sum_{k=1}^\infty{(M_{2k})^{-1/(2k)}}$. 
This proves Carleman's condition and completes the proof.
\end{proof}

\smallskip

Let us complete here one of the steps in the proof of \cref{theo:LLNCDM} that was left out in \cref{subsubsec:Cumulants}. There we proved the existence of a probability measure $P_\infty$ with moments 
\begin{equation}\label{moments_P_infty}
\int_\R{x^\ell P_\infty(dx)} = \frac{v_1^{-\frac{\ell+2}{2}}}{\ell+1}\sum_{\Gamma\in\mathbf{L}_0(\ell+2)} {\frac{1}{|\SSS^0(\Gamma)|}\prod_{i\geq 1} (i\cdot g)^{|\SSS^i_{\tiny\rightarrow}(\Gamma)|} \, v_i^{\,|\SSS_i(\Gamma)|}},
\quad\forall\, \ell\in\Z_{\ge 0}.
\end{equation}

As a reminder, $P_\infty$ was relevant to us because we proved that it has density $\frac{\omega_{\Lambda_{g;\vv}}(x)-|x|}{2}$, where $\omega_{\Lambda_{g;\vv}}$ is the limit shape of random Young diagrams obtained in \cref{theo:LLNCDM}.

\begin{corollary}\label{cor:UniqueP}
Let $g,v_1,v_2,\cdots\in\R$ be parameters for which there exist sequences of positive reals 
$\al(d),\, u(d)>0$ such that the weak version of \cref{main_assumption} is satisfied.
We showed in~\cref{sec:proofs_CDM} that there exists a probability measure $P_\infty$ with moments given by \eqref{moments_P_infty}.
Then the probability measure $P_\infty$ is uniquely determined by its moments.
\end{corollary}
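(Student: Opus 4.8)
The plan is to prove determinacy of $P_\infty$ by verifying Carleman's condition, following the template of the proof of \cref{lemma_moments} almost verbatim. Writing $N_{2k}:=\int_\R x^{2k}\,P_\infty(dx)$, it suffices to produce a constant $D>0$ with $N_{2k}\le (Dk)^{2k}$ for all large $k$; then $(N_{2k})^{-1/(2k)}\ge \tfrac{1}{Dk}$, and $\sum_k (N_{2k})^{-1/(2k)}$ diverges together with the harmonic series, which gives uniqueness of the solution to the moment problem.

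The first step is to reduce the required estimate to the combinatorial bound already established in \cref{lemma_moments}. By \eqref{moments_P_infty},
\[
0\le N_{2k}=\frac{v_1^{-(k+1)}}{2k+1}\sum_{\Gamma\in\mathbf{L}_0(2k+2)}\frac{1}{|\SSS^0(\Gamma)|}\prod_{i\ge 1}(i\cdot g)^{|\SSS^i_{\tiny\rightarrow}(\Gamma)|}\,v_i^{\,|\SSS_i(\Gamma)|},
\]
and since $v_1>0$ and $\tfrac{1}{(2k+1)|\SSS^0(\Gamma)|}\le 1$ for every path, the triangle inequality bounds $N_{2k}$ by $v_1^{-(k+1)}$ times the sum of the absolute values of the summands over $\Gamma\in\mathbf{L}_0(2k+2)$. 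That sum is exactly the quantity controlled inside \cref{lemma_moments}: \cref{lemma:PathsProp} bounds the relevant heights, the number of down-steps, and the cardinality $|\mathbf{L}_0(2k+2)|\le 4^{2k+2}$, so each summand is at most $(D_0 k)^{2k+2}$ and the whole sum is at most $(D_0 k)^{2k+2}4^{2k+2}$. Absorbing the fixed factor $v_1^{-(k+1)}=(v_1^{-1/2})^{2k+2}$ and the harmless $1/(2k+1)$ into the constant yields $N_{2k}\le (Dk)^{2k}$, completing Carleman's condition.

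The one point requiring care — and the only genuinely new ingredient relative to \cref{lemma_moments} — is the super-exponential growth control on the coefficients, namely $\sup_{k\ge 2}\sqrt[k]{|v_k|}/k<\infty$, since it is this bound (used as $|v_i|\le (iC)^i$) that powers the estimate on the inner sum. The hard part is therefore to confirm that this holds under the hypothesis that the weak version of \cref{main_assumption} is satisfiable. In the cases of interest this is immediate: whenever $\vv$ (respectively $\pm\vv$) determines a totally Jack-positive specialization, \cref{thm_jack_spec} gives $v_k=\sum_i a_i^k$ with $a_1\ge a_2\ge\cdots\ge 0$ and $\sum_i a_i\le v_1$, whence $|v_k|\le a_1^{k-2}\big(\textstyle\sum_i a_i\big)^2\le v_1^{k}$ and $\sqrt[k]{|v_k|}/k\le v_1/k\to 0$. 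Granting this growth bound, all remaining steps are verbatim repetitions of \cref{lemma_moments}, the extra factors $v_1^{-(k+1)}$ and $1/\big((2k+1)|\SSS^0(\Gamma)|\big)\le 1$ affecting only the implied constant.
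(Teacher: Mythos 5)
Your overall strategy --- Carleman's condition via the path estimates of \cref{lemma_moments}, absorbing the extra factors $v_1^{-(k+1)}$ and $1/\big((2k+1)\,|\SSS^0(\Gamma)|\big)\le 1$ into the constant --- is exactly the paper's, and that part of your argument is sound. The gap is in the step you yourself identify as the crucial one: verifying the growth condition \eqref{eq:what-is-m}. You verify it only when $\vv$ (or $\pm\vv$) determines a \emph{totally} Jack-positive specialization, i.e.\ when $(g,\vv)$ is an admissible pair. But the hypothesis of the corollary is strictly weaker: it only asks that $M^{(\al(d))}_{u(d);\vv}(\la)\ge 0$ for the particular sequences $\al(d),u(d)$, i.e.\ that $u(d)\cdot\vv$ is $\al(d)$-Jack-positive for each $d$. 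By \cref{thm:KOO}, such specializations may carry nonzero Thoma ``column'' parameters $b_i(d)$, so that $u(d)\,v_k=\sum_i a_i(d)^k+(-\al(d))^{1-k}\sum_i b_i(d)^k$; these are not of the form $\sum_i a_i^k$, can alternate in sign, and are not covered by \cref{thm_jack_spec}. Concretely, the fixed-temperature examples in \cref{ex:CDM} with $b_i\neq 0$ satisfy the weak version of \cref{main_assumption} but fall outside your case analysis, so as written your proof does not establish the corollary in the stated generality (and that generality is needed, since the corollary is invoked inside the proof of the LLN under exactly this weak hypothesis).

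The repair is short and is what the paper does. Apply \cref{thm:KOO} for each $d$ (in fact a single fixed $d$ suffices): there exist $a_1(d)\ge a_2(d)\ge\cdots\ge 0$ and $b_1(d)\ge b_2(d)\ge\cdots\ge 0$ with $\sum_i\big(a_i(d)+b_i(d)\big)\le u(d)\,v_1$ and $u(d)\,v_k=\sum_i a_i(d)^k+(-\al(d))^{1-k}\sum_i b_i(d)^k$ for all $k\ge 2$. Bounding each power sum by the corresponding power of $\sum_i a_i(d)$ or $\sum_i b_i(d)$ gives
\[
|v_k|\ \le\ u(d)^{k-1}\,v_1^{k}\,\big(1+\al(d)^{1-k}\big)\ \le\ C^{k},
\qquad k\ge 2,
\]
where $C$ depends only on $u(d),\al(d),v_1$ for that one fixed $d$. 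This yields $\sup_{k\ge 2}\sqrt[k]{|v_k|}/k<\infty$, i.e.\ \eqref{eq:what-is-m}, in the full generality of the hypothesis; with that in hand, the remainder of your argument goes through verbatim.
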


\begin{proof}
  \cref{thm:KOO} implies that for each $d$, there
  exist sequences $a_1(d)\geq a_2(d) \geq \cdots \geq 0$, $b_1(d)\geq
  b_2(d) \geq \cdots \geq 0$ such that $u(d)\cdot v_1\ge \sum_{i=1}^\infty (a_i(d) +
  b_i(d))$ and $u(d)\cdot v_k = \sum_{i=1}^\infty a_i(d)^k +
(-\alpha(d))^{1-k}\sum_{i=1}^\infty b_i(d)^k$, for all $k \geq 2$. 
One can easily check that this implies that the sequence $(v_k)_{k \geq 1}$ satisfies the assumption \eqref{eq:what-is-m} from \cref{def:approx-factorization-charactersA}. This allows us to use the same estimates from the proof of \cref{lemma_moments} to prove Carleman's condition.

  For all $\ell\in\Z_{\ge 1}$, let $|m_\ell| := \sum_{\Gamma\in\mathbf{L}_0(\ell)} \left|\prod_{i\geq 1} (i\cdot g)^{|\SSS^i_{\tiny\rightarrow}(\Gamma)|} \, v_i^{\,|\SSS_i(\Gamma)|}\right|$, and let $M_\ell^\infty$ be the $\ell$-th moment of $P_\infty$. Since $|\SSS^0(\Gamma)|\ge 1$, then $|M_\ell^\infty| \le |v_1|^{-\frac{\ell+2}{2}}\cdot |m_{\ell+2}|$, and in particular,
\[ (M_{2k}^\infty)^{-1/(2k)} \ge C_1\cdot \left(|m_{2k+2}|^{-1/(2k+2)}\right)^{(k+1)/k}, \]
for some constant $C_1>0$ and all $k\in\Z_{\ge 1}$.
In the proof of \cref{lemma_moments}, it was shown that $(m_{2k+2})^{-1/(2k+2)}\ge\frac{C_2}{k}$, for some constant $C_2>0$ and all $k\in\Z_{\ge 1}$.
Putting these two inequalities together gives the existence of some constant $C>0$ such that $(M_{2k}^\infty)^{-1/(2k)} \ge \frac{C}{k}$, for all $k\in\Z_{\ge 1}$. Since $\sum_{k=1}^\infty{\frac{1}{k}}$ diverges, so does $\sum_{k=1}^\infty{(M_{2k}^\infty)^{-1/(2k)}}$. This proves Carleman's condition and finishes the proof.
\end{proof}

For the next proposition, recall that we use the notation $\pm\vv:=(v_1, -v_2, v_3, -v_4, \dots)$. 

\begin{proposition}[Duality between $\mu_{g;\vv}$ and $\mu_{-g;\,\pm\vv}$]\label{prop_duality}
 Let $\left(\chi_d^\a\right)_{d \geq 1} \in \mathcal{F}^{\AFP}_{g,g',\vv}$, and
 let $\left(\tilde{\chi}_d^{(\tilde{\alpha})}\right)_{d \geq 1} \in
 \mathcal{F}^{\AFP}_{-g,-g',\pm\vv}$ be the associated sequences from
 \cref{prop:DualityAFP}. Then
 \[\omega_{\Lambda^{(\alpha)}_d}(-x) =
   \omega_{\Lambda^{(\tilde{\alpha})}_d}(x)\]
 as random functions. In particular
 \[ \omega_{\Lambda_{g;\vv}}(x) = \omega_{\Lambda_{-g;\pm\vv}}(-x),
   \text{ and }
 \mu_{g;\vv} = \mu_{-g;\,\pm\vv}.\]
\end{proposition}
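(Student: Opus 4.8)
The plan is to establish the duality $\mu_{g;\vv} = \mu_{-g;\,\pm\vv}$ by first proving the stronger statement that the random profiles are related by reflection across the vertical axis, and then deducing the statement about the limit shapes and transition measures by passing to the limit. The key algebraic input is the map from \cref{prop:DualityAFP}, which asserts that $\left(\tilde\chi_d^{(\tilde\alpha)}\right)_{d\ge 1}\in\mathcal{F}^{\AFP}_{-g,-g',\pm\vv}$ is obtained from $\left(\chi_d^\a\right)_{d\ge 1}$ via $\tilde\alpha = \alpha^{-1}$ and $\tilde\chi_d^{(\tilde\alpha)}(\mu) = (-1)^{\|\mu\|}\chi_d^{(\alpha)}(\mu)$, and which gives the measure identity \eqref{MeasuresDuality}: $\mathbb{P}^{(\tilde\alpha)}_{\tilde\chi_d}(\lambda) = \mathbb{P}^{(\alpha)}_{\chi_d}(\lambda')$.

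First I would unpack the geometric meaning of conjugation $\lambda\mapsto\lambda'$. The Young diagram of $\lambda'$ is the reflection of $\lambda$ across the main diagonal $y=x$ (in French coordinates). In the Russian convention, where $u=x-y$ and $v=x+y$, this reflection is exactly the map $u\mapsto -u$, i.e.\ reflection of the profile across the vertical axis. However, one must track the anisotropic rescaling carefully: the diagram $\Lambda^{(\alpha)}_d = T_{\sqrt{\alpha/d},\sqrt{1/(\alpha d)}}\lambda$ uses box-width $\sqrt{\alpha/d}$ and box-height $\sqrt{1/(\alpha d)}$, whereas $\Lambda^{(\tilde\alpha)}_d = T_{\sqrt{\tilde\alpha/d},\sqrt{1/(\tilde\alpha d)}}\lambda'$ uses width $\sqrt{\tilde\alpha/d}=\sqrt{1/(\alpha d)}$ and height $\sqrt{1/(\tilde\alpha d)}=\sqrt{\alpha/d}$. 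Thus conjugating $\lambda$ swaps the roles of rows and columns, and simultaneously the $\tilde\alpha$-rescaling swaps width and height back, so that $T_{\sqrt{\tilde\alpha/d},\sqrt{1/(\tilde\alpha d)}}\lambda'$ is precisely the reflection of $T_{\sqrt{\alpha/d},\sqrt{1/(\alpha d)}}\lambda$ across $y=x$ (French) equivalently across the vertical axis (Russian). This gives the pointwise identity $\omega_{T_{\sqrt{\tilde\alpha/d},\sqrt{1/(\tilde\alpha d)}}\lambda'}(x) = \omega_{T_{\sqrt{\alpha/d},\sqrt{1/(\alpha d)}}\lambda}(-x)$. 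Combining this with \eqref{MeasuresDuality}, which says that sampling $\lambda$ from $\mathbb{P}^{(\alpha)}_{\chi_d}$ and conjugating has the same law as sampling directly from $\mathbb{P}^{(\tilde\alpha)}_{\tilde\chi_d}$, yields the equality in law $\omega_{\Lambda^{(\alpha)}_d}(-x)\overset{d}{=}\omega_{\Lambda^{(\tilde\alpha)}_d}(x)$ as random functions.

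Having established the random-function identity, the limit statements follow by applying the LLN (\cref{theo:LLNDS}) to both sequences. Since $\left(\chi_d^\a\right)_{d\ge 1}\in\mathcal{F}^{\AFP}_{g,g',\vv}$ has limit shape $\omega_{\Lambda_{g;\vv}}$ and $\left(\tilde\chi_d^{(\tilde\alpha)}\right)_{d\ge 1}\in\mathcal{F}^{\AFP}_{-g,-g',\pm\vv}$ has limit shape $\omega_{\Lambda_{-g;\pm\vv}}$, taking the $d\to\infty$ limit of the equality in law (using that convergence in probability in supremum norm is preserved under the reflection $x\mapsto -x$, which is a continuous involution on the relevant function space) gives $\omega_{\Lambda_{g;\vv}}(x) = \omega_{\Lambda_{-g;\pm\vv}}(-x)$. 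Finally, by the Markov--Krein correspondence (\cref{theo:Meliot}), this reflection of profiles corresponds to the pushforward of the transition measure under $x\mapsto -x$; the identity $\omega_{\Lambda_{g;\vv}}(x)=\omega_{\Lambda_{-g;\pm\vv}}(-x)$ then forces $\mu_{g;\vv}=\mu_{-g;\,\pm\vv}$ after noting that the mean of each transition measure is zero and the reflection symmetry is consistent.

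The main obstacle I anticipate is the bookkeeping in the second paragraph: carefully verifying that the combination of conjugation and the $\alpha\leftrightarrow\alpha^{-1}$ rescaling produces an exact reflection of the rescaled profile, rather than some rescaling-and-reflection that does not quite close up. One should double-check the derivatives/equivalence classes in $\Y^1$ (recall profiles are only defined up to $\omega_1\sim\omega_2\iff\omega_1'\equiv\omega_2'$), and confirm that the reflection $x\mapsto -x$ maps the defining integrability conditions \eqref{eq:GenContYoung} to themselves, so that it is a genuine homeomorphism of $\Y^1$ intertwining the Markov--Krein maps with the pushforward $\mu\mapsto\mu(-\,\cdot\,)$. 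As a direct consistency check, one could verify the moment identity at the level of the combinatorial formula \eqref{mu_moments'}: reflecting a measure sends its $\ell$-th moment to $(-1)^\ell$ times itself, and one expects the substitution $g\mapsto -g$, $v_k\mapsto(-1)^{k-1}v_k$ to produce exactly this sign $(-1)^\ell$ on the Łukasiewicz-path generating function, by tracking the total degree of the weights $i\cdot g$ and $v_i$ along each path.
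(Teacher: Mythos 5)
Your proposal is correct and is essentially the paper's own proof: the paper establishes the same deterministic reflection identity $\omega_{T_{\sqrt{\al/d},\,\sqrt{1/(\al d)}}\,\la}(-x)=\omega_{T_{\sqrt{\tilde{\al}/d},\,\sqrt{1/(\tilde{\al}d)}}\,\la'}(x)$, combines it with \eqref{MeasuresDuality} to get equality in distribution of the random profiles, passes to the $d\to\infty$ limit (via \cref{theo:ShapeDS}, which is what actually identifies the two limit shapes as $\omega_{\Lambda_{g;\vv}}$ and $\omega_{\Lambda_{-g;\pm\vv}}$ --- the identification you attribute to \cref{theo:LLNDS}), and concludes the statement about transition measures from the Markov--Krein correspondence. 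Your closing moment check (the sign $(-1)^\ell$ produced by $g\mapsto -g$, $v_k\mapsto(-1)^{k-1}v_k$ on the Łukasiewicz-path sums) correctly captures that the final relation between $\mu_{g;\vv}$ and $\mu_{-g;\pm\vv}$ is the pushforward under $x\mapsto -x$ induced by the reflection of profiles, which is precisely how the paper's proof uses the Markov--Krein step.
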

\begin{proof}
For any partition $\la$ of $d$, we have
\begin{equation}\label{eq:duality_proof1}
\omega_{T_{\sqrt{\frac{\al}{d}},\sqrt{\frac{1}{\al d}}}\la}(-x) =
   \omega_{T_{\sqrt{\frac{1}{\al d}},\sqrt{\frac{\al}{d}}}\la'}(x) =
   \omega_{T_{\sqrt{\frac{\tilde{\al}}{d}},\sqrt{\frac{1}{\tilde{\al}d}}}\la'} (x).
\end{equation}
\cref{prop:DualityAFP} states that $\mathbb{P}^{(\tilde{\alpha})}_{\tilde{\chi}_d}(\lambda) = \mathbb{P}^{\a}_{\chi_d}(\lambda')$, which together with~\eqref{eq:duality_proof1} implies
that $\omega_{\Lambda^{(\alpha)}_d}(-x)$ and $\omega_{\Lambda^{(\tilde{\alpha})}_d}(x)$ have the same distribution.
Passing to the limit $d \to \infty$,
 \cref{theo:ShapeDS} implies that $\omega_{\Lambda_{g;\vv}}(x) =
 \omega_{\Lambda_{-g;\pm\vv}}(-x)$.
Finally, $\mu_{g;\vv} = \mu_{-g;\,\pm\vv}$ follows from the Markov--Krein correspondence.
\end{proof}

\subsection{The limit shape $\omega_{\Lambda_{g;\vv}}$ is an infinite staircase}

For the remainder of this section, assume that $(g,\vv)$ is an
  admissible pair with $v_1=1$, and $\mu_{g;\vv}$ is the probability
  measure determined by \cref{lemma_moments}.

\hspace{1pt}

Note that if $(g,\vv)$ is an admissible pair, then so is
$(-g,\pm\vv)$. In particular, this means that both probability measures
$\mu_{g; \vv}$ and $\mu_{-g;\,\pm\vv}$ exist and are uniquely determined
by their moments.
\cref{prop:DualityAFP} and \cref{prop_duality} imply that the cases $g>0$ and $g<0$ are dual to
each other. Thus, in the proofs below we will stick to only one of these two cases, either $g>0$ or $g<0$, if the other one follows by duality.
We would like to completely describe the limit shape
$\omega_{\Lambda_{g;\vv}}$ for $g\ne 0$.
Qualitatively, this shape is described by the following result.

\begin{proposition}\label{prop:LongestRows}
Let $(\chi_d)_{d\ge 1}$ be the Jack characters described in \cref{theo:Depoissonization}, let $\PP^\a_{\chi_d}$ be the corresponding probability measures, and let $\la^{(d)}\in\Y_d\,$ be $\PP^\a_{\chi_d}$-distributed random partitions.

If $g>0$, then the following limits exist in probability
\begin{equation}\label{eq:limit_cols}
\tilde{\la}'_i := g^{-1}\cdot\lim\frac{(\la^{(d)})'_i}{d},
\end{equation}
for all $i=1,2,\dots$, and
\begin{equation}\label{eq:LimProfileShape}
\omega_{\Lambda_{g,\vv}} (x) = 
\begin{cases}
-x, &\text{ if } x < -\tilde{\la}'_1,\\
2\tilde{\lambda}'_i+x, &\text{ if } x\in\big[-\tilde{\la}'_i+(i-1)g,\ -\tilde{\la}'_i+ig\big],\text{ for some } i\ge 1,\\
-x+2ig, &\text{ if } x\in\big[-\tilde{\lambda}'_i+ig,\ -\tilde{\lambda}'_{i+1}+ig\big],\text{ for some } i\ge 1.
\end{cases}
\end{equation}

Likewise, if $g<0$, then the following limits exist in probability
\begin{equation}\label{eq:limit_rows}
\tilde{\la}_i := -g^{-1}\cdot\lim\frac{\la^{(d)}_i}{d},
\end{equation}
for all $i=1,2,\dots$, and
\begin{equation}\label{eq:LimProfileShape'}
\omega_{\Lambda_{g,\vv}} (x) = 
\begin{cases}
x, &\text{ if } x >\tilde{\la}_1,\\
2\tilde{\lambda}_i-x, &\text{ if } x\in\big[\tilde{\la}_i+ig,\ \tilde{\la}_i+(i-1)g\big],\textrm{ for some } i \geq 1, \\
x-2ig, &\text{ if } x\in\big[\tilde{\la}_{i+1}+ig,\ \tilde{\la}_i+ig\big], \textrm{ for some } i\ge 1.
\end{cases}
\end{equation}
\end{proposition}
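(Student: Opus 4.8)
The plan is to reduce to the low-temperature case $g<0$ and to exploit the crucial feature that, for the specific sequence $\al=\al(d)=\frac{1}{g^2d}$ from \cref{theo:Depoissonization}, the boxes of the rescaled diagram $\Lambda^\a_d=T_{\sqrt{\al/d},\sqrt{1/(\al d)}}\la$ have width $\frac{1}{|g|d}\to 0$ but height exactly $\sqrt{1/(\al d)}=|g|=-g$, independent of $d$. Consequently $\Lambda^\a_d$ is literally a union of horizontal strips of height $-g$, the $i$-th strip having width $w_i^{(d)}:=\frac{\la^{(d)}_i}{|g|d}$, so that its profile $\omega_{\Lambda^\a_d}$ is exactly the staircase obtained by substituting $w_i^{(d)}$ for $\tilde{\la}_i$ in \eqref{eq:LimProfileShape'} (a routine change from the French to the Russian coordinates). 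Thus the entire prelimit profile is encoded by the single decreasing sequence $(w_i^{(d)})_{i\ge 1}$. For the reduction itself, \cref{prop:DualityAFP} and \cref{prop_duality} show that conjugation $\la\mapsto\la'$ together with $(g,\vv)\mapsto(-g,\pm\vv)$ exchanges rows and columns and reflects the profile through $x\mapsto -x$; hence it suffices to prove \eqref{eq:limit_rows} and \eqref{eq:LimProfileShape'} for $g<0$, and applying the duality then yields \eqref{eq:limit_cols} and \eqref{eq:LimProfileShape} for $g>0$ after matching the two piecewise-linear formulas, which is a direct check.

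The input from the limit theorem is that \cref{theo:Depoissonization} identifies the limit shape of $\Lambda^\a_d$ with $\omega_{\Lambda_{g;\vv}}$, so that \cref{theo:LLNDS} gives $\|\omega_{\Lambda^\a_d}-\omega_{\Lambda_{g;\vv}}\|_\infty\to 0$ in probability. The heart of the argument is then to show that for every fixed $i$ the rescaled width $w_i^{(d)}$ converges in probability to a constant $\tilde{\la}_i$. This is exactly where the fixed step height $-g$ is essential: because each riser of $\omega_{\Lambda^\a_d}$ is a slope-$(-1)$ segment of fixed length $-g$, the position of the $i$-th corner is a continuous functional of the profile in the supremum norm --- a corner cannot be thin and tall, so a small sup-norm perturbation can move it only slightly. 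Concretely, on the event $\{\|\omega_{\Lambda^\a_d}-\omega_{\Lambda_{g;\vv}}\|_\infty\le\delta\}$ one bounds $|w_i^{(d)}-\tilde{\la}_i|$ by a quantity tending to $0$ with $\delta$, where $\tilde{\la}_i$ is read off from the $i$-th corner of the fixed limit function; combined with the sup-norm convergence this gives \eqref{eq:limit_rows}.

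I would organize this crux step as an induction on $i$ anchored at the rightmost corner: to the right of $w_1^{(d)}$ the profile equals $x$ exactly, and examining where $\omega_{\Lambda^\a_d}(x)-x$ first becomes positive (from the right) pins down $w_1^{(d)}\to\tilde{\la}_1$; peeling off the top strip and iterating then controls $w_2^{(d)},w_3^{(d)},\dots$. Possible ties $\tilde{\la}_i=\tilde{\la}_{i+1}$ cause no difficulty, since one may instead track the partial sums $\sum_{j\le i}w_j^{(d)}$, which are manifestly continuous sup-norm functionals of the profile. Once \eqref{eq:limit_rows} is established, the explicit prelimit staircase converges pointwise to the staircase built from $(\tilde{\la}_i)_i$, namely the right-hand side of \eqref{eq:LimProfileShape'}; by uniqueness of the supremum-norm limit this equals $\omega_{\Lambda_{g;\vv}}$. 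This simultaneously confirms that $\mu_{g;\vv}$ is the discrete measure with atoms at the valleys $x_i=\tilde{\la}_i+ig$, in agreement with the discreteness of $\mu_{g;\vv}$ established earlier in this section.

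The main obstacle is precisely this upgrade from global (supremum-norm) control of the profile to the local, edge-type information carried by individual row lengths. In the fixed-temperature regime $g=0$ the analogous step genuinely fails, because the box height shrinks and corners may degenerate; the whole argument therefore hinges on the regime-specific rigidity that in high and low temperature the steps of the rescaled diagram retain the fixed height $|g|=-g$.
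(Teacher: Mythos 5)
Your proposal is correct and follows essentially the same route as the paper's proof: both rest on the observation that in this scaling regime the rescaled diagram is a staircase with steps of fixed size $|g|$, so its profile is exactly the formula \eqref{eq:LimProfileShape'} (resp.~\eqref{eq:LimProfileShape}) with the rescaled row (resp.~column) lengths substituted in, and then invoke \cref{theo:LLNDS} together with \cref{theo:Depoissonization}. The only differences are cosmetic: the paper treats $g>0$ directly (columns of fixed width $g$) where you treat $g<0$ and pass to $g>0$ by the duality of \cref{prop:DualityAFP,prop_duality}, and you spell out the corner-rigidity argument (sup-norm closeness forces closeness of each strip width because steps have fixed size) that the paper compresses into ``it necessarily follows that the limits in probability exist.''
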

\begin{proof}
Assume that $g >0$. Recall that, in the notation~\eqref{eq:Lambda}, we have the rescaled partition $\Lambda^\a_d = T_{\sqrt{\frac{\al}{d}},\, \sqrt{\frac{1}{\al d}}}\,\la^{(d)}$.
Since the parameters $\al,\, d,\, g$ are related by~\eqref{alpha_d}, we have $\Lambda^\a_d = T_{g,\, \frac{1}{gd}}\left(\la^{(d)}\right)$.
This means that $\Lambda^\a_d$ is a random diagram, which is obtained (in the French notation) by stacking rectangles of sizes $g\times\frac{(\la^{(d)})'_1}{gd}, g\times\frac{(\la^{(d)})'_2}{gd},\cdots$ next to each other from
left to right.
Equivalently, the profile of $\Lambda^\a_d$ is equal to
\begin{equation*}
\omega_{\Lambda^\a_d}(x) = 
\begin{cases}
-x, &\text{ if } x < -\frac{(\la^{(d)})'_1}{gd},\\
2\frac{(\la^{(d)})'_i}{gd}+x, &\text{ if } x\in\big[-\frac{(\la^{(d)})'_i}{gd}+(i-1)g,\ 
-\frac{(\la^{(d)})'_i}{gd}+ig\big],\text{ for some }i\ge 1,\\
-x+2ig, &\text{ if } x\in\big[-\frac{(\la^{(d)})'_i}{gd}+ig,\ -\frac{(\la^{(d)})'_{i+1}}{gd}+ig\big],\text{ for some }i\ge 1.
\end{cases}
\end{equation*}
Due to the existence of a limit shape $\omega_{\Lambda_\infty}$ of the
profiles $\omega_{\Lambda^\a_d}$ in the supremum norm in probability
(\cref{theo:LLNDS}) and the fact that $g>0$ is deterministic, it
necessarily follows that the limits in probability
\[
\tilde{\la}'_i := g^{-1}\cdot\lim\frac{(\la^{(d)})'_i}{d}
\]
exist, for all $i=1,2,\cdots$.
Moreover, the limit $\Lambda_\infty$ of the rescaled Young diagrams is obtained (in the French notation) by putting rectangles of sizes $g\times \tilde{\lambda}'_1, g\times \tilde{\lambda}'_2, \dots$ next to each other from left to right.
Equivalently, the limit profile $\omega_{\Lambda_\infty}$ is given by the RHS of~\eqref{eq:LimProfileShape}\footnote{See~\cref{fig:LowTempLim} for an illustration of the rescaled Young diagram $\Lambda^\a_d$ and the limit shape $\Lambda_\infty$ when $g<0$.}.
By \cref{theo:Depoissonization}, the limit profile $\omega_{\Lambda_\infty}$ is equal to $\omega_{\Lambda_{g,\vv}}$, therefore \cref{eq:LimProfileShape} follows.
\end{proof}

As a result of \cref{prop:LongestRows}, the limit profile $\omega_{\Lambda_{g,\vv}}$ is a piecewise linear curve that only has
slopes $\pm 1$ (it is a \emph{staircase shape}). Moreover, assuming
that $\tilde{\la}_1'>\tilde{\la}_2'>\dotsm$ when $g>0$ (resp. $\tilde{\la}_1>\tilde{\la}_2>\dotsm$ when $g<0$) are all distinct, then $\omega_{\Lambda_{g,\vv}}$ 
is entirely determined by the locations of its local minima.
The interplay between the local minima of $\omega_{\Lambda_{g,\vv}}$ and the support of the transition measure $\mu_{g;\vv}$ in the next result will play a critical role.

\begin{proposition}\label{measures_discrete}
The support of $\mu_{g;\vv}$ is discrete, and it coincides with the set of local minima of $\omega_{\Lambda_{g;\vv}} (x)$.
Moreover, if $g>0$, then $\supp(\mu_{g; \vv}) \subseteq [-g^{-1},\, +\infty)$, whereas if $g<0$, then $\supp(\mu_{g; \vv}) \subseteq (-\infty,\, -g^{-1}]$.
Finally, the distance between two consecutive points of $\supp\mu_{g;\vv}$ is at least $|g|$.
\end{proposition}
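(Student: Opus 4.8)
The plan is to read off everything from the explicit staircase description of $\omega_{\Lambda_{g;\vv}}$ furnished by \cref{prop:LongestRows}, and then feed the resulting interlacing sequence into the transition-measure machinery of \cref{sec:transition_measure}. By \cref{prop_duality} the cases $g>0$ and $g<0$ are reflections of one another, so I would fix $g<0$ and recover the $g>0$ statement at the end via $x\mapsto -x$, $g\mapsto -g$, $\vv\mapsto\pm\vv$. For $g<0$, inspecting the slopes in \eqref{eq:LimProfileShape'} shows that the local minima of $\omega_{\Lambda_{g;\vv}}$ sit at $m_i := \tilde{\la}_i+(i-1)g$ and the local maxima at $M_i := \tilde{\la}_i+ig$, for $i\ge 1$, where $\tilde{\la}_i=-g^{-1}\lim \la^{(d)}_i/d$ are the deterministic rescaled limiting row lengths from \cref{prop:LongestRows}. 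Since $\tilde{\la}_1\ge\tilde{\la}_2\ge\cdots\ge 0$ and $g<0$, these satisfy $m_1>M_1\ge m_2>M_2\ge\cdots$, i.e.\ a decreasing interlacing sequence with a \emph{minimum} as its largest term, exactly of the type $x_1>y_1>x_2>y_2>\cdots$ treated in \cref{sec:transition_measure}, with support atoms $x_i=m_i$ and zeros $y_i=M_i$.

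The core step is to identify $\mu_{g;\vv}$ with the discrete measure attached to this interlacing sequence. By \cref{theo:LLNCDM} the shape $\omega_{\Lambda_{g;\vv}}$ is a genuine element of $\Y^1$, so by the Markov--Krein homeomorphism (\cref{theo:Meliot}) its image $\mu_{g;\vv}$ is precisely the measure whose Cauchy transform is the infinite product $\prod_i (z-M_i)/(z-m_i)$. I would then verify Kerov's summability condition \eqref{eq:KerovCondInfinite}: because $0\le\tilde{\la}_i\le\tilde{\la}_1$ is bounded, one has $|M_i|=|\tilde{\la}_i+ig|\to\infty$, while $\sum_i|m_{i+1}-M_i|=\sum_i(\tilde{\la}_i-\tilde{\la}_{i+1})$ telescopes to $\tilde{\la}_1-\lim_i\tilde{\la}_i<\infty$; since $1/|M_i|$ is bounded, the series in \eqref{eq:KerovCondInfinite} converges absolutely. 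Consequently \eqref{eq:TransitionSupport} applies and $\mu_{g;\vv}=\sum_i\mu_i\delta_{m_i}$ with all $\mu_i>0$, so $\supp\mu_{g;\vv}$ is exactly the (discrete) set of local minima of $\omega_{\Lambda_{g;\vv}}$. The minimal gap is then immediate from the explicit formula: $m_i-m_{i+1}=(\tilde{\la}_i-\tilde{\la}_{i+1})-g=(\tilde{\la}_i-\tilde{\la}_{i+1})+|g|\ge|g|$, which yields the claimed lower bound on the distance between consecutive atoms.

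For the one-sided support bound I would use only the trivial partition inequality $\la^{(d)}_1\le|\la^{(d)}|=d$. The largest atom is $m_1=\tilde{\la}_1=-g^{-1}\lim \la^{(d)}_1/d$, and since $\lim\la^{(d)}_1/d\le 1$ while $-g^{-1}>0$, this gives $m_1\le -g^{-1}$, hence $\supp\mu_{g;\vv}\subseteq(-\infty,-g^{-1}]$. The dual statement for $g>0$ follows from $\ell(\la^{(d)})=(\la^{(d)})'_1\le d$: by \eqref{eq:limit_cols} the smallest atom is $-\tilde{\la}'_1=-g^{-1}\lim(\la^{(d)})'_1/d$, so $\tilde{\la}'_1\le g^{-1}$ and therefore $-\tilde{\la}'_1\ge -g^{-1}$, giving $\supp\mu_{g;\vv}\subseteq[-g^{-1},\infty)$.

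The step I expect to demand the most care is the middle one, where the explicit staircase must be genuinely matched with the infinite-interlacing transition-measure construction: beyond checking \eqref{eq:KerovCondInfinite}, one must handle possible coincidences $\tilde{\la}_i=\tilde{\la}_{i+1}$, in which case the zero $M_i$ cancels the pole $m_{i+1}$ and the corresponding atom drops out. This does not affect the conclusions, because deleting cancelled atoms only enlarges the gaps between the surviving support points (so the bound $|g|$ is preserved) and leaves the endpoint estimate untouched; nonetheless it is the place where the argument must be written out carefully rather than merely read off from \cref{prop:LongestRows}.
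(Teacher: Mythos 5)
Your proposal is correct and follows essentially the same route as the paper's proof: both read the staircase structure off \cref{prop:LongestRows}, identify $\supp(\mu_{g;\vv})$ with the set of local minima via the interlacing-sequence construction of \cref{sec:transition_measure} and \eqref{eq:TransitionSupport}, bound the extreme atom by the trivial inequality $\la^{(d)}_1\le d$ (resp.\ $(\la^{(d)})'_1\le d$), and obtain the gap bound from monotonicity of the limiting row/column lengths. The only differences are cosmetic: you fix $g<0$ where the paper fixes $g>0$ (duality covers both), and you spell out the verification of Kerov's condition \eqref{eq:KerovCondInfinite} and the cancellation of coinciding minima/maxima, steps the paper handles implicitly by working directly with consecutive \emph{actual} local minima indexed by $j>i$.
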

\begin{proof}
Assume that $g>0$.
By \cref{prop:LongestRows}, the limit shape $\omega_{\Lambda_{g;\vv}}$ only has slopes $\pm 1$, thus it is uniquely determined by its local minima $x_i$ and local maxima $y_i$ that form an interlacing sequence of the form $x_1 < y_1 < x_2 < y_2 < \cdots$ (which could be finite or countably infinite).
As a result, \cref{eq:TransitionSupport} implies that $\supp(\mu_{g; \vv}) = \{x_1,x_2,\dots\}$ is discrete.

Moreover, by~\eqref{eq:LimProfileShape}, the smallest local minimum is $x_1=-\tilde{\la}'_1$.
Since $(\la^{(d)})'_1\le \big|\la_1^{(d)}\big|=d$, it follows that
\[
x_1=-\tilde{\la}'_1 = -g^{-1}\cdot\lim\frac{(\la^{(d)})'_i}{d} \ge -g^{-1}\cdot 1 = -g^{-1}.
\]
Finally, by~\eqref{eq:LimProfileShape}, two consecutive local minima must be of the form $-\tilde{\la}'_i + (i-1)g$ and $-\tilde{\la}'_j + (j-1)g$, for some $j>i$.
Since $(\la^{(d)})'_i\ge(\la^{(d)})'_j$, whenever $j>i$, the distance between these consecutive local minima is
\[
(-\tilde{\la}'_j + (j-1)g) - (-\tilde{\la}'_i + (i-1)g)
= (j-i)g + g^{-1}\cdot\lim\frac{(\la^{(d)})'_i - (\la^{(d)})'_j}{d}\ge g,
\]
which ends the proof.
\end{proof}

It is not clear from \cref{prop:LongestRows} whether the staircase shape $\omega_{\Lambda_{g,\vv}}$ has infinitely many local minima or only finitely many.
In fact, $\omega_{\Lambda_{g,\vv}}$ will have only finitely many local minima if some $\tilde{\la}_i'$ (resp.~$\tilde{\la}_i$), as defined by~\eqref{eq:limit_cols} when $g>0$ (resp.~by~\eqref{eq:limit_rows} when $g<0$), is equal to zero;
the next theorem shows that this never happens.

\begin{theorem}\label{main_thm_2}
The support of $\mu_{g;\vv}$ is countably infinite. Moreover, if $\pm g>0$, then the unique accumulation point of the support of $\mu_{g; \vv}$ is $\pm\infty$.
Consequently, the limit shape $\omega_{\Lambda_{g,\vv}}$ is a semi-infinite staircase shape with only slopes $\pm 1$ that extends to $+\infty$, if $g>0$, and extends to $-\infty$, if $g<0$.
\end{theorem}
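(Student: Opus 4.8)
The plan is to reduce the entire statement to the single claim that $\supp(\mu_{g;\vv})$ is infinite. Indeed, \cref{measures_discrete} already shows that this support is discrete, that consecutive support points lie at distance at least $|g|$, and that it is contained in $[-g^{-1},+\infty)$ when $g>0$ (resp.\ in $(-\infty,-g^{-1}]$ when $g<0$). A discrete subset of $\R$ whose consecutive gaps are bounded below by $|g|>0$ has no finite accumulation point; hence, if such a set is bounded below (the case $g>0$) and infinite, it is automatically countable, unbounded above, and has $+\infty$ as its unique accumulation point. Via \cref{prop:LongestRows}, having infinitely many support points is the same as having infinitely many local minima of $\omega_{\Lambda_{g;\vv}}$, i.e.\ a genuinely semi-infinite staircase extending to $+\infty$. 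Moreover \cref{prop_duality} gives $\omega_{\Lambda_{g;\vv}}(x)=\omega_{\Lambda_{-g;\pm\vv}}(-x)$, so the $g<0$ shape is the reflection of the $(-g)>0$ shape; it therefore suffices to prove infinitude of the support for $g>0$, the reflection turning $+\infty$ into $-\infty$.

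So fix $g>0$. The decisive point is positivity: since $(g,\vv)$ is admissible with $v_1=1$, \cref{thm_jack_spec} yields $v_i=\sum_j a_j^i\ge 0$ for all $i\ge 2$, while $v_1=1>0$ and $g>0$. Hence every summand $\prod_{i\ge 1}(i\cdot g)^{|\SSS_{\tiny\rightarrow}^i(\Gamma)|}\,v_i^{\,|\SSS_i(\Gamma)|}$ in the moment formula \eqref{mu_moments'} is nonnegative, and it suffices to bound the $(2k)$-th moment $M_{2k}$ from below by the weight of a single well-chosen Łukasiewicz path. I would take the triangle path in $\mathbf{L}_0(2k)$ that rises from $(0,0)$ to height $h$ by $h$ unit up-steps, then takes $t:=2k-2h$ horizontal steps at height $h$, then descends to $(2k,0)$ by $h$ unit down-steps. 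This is a valid Łukasiewicz path (no horizontal step at height $0$, all down-steps of degree $1$); its up-steps contribute $v_1^h=1$, its down-steps contribute $1$, and its horizontal steps contribute $(hg)^t$. With $h=\lfloor k/2\rfloor$ one has $t\ge k$, so by nonnegativity of all terms,
\[ M_{2k}=\int_\R x^{2k}\,\mu_{g;\vv}(dx)\ \ge\ (hg)^{t}\ \ge\ (gk/3)^{k} \]
for all large $k$.

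This lower bound forces $M_{2k}^{1/(2k)}\to\infty$. If $\supp(\mu_{g;\vv})$ were bounded, say contained in $[-R,R]$, then $M_{2k}\le R^{2k}$ and $M_{2k}^{1/(2k)}\le R$ for all $k$, a contradiction; hence the support is unbounded. Combined with the first paragraph, this proves that $\supp(\mu_{g;\vv})$ is countably infinite with unique accumulation point $+\infty$, and that $\omega_{\Lambda_{g;\vv}}$ is a semi-infinite staircase extending to $+\infty$; the case $g<0$ follows by the reflection of \cref{prop_duality}. The only genuine difficulty would be the potential cancellation among summands when $g<0$ (where the $v_i$ alternate in sign), but the reduction to $g>0$ removes it entirely, leaving only the elementary moment estimate above.
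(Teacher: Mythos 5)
Your proof is correct and takes essentially the same route as the paper: reduce to $g>0$ via the duality of \cref{prop_duality}, combine \cref{measures_discrete} with the positivity $v_i\ge 0$ supplied by \cref{thm_jack_spec}, and bound a high moment from below by the weight of a single trapezoidal Łukasiewicz path with many horizontal steps at a high level, contradicting compactness of the support. The only (cosmetic) difference is that the paper uses the path of length $3k$ with $k$ up, $k$ horizontal and $k$ down steps, giving $M_{3k}\ge (kg)^k$ against the bound $M_{3k}\le C\eps^{-3k}$, whereas you use a length-$2k$ path and phrase the contradiction as $M_{2k}^{1/(2k)}\to\infty$.
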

\begin{proof}
The statement about $\omega_{\Lambda_{g,\vv}}$ follows from ones about $\mu_{g;\vv}$, together with \cref{prop:LongestRows,measures_discrete}.
Thus it remains to prove the statements about $\mu_{g;\vv}$.
Assume that $g >0$. By \cref{measures_discrete}, the measure $\mu_{g;\vv}$ is discrete and has 
at most one accumulation point, namely $+\infty$.
If we proved that $\mu_{g; \vv}$ does not have compact support, the theorem would follow.
Thus, for the sake of contradiction, assume that the support of $\mu_{g; \vv}$ is compact,
and therefore finite. In particular, there exist $C, \epsilon > 0$ such
that the moments $M_{\ell} = \int_{\R}{x^{\ell}\mu_{g; \vv}(dx)}$ of
$\mu_{g; \vv}$ are bounded as follows:
\begin{equation}\label{bound_moments}
|M_{\ell}| \le C\eps^{-\ell},\ \text{ for all }\ell\in\Z_{\ge 1}.
\end{equation}
On the other hand, recall that the moments $M_\ell$ are given by
\eqref{mu_moments'}. For any $k\in\Z_{\ge 1}$, consider the following Łukasiewicz path $\Gamma_0\in\mathbf{L}_0(3k)$:
\begin{itemize}
	\item $w_0 = (0, 0),\, w_1=(1, 1),\cdots, w_k=(k, k)$ ($k$ up steps of degree $1$).
	\item $w_{k+1} = (k+1, k),\, w_{k+2}=(k+2, k),\cdots, w_{2k}=(2k, k)$ ($k$ horizontal steps).
	\item $w_{2k+1} = (2k+1, k-1),\, w_{2k+2}=(2k+2, k-2),\cdots, w_{3k}=(3k, 0)$ ($k$ down steps).
\end{itemize}
The term corresponding to $\Gamma_0$ in the sum \eqref{mu_moments'} is equal to 
$(kg)^k\cdot v_1^k = (kg)^k$.
Next, since $\vv$ determines a totally Jack-positive specialization, \cref{thm_jack_spec} implies that $v_i\ge 0$, for all $i$, therefore all terms in the sum \eqref{mu_moments'} are nonnegative and so
$M_{3k}\ge (kg)^k$. But according to \eqref{bound_moments}, $M_{3k}\le C \eps^{-3k}$.
As a result, $C \ge (kg\cdot\eps^3)^k$, for all $k\in\Z_{\ge 1}$,
which is impossible. Hence, $\mu_{g; \vv}$ does not have compact support.
\end{proof}

We can now derive precise information about the \emph{edge asymptotics} (longest rows or columns) of random partitions, starting from the knowledge we have about $\supp(\mu_{g;\vv})$.
Denote
\[
\max\supp(\mu) := \sup\{x \mid x\in\supp(\mu) \},\quad\min\supp(\mu) := \inf\{x \mid x\in\supp(\mu) \},
\]
for any measure $\mu$ on $\R$.

\begin{proposition}\label{prop:LongestRow}
Let $\left(\chi_d\right)_{d\ge 1}\in\mathcal{F}^{\AFP}_{g,g',\vv}$, let $\PP^\a_{\chi_d}$ be the corresponding probability measures, and let $\la^{(d)}\in\Y_d\,$ be $\mathbb{P}^\a_{\chi_d}$-distributed random partitions.
Further, set $v_1:=1$, and let $\mu_{g;\vv}$ be
the transition measure of the corresponding limit shape
$\omega_{\Lambda_\infty}$ (whose existence is given in \cref{theo:LLNDS}).
Equivalently, since $\omega_{\Lambda_\infty}=\omega_{\Lambda_{g;\vv}}$ (due to \cref{theo:ShapeDS}), $\mu_{g;\vv}$ is 
the probability measure on $\R$ determined by \cref{lemma_moments}.

If $g>0$, then we have
\[
\frac{(\la^{(d)})'_1}{-g\cdot d} \to \min\supp(\mu_{g;\vv}),
\]
in probability.
If $g<0$, then we have
\[
\frac{\la^{(d)}_1}{-g\cdot d} \to \max\supp(\mu_{g;\vv}),
\]
in probability.
\end{proposition}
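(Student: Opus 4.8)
The plan is to connect the extreme points of $\supp(\mu_{g;\vv})$ with the limits of the longest column/row established in \cref{prop:LongestRows}, and then transfer this from the special characters $\chi_{d;g,\vv}$ of \cref{theo:Depoissonization} to arbitrary characters in $\mathcal{F}^{\AFP}_{g,g',\vv}$. By duality (\cref{prop:DualityAFP,prop_duality}), it suffices to treat one case, say $g>0$, where we must show $\frac{(\la^{(d)})'_1}{-g\cdot d}\to\min\supp(\mu_{g;\vv})$ in probability.

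First I would identify the target value. By \cref{measures_discrete}, when $g>0$ the support of $\mu_{g;\vv}$ coincides with the set of local minima $\{x_1<x_2<\cdots\}$ of the limit shape $\omega_{\Lambda_{g;\vv}}$, so $\min\supp(\mu_{g;\vv}) = x_1$. On the other hand, formula \eqref{eq:LimProfileShape} in \cref{prop:LongestRows} identifies the smallest local minimum as $x_1 = -\tilde{\la}'_1$, where $\tilde{\la}'_1 = g^{-1}\lim_{d\to\infty}\frac{(\la^{(d)})'_1}{d}$ exists in probability. Combining these two facts immediately gives $\frac{(\la^{(d)})'_1}{d}\to g\cdot\tilde{\la}'_1 = -g\cdot x_1 = -g\cdot\min\supp(\mu_{g;\vv})$ in probability, i.e.\ $\frac{(\la^{(d)})'_1}{-g\cdot d}\to\min\supp(\mu_{g;\vv})$, exactly as claimed. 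This settles the proposition for the specific characters $\chi_{d;g,\vv}$ of \cref{theo:Depoissonization}.

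The remaining issue is that \cref{prop:LongestRows} was stated only for those specific characters, whereas \cref{prop:LongestRow} concerns an arbitrary sequence $(\chi_d)_{d\ge 1}\in\mathcal{F}^{\AFP}_{g,g',\vv}$. I would argue that the proof of \cref{prop:LongestRows} in fact only uses the existence of a limit shape $\omega_{\Lambda_\infty}$ (guaranteed for any AFP sequence by \cref{theo:LLNDS}) together with the rescaling identity $\Lambda^\a_d = T_{g,\frac{1}{gd}}\la^{(d)}$, which follows from \eqref{alpha_d} and holds regardless of which AFP sequence we use. The key point is that the profile $\omega_{\Lambda^\a_d}$ is a staircase whose successive local-minimum positions are explicit affine functions of the rescaled column lengths $\frac{(\la^{(d)})'_i}{gd}$; uniform convergence of the profile to the deterministic $\omega_{\Lambda_\infty}$ forces each column length to converge in probability, and \cref{theo:ShapeDS} guarantees $\omega_{\Lambda_\infty}=\omega_{\Lambda_{g;\vv}}$, so that the limiting positions match those computed from $\mu_{g;\vv}$. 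Thus the conclusion of \cref{prop:LongestRows}, and with it the computation above, holds verbatim for every sequence in $\mathcal{F}^{\AFP}_{g,g',\vv}$.

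The main obstacle I anticipate is the step extracting convergence of the single column length $(\la^{(d)})'_1$ from uniform convergence of the whole profile: one must argue that the supremum-norm convergence of $\omega_{\Lambda^\a_d}$ pins down the location of the \emph{leftmost} corner, and that this leftmost local minimum does not escape to $-\infty$ or merge with others in the limit. Since $\omega_{\Lambda_{g;\vv}}(x)=-x$ for $x<x_1$ (by \eqref{eq:LimProfileShape}) and $x_1=\min\supp(\mu_{g;\vv})$ is strictly finite by \cref{main_thm_2} and bounded below by $-g^{-1}$ via \cref{measures_discrete}, the first corner is well-separated, and the affine relation $x_1^{(d)} = -\frac{(\la^{(d)})'_1}{gd}$ between the corner position and the column length lets uniform convergence on a neighborhood of $x_1$ control $(\la^{(d)})'_1$. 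Care is needed to phrase this as a clean convergence-in-probability statement, but no new estimates beyond those already available in \cref{theo:LLNDS} and \cref{prop:LongestRows} should be required.
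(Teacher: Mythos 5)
Your overall strategy---identifying $\min\supp(\mu_{g;\vv})$ with the leftmost corner $-\tilde{\la}'_1$ of the limit shape via \cref{measures_discrete} and \eqref{eq:LimProfileShape}, then pinning down $(\la^{(d)})'_1$ from uniform convergence of the profiles---is the same as the paper's, and your first step (the statement for the special characters $\chi_{d;g,\vv}$ of \cref{theo:Depoissonization}) is correct. However, the transfer to an \emph{arbitrary} sequence in $\mathcal{F}^{\AFP}_{g,g',\vv}$ rests on a false claim: you assert that the rescaling identity $\Lambda^\a_d = T_{g,\frac{1}{gd}}\la^{(d)}$ ``follows from \eqref{alpha_d} and holds regardless of which AFP sequence we use.'' It does not. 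The exact relation $\al = g^2 d$ (resp.\ $\al = \frac{1}{g^2 d}$) in \eqref{alpha_d} is part of the \emph{construction} of the particular characters in \cref{theo:Depoissonization}; a general sequence in $\mathcal{F}^{\AFP}_{g,g',\vv}$ only satisfies the asymptotic condition \eqref{eq:double-scaling-refined}, so the boxes of $\Lambda^\a_d$ have width $\sqrt{\al/d} = g + O(d^{-1/2})$ and height $\frac{1}{\sqrt{\al d}}$, not exactly $g$ and $\frac{1}{gd}$. Consequently \cref{prop:LongestRows} does not hold ``verbatim'': its proof exploits that all the profiles $\omega_{\Lambda^\a_d}$ are staircases with the same $d$-independent step width $g$, so that the $i$-th column occupies a fixed horizontal window; with a $d$-dependent step width that recovery of the column lengths has to be redone. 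The same error reappears in your last paragraph in the ``affine relation $x_1^{(d)} = -\frac{(\la^{(d)})'_1}{gd}$,'' which for a general AFP sequence must read $x_1^{(d)} = -\frac{(\la^{(d)})'_1}{\sqrt{\al d}}$.

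The gap is repairable, and the repair is essentially what the paper does: it never extends \cref{prop:LongestRows} to general AFP sequences, but works directly with the true box dimensions and only with the extreme corner. Concretely, for $g<0$ and $M := \max\supp(\mu_{g;\vv})$, one uses (via \cref{measures_discrete} and \cref{theo:ShapeDS}) that $\omega_{\Lambda_{g;\vv}}(x) = x$ for $x>M$ and $\omega_{\Lambda_{g;\vv}}(x) = 2M-x$ on $[M+g,M]$, and derives a two-case geometric inequality: if $\la^{(d)}_1\sqrt{\al/d}\ge M$ then $\|\omega_{\Lambda^\a_d}-\omega_{\Lambda_\infty}\|_\infty \ge 2\min\bigl\{\tfrac{1}{\sqrt{\al d}},\ |\la^{(d)}_1\sqrt{\al/d}-M|\bigr\}$, while if $\la^{(d)}_1\sqrt{\al/d}\le M$ the sup-norm distance is at least $2\min\bigl\{-g,\ |\la^{(d)}_1\sqrt{\al/d}-M|\bigr\}$. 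Since $\tfrac{1}{\sqrt{\al d}}\to -g$ and $\sqrt{\al d}\to -g^{-1}$ by \eqref{eq:double-scaling-refined}, uniform convergence in probability (\cref{theo:LLNDS}) then forces $\frac{\la^{(d)}_1}{-g\cdot d}\to M$ in probability. So your plan goes through once every appearance of the exact width $g$ and height $\frac{1}{gd}$ is replaced by $\sqrt{\al/d}$ and $\frac{1}{\sqrt{\al d}}$ together with their asymptotics; as written, the transfer step is not justified.
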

\begin{proof}
Assume that $g<0$ and set $M:=\max\supp(\mu_{g;\vv})$.
According to \cref{measures_discrete}, $M$ is the largest local minimum of $\omega_{\Lambda_{g;\vv}}$; in the notation of \cref{prop:LongestRows} and \cref{eq:LimProfileShape'}, this means $M = \tilde{\la}_1$, and therefore $\omega_{\Lambda_{g;\vv}}(x) = x$, for $x>M$, and $\omega_{\Lambda_{g;\vv}}(x) = 2M-x$, for $x\in[M+g,M]$.
A simple geometric argument thus implies that, if $\la^{(d)}_1\cdot\frac{\sqrt{\al}}{\sqrt{d}}\ge M$, then
\begin{equation}\label{eq:tech_1}
\left|\omega_{\Lambda^\a_d}(M)-\omega_{\Lambda_\infty}(M)\right|
\ge 2\cdot\min\left\{ \frac{1}{\sqrt{\al d}},\ \left|\la^{(d)}_1\cdot\frac{\sqrt{\al}}{\sqrt{d}} - M\right| \right\}.
\end{equation}
Similarly, if $\la^{(d)}_1\cdot\frac{\sqrt{\al}}{\sqrt{d}}\le M$, then
\begin{equation}\label{eq:tech_2}
\left|\omega_{\Lambda^\a_d}\left(\la^{(d)}_1\cdot\frac{\sqrt{\al}}{\sqrt{d}}\right)-\omega_{\Lambda_\infty}\left(\la^{(d)}_1\cdot\frac{\sqrt{\al}}{\sqrt{d}}\right)\right|\ge 2\cdot\min\left\{ -g,\  \left|\la^{(d)}_1\cdot\frac{\sqrt{\al}}{\sqrt{d}} - M\right| \right\}.
\end{equation}
Recall that in our regime of interest we have $\frac{1}{\sqrt{\al d}}\sim -g$.
Take any small $-g>\eps>0$, and assume that $\eps\ge\|\omega_{\Lambda^\a_d} - \omega_{\Lambda_\infty}\|_\infty$ and $\eps/2 > |\sqrt{\al d} + g^{-1}|$.
Then the previous inequalities~\eqref{eq:tech_1}-\eqref{eq:tech_2} imply
\[
\frac{\eps}{2}\ge\left|\la^{(d)}_1\cdot\frac{\sqrt{\al}}{\sqrt{d}} - M\right|.
\]
By our choice of $\eps>0$, it follows that
\[
\eps\ge\left|\frac{\la^{(d)}_1}{-g\cdot d} - M\right|.
\]
Hence,
\[
\PP^\a_{\chi_d}( \|\omega_{\Lambda^\a_d} - \omega_{\Lambda_\infty}\|_\infty \le \eps)
\le \PP^\a_{\chi_d}\left(\left|\frac{\la^{(d)}_1}{-g\cdot d} - M\right| \le \eps\right),\]
for all large $d$.
\cref{theo:LLNDS} shows $\lim_{d\to\infty}\PP^\a_{\chi_d}( \|\omega_{\Lambda^\a_d} - \omega_{\Lambda_\infty}\|_\infty \le \eps)=1$, therefore
\[
\lim_{d\to\infty}{\PP^\a_{\chi_d}\left( \left|\frac{\la^{(d)}_1}{-g\cdot d} - M\right| \le \eps\right)} = 1,
\]
which is the desired limit in probability.
\end{proof}

\begin{remark}
For concreteness, let us consider only the case $g>0$ in this remark.
We cannot tie the quantity $\frac{(\la^{(d)})'_i}{-g\cdot d}$ to the $i$-th smallest point in the support of $\mu_{g;\vv}$ (equivalently, the $i$-th local minima of $\omega_{\Lambda_{g,\vv}}$) , unless $i=1$, because it is not clear from~\cref{eq:LimProfileShape} what is the $i$-th smallest local minimum of $\omega_{\Lambda_{g,\vv}}$.
The problem occurs when, in the notation of \cref{prop:LongestRows}, $\tilde{\la}'_j=\tilde{\la}'_{j+1}$, for some $j$'s.
This is why the general \cref{prop:LongestRow} only deals with the longest columns $(\la^{(d)})'_1$ of the random partitions.
In the next section, we will study one case where this analysis can be extended to several of the longest columns $(\la^{(d)})'_1, (\la^{(d)})'_2, \cdots$ (or several of the longest rows, if $g<0$).
\end{remark}

In the next section, we consider the special case when $\PP^\a_{\chi_d}$ are the Jack--Plancherel measures and we find not only the limits in probability of the longest row/column of the random partition in terms of $\supp(\mu_{g;\vv})$, but we will compute exactly the limits of any finite number of longest rows/columns, and therefore we will entirely determine the limit shape $\omega_{\Lambda_\infty}$ in this case.

\subsection{Edge asymptotics of Jack--Plancherel-distributed random partitions}

Consider the $\Z_{\ge 0}\times\Z_{\ge 0}$ matrix $J_{g;\vv}$ defined by $(J_{g;\vv})_{i, j} := ig\cdot\delta_{i,j} + v_{j-i}$, for all $i, j\in\Z_{\ge 0}$, where $v_{-1}:=1$ and $v_0=v_{-l}:=0$, for all $l\ge 2$. Also consider the special case when $v_1=1,\, v_i=0$, for all $i\ge 2$, to be denoted by $J^{\text{Planc}}_g := J_{g;(1,0,0,\cdots)}$, i.e.
\[  J_{g;\vv} := \begin{bmatrix} 
0 & v_1 & v_2 & v_3 & \cdots\\
1 & g & v_1 & v_2 & \ddots\\
0 & 1 & 2g & v_1 & \ddots\\
0 & 0 & 1 & 3g & \ddots\\
\vdots & \ddots & \ddots & \ddots & \ddots
\end{bmatrix}, \qquad
J^{\text{Planc}}_g := \begin{bmatrix} 
0 & 1 & 0 & 0 & \cdots\\
1 & g & 1 & 0 & \ddots\\
0 & 1 & 2g & 1 & \ddots\\
0 & 0 & 1 & 3g & \ddots\\
\vdots & \ddots & \ddots & \ddots & \ddots
\end{bmatrix}. \]

\begin{proposition}\label{prop:matrix_moms}
The $(0, 0)$-entries of the powers of $J_{g;\vv}$ are equal to the moments of the measure $\mu_{g;\vv}$:
\begin{equation}\label{matrix_moms}
(J_{g;\vv})^\ell_{0,0} = \int_{\R}{x^\ell d\mu_{g;\vv}(x)},\ \text{ for all } \ell\in\Z_{\ge 0}.
\end{equation}
\end{proposition}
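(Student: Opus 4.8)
The plan is to expand the matrix power $(J_{g;\vv})^\ell_{0,0}$ combinatorially and recognize the resulting lattice walks as Łukasiewicz paths, so that the weighted enumeration matches term-by-term the moment formula \eqref{mu_moments'} from \cref{lemma_moments} (recall $v_1=1$ throughout this subsection). First I would write, by the definition of matrix multiplication,
\[ (J_{g;\vv})^\ell_{0,0} = \sum_{0=i_0,\,i_1,\dots,i_{\ell-1},\,i_\ell=0}\ \prod_{t=1}^\ell (J_{g;\vv})_{i_{t-1},\,i_t}, \]
where each intermediate index $i_t$ ranges over $\Z_{\ge 0}$. Since $J_{g;\vv}$ vanishes strictly below its first subdiagonal (this is exactly the convention $v_{-l}=0$ for $l\ge 2$), every walk with a nonzero contribution uses only down-steps of degree $1$; together with $i_0=i_\ell=0$ this forces any such walk to stay at height at most $\ell$ (as in \cref{claimA} and \cref{lemma:PathsProp}(1)), so the displayed sum is in fact finite and well defined. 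The case $\ell=0$ is the trivial identity $1=1$, so I would assume $\ell\ge 1$ from here on.

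Next I would read off the weight of each step of a walk, viewed as an excursion $\Gamma=(w_0,\dots,w_\ell)$ with $w_t=(t,i_t)$, directly from the entries of $J_{g;\vv}$: a horizontal step at height $i$ carries the diagonal weight $(J_{g;\vv})_{i,i}=ig$; an up-step of degree $k\ge 1$, going from height $i$ to $i+k$, carries the superdiagonal weight $(J_{g;\vv})_{i,i+k}=v_k$; a down-step of degree $1$ carries the subdiagonal weight $(J_{g;\vv})_{i,i-1}=v_{-1}=1$; and a hypothetical down-step of degree $l\ge 2$ carries weight $v_{-l}=0$ and hence never occurs. The crucial point is that a horizontal step \emph{at height $0$} has weight $(J_{g;\vv})_{0,0}=0\cdot g=0$ and so contributes nothing. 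Consequently the excursions of nonzero weight are precisely those with no horizontal step at height $0$ and with all down-steps of degree $1$ — that is, exactly the Łukasiewicz paths $\Gamma\in\mathbf{L}_0(\ell)$.

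Finally, collecting the step-weights of a fixed $\Gamma\in\mathbf{L}_0(\ell)$ gives
\[ \prod_{t=1}^\ell (J_{g;\vv})_{i_{t-1},\,i_t} = \prod_{i\ge 1}(i\cdot g)^{|\SSS^i_{\tiny\rightarrow}(\Gamma)|}\ \prod_{k\ge 1} v_k^{\,|\SSS_k(\Gamma)|}, \]
because horizontal steps at height $i\ge 1$ are enumerated by $\SSS^i_{\tiny\rightarrow}(\Gamma)$, up-steps of degree $k$ are enumerated by $\SSS_k(\Gamma)$, and down-steps carry weight $1$ and drop out. Summing over $\Gamma\in\mathbf{L}_0(\ell)$ reproduces exactly the right-hand side of \eqref{mu_moments'}, which is $\int_\R x^\ell\,d\mu_{g;\vv}(x)$ by \cref{lemma_moments}, establishing \eqref{matrix_moms}. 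The hard part is not analytic but purely bookkeeping: one must check that the combinatorial statistics $\SSS^i_{\tiny\rightarrow}$ and $\SSS_k$ of \eqref{mu_moments'} line up with the three surviving step-types, and in particular that the vanishing diagonal entry at height $0$ is precisely what encodes the defining Łukasiewicz condition of having no horizontal steps at height $0$. I would also take care to keep the two edge conventions distinct — the subdiagonal weight $v_{-1}=1$ for degree-$1$ down-steps versus the superdiagonal weight $v_1=1$ for degree-$1$ up-steps — so that the two are attributed to $\SSS_{-1}$ and $\SSS_1$ correctly and neither is double-counted.
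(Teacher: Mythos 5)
Your proposal is correct and follows essentially the same route as the paper's proof: expand $(J_{g;\vv})^\ell_{0,0}$ as a sum over weighted walks, observe that the zero entries of $J_{g;\vv}$ (below the first subdiagonal and at the $(0,0)$ position) kill exactly the non-Łukasiewicz excursions, and match the surviving step-weights with the statistics in \eqref{mu_moments'}. If anything, your write-up is slightly more careful than the paper's, which asserts that the condition $i_{k+1}-i_k\ge -1$ alone yields a Łukasiewicz path, whereas you correctly point out that the vanishing diagonal entry $(J_{g;\vv})_{0,0}=0$ is what eliminates horizontal steps at height~$0$.
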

\begin{proof}
Denote $J := J_{g;\vv}$ and take any $\ell\in\Z_{\ge 0}$. Then
\[(J^\ell)_{0,0} =\sum_{i_1, \dots, i_{\ell-1}\in\Z_{\ge 0}}J_{0, i_1}J_{i_1, i_2}\cdots J_{i_{\ell-1}, 0}.\]
The sum is over sequences $(i_1, \dots, i_{\ell-1})\in(\Z_{\ge 0})^{\ell-1}$ with $i_{k+1}-i_k\ge -1$, for all $k$, since otherwise the term vanishes.
To any such sequence $(i_1, \dots, i_{\ell-1})$, associate the excursion $\Gamma = (w_0, w_1, \cdots, w_\ell)$, defined by $w_k=(k, y_k),\, y_0=y_\ell=0$, and $y_k=i_k$, for $1\le k\le \ell-1$. The condition $i_{k+1}-i_k\ge -1$ implies that $\Gamma$ is a Łukasiewicz path of length $\ell$. Moreover, the term $J_{0, i_1}J_{i_1, i_2}\cdots J_{i_{\ell-1}, 0}$ contains a factor $v_i$ for every up step of degree $i$, and a factor $i\cdot g$ for every horizontal step at height $i$, so the term equals $\prod_{i\ge 1}{(i\cdot g)^{|\SSS_{\tiny\rightarrow}^i(\Gamma)|}\,v_i^{\,|\SSS_i(\Gamma)|}}$. Hence, $(J^\ell)_{0, 0}$ is the sum in the RHS of~\eqref{mu_moments'}, which also equals the $\ell$-th moment of $\mu_{g;\vv}$.
\end{proof}

In the special case $\vv=(1,0,0,\cdots)$, the resulting $J^{\text{Planc}}_g$ is a \emph{Jacobi matrix}, i.e.~a real symmetric, tridiagonal matrix whose off-diagonal entries are positive. The (unique) probability measure $\mu_g^{\text{Planch}} := \mu_{g;(1,0,0,\cdots)}$ satisfying \eqref{matrix_moms} is said to be the \emph{spectral measure of $J^{\text{Planc}}_g$}.
Note also that the moment formula \eqref{mu_moments'} simplifies because the terms coming from Łukasiewicz paths with up steps of degree $\ge 2$ vanish. Let $\mathbf{M}_0(\ell)\subseteq\mathbf{L}_0(\ell)$ be the subset of Łukasiewicz paths whose up steps (and down steps) all have degree $1$: this is the set of \emph{Motzkin paths of length $\ell$}, see~\cref{fig:motzkin}.\footnote{Our Motzkin paths do not have horizontal steps at height $0$, though this is usually allowed in the literature.}

\begin{figure}
\begin{center}
\includegraphics[width=0.7\linewidth]{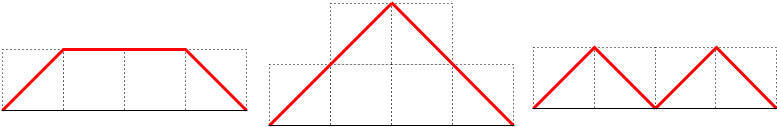}
\end{center}
\caption{All three Motzkin paths in $\mathbf{M}_0(4)$.}
\label{fig:motzkin}
\end{figure}

If $g=0$, then $J^{\text{Planc}}_{g=0}$ is a Toeplitz matrix with spectral measure $\mu_{g=0}^{\text{Planch}}$ being the semicircle distribution. When $g\ne 0$, our Propositions \ref{lemma_moments}, \ref{prop:matrix_moms} and Theorem \ref{main_thm_2} imply the following.

\begin{corollary}
  \label{cor:PlancherelMoments}
If $\pm g>0$, the spectral measure $\mu_g^{\text{Planch}}$ of $J^{\text{Planc}}_g$ has discrete countable support $\supp(\mu^{\text{Planc}}_g)$ with unique accumulation point $\pm\infty$. Moreover, $\mu_g^{\text{Planch}}$ is the unique probability measure with moments:
\[  \int_\R{x^\ell \mu^{\text{Planc}}_g(dx)} = \sum_{\Gamma\in\mathbf{M}_0(\ell)} 
\prod_{i\ge 1} (i\cdot g)^{|\SSS_{\tiny\rightarrow}^i(\Gamma)|},\ \text{ for all }\ell\in\Z_{\ge 1}.  \]
\end{corollary}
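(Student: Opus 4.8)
The plan is to read this corollary off the general results of this section by specializing to $\vv=(1,0,0,\dots)$ and then performing one combinatorial simplification. First I would check that, for every $g\neq 0$, the pair $(g,\vv)$ with $\vv=(1,0,0,\dots)$ is an admissible pair with $v_1=1$ in the sense of \cref{def:admissible_pair}. By \cref{thm_jack_spec}, $\vv$ determines a totally Jack-positive specialization—it is precisely the Plancherel specialization of \cref{ex:plancherel}, corresponding to $c=1$ and $a_i=b_i=0$ for all $i$. Since only the first coordinate of $\vv$ is nonzero we have $\pm\vv=\vv$, so $\pm\vv$ is totally Jack-positive as well; hence $(g,\vv)$ is admissible both when $g>0$ and when $g<0$.

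Once admissibility is in hand, the qualitative assertions are immediate consequences of earlier results. \cref{measures_discrete} gives that the support of $\mu_g^{\text{Planch}}=\mu_{g;(1,0,0,\dots)}$ is discrete, and \cref{main_thm_2} gives that it is countably infinite with unique accumulation point $+\infty$ when $g>0$ (resp.\ $-\infty$ when $g<0$). Uniqueness of $\mu_g^{\text{Planch}}$ among probability measures with the prescribed moments is supplied by \cref{lemma_moments}; its hypothesis $\mathcal{F}^{\AFP}_{g,g',\vv}\neq\emptyset$ holds because \cref{theo:Depoissonization} exhibits the explicit Jack characters $\chi_{d;g,\vv}$ lying in this family.

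The only computation left is to reduce the Łukasiewicz-path moment formula \eqref{mu_moments'} (equivalently, the diagonal entry $(J_{g;\vv})^\ell_{0,0}$ of \cref{prop:matrix_moms}) to a sum over Motzkin paths. Setting $v_1=1$ and $v_i=0$ for $i\ge 2$, the weight $\prod_{i\ge 1}v_i^{\,|\SSS_i(\Gamma)|}$ of a path $\Gamma\in\mathbf{L}_0(\ell)$ vanishes unless $|\SSS_i(\Gamma)|=0$ for every $i\ge 2$, i.e.\ unless $\Gamma$ has no up step of degree $\ge 2$. A Łukasiewicz path already has down steps only of degree $1$, so this surviving condition is exactly the defining property of $\mathbf{M}_0(\ell)$, and on such a path the remaining factor is $v_1^{\,|\SSS_1(\Gamma)|}=1$. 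The sum therefore collapses to $\sum_{\Gamma\in\mathbf{M}_0(\ell)}\prod_{i\ge 1}(i\cdot g)^{|\SSS_{\tiny\rightarrow}^i(\Gamma)|}$, which is the claimed formula.

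I do not anticipate a real obstacle, since every ingredient has already been established; the only point deserving care is the bookkeeping in the last step, namely confirming that the Plancherel weight kills precisely those Łukasiewicz paths carrying an up step of degree $\ge 2$ and leaves the Motzkin paths with weight $1$.
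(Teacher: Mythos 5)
Your proposal is correct and follows essentially the same route as the paper: the paper derives this corollary directly from \cref{lemma_moments}, \cref{prop:matrix_moms} and \cref{main_thm_2} (applicable since $(g,(1,0,0,\dots))$ is an admissible pair, as the Plancherel specialization is totally Jack-positive and self-dual under $\vv\mapsto\pm\vv$), together with the same observation that setting $v_i=0$ for $i\ge 2$ kills exactly the Łukasiewicz paths having an up step of degree $\ge 2$, collapsing \eqref{mu_moments'} to a sum over Motzkin paths. Your explicit verification of admissibility and of the hypothesis $\mathcal{F}^{\AFP}_{g,g',\vv}\neq\emptyset$ via \cref{theo:Depoissonization} matches what the paper leaves implicit in its standing assumptions for the section.
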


Let
\[ J_{\nu}(x) =  \sum_{m=0}^\infty\frac{(-1)^m}{m!\Gamma(m+\nu+1)}\left(\frac{x}{2}\right)^{2m+\nu}\]
be the Bessel function of the first kind, where $\Gamma(z)$ is the
Gamma function. It turned out that the operator $J^{\text{Planc}}_g$ was studied in the past, and the following relation between the zeroes of $J_{\nu}(x)$, as a function of the label parameter $\nu$, and the spectrum of $\mu_{g}^{\text{Planch}}$ is well-known (see~\cite{GradZakrajsek1973,IkebeKikuchiFujishiro1991,StampachStovicek2015}).

\begin{theorem}\label{theo:BesselZeroes}
Let $g>0$, and let
\[
\big\{l_1^{(g)}<l_2^{(g)}<\cdots \big\}  = \{z \in \R\colon J_{-z\cdot g^{-1}}(2 g^{-1}) = 0\}
\]
be the set of zeroes of the Bessel function of the first kind. Then the support of the spectral measure $\mu_g^{\text{Planch}}$ of $J^{\text{Planc}}_g$ is
\[
\supp(\mu_g^{\text{Planch}}) = \big\{l_1^{(g)}-g < l_2^{(g)}-g < \cdots\big\}.
\]
\end{theorem}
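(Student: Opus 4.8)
The plan is to treat $J^{\text{Planc}}_g$ as a genuine Jacobi matrix and to read off its spectrum directly from the defining three-term recurrence, matching the latter to the recurrence satisfied by Bessel functions in their order parameter.

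\emph{Step 1: spectral reduction.} Since every off-diagonal entry of $J^{\text{Planc}}_g$ equals $1$, Carleman's condition $\sum_n b_n^{-1}=\infty$ holds trivially, so $J^{\text{Planc}}_g$, viewed as a symmetric operator on $\ell^2(\Z_{\ge 0})$, is essentially self-adjoint and in the limit-point case at $+\infty$; its spectrum therefore equals $\supp(\mu^{\text{Planch}}_g)$, which by \cref{cor:PlancherelMoments} is discrete and accumulates only at $+\infty$. Consequently the spectrum is pure point and each atom $\lambda$ is a simple eigenvalue whose eigenvector is the unique (up to scalar) $\ell^2$ solution $(\psi_n)_{n\ge 0}$ of the eigenvalue equation. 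Writing this out, the boundary relation at $n=0$ reads $\psi_1=\lambda\psi_0$, while for $n\ge 1$ one has
\[ \psi_{n+1}=(\lambda-ng)\,\psi_n-\psi_{n-1}. \]
Extending the bulk recurrence formally to $n=0$ and comparing with the boundary relation shows that $\lambda$ is an eigenvalue precisely when the $\ell^2$ solution of the bulk recurrence obeys the virtual condition $\psi_{-1}=0$; in that case $\psi_0\neq 0$ (otherwise the whole sequence would vanish by the recurrence), so the spectral weight of $e_0$ at $\lambda$ is positive and $\lambda\in\supp(\mu^{\text{Planch}}_g)$.

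\emph{Step 2: Bessel solution.} I would then exhibit the $\ell^2$ solution explicitly. Using the classical recurrence $J_{\nu-1}(w)+J_{\nu+1}(w)=\tfrac{2\nu}{w}J_\nu(w)$ with $w=2/g$ and $\nu=n-\lambda/g$, so that $\tfrac{2\nu}{w}=ng-\lambda$, a direct substitution shows that
\[ \phi_n:=(-1)^n\,J_{\,n-\lambda/g}\!\left(2/g\right),\qquad n\in\Z, \]
solves $\phi_{n+1}=(\lambda-ng)\phi_n-\phi_{n-1}$. Since $J_{\nu}(2/g)$ vanishes like $\Gamma(\nu+1)^{-1}(1/g)^{\nu}$ as $\nu\to+\infty$, the sequence $\phi_n$ decays super-exponentially, hence lies in $\ell^2$ at $+\infty$ and is the minimal solution; any $\ell^2$ eigenvector must be a scalar multiple of it. The virtual boundary condition $\psi_{-1}=0$ then becomes $\phi_{-1}=-J_{-1-\lambda/g}(2/g)=0$, i.e.\ $J_{-(\lambda+g)/g}(2/g)=0$. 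Setting $z=\lambda+g$ this is exactly $J_{-z/g}(2/g)=0$, so the admissible $\lambda$ are precisely $\lambda=l^{(g)}_j-g$, and conversely each zero $l^{(g)}_j$ yields one such eigenvalue. Combined with Step 1 this gives $\supp(\mu^{\text{Planch}}_g)=\{\,l^{(g)}_j-g\,\}_{j\ge 1}$, as claimed.

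\emph{Main obstacle.} The recurrence matching and the Bessel asymptotics are routine; the delicate part is the spectral-theoretic bookkeeping of Step 1, namely justifying rigorously that, in the limit-point case, $\supp(\mu^{\text{Planch}}_g)$ coincides with the set of $\lambda$ for which the minimal solution satisfies $\psi_{-1}=0$, and that each such $\lambda$ carries strictly positive mass. These facts are standard in the theory of Jacobi matrices and are carried out, for this precise operator, in the references cited in the statement (\cite{GradZakrajsek1973,IkebeKikuchiFujishiro1991,StampachStovicek2015}); I would either invoke them directly or reproduce the short limit-point argument sketched above. A minor point is the branch of $J_\nu$ at a reflected argument, which is sidestepped here by carrying the sign $(-1)^n$ so that only $J_\nu(2/g)$ with the positive argument $2/g$ appears.
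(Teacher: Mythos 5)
Your argument cannot be compared line-by-line with the paper's, because the paper contains no proof of this statement: \cref{theo:BesselZeroes} is presented there as a known fact about the operator $J^{\text{Planc}}_g$, with the spectral analysis delegated entirely to the cited references (Grad--Zakraj\v{s}ek, Ikebe et al., \v{S}tampach--\v{S}\v{t}ov\'{\i}\v{c}ek). What you have written is, in effect, a self-contained reconstruction of the standard argument those references carry out, and it is correct: the Carleman/limit-point reduction identifies $\supp(\mu_g^{\text{Planch}})$ with the set of $\lambda$ admitting a nonzero $\ell^2$ solution of the three-term recurrence subject to the virtual boundary condition $\psi_{-1}=0$; the sequence $\phi_n=(-1)^n J_{n-\lambda/g}(2/g)$ does solve the bulk recurrence (the identity $J_{\nu-1}(w)+J_{\nu+1}(w)=\tfrac{2\nu}{w}J_\nu(w)$ at $w=2/g$, $\nu=n-\lambda/g$ gives $\tfrac{2\nu}{w}=ng-\lambda$, and the alternating sign flips this into $\phi_{n+1}=(\lambda-ng)\phi_n-\phi_{n-1}$); its super-exponential decay makes it the unique $\ell^2$ solution up to scalar; and $\phi_{-1}=-J_{-(\lambda+g)/g}(2/g)=0$ translates exactly into $\lambda=l_j^{(g)}-g$, which is the shift by $-g$ in the statement. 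Two points should be handled explicitly if this were written out in full: (i) the paper defines $\mu_g^{\text{Planch}}$ by its moments $(J^\ell)_{0,0}$, so you must identify it with the operator-theoretic spectral measure $\langle e_0,E(\cdot)e_0\rangle$ of the self-adjoint closure — this follows from determinacy (already established in \cref{cor:PlancherelMoments}) together with the fact that powers of the banded matrix compute the moments of the closure; and (ii) your claims that every point of the discrete support is an eigenvalue, and that $\ell^2$ solutions are unique up to scalar in the limit-point case, are standard Jacobi-matrix facts that must be cited or reproved (your Wronskian-free route via limit-point invariance works). The trade-off is clear: your route makes the theorem self-contained where the paper simply imports it, at the cost of invoking this standard spectral-theoretic bookkeeping.
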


Now we prove the main theorem of this section, which describes the limits in probability of the longest rows or columns of random Jack--Plancherel-distributed random partitions in the low or high temperature regimes.
In particular it gives an explicit description of the limit shape $\omega_{\Lambda_\infty}$ in these regimes in terms of zeroes of Bessel functions.

\begin{theorem}\label{thm_asymptotic_positions}
Let  $(\la^{(d)}\in\Y_d)_{d\ge 1}$ be a sequence of random Young diagrams distributed by the Jack--Plancherel measures, as defined by \cref{eq:Jack-Planch} or in \cref{ex:Plancherel}.

\vspace{1pt}

$\bullet$ Assume that $g>0$. In the regime $\al,d\to\infty,\,\sqrt{\al}/\sqrt{d}\to g$, for any $i=1,2,\cdots$, we have
$\,\frac{(\la^{(d)})'_i}{-g\cdot d}\to l_{i}^{(g)}-ig$, in probability, and the limit shape is
\[
\omega_{\Lambda_{\infty}} (x) = 
\begin{cases}
-x, &\text{ if } x < -l_1^{(g)}-g,\\
2(l_i^{(g)}-ig)+x, &\text{ if } x\in\big[l_i^{(g)}-g,\ l_i^{(g)}\big], \text{ for some } i\ge 1,\\
-x+2ig, &\text{ if } x\in\big[l_i^{(g)},\ l_{i+1}^{(g)}-g\big], \text{ for some } i\ge 1.
\end{cases}
\]

\vspace{1pt}

$\bullet$ Assume that $g<0$. In the regime $\al\to 0,\, d\to\infty,\,\sqrt{\al d}\to -g^{-1}$, for any $i=1,2,\cdots$, we have
$\,\frac{\la^{(d)}_i}{-g\cdot d}\to -l_i^{(-g)}-ig$, in probability, and
\[
\omega_{\Lambda_\infty} (x) = 
\begin{cases}
x, &\text{ if } x > -l_1^{(-g)}-g,\\
-2(l_i^{(-g)}+ig)-x, &\text{ if } x\in\big[-l_i^{(-g)},\ -l_i^{(-g)}-g\big], \text{ for some } i \ge 1,\\
x-2ig, &\text{ if } x\in\big[-l_{i+1}^{(-g)}-g,\ -l_i^{(-g)}\big], \text{ for some } i\ge 1.
\end{cases}
\]
\end{theorem}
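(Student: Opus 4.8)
The plan is to combine the qualitative description of the limit shape already obtained in \cref{prop:LongestRows,measures_discrete,main_thm_2} with the precise spectral information coming from \cref{theo:BesselZeroes}, and then to upgrade the single-row/single-column statement of \cref{prop:LongestRow} into a statement about every individual row/column. First I would verify that the Jack--Plancherel measures indeed fall within our framework: by \cref{ex:Plancherel} they correspond to the characters $\chi_d$ with $v_k=v_k'=v_{(k|l)}=0$ for all $k,l\ge 2$, so the relevant parameter tuple is $\vv=(1,0,0,\dots)$, and the regimes $\al,d\to\infty,\ \sqrt{\al}/\sqrt{d}\to g>0$ (resp.\ $\al\to 0,\ d\to\infty,\ \sqrt{\al d}\to -g^{-1}$ with $g<0$) are exactly the high (resp.\ low) temperature regimes satisfying \eqref{eq:double-scaling-refined} with $g'=0$. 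Hence the transition measure of the limit shape is $\mu_{g;\vv}=\mu_g^{\text{Planch}}$, the spectral measure of $J^{\text{Planc}}_g$, by \cref{theo:ShapeDS} and \cref{prop:matrix_moms}.

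By duality (\cref{prop:DualityAFP,prop_duality}), it suffices to treat one sign, say $g>0$, since the $g<0$ statement follows by sending $g\mapsto -g$, $\vv\mapsto\pm\vv=(1,0,0,\dots)$ (which is unchanged) and reflecting $x\mapsto -x$, swapping the roles of rows and columns. For $g>0$, \cref{theo:BesselZeroes} identifies $\supp(\mu_g^{\text{Planch}})=\{l_i^{(g)}-g\}_{i\ge 1}$, where the $l_i^{(g)}$ are the ordered zeroes of $\nu\mapsto J_{-\nu g^{-1}}(2g^{-1})$. Comparing with \cref{measures_discrete}, which says $\supp(\mu_{g;\vv})$ is exactly the set of local minima of $\omega_{\Lambda_{g;\vv}}$, and with the parametrization \eqref{eq:LimProfileShape} from \cref{prop:LongestRows}, the local minima are located at $x_i=-\tilde{\la}'_i+(i-1)g$. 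Since \cref{main_thm_2} guarantees countably infinitely many of them accumulating only at $+\infty$, and \cref{measures_discrete} guarantees they are spaced at least $g$ apart, the minima are genuinely ordered $x_1<x_2<\cdots$; matching them to the ordered support points gives $-\tilde{\la}'_i+(i-1)g=l_i^{(g)}-g$, i.e.\ $\tilde{\la}'_i=ig-l_i^{(g)}$, which after recalling $\tilde{\la}'_i=g^{-1}\lim (\la^{(d)})'_i/d$ yields $(\la^{(d)})'_i/(-g\,d)\to l_i^{(g)}-ig$ in probability. Plugging these values of $\tilde{\la}'_i$ back into \eqref{eq:LimProfileShape} produces the stated closed form of $\omega_{\Lambda_\infty}$.

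The main obstacle is the order-matching step: I must argue that the $i$-th local minimum of the limit profile corresponds to the $i$-th largest column $(\la^{(d)})'_i$, not merely that the sets of limiting positions coincide. The subtlety, flagged in the remark following \cref{prop:LongestRow}, is that in general a priori distinct columns could have equal rescaled limits $\tilde{\la}'_i=\tilde{\la}'_{i+1}$, collapsing two would-be minima into one and destroying the bijection. The key extra input in the Plancherel case is that the support points $l_i^{(g)}-g$ are \emph{strictly} increasing (distinct zeroes of the Bessel function give strictly separated eigenvalues of the Jacobi matrix $J^{\text{Planc}}_g$), hence by \cref{measures_discrete} consecutive minima are strictly more than $g$ apart, forcing $\tilde{\la}'_1>\tilde{\la}'_2>\cdots$ to be strict. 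This strict ordering lets me promote the LLN for the profile into convergence of each individual $(\la^{(d)})'_i$: a geometric argument analogous to \eqref{eq:tech_1}--\eqref{eq:tech_2} in the proof of \cref{prop:LongestRow}, applied at the $i$-th local minimum rather than only the first, shows that a uniform deviation of $\omega_{\Lambda^\a_d}$ from $\omega_{\Lambda_\infty}$ by at most $\eps$ controls $|(\la^{(d)})'_i/(-g\,d)-(l_i^{(g)}-ig)|$, and the LLN (\cref{theo:LLNDS}) finishes the convergence in probability.
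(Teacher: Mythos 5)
Your overall skeleton matches the paper's: reduce to one sign of $g$ by duality, identify $\supp(\mu_g^{\text{Planch}})$ with the Bessel zeroes via \cref{theo:BesselZeroes}, match the ordered support points with the ordered local minima of the limit shape, and finish with the geometric argument of \cref{prop:LongestRow} applied at each local minimum. However, the step you yourself flag as the main obstacle --- ruling out collapses $\tilde{\la}'_i=\tilde{\la}'_{i+1}$ --- is not actually resolved by your argument, and this is a genuine gap. Distinctness of the support points is vacuous here: the support of a measure is by definition a set of distinct points, and \cref{measures_discrete} identifies it with the set of local minima whatever that set is. If a collapse $\tilde{\la}'_j=\tilde{\la}'_{j+1}$ occurred, the profile would simply have one fewer local minimum (the ascending segment after the $j$-th minimum would have length $2g$ instead of $g$), and the remaining minima would still be distinct and spaced strictly more than $g$ apart --- perfectly consistent with \cref{measures_discrete} and with the support being $\{l_i^{(g)}-g\}_{i\ge 1}$, just with a shifted index matching. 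To exclude this from support data alone you would need a quantitative fact such as $l_{i+1}^{(g)}-l_i^{(g)}<2g$ for consecutive Bessel zeroes, which you neither prove nor cite, and which is not invoked anywhere in the paper either.

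The paper closes exactly this gap with a different idea: it proves the functional equation
\begin{equation*}
z\,G_{\mu_g^{\text{Planch}}}(z)-1 \;=\; G_{\mu_g^{\text{Planch}}}(z)\,G_{\mu_g^{\text{Planch}}}(z-g),
\end{equation*}
established combinatorially from the moment formula of \cref{cor:PlancherelMoments} by a first-return decomposition of Motzkin paths together with an identity obtained by deleting subsets of horizontal steps (which converts the shifted weights $(i+1)g$ back into the standard weights $i\cdot g$). Evaluating this equation at $z=z_0+g$, a pole $z_0$ of the Cauchy transform forces a zero at $z_0+g$; via the Markov--Krein correspondence this says that every local minimum of $\omega_{\Lambda_\infty}$ is followed by a local maximum at distance exactly $|g|$, which is precisely the statement that no collapse occurs, i.e.\ $\tilde{\la}'_1>\tilde{\la}'_2>\cdots$ strictly. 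Once that is in place, the rest of your proposal (index matching with the Bessel zeroes and the $\eps$-argument for each individual row/column) goes through as you describe. So your proof is incomplete until you either reproduce an argument of this type or supply a genuine spacing bound on the zeroes of $\nu\mapsto J_{-\nu g^{-1}}(2g^{-1})$.
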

\begin{proof}
Consider only $g<0$, since the case $g>0$ is analogous.

We first claim that whenever $\omega_{\Lambda_{\infty}}$ has a local minimum at $z_0$, then it has a local maximum at $z_0+g$.
By \cref{theo:Depoissonization}, we have $\omega_{\Lambda_{\infty}}=\omega_{g;(1,0,0,\cdots)}$ and Equation~\eqref{eq:LimProfileShape'} describes this limit shape: qualitatively, it is a staircase shape with slopes $\pm 1$.
As a result of \cref{main_thm_2}, this is an infinite staircase, so in view of \cref{eq:LimProfileShape'}, the limits $\tilde{\la}_i$ in \cref{eq:limit_rows} are never zero.
Note that our claim might appear obvious from a look at \cref{eq:LimProfileShape'}, however this is not the case when $\tilde{\la}_i=\tilde{\la}_{i+1}$, for some $i\ge 1$ (if $\tilde{\la}_i+(i-1)g$ is the position of a local minimum and $\tilde{\la}_i=\tilde{\la}_{i+1}$, then $\tilde{\la}_i+ig$ is not a local maximum).

The infinite staircase $\omega_{\Lambda_{\infty}}$ of slopes $\pm 1$ fits into the framework of our discussion in \cref{sec:transition_measure}, which implies that the interlacing sequences of local minima and local maxima of $\omega_{\Lambda_{\infty}}$ correspond to the poles and zeroes, respectively, of the associated (via Markov--Krein correspondence) Cauchy transform $G_{\mu_g^{\text{Planch}}}(z)$.
Therefore our desired claim translates to: if $z_0$ is a pole of $G_{\mu_g^{\text{Planch}}}(z)$, then $z_0+g$ is one of its zeroes.
We will next prove the following functional equation, which immediately implies this claim:
\begin{equation}\label{eq:CauchyTransEq}
z\cdot G_{\mu_g^{\text{Planch}}}(z)-1 = G_{\mu_g^{\text{Planch}}}(z) G_{\mu_g^{\text{Planch}}}(z-g).
\end{equation}
By virtue of~\eqref{eq:Moment}, both sides of
\eqref{eq:CauchyTransEq} can be treated as formal power series in
$z^{-1}$, and it is enough to prove that their coefficients are the
same. \cref{cor:PlancherelMoments} implies that both sides of
\eqref{eq:CauchyTransEq} are $O(z^{-2})$, and for $\ell \geq 2$ the coefficient of
$z^{-\ell}$ of the LHS is equal to
\[ M^{\text{Planch}}_\ell(g) := \sum_{\Gamma\in\mathbf{M}_0(\ell)} 
\prod_{i\ge 1} (i\cdot g)^{|\SSS_{\tiny\rightarrow}^i(\Gamma)|}.\]
Every Motzkin path $\Gamma $ can be uniquely
decomposed as the concatenation of two Motzkin paths $\Gamma =
((0,0),\Gamma_1,(k,0),\Gamma_2)$,with $0<k\le \ell$ and 
$|\SSS^0\big(((0,0),\Gamma_1,(k,0))\big)|=1$. This gives
\[ M^{\text{Planch}}_0(g) = 1,\ \  M^{\text{Planch}}_1(g) = 0,\ \
M^{\text{Planch}}_\ell(g) = \sum_{\ell_1+\ell_2 =
\ell-2}\widetilde{M}^{\text{Planch}}_{\ell_1}(g)\cdot
M^{\text{Planch}}_{\ell_2}(g),\ \forall\,\ell\ge 2,\]
where $\widetilde{M}^{\text{Planch}}_{\ell}(g) := \sum_{\Gamma\in\mathbf{M}_0(\ell)} 
\prod_{i\ge 1} ((i+1)\cdot
g)^{|\SSS_{\tiny\rightarrow}^i(\Gamma)|}$. We can interpret the factor
\[\prod_{i\ge 1} ((i+1)\cdot g)^{|\SSS_{\tiny\rightarrow}^i(\Gamma)|} = 
\sum_{j_1, j_2, \dots \in\Z_{\geq 0}}
\,\prod_{i\ge 1}\binom{|\SSS^i_{\tiny\rightarrow}(\Gamma)|}{j_i}(i\cdot g)^{|\SSS_{\tiny\rightarrow}^i(\Gamma)| - j_i}\cdot g^{j_i}\]
as a sum over all subsets $S\subseteq \SSS_{\tiny\rightarrow}(\Gamma)$ of horizontal steps
of $\Gamma$ and associating a standard weight $\prod_{i\ge 1} (i\cdot g)^{|\SSS_{\tiny\rightarrow}^i(\widetilde{\Gamma})|}$ multiplied by $g^{|S|}$ to a Motzkin path
$\widetilde{\Gamma}$ obtained from $\Gamma$ by removing the horizontal
steps from the chosen $S$. This interpretation allows to rewrite
$\widetilde{M}^{\text{Planch}}_{\ell}(g)$:

\[ \widetilde{M}^{\text{Planch}}_{\ell}(g) = 
\sum_{k =0}^\ell\binom{\ell}{k}g^k\cdot M^{\text{Planch}}_{\ell-k}(g) = 
\sum_{k=0}^{\infty}\binom{-\ell+k-1}{k}(-g)^k\cdot M^{\text{Planch}}_{\ell-k}(g),\]
which gives
\begin{align*}
M^{\text{Planch}}_\ell(g) &= \sum_{\ell_1,\ell_2,k\in\Z_{\ge 0}\colon\atop \ell_1+\ell_2+k = \ell-2}
{\!\binom{-\ell_1-1}{k}(-g)^k\cdot M^{\text{Planch}}_{\ell_1}(g) M^{\text{Planch}}_{\ell_2}(g)}\\
&= [z^{-\ell}]G_{\mu_g^{\text{Planch}}}(z) G_{\mu_g^{\text{Planch}}}(z-g).
\end{align*}
This proves the desired~\eqref{eq:CauchyTransEq}, and therefore we
conclude the desired statement: if $\omega_{\Lambda_{\infty}}$ has a
local minimum at $z_0$, then it has a local maximum at $z_0+g$.
This means, in the notation of \cref{eq:limit_rows} and \cref{eq:LimProfileShape'}, that $\tilde{\la}_i\ne\tilde{\la}_{i+1}$, for all $i$, and therefore the local minima of $\omega_{\Lambda_{\infty}}$ are $\tilde{\la}_1>\tilde{\la}_2+g>\tilde{\la}_3+2g>\cdots$.
On the other hand, \cref{theo:BesselZeroes} together with \cref{prop_duality} imply that
\[
\supp(\mu_g^{\text{Planch}}) = \{-l^{(-g)}_1-g > -l^{(-g)}_2-g > \cdots \},
\]
and by the framework of \cref{sec:transition_measure}, it follows that this is also the set of local minima of $\omega_{\Lambda_{\infty}}$.
Hence, $\tilde{\la}_i+(i-1)g=-l^{(-g)}_i-g$, for all $i\ge 1$, so plugging back into \eqref{eq:LimProfileShape'} gives
\[
\omega_{\Lambda_\infty} (x) = 
\begin{cases}
x, &\text{ if } x > -l_1^{(-g)}-g,\\
-2(l_i^{(-g)}+ig)-x, &\text{ if } x\in\big[-l_i^{(-g)},\ -l_i^{(-g)}-g\big],\text{ for some }i\ge 1,\\
x-2ig, &\text{ if } x\in\big[-l_{i+1}^{(-g)}-g,\ -l_{i}^{(-g)}\big],\text{ for some }i\ge 1.
\end{cases}
\]

Finally, let $i\ge 1$ be arbitrary.
Take any $-g>\eps>0$, and assume that $\eps\ge\|\omega_{\Lambda^\a_d} - \omega_{\Lambda_\infty}\|_\infty$.
Using the same arguments as in the proof of \cref{prop:LongestRow}, one can show that there exists a constant $c>0$ (independent of $d\ge 1$) such that
\[
c\eps\ge \left|\left(\la^{(d)}_i\cdot\frac{\sqrt{\al}}{\sqrt{d}} + \frac{(i-1)}{\sqrt{\al d}}\right) -  \left(-l_i^{(-g)}-(2i-1)g\right)\right|\sim \left|\frac{\la^{(d)}_i}{-g\cdot d} -  \left(-l_i^{(-g)}-ig\right)\right|.
\]
Hence,
\[
\PP^\a_{\chi_d}( \|\omega_{\Lambda^\a_d} - \omega_{\Lambda_\infty}\|_\infty \le \eps)
\le\PP^\a_{\chi_d}\left(\left|\frac{\la^{(d)}_i}{-g\cdot d} -  \left(-l_i^{(-g)}-ig\right)\right|\le c\eps\right),
\]
for all large $d$.
\cref{theo:LLNDS} shows $\lim_{d\to\infty}\PP^\a_{\chi_d}(\|\omega_{\Lambda^\a_d} - \omega_{\Lambda_\infty}\|_\infty \le
\eps)=1$, therefore
\[
\lim_{d\to\infty}{\PP^\a_{\chi_d}\left(\left|\frac{\la^{(d)}_i}{-g\cdot d} -  \left(-l_i^{(-g)}-ig\right)\right|\le c\eps\right)} = 1,
\]
as desired.
\end{proof}

\section*{Acknowledgement}

CC was partially supported by the NSF grant DMS-2348139 and by the Simons Foundation's Travel Support for Mathematicians grant MP-TSM-00006777.
CC would like to thank the hospitality of the Galileo Galilei Institute for Theoretical Physics, where part of this research was done, and to the participants of the workshop Randomness, Integrability and Universality, for stimulating discussions. CC is also grateful to the Institute of Mathematics of the Polish Academy of Sciences for hosting him during his research visit in June of 2022.
MD would like to thank to Valentin F\'eray and Piotr Śniady for many
years of collaboration on strictly related problems and countless
interesting discussions. He would also like to thank to Philippe Di
Francesco, Pierre-L\"oic M\'eliot, and Ryszard Szwarc for their valuable comments, and
to Mateusz Kwaśnicki for letting us know about the
reference~\cite{StampachStovicek2015}. AM would like to thank the
faculty and staff at UMass Boston for their support during the early
stages of this project, especially Eric Grinberg, Eduardo González,
Alfred No\"el, Maureen Title, and Mirjana Vuleti\'{c}. We also thank the referees for their valuable comments.

\bibliographystyle{amsalpha}

\bibliography{biblio2015}

\end{document}